%% file: main.tex
\documentclass[11pt]{amsart}
\makeatletter
\usepackage{manfnt} 
\usepackage[nohug]{ptdiagrams}
\usepackage{import}
\import{./}{preamble}
\usepackage{tikz-cd}
\tikzcdset{scale cd/.style={every label/.append style={scale=#1},
    cells={nodes={scale=#1}}}}
\def\cdga{\rm CDGA}
\def\cs{{\rm cSpec}}
\def\Cc{{\cC}}
\def\cM{{\mathcal{M}}}
\def\bcol{{\mathbf{colim}}}
\def\bli{{\mathbf{lim}}}
\def\Q{{\mathbb Q}}
\newcommand{\onto}{\,\,\twoheadrightarrow\,\,}
\newcommand{\la}{\label}

\newcommand{\cSpec}{\mathrm{cSpec}}

\newcommand{\Map}{\mathrm{Map}}

\newcommand{\bcolim}{\boldsymbol{\mathrm{colim}}}
\newcommand{\blim}{\boldsymbol{\mathrm{lim}}}
\newcommand{\hocolim}{\mathrm{hocolim}}

\newcommand{\ux}{c(X)}

\newcommand{\ua}{c(A)}

\newcommand{\BBox}{\Box\!\Box}

\newcommand{\cube}{\mathord{\mbox{\mancube}}}

\def\deg{\mathrm{deg}\,}

\newcommand{\id}{\mathtt{Id}}
\newcommand{\into}{\hookrightarrow}

\newcommand{\FunDD}{\Fun((\Delta^1 \vee \Delta^1)^{\rm op}, \,\Cc)}
\newcommand{\Top}{\mathrm{Top}}

\newcommand{\cAff}{\mathrm{cAff}}

\def\I{\mathcal{I}}
\def \Dc {\mathscr{D}}

\def\bF{\mathbb F}
\def\bI{\mathbb I}
\def\bN{\mathbb N}
\def\O{\mathbb O}
\newcommand{\bQ}{\boldsymbol{Q}}
\def\bS{\mathbb S}
\def\bZ{\mathbb Z}

\def\cQ{\mathscr{Q}}

\def\bL{\boldsymbol{L}}
\def\bR{\boldsymbol{R}}

\def\Lc{\mathscr{L}}
\def\Rc{\mathscr{R}}

\def\Z{\mathbb{Z}}
\def\Q{\mathbb{Q}}

\def\sl2{{\mathfrak{s}\mathfrak{l}}_2}

\def\Hom{\mathrm{Hom}}
\def\mod{\mathrm{mod}}

\def\Fun{\mathrm{Fun}}
\def\Tria{\mathtt{Tria}}
\def\Gan{\mathtt{Gan}}
\def\Cof{\mathtt{Cof}}
\def\Fib{\mathtt{Fib}}

\def\sset{\mathrm{Set}_{\Delta}}

\def\cdga{\mathrm{CDGA}}

\def\Cat{\mathtt{Cat}}

\def \cF{\mathcal{F}}

\def\cA{\mathcal{A}}
\def\cR{\mathcal{R}}
\def\cK{\mathcal{K}}
\def\Cc{\mathscr{C}}

\def\dDel{\mathbf{\Delta}}

\makeatletter
\newcommand*{\doublerightarrow}[2]{\mathrel{
  \settowidth{\@tempdima}{$\scriptstyle#1$}
  \settowidth{\@tempdimb}{$\scriptstyle#2$}
  \ifdim\@tempdimb>\@tempdima \@tempdima=\@tempdimb\fi
  \mathop{\vcenter{
    \offinterlineskip\ialign{\hbox to\dimexpr\@tempdima+1em{##}\cr
    \rightarrowfill\cr\noalign{\kern.5ex}
    \rightarrowfill\cr}}}\limits^{\!#1}_{\!#2}}}
\newcommand*{\triplerightarrow}[1]{\mathrel{
  \settowidth{\@tempdima}{$\scriptstyle#1$}
  \mathop{\vcenter{
    \offinterlineskip\ialign{\hbox to\dimexpr\@tempdima+1em{##}\cr
    \rightarrowfill\cr\noalign{\kern.5ex}
    \rightarrowfill\cr\noalign{\kern.5ex}
    \rightarrowfill\cr}}}\limits^{\!#1}}}
\makeatother
\begin{document} 
\title{Ganea decompositions of classifying spaces}
\author{Yuri Berest}
\address{Department of Mathematics,
Cornell University, Ithaca, NY 14853-4201, USA}
\email{berest@math.cornell.edu}
\author{Yun Liu}
\address{Department of Mathematics,
Indiana University,
Bloomington, IN 47405, USA}
\email{yl224@iu.edu}
\author{Ajay C. Ramadoss}
\address{Department of Mathematics,
Indiana University,
Bloomington, IN 47405, USA}
\email{ajcramad@iu.edu}
%

\begin{abstract}
We study homotopy decompositions of the classifying spaces $BG$ of compact connected Lie groups obtained by (relative) fiber-cofiber construction. Given a pair of Borel fibrations $ F \to E \to BG $  and 
$F' \to E' \to BG $, this construction yields a tower (telescope) of 
spaces $ X_{m}(F,F') $ over $BG$ indexed by $  \Z_+ $ that converges in the sense that $\hocolim \,(X_{m})\,$ is weakly homotopy equivalent to $BG$. 
We determine cohomological conditions on the fibrations that produce the spaces $X_{m}(F,F')$ with properties similar to those of the spaces of quasi-invariants of Weyl groups constructed in \cite{berest2023topological}. We prove that, under these conditions, the resulting homotopy decompositions of $BG$ are sharp (over $\Q$), the spaces $X_{m}(F,F')$ are rationally formal and Cohen-Macaulay, their cohomology rings being finite rank free modules over $H^*(BG, \Q)$. We construct many examples which include the fundamental (maximal torus) fibration $ G/T \to BT \to BG $ as well as the universal fibration $\, E_{\rm com}G_{\bf 1} \to B_{\rm com}G_{\bf 1} \to  BG \,$ for the classifying space $B_{\rm com}G$ of commuting elements in $G$ introduced in \cite{adem2015classifying}. In most cases, we give an explicit presentation for the (equivariant) cohomology rings in terms of characteristic classes and compute the (equivariant) $K$-theory of the spaces involved.
The paper contains an Appendix, where we re-examine the topological fiber-cofiber construction in an abstract setting, proving an $\infty$-categorical extension of the classical Ganea Theorem.
\end{abstract}
\maketitle



\section{Introduction}
The invariant theory of finite reflection groups has its origin in algebraic topology. 
In the 1950 ICM report \cite{Chev50}, C. Chevalley presented the results of his calculations of the rational Betti numbers of exceptional simple Lie groups. A. Borel \cite{borel1953cohomologie} reformulated these results in terms of classifying spaces of compact Lie groups
and proved the following general theorem relating  cohomology of these spaces to polynomial invariants of Weyl groups.
\begin{thm}[Borel]
\label{BThm}
Let $G$ be a compact connected Lie group with maximal torus $T$, and let $p: BT \to BG$ denote the map of classifying spaces induced by the inclusion $T \hookrightarrow G$.

$(1)$ The algebra homomorphism $ p^*: H^*(BG, \Q) \to H^*(BT, \Q) $ induced by $p$ on rational cohomology is injective, and its image coincides with the $W$-invariant subalgebra
\begin{equation}
\label{I2}
H^*(BG, \Q) \cong H^*(BT, \Q)^W \, ,   
\end{equation}
where $W = W_G(T) $ is the Weyl group of $G$ associated to $T$.

$(2)$ Both $H^*(BT,\Q)$ and $H^*(BG,\Q)$ are free polynomial algebras, with $H^*(BT,\Q)$ being a free module over $H^*(BG,\Q)$ of rank $|W|$.
\end{thm}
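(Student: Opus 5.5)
The plan is to analyze the Borel fibration $G/T \to BT \xrightarrow{p} BG$ with the Leray--Serre spectral sequence over $\Q$. Three classical inputs will be used.

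\begin{itemize}
\item[(a)] $H^*(G/T,\Q)$ is concentrated in even degrees, with total dimension $\chi(G/T)=|W|$. This follows from the Bruhat (Schubert cell) decomposition of $G/T \cong G_{\mathbb C}/B$, or from Hopf--Samelson: $\chi(G/T)=|N(T)/T|$ by the Lefschetz fixed point theorem applied to the $T$-action.
\item[(b)] $BG$ is simply connected, since $G$ is connected, so the local coefficient system $H^*(G/T)$ is trivial.
\item[(c)] $H^*(BT,\Q)=\Q[t_1,\dots,t_r]$ is a polynomial algebra on degree-$2$ generators, with $r=\dim T$.
\end{itemize}

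First we show that the spectral sequence $E_2^{p,q}=H^p(BG)\otimes H^q(G/T)\Rightarrow H^{p+q}(BT)$ degenerates at $E_2$. Suppose $H^*(BG,\Q)$ is concentrated in even degrees. Then every page is concentrated in even bidegrees, and all differentials $d_r$, of bidegree $(r,1-r)$, vanish for parity reasons. To get evenness of $H^*(BG)$, I would run the argument in the other order. Rationally $G$ is a product of odd spheres, by Hopf's theorem that $H^*(G,\Q)$ is an exterior algebra on odd generators. The spectral sequence of $G\to EG\to BG$, or Borel's transgression theorem, then gives that $H^*(BG,\Q)$ is a polynomial algebra on even generators of degrees $2d_i$, and in particular is even. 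This step is the main obstacle, since it is genuinely a theorem about $H^*(BG)$; I will take it from the Borel transgression argument.

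Given degeneration, the edge map $p^*:H^*(BG)\to H^*(BT)$ is injective. The Leray--Hirsch theorem applies: the restriction $H^*(BT)\to H^*(G/T)$ is surjective because the $E_2$ column $E_2^{0,*}$ survives, so lifts of a basis of $H^*(G/T)$ form a free $H^*(BG)$-basis of $H^*(BT)$. Therefore $H^*(BT)$ is free of rank $\dim H^*(G/T)=|W|$ over $H^*(BG)$, which gives part (2).

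For (1), $W=N(T)/T$ acts on $BT$ over $BG$, because conjugation by $n\in N(T)$ is inner in $G$ and inner automorphisms act homotopically trivially on $BG$. Hence $\operatorname{im}p^*\subseteq H^*(BT)^W$. For the reverse inclusion, compare Poincaré series. By (2), $P_{BT}(t)=P_{BG}(t)\,P_{G/T}(t)$. By Chevalley's theorem, $\Q[t_1,\dots,t_r]^W$ is polynomial and $\Q[t]$ is free of rank $|W|$ over it. Alternatively, we avoid Chevalley and use a transfer argument: the covering $G/T\to G/N(T)$ has $\chi(G/N(T))=1$ and $H^*(G/N(T),\Q)=H^*(G/T)^W$. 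The Becker--Gottlieb transfer for $BN(T)\to BG$ then shows that $H^*(BG)\to H^*(BN(T))=H^*(BT)^W$ is an isomorphism, since its fiber $G/N(T)$ has Euler characteristic $1$. This gives \eqref{I2}, and the polynomiality in (2) then also follows from Chevalley's theorem applied to the reflection group $W$ acting on $H^2(BT,\Q)$.
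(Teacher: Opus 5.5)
The paper gives no proof of this statement. It is quoted in the introduction as Borel's classical theorem, with Chevalley's theorem as its algebraic explanation, so your argument has to stand on its own.

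Most of it does. The parity degeneration of the Serre spectral sequence for $G/T\to BT\to BG$ is fine, as are Leray--Hirsch for freeness of rank $|W|$ and the inclusion $\operatorname{im}p^*\subseteq H^*(BT,\Q)^W$. One caveat: evenness of $H^*(G/T,\Q)$ has to come from the Bruhat cells, since Hopf--Samelson only gives $\chi(G/T)=|W|$.

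The gap is the reverse inclusion $H^*(BT,\Q)^W\subseteq\operatorname{im}p^*$, which is the real content of (1). Neither of your two routes proves it as written.

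\emph{Poincar\'e-series route.} Set $C=H^*(BT,\Q)$, $A=\operatorname{im}p^*$ and $B=C^W$, so $A\subseteq B$. Knowing that $C$ is free of rank $|W|$ over both $A$ and $B$ does not give $P_A=P_B$, because you never identify $P_{G/T}$ with the Poincar\'e series of the coinvariant algebra. An extra input is needed, for instance:
\begin{itemize}
\item Equal ranks give $[\mathrm{Frac}\,C:\mathrm{Frac}\,A]=|W|=[\mathrm{Frac}\,C:\mathrm{Frac}\,B]$, hence $\mathrm{Frac}\,A=\mathrm{Frac}\,B$.
\item $B$ is integral over $A$, because $C$ is finite over $A$.
\item $A\cong H^*(BG,\Q)$ is a polynomial ring by the transgression theorem you already invoked, hence integrally closed, so $A=B$.
\end{itemize}

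\emph{Transfer route.} The Becker--Gottlieb transfer with $\chi(G/N(T))=1$ only shows that $H^*(BG,\Q)\to H^*(BN(T),\Q)$ is split injective. A fibre of Euler characteristic $1$ need not be rationally acyclic, so surjectivity does not follow from the transfer.

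What does work, using only facts you already listed, is the following. $H^*(G/N(T),\Q)=H^*(G/T,\Q)^W$ is evenly graded with Euler characteristic $1$, hence equals $\Q$ in degree $0$. So $G/N(T)$ is rationally acyclic. The Serre spectral sequence of $G/N(T)\to BN(T)\to BG$ has trivial coefficients since $BG$ is simply connected, and it gives $H^*(BG,\Q)\cong H^*(BN(T),\Q)\cong H^*(BT,\Q)^W$, with the composite equal to $p^*$.

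If you do this step first, $H^*(BG,\Q)$ is automatically concentrated in even degrees. The degeneration you need for (2) then no longer depends on Hopf's theorem or Borel's transgression theorem. Polynomiality of $H^*(BG,\Q)$ follows from Chevalley's theorem, since $W$ acts on $H^2(BT,\Q)$ as a reflection group.
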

Shortly after, Chevalley \cite{Che55} gave an algebraic explanation to Borel's results by proving
\begin{thm}[Chevalley]
\label{CThm} 
Let $W$ be a finite group generated by reflections in a finite-dimensional vector space $V$ over a field $\,\Bbbk$ of characteristic zero, and let $ \Bbbk[V] = {\rm Sym}_{\Bbbk}(V^*) $ denote the $($graded$)$ polynomial algebra of $V$. Then the invariant subalgebra $ \Bbbk[V]^W \subseteq \Bbbk[V] $ is itself a polynomial algebra, and $ \Bbbk[V]$ is a free module over $ \Bbbk[V]^W $ of rank $|W|$.
\end{thm}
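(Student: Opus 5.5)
The plan is to follow Chevalley's original argument: first show that $\Bbbk[V]$ is generated as a module over $R := \Bbbk[V]^W$ by any lift of a homogeneous basis of the coinvariant algebra, then deduce that $R$ is polynomial, and finally get freeness and the rank from a dimension count. Write $S = \Bbbk[V]$ with its standard grading, $R_+$ for the invariants of positive degree, and $I = S R_+$ for the ideal they generate in $S$. Since $\operatorname{char}\Bbbk = 0$, the Reynolds operator $\rho(f) = |W|^{-1}\sum_{w\in W} w f$ is an $R$-linear projection $S \to R$, and this is the basic tool.

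\emph{Step 1 (the key lemma).} I will prove: if $f_1,\dots,f_r \in R$ are homogeneous with $f_1 \notin (f_2,\dots,f_r)R$, and $g_1,\dots,g_r \in S$ are homogeneous with $\sum_i g_i f_i = 0$, then $g_1 \in I$. The proof is by induction on $\deg g_1$.
\begin{itemize}
\item If $\deg g_1 = 0$, apply $\rho$ to the relation. This expresses $g_1 f_1$ as an element of $(f_2,\dots,f_r)R$ with $g_1$ a constant, so $g_1 = 0$.
\item In general, for a reflection $s$ with linear form $\alpha_s$ vanishing on its hyperplane, each $g - s g$ is divisible by $\alpha_s$. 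Write $g_i - s g_i = \alpha_s h_i$. Then $\sum_i h_i f_i = 0$ with $\deg h_1 < \deg g_1$, so by induction $g_1 - s g_1 \in I$.
\item Since $W$ is generated by reflections, $g_1 - w g_1 \in I$ for all $w \in W$. Hence $g_1 \equiv \rho(g_1) \pmod I$, and $\rho(g_1) \in R$ has positive degree, so it lies in $I$. Therefore $g_1 \in I$.
\end{itemize}
I expect this step to be the main obstacle. It is where the reflection hypothesis enters, through divisibility by $\alpha_s$ and generation of $W$ by reflections.

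\emph{Step 2 ($R$ is polynomial).} By Hilbert's basis theorem, $I$ is generated by finitely many homogeneous invariants $f_1,\dots,f_r$, chosen minimal. These generate $R$ as an algebra: for homogeneous $f \in R_+$, write $f = \sum_i g_i f_i$, apply $\rho$ to get $f = \sum_i \rho(g_i) f_i$, and induct on degree.

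Suppose there is a nonzero homogeneous polynomial relation $h(f_1,\dots,f_r) = 0$ of minimal degree, and set $h_i = \partial h/\partial x_i\,(f)$. These lie in $R$, and not all vanish by minimality together with $\operatorname{char}\Bbbk = 0$. Reorder so that $h_1,\dots,h_m$ is a minimal generating set of the ideal $(h_1,\dots,h_r)R$, and write $h_j = \sum_{i\le m} a_{ji} h_i$ for $j > m$.

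Differentiating the relation with respect to each coordinate $x_k$ gives
\[
\sum_{i\le m} h_i \Bigl( \partial_k f_i + \sum_{j>m} a_{ji}\, \partial_k f_j \Bigr) = 0 .
\]
Step 1 then puts each bracket in $I$. Multiplying by $x_k$, summing over $k$, and using Euler's formula, a degree comparison gives $f_1 \in (f_2,\dots,f_r)S + S_+ I$. This contradicts minimality of the generators of $I$. So the $f_i$ are algebraically independent, and $r = \dim V$ because $S$ is integral over $R$.

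\emph{Step 3 (freeness and rank).} By the graded Nakayama lemma, homogeneous lifts of a basis of $S/I$ generate $S$ as an $R$-module. Since $R$ is a polynomial ring and $S$ is a polynomial ring of the same dimension, $S$ is Cohen--Macaulay and finite over $R$, hence free over $R$. Alternatively, Step 1 shows directly that there are no $R$-relations among lifts of an $S/I$-basis.

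For the rank, compare fraction fields. $\operatorname{Frac}(S)/\operatorname{Frac}(R)$ is Galois with group $W$: $W$ acts faithfully, and $\operatorname{Frac}(S)^W = \operatorname{Frac}(R)$ by clearing denominators with norms. So the rank is $[\operatorname{Frac}(S):\operatorname{Frac}(R)] = |W|$.
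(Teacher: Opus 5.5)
Your proposal is correct. It is essentially Chevalley's original proof: the Reynolds operator, the key lemma via divisibility by $\alpha_s$, the Euler-formula argument for algebraic independence, then freeness and a Galois-theoretic count of the rank. The paper gives no proof of its own and simply cites Chevalley \cite{Che55}, so there is no alternative route to compare. The one detail to make explicit is that in Step~2 the coefficients $a_{ji}\in R$ must be chosen homogeneous of degree $\deg f_i-\deg f_j$. This makes each bracket homogeneous, so that Step~1 applies.
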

Note that Chevalley's Theorem applies in the topological situation, because the Weyl group $W = W_G(T)$ is generated by reflections in the $\Q$-vector space $ V = \pi_1(T) \otimes \Q$, and the latter is naturally isomorphic to $H_2(BT,\Q)$ by the rational Hurewicz Theorem. Consequently, 
\begin{equation}
\label{BTV}
H^*(BT, \Q) \cong {\rm Sym}_{\Q}(V^*) = \Q[V]\, .
\end{equation}
The above results laid the foundation for many important developments in algebra and topology over the past seventy years. On the topological side, we should single out the work on the Steenrod Problem on realizing polynomial algebras as cohomology rings over ${\mathbb F}_p$
(see \cites{Steen71, CE74, AW80, DMW92, Not99}), the theory of homotopy decompositions of classifying spaces of Lie groups (see \cites{JM92, JMO92}) and the discovery of $p$-compact groups as natural ($p$-local) generalizations of compact Lie groups (see  \cites{DW94}). On the algebraic side, much deep and beautiful work has been done in representation theory, combinatorics and the theory of multivariable special functions related to finite reflection groups. In particular, we can mention the discovery of Dunkl differential-reflection operators (see \cites{Du89, Hec91, DO03}), the construction of double affine Hecke algebras and its use in the proof of the famous Macdonald Conjectures (see \cites{Chered95, Chered96}), and the development of representation theory of double affine complex (and more generally, symplectic) reflection algebras (see \cites{EG02,GGOR,BEG03}).

A remarkable generalization of the classical invariant polynomials appeared in mathematical physics (see \cites{chalykh1990commutative, chalykh1993integrability}). Given a finite Coxeter group $W$ acting in its reflection representation $V$, denote by $ \cA_W = \{H\}$ the set of reflection hyperplanes of $W$ in $V$ and assign
to each $H \in \cA_W $ a non-negative integer $ m_{H} $ --- the `multiplicity' of $H$ --- in such a way that the hyperplanes in the same $W$-orbit in $ \cA_W $ have
the same multiplicities, i.e. $ m_{w(H)} = m_H $ for all $w \in W $. With these data in hand, define the {\it quasi-invariant polynomials of multiplicity} $m$ (in short, the {\it $m$-quasi-invariants} of $W$) by
\begin{equation}
\label{Qinv}
Q_m(W) \,:=\, \{ p \in \Bbbk[V]\, |\, 
s_{H}(p) \,\equiv\, p \, \,\mbox{mod}\,\langle \alpha_H \rangle^{2 m_H}\, , \, \, \forall\,H 
\in \cA_W \,\}\, ,
\end{equation}
where 
$ s_H  $ stands for the reflection operator on $ {\Bbbk}[V]$ with respect to the hyperplane $ H \in \cA_W $, $\, \alpha_H \in V^* $ is a linear form
defining $H$, i.e. $ H = \ker(\alpha_H)$, and $ \langle \alpha_H \rangle $ is the (principal) ideal in $ \Bbbk[V] $ generated by $ \alpha_H $. It is easy to see that  $Q_m(W)$ is a graded subalgebra of $ \Bbbk[V] $ stable under the action of $W$ and such that $  Q_m(W)^W = \Bbbk[V]^W  $ for all $\,m = \{m_H\}\,$. Moreover, we obviously have $ Q_0(W) = \Bbbk[V] $, and $\, Q_m(W) \supseteq Q_{m'}(W) \,$ whenever $ m_H \le m_H' $ for all $ H \in \cA_W$. Thus, with $m$ varying, the quasi-invariants define a canonical algebra filtration
\begin{equation}
\label{qfilt}
\Bbbk[V] = Q_0(W)\supseteq\,\ldots\, \supseteq Q_m(W) \supseteq  Q_{m'}(W) \supseteq \,\ldots\,
\supseteq Q_m(W)^W = \Bbbk[V]^W \, ,
\end{equation}
interpolating between all polynomials $(m=0)$ and the $W$-invariants $(m \to \infty) $.

The algebras $Q_m(W)$ are not polynomial and do not admit a simple combinatorial description in general. Nevertheless, they carry many interesting structures related to representation theory (see, e.g., \cites{BEG03, berest2011quasi}) and possess nice ring-theoretic properties, the most surprising of which is the following:
\begin{thm}[\cites{FV02, EG02b, BEG03, berest2011quasi}]
\label{TGoren}
For any $($$W$-invariant$)$ collection of multiplicities $ m $, $\,Q_m(W) $ is a free module over $ \Bbbk[V]^W $ of rank $ |W| $. Consequently, $\,Q_m(W) $ is a graded Cohen-Macaulay
algebra of $($Krull$)$ dimension $ \dim(V)$. 
\end{thm}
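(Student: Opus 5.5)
The plan is to reduce the statement to Cohen--Macaulayness, which is equivalent to freeness in this graded setting, and then to obtain Cohen--Macaulayness from the representation theory of the rational Cherednik algebra $H_m(W)$. Write $R := \Bbbk[V]^W$, which is a graded polynomial algebra by Theorem~\ref{CThm}.

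The first step is finiteness. Since $\Bbbk[V]$ is a finitely generated $R$-module (Theorem~\ref{CThm}) and $R$ is noetherian, the $R$-submodule $Q_m(W)\subseteq \Bbbk[V]$ is finitely generated and graded. For a finitely generated graded module $M$ over a graded polynomial ring, the graded Nakayama lemma and the Auslander--Buchsbaum formula give
\[
M \text{ free} \iff \operatorname{depth}_R M = \dim R \iff M \text{ Cohen--Macaulay of dimension } \dim V .
\]
So freeness and the Cohen--Macaulay assertion are the same statement. It therefore suffices to show that $Q_m(W)$ is Cohen--Macaulay, and separately to compute the rank.

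The rank is easy. Put $\delta_m := \prod_{H\in\cA_W}\alpha_H^{2m_H}$. This is $W$-invariant, since each $w$ permutes the $\alpha_H$ up to scalars and preserves multiplicities, and even powers kill the signs. For $q\in\Bbbk[V]$ the difference $s_H(\delta_m q)-\delta_m q=\delta_m(s_Hq-q)$ is divisible by $\alpha_H^{2m_H}$. Hence $\delta_m\Bbbk[V]\subseteq Q_m(W)\subseteq \Bbbk[V]$. It follows that $Q_m(W)$ and $\Bbbk[V]$ have the same rank over $R$, namely $|W|$ by Theorem~\ref{CThm}, and that $\operatorname{Frac} Q_m(W)=\Bbbk(V)$.

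The main step, and the one I expect to be the real obstacle, is the depth condition. Elementary approaches, such as a case-by-case induction on rank or an attempt to exhibit an explicit basis, only work for dihedral groups; a uniform argument needs a structural description of $Q_m(W)$. I would proceed as follows.
\begin{enumerate}
\item Consider the Dunkl operators $\nabla_\xi$ with integral parameter $c=m$, acting on $\Bbbk[V]$, and the rational Cherednik algebra $H_m$ they generate together with $\Bbbk[V]$ and $W$. Let $e$ be the trivial idempotent of $\Bbbk W$.
\item Show that $Q_m(W)$ is exactly the space of polynomials $p$ such that $L(p)\in \Bbbk[V]$ for all $W$-invariant differential operators $L$ in the image of $e H_m e$ under the Dunkl embedding. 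Equivalently, $Q_m(W)$ is a module over the spherical subalgebra $e H_m e$, realized via differential operators with poles along the hyperplanes.
\item For integral $m$, use the Heckman--Opdam shift operators to get a Morita equivalence between $e H_m e$ and $H_m$. Identify $Q_m(W)$, as an $R$-module, with $e\,\Delta_m(\mathrm{triv})$, where $\Delta_m(\mathrm{triv})=\Bbbk[V]\otimes \mathrm{triv}$ is a standard module, or possibly with a shift of it.
\item Conclude. A standard module is free over $\Bbbk[V]$, hence free over $R$ as a $W$-equivariant module. Applying $e$ extracts an $R$-direct summand, and a graded direct summand of a free module over a graded polynomial ring is free. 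Combined with the rank computation this gives rank $|W|$.
\end{enumerate}

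The delicate points are the exact identification in step 3, namely that the image of the shift map is precisely $Q_m(W)$ and not a proper submodule, and the proof that the Morita equivalence holds for all integral $m$. If the identification only yields an inclusion, I would fall back on a Hilbert series comparison, using the known formula $\sum_{w} t^{\deg}$ shifted by $\sum_H m_H$ to force equality.
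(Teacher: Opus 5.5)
The paper does not prove this theorem. It is quoted from \cite{BEG03} and the other cited works, and your Cherednik-algebra strategy is essentially that of \cite{BEG03}. Your preliminary reductions are fine: finite generation, freeness $\Leftrightarrow$ $\operatorname{depth}_R Q_m(W)=\dim V$ by graded Auslander--Buchsbaum, and rank $|W|$ from $\delta_m\Bbbk[V]\subseteq Q_m(W)\subseteq \Bbbk[V]$. Steps 3--4, however, fail as stated.

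\textbf{The identification in step 3 is impossible.} You have $e\Delta_m(\mathrm{triv})=(\Bbbk[V]\otimes\mathrm{triv})^W=\Bbbk[V]^W$, which is free of rank one over $R$, while you have just shown that $Q_m(W)$ has rank $|W|$; a degree shift does not change rank. The reason is that $W$ acts on $Q_m(W)$ commuting with the $W$-invariant operators coming from $eH_me$. Hence $Q_m(W)=\bigoplus_\tau \tau\otimes \mathrm{Hom}_W(\tau,Q_m(W))$ splits into separate $eH_me$-modules, and only the $\tau=\mathrm{triv}$ piece is $e\Delta_m(\mathrm{triv})=R$. The right target is (up to conventions) $e\,\Delta_m(\Bbbk W)\cong\bigoplus_\tau e\Delta_m(\tau)^{\oplus\dim\tau}$, whose summands sit in different degree shifts.

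This also defeats your Hilbert-series fallback. For $W=\mathbb{Z}/2$ one has $Q_m=\Bbbk[x^2]\oplus x^{2m+1}\Bbbk[x^2]$: the invariant part is unshifted and the sign part is shifted by $2m$. For $W=S_3$, $m=1$, non-invariant quasi-invariants already occur in polynomial degree $4$, while anti-invariant ones start in degree $9$. So no single shift of the Hilbert series of $\Bbbk[V]$, or of its non-invariant part, gives that of $Q_m(W)$.

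The same example shows $Q_m(W)\supsetneq \Bbbk[V]^W+\delta_m\Bbbk[V]$. This is why the elementary device the paper uses for its relative quasi-invariants (\Cref{prop:gen-quasi}, \Cref{prop:basis}) has no analogue here: there, $Q_m=\Q[V]^W+\theta^m\Q[V]$ with a single invariant $\theta$.

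\textbf{More fundamentally, nothing in your sketch produces freeness.} The Morita equivalence $eH_me\sim H_m$ gives $Q_m(W)\cong eN$ with $N:=H_me\otimes_{eH_me}Q_m(W)$. It does not tell you that $N$ is free over $\Bbbk[V]$, which is exactly what you need. Two ingredients are missing.
\begin{enumerate}
\item[(i)] The image of $eH_me$ under the Dunkl embedding must preserve $Q_m(W)$; this is the real content of your step 2. Then $Q_m(W)$ lies in category $\mathcal{O}$ for $eH_me$: it is finitely generated over $R$, and the operators $\mathrm{Res}\,p(\nabla)$ with $p\in\Bbbk[V^*]^W$ of positive degree lower degrees, so they act locally nilpotently. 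Hence $N\in\mathcal{O}(H_m)$.
\item[(ii)] For integral $m$, the category $\mathcal{O}(H_m)$ must be semisimple with the standard modules $\Delta_m(\tau)$ as its simple objects. One way to see this is that the KZ functor then lands in modules over the Hecke algebra at $q=1$, i.e.\ over $\Bbbk W$.
\end{enumerate}
Granting these, $N\cong\bigoplus_\tau\Delta_m(\tau)^{\oplus n_\tau}$ is free over $\Bbbk[V]$, hence over $R$. Then $Q_m(W)\cong N^W$ is a graded $R$-direct summand of a free $R$-module, hence free, of rank $|W|$ by your computation.

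Step (ii) is precisely where integrality of $m$ enters. For suitable non-integral parameters, $\mathcal{O}$ contains objects that are not free over $\Bbbk[V]$, even finite-dimensional ones. Without (ii), or some other argument that $N$ is free over $\Bbbk[V]$, the proof does not close.
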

For $m=0$, \Cref{TGoren}  reduces to  Chevalley's \Cref{CThm}; however, the simple algebraic proof given in \cite{Che55} for $ \Bbbk[V] $ does not work for $Q_m(W)$ when $m \not= 0$. For arbitrary $m$,  \Cref{TGoren} was first established in the case of dihedral groups (the Coxeter groups of rank two) in \cite{FV02}, and for a general finite Coxeter $W$, it was then proven (by different methods) in \cite{EG02b} and \cite{BEG03}. The quasi-invariants can be defined for arbitrary pseudo-reflection groups, in which case \Cref{TGoren} takes its most general form proved in  \cite{berest2011quasi}.

Now, if $W$ is a Weyl group of a compact connected Lie group $G$, in view of Borel's \Cref{BThm},  it is natural to ask whether the algebras \eqref{Qinv} and the associated filtration \eqref{qfilt} arise topologically from some homotopy decomposition of the classifying space $BG$: namely, is there a diagram of $W$-spaces $\, X_*(G,T): \, {\mathcal M}_W \to {\rm Top} \,$ indexed by the poset of multiplicities $m$:
\begin{equation}
\label{Qtow}
BT = X_0(G,T) \,\to \,\ldots \,\to\, X_m(G,T) \,\xrightarrow{\pi_{m,m'}}\, X_{m'}(G,T)\,\to\,\ldots\, \to BG
\end{equation}
and equipped with natural $W$-maps $\, p_m: X_m(G,T) \to BG \,$, such that
\begin{equation}
\label{hocolBG}
{\rm hocolim}_{m \in {\mathcal M}_W}\, [X_m(G,T)] \,\simeq\, BG\, ,
\end{equation}
and the associated (contravariant) diagram of cohomology algebras
\begin{equation*}
H^*(BT,\, \Bbbk) = H^*(X_0,\,\Bbbk) \,\leftarrow\, \ldots\, \leftarrow\,
H^*(X_m,\,\Bbbk) \, \xleftarrow{\pi^*_{m,m'}}\, H^*(X_{m'},\,\Bbbk) \leftarrow \ldots \leftarrow H^*(BG,\,\Bbbk)
\end{equation*}
isomorphic, under the identification \eqref{BTV}, to the filtration \eqref{qfilt}? In \cite{berest2023topological}, the first and third authors reformulated the above question axiomatically (by listing all desired properties of the spaces $X_m(G,T)$) and gave a complete answer in the rank one case: for $ G = SU(2) $. The answer turned out to be very simple: the spaces $X_m(G,T)$, satisfying \eqref{Qtow}, \eqref{hocolBG} --- and all other axioms of \cite{berest2023topological} --- do exist, are unique (at least, rationally), and can be constructed explicitly. To that end, the authors of \cite{berest2023topological} 
used a formal construction known in homotopy theory as the {\it fiber-cofiber construction}. It starts with a (homotopy) fibration sequence $\,F \xrightarrow{j} X \xrightarrow{p} \,B\,$ over a (well-)pointed base $B$ and produces 
a tower of fibrations over the same base:
\begin{equation}
\label{Gatow}
\begin{diagram}[small, tight]
F               & \rTo  &  F_1      &  \rTo  & F_2 & \rTo& \ldots \\
\dTo^{j}        &       & \dTo^{j_1} &        & \dTo^{j_2} \\
X               & \rTo^{\pi_0}  &  X_1      &  \rTo^{\pi_1}  & X_2 & \rTo & \ldots \\
\dTo^{p}        &       & \dTo^{p_1} &        & \dTo^{p_2} \\
B               & \rEq  &  B     &  \rEq  & B& \rEq & \ldots \\
\end{diagram}
\end{equation}
where the spaces $X_m$ and $F_m$ are defined inductively by 
\begin{equation}
\label{XmFm}
X_m := {\rm hocof}_*(j_{m-1})\, ,\quad F_m :=  {\rm hofib}_*(p_m)\, ,\quad \forall\,m \ge 1\,.
\end{equation}
Now, for $ G = \SU(2) $, the diagram of spaces \eqref{Qtow} arises by simply applying the above procedure to Borel's fundamental fibration sequence (see \cite{berest2023topological}*{Theorem 3.9})
\begin{equation}
\label{Bfib}
G/T \to BT \xrightarrow{p} BG \, .
\end{equation}

At this point, we pause to remark that the fiber-cofiber construction was introduced by T. Ganea \cites{ganea1965generalization, ganea1967lusternik} as a tool to study his famous conjectures in algebraic topology. It is well known and widely used in the theory of Lusternik-Schnirelmann (LS) categories of spaces (see, e.g., \cites{cornea2003lusternik}); however, somewhat surprisingly, it does not play a role --- and it appears to have not been even investigated --- in the literature on homotopy decompositions of classifying spaces of  Lie groups and their $p$-local analogs. 

Unfortunately, for higher rank Lie groups, the situation is more complicated. In general, applying the fiber-cofiber construction to the Borel fibration \eqref{Bfib} does not produce spaces with the desired properties. As shown in \cite{berest2023topological}*{Example 3.8}, if $ {\rm rk}(G) > 1 $, already the first cofiber $ X_1 = {\rm hocof}(G/T \hookrightarrow BT) $ of \eqref{Bfib} has rational cohomology that fails to be a Cohen-Macaulay ring, and hence, in view of \Cref{TGoren}, cannot represent the algebras $Q_m(W)$. Realizing the rings of quasi-invariants for arbitrary compact Lie groups  
requires additional topological tools and a more elaborate construction. One such construction has been recently proposed in \cite{berest2025quasiflag}, to which we refer the interested reader.

On the other hand, in homotopy theory, there are known several generalizations of the classical fiber-cofiber construction  (see, e.g., \cites{Doe93, HL94, Doe98}). Motivated by \cite{berest2023topological}, one can reverse the logic and ask if any of these produce homotopy decompositions of classifying spaces with good properties, similar to those of the spaces of quasi-invariants \eqref{Qtow}.

In the present paper, we examine the simplest (and arguably most natural) topological generalization of the Ganea construction: the  join of fibrations. Given a compact connected Lie group $G$ and a pair of $G$-spaces, $F$ and $F'$, we consider the Borel fibration sequences
\begin{eqnarray}
\label{FFf} &F \to X \xrightarrow{p} BG \\
\label{PFf} &F' \to X' \xrightarrow{p'} BG
\end{eqnarray}
where $ X := EG \times_G F $ and $ X' := EG \times_G F' $ are the spaces of homotopy $G$-orbits
in $F$ and $F'$. The {\it join} of \eqref{FFf} and 
\eqref{PFf} is then defined by taking the homotopy pushouts of the rows of the following natural diagram
%
\[\begin{tikzcd}
	F & {F \times F'} & {F'} \\
	X & {X \times^h_{BG}X'} & {X'} \\
	BG & BG & BG
	\arrow[from=1-1, to=2-1]
	\arrow[from=1-2, to=1-1]
	\arrow[from=1-2, to=1-3]
	\arrow[from=1-2, to=2-2]
	\arrow[from=1-3, to=2-3]
	\arrow[from=2-1, to=3-1]
	\arrow[from=2-2, to=2-1]
	\arrow[from=2-2, to=2-3]
	\arrow[from=2-2, to=3-2]
	\arrow[from=2-3, to=3-3]
	\arrow[equals, from=3-2, to=3-1]
	\arrow[equals, from=3-2, to=3-3]
\end{tikzcd}\]
where $ X \times^h_{BG}X' $ is the homotopy fiber product of
the spaces $X$ and $X'$ over $BG$.
This way we get a new Borel fibration sequence
\begin{equation}
    \label{F*fib}
F_1(F,F') \to X_1(F,F') \to BG 
\end{equation}
where $ F_1(F,F') = F \ast F'$ is the classical join of the spaces $F$ and $F'$ equipped with the
diagonal $G$-action and $ X_1(F,F') := EG \times_G F_1 $ is the homotopy quotient of $F_1$ with respect to that action. 
Just as the classical fiber-cofiber construction, this construction can be iterated {\it ad infinitum} by letting inductively
$$
F_m(F,F') := F_{m-1}(F,F') \ast F' \quad \mbox{and} \quad X_m(F,F') := EG \times_G F_m(F,F') \, , \quad  \forall\, m \ge 1\,.  
$$
This produces a tower (mapping telescope) of spaces
\begin{equation}
\label{QtowF}
X = X_0(F,F') \,\to \,\ldots \,\to\, X_m(F,F') \,\xrightarrow{\pi_m}\, X_{m+1}(F,F')\,\to\,\ldots\, \to BG
\end{equation}
such that
\begin{equation}
\label{equivF}
{\rm Tel}\,[X_*(F,F')] = \underset{m\to \infty}{\rm hocolim}\,[\,X_m(F,F')\,] \,\simeq\,  BG
\end{equation}
Now, if we let $ Q_m(F,F') $ denote the rational cohomology rings of the spaces \eqref{QtowF}:
\begin{equation}
\label{QF} 
Q_m(F,F') := H^*(X_m(F,F'), \Q) = H_G^*(F\ast F' \ast \stackrel{m}{\ldots}\ast F', \, \Q)
\end{equation}
then each $ Q_m(F,F') $ is naturally a module over $ H^*(BG, \Q) \cong \Q[V]^W $ via the algebra homomorphism induced  by the Borel quotient map $ X_m(F,F') \to BG $ at step $m$. We can therefore ask when the algebras $ Q_m(F,F') $ have the same properties as
the classical rings of quasi-invariants $Q_m(W)$: more precisely, for which pairs of spaces $F$ and $F'$, the result of \Cref{TGoren} holds for $ Q_m(F,F') $?

The following provides a (partial) answer to this question (see \Cref{thm:borel-tow} and \Cref{cor:htpy-decomp}).

\begin{thm}
\label{ThmI}
Assume that $ F $ is a $\Q$-finite $G$-CW complex with rational cohomology vanishing in odd degrees and assume that $ F' $ has $G$-homotopy type of an odd-dimensional sphere 
$ \bS^{2n-1} $ equipped with a transitive $G$-action. Then, for all $ m \ge 0 $, the algebras $Q_m(F,F')$ are free modules over $\,\Q[V]^W$ of rank $\,\dim_{\Q}[H^{\ast}(F,\,\Q)]\,$ and hence $($graded$)$ Cohen-Macaulay. Each map $ \pi_m $ in the diagram \eqref{QtowF} induces an {\rm injective} algebra homomorphism on rational cohomology, defining the descending filtration
\begin{equation*}
Q_0(F,F') \hookleftarrow  Q_1(F,F')  \hookleftarrow \ldots  
\hookleftarrow Q_m(F,F') \xhookleftarrow{\,\pi_m^*\,} Q_{m+1}(F,F') \hookleftarrow \ldots
\end{equation*}
such that  $ \,\varprojlim \,Q_m(F,F') \cong \Q[V]^W $. 
Consequently, \eqref{QtowF}, \eqref{equivF} provides a {\rm sharp}
homotopy decomposition of $BG$ $($with respect to rational cohomology$)$  in the sense that the associated Bousfield-Kan spectral sequence degenerates, with all higher limits vanishing:
$$
{\varprojlim}^{i}\,H^*(X_m(F,F'), \Q)\,=\,0 \, ,\quad \forall\,i \ge 1\,.
$$
\end{thm}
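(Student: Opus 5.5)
We will prove the statement by induction on $m$, working throughout with Borel equivariant cohomology over $\Q$ and writing $R := H^*(BG,\Q)\cong \Q[V]^W$.

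\textbf{Step 1: the equivariant cohomology of $F'$.} Since $F'\simeq \bS^{2n-1}$ carries a transitive $G$-action, we have $F' \cong G/K$ with $G/K \simeq \bS^{2n-1}$, and $X' = EG\times_G F'\simeq BK$. The Serre spectral sequence of $\bS^{2n-1}\to BK\to BG$ is the Gysin sequence. Because $F'$ has no $G$-fixed points, the equivariant Euler class $e\in R^{2n}$ acts faithfully, so it is a nonzerodivisor. Hence
\[
H^*_G(F')\cong R/(e).
\]
(Rationally, $K$ has rank $\mathrm{rk}\,G-1$ with $W$-invariants, so one can alternatively check directly that $e\neq 0$.)

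\textbf{Step 2: Mayer--Vietoris for the join.} The join $F_{m+1}=F_m\ast F'$ is the homotopy pushout of $F_m\leftarrow F_m\times F'\to F'$, and this description is $G$-equivariant. It therefore yields a long exact Mayer--Vietoris sequence
\[
\cdots\to H^*_G(F_{m+1})\to H^*_G(F_m)\oplus H^*_G(F')\to H^*_G(F_m\times F')\to\cdots
\]
For the middle term we use the relative Gysin sequence of the sphere bundle $EG\times_G(F_m\times F')\to X_m$, whose Euler class is the pullback of $e$. Assume inductively that $Q_m=H^*_G(F_m)$ is $R$-free, concentrated in even degrees, of rank $d=\dim H^*(F)$. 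Then $e$ is a nonzerodivisor on $Q_m$, so
\[
H^*_G(F_m\times F')\cong Q_m/eQ_m,
\]
again concentrated in even degrees.

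With everything in even degrees, the map $Q_m\oplus R/(e)\to Q_m/eQ_m$ is surjective already on the first summand. The long exact sequence therefore splits into short exact sequences, and $Q_{m+1}$ is identified with the kernel, i.e.\ the fiber product $Q_m\times_{Q_m/e}R/(e)$. Projecting to the factor $R/(e)$ gives
\[
0\to eQ_m\to Q_{m+1}\to R/(e)\to 0.
\]
Using the unit $R\to Q_{m+1}$, this extension splits as $R$-modules: $Q_{m+1}\cong R\oplus eQ_m$, except for the piece in the image of $e$. More cleanly, $Q_{m+1}=R\cdot 1+eQ_m\subseteq Q_m$. Counting Hilbert series gives $P_{Q_{m+1}}=t^{2n}P_{Q_m}+(1-t^{2n})P_R$.

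\textbf{Step 3: freeness and injectivity.} Freeness of $Q_{m+1}$ follows from the depth lemma applied to the short exact sequence above: $eQ_m\cong Q_m$ is free, and $R/(e)$ has depth $\dim R-1$. That only gives depth $\ge \dim R-1$, so a better argument is needed. Choose an $R$-basis $1=b_0,b_1,\dots,b_{d-1}$ of $Q_m$ containing $1$; this is possible for $m\ge1$ by induction, since $R\cdot1$ is a direct summand. Then $\{1,eb_1,\dots,eb_{d-1}\}$ is a basis of $R\cdot1+eQ_m$. For $m=0$ one needs $R\to H^*_G(F)$ to split, which holds because $H^*_G(F)$ is free with $H^0=\Q$. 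This gives freeness of rank $d$ in even degrees, and the Cohen--Macaulay property follows. Injectivity of $\pi_m^*$ is visible from $Q_{m+1}\subseteq Q_m$.

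\textbf{Step 4: the limits.} The filtration is $Q_m=R+e^mQ_0$ with respect to the basis above. Since $e^m$ has degree $2nm\to\infty$, in each fixed degree the tower stabilizes at $R$, so $\varprojlim Q_m=R$. The tower satisfies Mittag-Leffler degreewise, since each degree is a finite-dimensional eventually constant system, and hence $\varprojlim^i=0$ for $i\ge1$. The equivalence \eqref{equivF} itself comes from the connectivity of iterated joins with spheres: $F_m$ is roughly $2nm$-connected, so $\hocolim F_m\simeq *$.

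\textbf{Main obstacle.} The hardest step is Step 2: identifying $Q_{m+1}$ as the subring $R+eQ_m$. This requires showing that the restriction map $R/(e)\to Q_m/eQ_m$ is injective, which is needed so that the fiber-product description applies. Injectivity holds because $R\cdot 1$ is a direct summand of $Q_m$, which is where the choice of basis containing $1$ is essential.
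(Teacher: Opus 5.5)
Your proof is correct, but it reaches freeness and the middle Mayer--Vietoris term by a different route from the paper.

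The paper proceeds in four stages:
\begin{itemize}
\item It first computes the rational homotopy type of $F_m$: it is an iterated suspension of $F$, hence rationally a wedge of even-dimensional spheres.
\item From this, the Serre spectral sequence of $F_m\to X_m\to BG$ collapses, so $Q_m$ is free of rank $\dim_{\Q}H^*(F;\Q)$ for every $m$ directly.
\item It then uses that freeness to compute $H^*(X_m\times_{BG}BH;\Q)\cong Q_m\otimes_R R/(\theta)$ by the Eilenberg--Moore spectral sequence. Injectivity of $\pi_m^*$ comes from a snake-lemma argument that reduces to the case $m=0$ and freeness of $Q_0$.
\item Only afterwards does it derive $Q_{m+1}=R+\theta Q_m$ and the basis $\{1,\theta^m b_i\}$, using the separate algebraic lemma (Proposition~2.6).
\end{itemize}

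You instead get the middle term from the Gysin sequence of the pulled-back sphere bundle $X_m\times_{BG}BK\to X_m$, which needs only that $e$ is regular on $Q_m$. You then carry freeness, the summand $R\cdot 1$, injectivity and the presentation $Q_{m+1}=R+eQ_m$ through a single induction. So you need equivariant formality only at $m=0$, and never the homotopy type of $F_m$ or Eilenberg--Moore. This is more elementary and self-contained. What the paper's route buys in addition is the fact that $F_m$ is rationally a wedge of even spheres, which it reuses for the formality statements and the Hilbert series.

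Some things to tidy up:
\begin{itemize}
\item Delete the sentence claiming $Q_{m+1}\cong R\oplus eQ_m$. In fact $R\cap eQ_m=eR\neq 0$, and $0\to eQ_m\to Q_{m+1}\to R/(e)\to 0$ cannot split, because $R/(e)$ is torsion while $Q_{m+1}$ is free. Likewise drop the depth-lemma detour.
\item State the base case explicitly: $H^*_G(F;\Q)$ is free of rank $\dim_{\Q}H^*(F;\Q)$ because the Serre spectral sequence of $F\to X\to BG$ collapses, both fiber and base having even cohomology.
\item Order the inductive step so that injectivity of $R/(e)\to Q_m/eQ_m$ (from $R\cdot 1$ being a summand) comes before you identify $Q_{m+1}$ with $R+eQ_m\subseteq Q_m$.
\item For $e\neq 0$, a cleaner argument than the fixed-point heuristic is available. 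If $e=0$, the Gysin sequence would make $H^{2n-1}(BK;\Q)\to H^0(BG;\Q)=\Q$ surjective. This contradicts the vanishing of $H^*(BK;\Q)$ in odd degrees.
\end{itemize}
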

\Cref{ThmI} may seem formal; however, by choosing appropriate $F$ and $F'$, it allows us to construct topologically a number of interesting examples. First, note that the assumption  on $F'$ can be restated by saying the fibration \eqref{PFf} is a spherical fiber bundle
\begin{equation}
\label{Pfib'}
G/H \to BH \to BG
\end{equation}
where $H$ is a closed subgroup of $G$, such that $\,G/H\cong S^{2n-1}\,$ for $ n\ge 1 $. Such pairs   $(G,H)$ have been classified in the literature, and their list is not very long  (see \Cref{thm:RHS}). If we assume that $ \pi_1(G)$ is torsion-free, then there are essentially the following 5 cases:
\begin{enumerate}
    \item $G=\U(k)\, ,\, H=\U(k-1),\, G/H= \bS^{2k-1}\, (k\ge 2)\,$. 
    \item $G=\SU(k)\, ,\, H=\SU(k-1),\, G/H= \bS^{2k-1}\, (k\ge 2)\,$.
    \item $G=\SP(k)\, ,\, H=\SP(k-1),\, G/H= \bS^{4k-1}\, (k\ge 1)\,$.
    \item $G=\Spin(7)\, ,\, H=G_2,\, G/H=\bS^{7}$.
    \item $G=\Spin(9)\, ,\, H=\Spin(7),\, G/H=\bS^{15}$.
\end{enumerate}

As for the $G$-CW complex $F$, we will consider the two spaces: 
\begin{itemize}
\item
$ F = G/T $, the classical flag manifold of $G$, for which the fibration \eqref{FFf} is just the Borel fundamental fibration \eqref{Bfib};
\item 
$\,F = E_{\rm com}G_{\bf 1}\,$ is the identity component of (the total space of) the universal bundle for the transitionally commutative principal $G$-bundles, in which case 
 \eqref{FFf} reads
 \begin{equation}
 \label{PFib'}
 E_{\rm com}G_{\bf 1} \,\to\, B_{\rm com}G_{\bf 1}  \,\to\,  BG\, ,
 \end{equation}
where $ B_{\rm com}G $ is the {\it classifying space for commuting elements} (in short, {\it for commutativity}) in $G$ and $ B_{\rm com}G_{\bf 1} $ is its connected component of the identity (i.e., the point  in $ B_{\rm com}G $ representing the trivial commuting elements $ \{{\bf 1}_n: {\mathbb Z}^n \to G\}_{n \ge 0} $). 
\end{itemize}
The classifying space  $ B_{\rm com}G $ and 
the associated universal bundle $ E_{\rm com}G $ were defined for arbitrary topological groups in \cite{adem2012commuting} and were studied extensively  in recent years. For compact connected Lie groups $G$, it is natural to focus on the identity components  of these spaces,
$ B_{\rm com}G_{\bf 1} $ and $\,E_{\rm com}G_{\bf 1}$, which were 
introduced and studied in \cite{adem2015classifying}. Among other things, this last paper 
computes the rational cohomology of the space $\,E_{\rm com}G_{\bf 1}$ (see \cite{adem2015classifying}*{Corollary 7.4}):
\begin{equation}
\label{EGcom}   
H^*(E_{\rm com}G_{\bf 1},\,\Q)\,\cong\,[ H^*(G/T,\,\Q) \otimes H^*(G/T,\,\Q)]^W
\end{equation}
which shows that the assumptions of our \Cref{ThmI} hold for $F = E_{\rm com}G_{\bf 1}$. 

We compute the rational homotopy types of the $G$-spaces $F_m$ and the corresponding homotopy quotients $ X_m(F, G/H)$.
As result, we give canonical presentations
for the rational cohomology rings of $ X_m(F, G/H)$ (i.e., the algebras \eqref{QF}) 
in terms of the Euler classes of the spherical bundles \eqref{Pfib'}; they have presentations \eqref{prop:QmGTH} that
are topological analogs of the algebraic definition \eqref{Qinv} of the classical quasi-invariants of $W$.
Extending the results of \cite{berest2023topological}, we also compute explicitly the $T$-equivariant cohomology and
the equivariant $K$-theory of the spaces $ F_m $ for the flag manifold cases.
We remark that the original results of \cite{berest2023topological} correspond to the case
$G = \SU(2)$ and $ H = \{e\} $, when (for $ F = G/T$) we recover the classical algebras of quasi-invariants of $W = \bZ/2\bZ$. However, even in that case --- for $ F = E_{\rm com}G_{\bf 1} $ --- we get new interesting analogs of quasi-invariants that did not seem to appear in  the earlier literature.

\Cref{ThmI} admits a natural generalization that adds a combinatorial complexity and further extends our class of examples. Instead of using the same two fibrations, \eqref{FFf} and \eqref{PFf}, in the above join construction, we can vary them at each step. To be precise, for a compact connected Lie group $G$, we fix a maximal torus $T$ and consider an ascending sequence of closed subgroups: $\,T \subseteq G_1 \subseteq G_2 \subseteq \ldots \subseteq G_l \subseteq G \,$ such that, for each $G_i $, there exists an $\,H_i \subseteq G_i \,$ from the above list (i.e. $ G_i/H_i \cong \bS^{2k_i -1}$ for some $k_i \ge 1$). Then, 
we choose for each pair $(G_i, H_i)$ an integer (multiplicity) $m_i \ge 0$ and,
starting with $ F = G/T $, construct inductively the spaces
$\,
F_i^{(m)} = F_{i-1}^{(m)} \ast^{m_i}_{G/G_i} (G/H_i) \,
$,  
where $\,\ast^{m_i}_{G/G_i}\,$ denotes the iterated ($m_i$-fold) relative join over $G/G_i$. Each of these spaces carries a natural (diagonal) $G$-action, hence, by taking the homotopy quotients, we can define  
\begin{equation}
\label{XFGH}
X_{(m)}(F, G_1/H_1, \ldots, G_l/H_l) := F^{(m)}_{hG} \,\simeq\, BT \ast^{m_1}_{BG_1} BH_{1}  \ast^{m_2}_{BG_2}\,\ldots \,\ast^{m_l}_{BG_l} BH_l \,. 
\end{equation}
In this way, we get a diagram of spaces \eqref{XFGH} indexed by the $l$-tuples of integers $ 
m = (m_1, m_2, \ldots, m_l) \in {\mathbb Z}_+^l\,$ whose rational cohomology rings enjoy the same properties as the algebras \eqref{QF}, although having a more complicated combinatorial structure. We should mention that this join construction is one of the key ingredients
of the definition of quasi-flag manifolds realizing the classical algebras $Q_m(W) $ of quasi-invariants of Weyl groups in \cite{berest2025quasiflag} (see \Cref{eg:quasi-partial}).

\subsection*{Appendix}
The paper includes an Appendix that may (and probably should) be read independently of 
the rest. In this Appendix, we re-examine the classical Ganea construction in the context of
$\infty$-categories. In an abstract homotopy-theoretic setting (specifically, in the setting of Quillen model categories), the Ganea construction was studied  in the work of J.-P. Doeraene \cites{Doe93, Doe98}, K. Hess and J.-M. Lemaire \cite{HL94}, D. Tanr\`e \cite{DT95} who used it as a tool for defining generalized Lusternik-Schnirelmann (LS) invariants. Our results can be viewed as an extension and refinement of this work, although we state them in a different way. For any (finitely) bicomplete, pointed $\infty$-category $ \Cc $, we construct a canonical $\infty$-functors
$\Xi: \Fun(\Delta^1, \Cc) \to \Fun(\cube, \Cc) $ (see \Cref{GaTh}) and $ \Xi^{\rm rel}: \FunDD \to \Fun(\cube, \Cc) $ (see \Cref{RGaTh})
which take values in the $\infty$-subcategory of cubical diagrams in $\Cc $ spanned by
Mather cubes (in the sense of \cite{kerodon}*{Definition~7.7.5.1}). As is well-known, 
constructing a functor between $\infty$-categories amounts to solving a homotopy coherence problem: that is, making coherent choices for higher dimensional homotopies (simplices). We approach this problem by redefining  basic functors. Specifically, we extend the standard fiber and cofiber functors defined (for pointed $\infty$-categories) in \cite{HA} to adjoint functors on diagram categories  (see \Cref{Propadj}) and use the associated natural transformations (the adjunction unit and counit) to build higher dimensional homotopy coherent diagrams. In the $\infty$-categorical setting, the natural transformations are themselves represented by $\infty$-functors, which (in our case) possess additional (`cubical') symmetries. Making use of these additional symmetries allow us to construct the $\infty$-categorical Mather cubes in a functorial way. As an application, we construct a
functorial Ganea tower on morphisms in an arbitrary $\infty$-category $\Cc$ and,
using the recent work of M. Anel, G. Biedermann, E. Finster and A. Joyal \cite{ABFJ20}, prove that this tower converges, providing a generalization of the classical Ganea decomposition for any hypercomplete $\infty$-topos (see \Cref{Gacor}).
We mention that our approach is inspired by the work of M. Groth and J. $\check{\rm S}$t\'ov\'i$\check{\rm c}$ek on abstract representation theory in stable $\infty$-categories
(see \cites{GS18, GR19}) and also the recent work of T. Dyckerhoff, M. Kapranov, and Y. Soibelman on spherical functors (see \cites{DKS23, DKSS24}).

\subsection*{Organization of the paper}
In \Cref{sec:cAff}, we construct rational algebraic models for our basic fibrations, 
which properties we then use in \Cref{sec:tower} to prove \Cref{ThmI}. In \Cref{sec:eg}, 
we study two  families of examples mentioned above: one related to
the maximal torus fibration \eqref{Bfib} and the other to the universal fibration 
\eqref{PFib'} for the classifying space of commuting elements in $G$. In \Cref{sec:gen}, 
we extend our main \Cref{ThmI} to families of fibrations with total space \eqref{XFGH}. 
The paper ends with \Cref{sec:ganea} on $\infty$-categorical Ganea construction which contents we have summarized above.

\subsection*{Acknowledgments}{\footnotesize
Part of this research was conducted while the first author was visiting the Simons Laufer Mathematical Sciences Institute (formerly known as MSRI) as Research Professor for the program ``Noncommutative Algebraic Geometry'' in Spring 2024. He is very grateful to the Institute for its hospitality and financial support under the NSF Grant DMS-1928930. The work of Yu.B. was also partially supported by the Simons Collaboration Grant 712995. }


\section{Relative Ganea construction for coaffine stacks}
\label{sec:cAff}
In this section, we work in the $\infty$-category of coaffine stacks over $\Q$. Coaffine stacks, introduced by Toen \cite{ToenCA} and studied by Lurie \cites{luriedag8,luriedag13}, provide a convenient algebro-geometric framework for rational homotopy theory. (We refer the  interested reader to  \cite{berest2025quasiflag}*{Appendix B} for a gentle introduction to 
the theory of coaffine stacks and and its applications.) 
The Ganea construction in $\infty$-categories that we use in this section is introduced in \Cref{AS3}.


\subsection{Coaffine stack of relative quasi-invariants}

Fix $W$ a finite reflection group with representation $V$ (over $\Q$), together with a codimension-one subspace $V_H\subseteq V$ and a maximal finite reflection subgroup $W_H\subset W$ preserving $V_H$.
Equip the polynomial algebra $\Q[V]$ with the {\it cohomological} grading by setting
$$ {\rm deg}(x)=2\, ,\qquad \forall\,x\,\in\,V^{\ast}\, .$$
With this grading and the trivial differential, we may regard $\Q[V]$ and $\Q[V_H]$ as objects of $\cdga^{> 0}_{\Q}$. The invariant subalgebras $\Q[V]^{W}$ and $\Q[V_H]^{W_H}$ inherit the grading and hence also define objects of $\cdga^{>0}_{\Q}$. We can therefore define the corresponding coaffine stacks by
\begin{eqnarray*}
V^c\,:=\, \cs\,\Q[V]\,, & & \,\,V^c/\!/W\,:=\,\cs\,\Q[V]^{W}\,, \\
V_H^c\,:=\,\cs\, \Q[V_H]\,,  & &\,\, V_H^c/\!/W_H\,:=\,\cs\,\Q[V_H]^{W_H}\, .
\end{eqnarray*}  
There is a canonical morphism of coaffine stacks 
\begin{equation}\label{eq:fib-Wn}
    \begin{tikzcd}[sep=small]
    V^c \ar[r] & V^c/\!/W
    \end{tikzcd}
\end{equation}
and we have the following commutative diagram 
\begin{equation*}
    \begin{tikzcd}
    V_H^c \ar[r]\ar[d] & V_H^c/\!/W_H \ar[d] & \\
    V^c \ar[r] &  V^c/\!/W_H \ar[r] & V^c/\!/W
    \end{tikzcd}
\end{equation*}
which induces a morphism of coaffine stacks
\begin{equation}
\label{eq:fib-W}
   V_H^c/\!/W_H \to V^c/\!/W \, . 
\end{equation}

\begin{defn}
Given the two maps \eqref{eq:fib-Wn} and \eqref{eq:fib-W} above, we set $V_0^c:=V^c$
and inductively define the \emph{coaffine stack of relative quasi-invariants} by
\begin{eqnarray}
    \label{eq:Vcm}
    V^c_{m+1}(W,W_H) = V^c_m(W,W_H)  \ast_{V^c/\!/W} V_H^c/\!/W_H \, ,
\end{eqnarray}
where the relative Ganea construction is in the $\infty$-category ${\rm cAff}_{\Q}$ (see \Cref{AS3}).
\end{defn}

\begin{thm}
\label{thm:alg-gen-quasi}
    Suppose that the morphism \eqref{eq:fib-W} is induced by a surjective algebra homomorphism 
    $\Q[V]^W\twoheadrightarrow \Q[V_H]^{W_H}$ such that $\Q[V_H]^{W_H}=\Q[V]^W/(\theta)$. Then the coaffine stack of relative quasi-invariants admits an explicit description $V^c_m(W,W_H) = \cs\,Q_m(W,W_H)$, where
    \begin{equation}
    \label{eq:Pm-alt}
        Q_m(W, W_H) := \{\, f\in \Q[V] \, \vert \, s_{\alpha}(f)\equiv f \, \mod\, (\theta)^m, \, s_{\alpha}\in \cA_W \, \}
    \end{equation}
    and $\cA_W$ is the set of reflections in $W$. It inherits the $W$-action from $\Q[V]$.
\end{thm}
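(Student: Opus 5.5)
The plan is to translate the relative Ganea construction in $\mathrm{cAff}_{\Q}$ into commutative algebra, and then compute by induction on $m$. Write $R:=\Q[V]^W$ and $S:=\Q[V_H]^{W_H}=R/(\theta)$, and note that $\theta\in R$ is $W$-invariant of positive degree. Since $\cs$ is a contravariant equivalence onto its image (coconnective CDGAs with $H^0=\Q$), it should turn colimits in the relevant range into limits of CDGAs. By definition (\Cref{AS3}), the relative join $X\ast_{B}Y$ is the pushout of $X\leftarrow X\times_B Y\to Y$. So if $X=\cs A$ with $A$ an $R$-algebra, I expect
\[
X\ast_{V^c/\!/W}V_H^c/\!/W_H\;\simeq\;\cs\bigl(A\times^h_{A\otimes^{\mathbf L}_R S}S\bigr).
\]
I would justify this first. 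The fiber product $X\times_BY$ corresponds to the derived tensor product. The pushout corresponds to the homotopy pullback, because all algebras involved are connected, concentrated in degrees $\geq 0$ with $H^0=\Q$, and the pullback formed below stays in this class; this is Lurie's/Toën's criterion that $\cs$ preserves such limits. I expect this compatibility of $\cs$ with pushouts/pullbacks to be the main technical obstacle, and I would handle it by checking the hypotheses of the relevant statement in \cite{luriedag8} directly on the explicit algebras.

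Next I run the induction, carrying along the claim that $A_m:=Q_m$ has zero differential, is a free $R$-module, and is equal to $R+\theta^m\Q[V]$. For $m=0$ this is Chevalley's \Cref{CThm}. Because $A_m$ is $R$-free and $\theta$ is a nonzerodivisor on $R$, we get $A_m\otimes^{\mathbf L}_RS\simeq A_m/\theta A_m$ with zero differential. Moreover $A_m\to A_m/\theta A_m$ is surjective, so the homotopy pullback is computed by the strict pullback:
\[
A_{m+1}=\{f\in A_m : f \bmod \theta A_m \in \operatorname{im}(R/\theta)\}=R+\theta A_m .
\]
Substituting the inductive hypothesis gives $A_{m+1}=R+\theta(R+\theta^m\Q[V])=R+\theta^{m+1}\Q[V]$. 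This is again a graded subalgebra with zero differential, and it is $W$-stable since $\theta$ is invariant, which also gives the inherited $W$-action. For freeness I write $\Q[V]=R\otimes\mathcal H$ with $\mathcal H$ the harmonics, $\mathcal H=\Q\oplus\mathcal H_+$. Then $R+\theta^{m}\Q[V]=R\cdot1\oplus\theta^{m}(R\otimes\mathcal H_+)$, which is $R$-free; this closes the induction.

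Finally I identify $R+\theta^m\Q[V]$ with \eqref{eq:Pm-alt}.

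The inclusion $\subseteq$ holds because $s_\alpha$ fixes $R$ and preserves the ideal $\theta^m\Q[V]$.

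For $\supseteq$, suppose $s_\alpha f-f\in\theta^m\Q[V]$ for every reflection. Since $W$ is generated by reflections and $\theta^m\Q[V]$ is $W$-stable, a telescoping argument gives $wf-f\in\theta^m\Q[V]$ for all $w\in W$. Averaging then yields
\[
f-\tfrac1{|W|}\textstyle\sum_w wf\in\theta^m\Q[V],
\]
so $f\in R+\theta^m\Q[V]$. This gives $V^c_m(W,W_H)=\cs\,Q_m(W,W_H)$.
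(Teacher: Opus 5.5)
Your proposal is correct and follows essentially the same route as the paper. That route is: induction on $m$; dualizing the relative join to the fiber product $Q_m\times_{Q_m/(\theta)}\Q[V_H]^{W_H}$ using freeness of $Q_m$ over $\Q[V]^W$; identifying this with $\Q[V]^W+\theta\,Q_m=\Q[V]^W+\theta^{m+1}\Q[V]$; and passing to the congruence description by averaging over $W$. The one step you leave implicit, which the paper checks explicitly, is that projecting the fiber product onto $A_m$ is injective (equivalently, that $R/(\theta)\to A_m/\theta A_m$ is injective); this follows at once from your inductive decomposition $A_m=R\cdot 1\oplus\theta^m(R\otimes\mathcal{H}_+)$.
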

\begin{rmk}
The presentation of the algebras of relative quasi-invariants \eqref{eq:Pm-alt} and classical quasi-invariants \eqref{Qinv} are very similar but not the same. While both are Cohen-Macaulay, $Q_m(W)$ is Gorenstein but $Q_m(W,W_H)$ is not (see their Hilbert series in \Cref{cor:hilb}). 
The definition of $Q_m(W,W_H)$ depends on a choice of subgroup $W_H$ of $W$ while $Q_m(W)$ does not, and the multiplicity for $Q_m(W,W_H)$ is only a non-negative integer. 
\end{rmk}

\begin{prop}
\label{prop:gen-quasi}
    There is an alternative presentation 
    \begin{equation}\label{eq:coor-pre}
    Q_m(W,W_H) = \Q[V]^{W} + \theta^m \cdot \Q[V] \, .
    \end{equation}
\end{prop}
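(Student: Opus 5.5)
We prove the equality $Q_m(W,W_H)=\Q[V]^W+\theta^m\Q[V]$ by two inclusions. Here $\theta\in\Q[V]^W$ is the (homogeneous) generator of the kernel of $\Q[V]^W\twoheadrightarrow\Q[V_H]^{W_H}$, and $Q_m(W,W_H)$ is the set \eqref{eq:Pm-alt} of $f$ with $s_\alpha(f)-f\in(\theta)^m$ for every reflection $s_\alpha\in\cA_W$.

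\emph{The inclusion $\supseteq$.} If $f\in\Q[V]^W$, then $s_\alpha f-f=0$. If $f=\theta^m g$, then $W$-invariance of $\theta$ gives
\[
s_\alpha f-f=\theta^m\bigl(s_\alpha g-g\bigr)\in(\theta)^m .
\]
So this inclusion is immediate.

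\emph{The inclusion $\subseteq$.} Let $f\in Q_m(W,W_H)$. Since reflections generate $W$, the condition propagates to all of $W$ by telescoping. For $w=s_1\cdots s_k$,
\[
wf-f=\sum_{i} s_1\cdots s_{i-1}\,(s_if-f).
\]
Each term lies in $(\theta)^m$, because the ideal $(\theta)^m$ is $W$-stable ($\theta$ being invariant). Hence $wf\equiv f \bmod (\theta)^m$ for every $w\in W$.

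Now average with the Reynolds operator $R=\frac1{|W|}\sum_{w\in W} w$, which is available since we are over $\Q$. This gives
\[
f-R(f)=\frac1{|W|}\sum_{w\in W}\bigl(f-wf\bigr)\in(\theta)^m=\theta^m\Q[V].
\]
Since $R(f)\in\Q[V]^W$, we conclude $f\in\Q[V]^W+\theta^m\Q[V]$, as required.

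The argument is elementary once the characterization \eqref{eq:Pm-alt} from \Cref{thm:alg-gen-quasi} is taken as the definition. The only point needing care is that the congruence is taken in $\Q[V]$ with respect to the ideal generated by $\theta$, which is $W$-invariant because $\theta\in\Q[V]^W$. This invariance is what lets the reflection condition propagate to all of $W$ and lets the averaging step go through. No further hypothesis on $\theta$ (for instance, being a nonzerodivisor) is needed.
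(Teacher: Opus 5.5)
Your proof is correct and follows the paper's argument. The paper likewise uses the $W$-stability of $(\theta)^m$ to pass from reflections to all of $W$, and then writes $f=\frac{1}{|W|}\sum_{w}w(f)+\frac{1}{|W|}\sum_{w}(1-w)(f)$ with the second term in $(\theta)^m$; your telescoping identity simply spells out the reflections-to-$W$ step that the paper asserts without detail.
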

\begin{proof}
For simplicity we write $Q_m=Q_m(W,W_H)$ and $P_m = \Q[V]^{W} + \theta^m \cdot \Q[V]$.
It is easy to see that $ P_m \subseteq Q_m $, so we only need to show that $ Q_m \subseteq P_m $.
Since $\theta$ is $W$-invariant, the ideal $(\theta)^m\subseteq \Q[V]$ is $W$-invariant, thus $Q_m$ can be described as 
\[
Q_m = \{\, f\in \Q[V] \, |\, w(f) \equiv f \, \mod\, (\theta)^m, \forall w \in W \,\}\, . 
\]
Given any $ f \in Q_m $, we can decompose $ f $ as 
\[ f = \frac{1}{\vert W \vert} \sum_{w\in W}w(f) + \frac{1}{\vert W \vert} \sum _{w\in W}(1-w)(f) \]
where the first component is $ W $-invariant and the latter component is a sum of polynomials where each $ (1-w)(f) \in (\theta)^m $, so $ f\in P_m $. Thus we proved $ P_m = Q_m $ as desired.
\end{proof}
\begin{rmk}
\la{rmk:EqPre}
    A generalization of \Cref{prop:gen-quasi} holds for algebras 
    \begin{equation}
    \la{Qm1}    
    Q_m\coloneqq\{\, f\in Q_0 \, |\, s_{\alpha}(f)\equiv f \, \mod\, (\theta)^m,\, s_{\alpha}\in \cA_W \, \}
    \end{equation}
    where $Q_0$ is an $W$-algebra and $\theta\in Q_0^W$, i.e., $Q_m$ has an alternative presentation 
    \begin{equation}
    \la{Qm2}    
    Q_m = Q_0^W + \theta^m \cdot Q_0 \, .
    \end{equation}
    We will interchangeably switch between those two equivalent presentations, and for proof convenience we will mostly use the latter presentation.
\end{rmk}

We use the alternative presentation \eqref{eq:coor-pre} to show that $Q_m(W,W_H)$ is a free module over $\Q[V]^W$ and thus Cohen-Macaulay (see, e.g., \cite{bruns1998cohen}). 
%
\begin{prop}\label{prop:basis}
    Let $P_0 \supset P_1 \supset P_2 \supset \ldots \supset P_m \supset \ldots \supset P_{\infty}$
    be a descending sequence of $\Q$-algebras such that $P_m = P_{\infty} + \theta \cdot P_{m-1}$
    for some nonzero divisor $\theta\in P_{\infty}$. Then 
    $P_m = P_{\infty}+\theta^m \cdot P_0$.
    Furthermore, if we have a basis $\{b_i\}$ of the $\Q$-algebra $P_0$ as a free module over $P_{\infty}$ of rank $r$ with $b_1=1$, then 
    $\{1, \theta^mb_i \, \vert\, 1< i\leq r\}$
    forms a basis of $P_m$ as a free module over $P_{\infty}$. In particular, if $P_{\infty}$ is a polynomial algebra, $P_m$ is Cohen-Macaulay.
\end{prop}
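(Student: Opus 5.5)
First I will establish $P_m = P_\infty + \theta^m P_0$ by induction on $m$. For $m=0$ this holds because $P_\infty \subseteq P_0$. Suppose $P_{m-1} = P_\infty + \theta^{m-1}P_0$. Then
\[
P_m = P_\infty + \theta\,(P_\infty + \theta^{m-1}P_0) = P_\infty + \theta P_\infty + \theta^m P_0 = P_\infty + \theta^m P_0 ,
\]
where the last equality uses $\theta \in P_\infty$, so that $\theta P_\infty \subseteq P_\infty$.

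Next comes the basis statement. Since $\{b_i\}$ is a $P_\infty$-basis of $P_0$ and $b_1 = 1$, every element of $P_0$ can be written as $\sum_i c_i b_i$ with $c_i \in P_\infty$. Multiplying by $\theta^m$ and using $\theta^m c_1 \in P_\infty$, we get
\[
P_m = P_\infty + \sum_{i>1} P_\infty\,\theta^m b_i .
\]
So $\{1, \theta^m b_i\}_{i>1}$ spans $P_m$ over $P_\infty$.

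For linear independence, suppose $a + \sum_{i>1} c_i \theta^m b_i = 0$ with $a, c_i \in P_\infty$. Rewrite this as $a\cdot b_1 + \sum_{i>1}(c_i\theta^m) b_i = 0$, a relation in $P_0$ with coefficients in $P_\infty$. Freeness of $P_0$ over $P_\infty$ forces $a = 0$ and $c_i\theta^m = 0$ for each $i$. Since $\theta$ is a nonzero divisor, so is $\theta^m$, hence $c_i = 0$. The nonzero-divisor property is needed only at this step, and the argument is otherwise direct. One point to watch is that the nonzero-divisor condition must hold in $P_\infty$, or in $P_0$, which contains it; either suffices because $c_i \in P_\infty$. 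Hence $P_m$ is free over $P_\infty$ of rank $r$.

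Finally, for the Cohen–Macaulay claim, assume $P_\infty$ is a (graded) polynomial algebra. Then $P_m$ is a finite free module over a regular ring of the same Krull dimension. Indeed, $P_m$ is finite over $P_\infty$, so it is integral over it and $\dim P_m = \dim P_\infty$. The variables of $P_\infty$ then form a homogeneous system of parameters of $P_m$, and this is a regular sequence on $P_m$ because $P_m$ is free over $P_\infty$. By the standard criterion (e.g. \cite{bruns1998cohen}), this means $P_m$ is Cohen–Macaulay. In the graded setting, I would run this argument with the grading implicit, noting that $\theta$ should be homogeneous so that the basis is homogeneous.
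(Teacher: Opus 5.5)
Your proof is correct and follows the same route as the paper. You prove $P_m = P_\infty + \theta^m P_0$ by induction, then expand $P_0$ in the basis $\{b_i\}$ and absorb $\theta^m b_1 P_\infty = \theta^m P_\infty$ into $P_\infty$. Your linear-independence step (reading off coefficients in the basis of $P_0$, then cancelling $\theta^m$) is tighter than the paper's, which only observes the pairwise intersections $P_\infty \cap \theta^m b_i P_\infty = 0$ and never explicitly uses that $\theta$ is a nonzero divisor; your Cohen--Macaulay argument also supplies what the paper leaves as ``in particular''.
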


\begin{proof}
    By induction on $m$, we have 
    $$P_m = P_{\infty}+\theta\cdot P_{m-1} = P_{\infty}+\theta\cdot (P_{\infty}+\theta^{m-1}\cdot P_0)=P_\infty+\theta^m\cdot P_0\, .$$ 
    We can write 
    $P_0 = \bigoplus_{1\leq i \leq r} b_i \cdot P_{\infty}$ and
    \begin{equation*}
        P_m  = P_{\infty} + \bigoplus_{1\leq i \leq r} \theta^m b_i \cdot P_{\infty} = P_{\infty} + \theta^m \cdot P_{\infty} + \bigoplus_{1< i \leq r} \theta^mb_i \cdot P_{\infty} = P_{\infty} + \bigoplus_{1< i \leq r} \theta^m b_i \cdot P_{\infty}
    \end{equation*}
    Note in the above decomposition, $b_i \cdot P_{\infty}\cap b_j \cdot P_{\infty} = \{0\} $ whenever $ i\neq j $, therefore $ P_{\infty} \cap \theta^m b_i \cdot P_{\infty} = \{0\} $ for $ i>1 $, so we can write 
    \[ 
    P_m = P_{\infty} \bigoplus \bigoplus_{1< i \leq r} \theta^m b_i \cdot P_{\infty}
    \]
    which shows $ P_m $ is a free $ P_{\infty} $-module of rank $ r $ with the desired basis.
\end{proof}

\begin{proof}[Proof of \Cref{thm:alg-gen-quasi}]
For simplicity we write $Q_m=Q_m(W,W_H)$.
We proceed by induction on \(m\). The case \(m=0\) is immediate. 
Assume \(m \ge 0\). By definition of the relative join,
\[
\begin{split}
V^c_{m+1}
&= \bcol \{ V^c_m \leftarrow V^c_m \times_{V^c/\!/W} H^c/\!/W_H \rightarrow H^c/\!/W_H \} \\
&\cong \cs\!\left(
\bli \{ Q_m \to Q_m \otimes^{\bL}_{\Q[V]^W} \Q[H]^{W_H}
\leftarrow \Q[H]^{W_H} \}
\right).
\end{split}
\]
Since \(Q_m\) is free over \(\Q[V]^W\), the derived tensor reduces to the ordinary tensor. Using
\(\Q[H]^{W_H} \cong \Q[V]^W/(\theta)\) with \(\theta\) a non-zero divisor, we obtain
\[
Q_m \otimes^{\bL}_{\Q[V]^W} \Q[H]^{W_H} \cong Q_m/(\theta)\, ,
\]
hence
\[
V^c_{m+1}
\cong \cs\!\left( Q_m \times_{Q_m/(\theta)} \Q[H]^{W_H} \right)\, .
\]

Consider the map
\[
\pi_{m+1}^\ast \colon
Q_m \times_{Q_m/(\theta)} \Q[H]^{W_H}
\to Q_m \hookrightarrow \Q[V]\, ,
\qquad (f,\alpha)\mapsto f\, .
\]
If \(\pi_{m+1}^\ast(f,\alpha)=\pi_{m+1}^\ast(g,\beta)\), then \(f=g\), and the fiber product condition
implies \(\widetilde{\alpha}-\widetilde{\beta} \in (\theta)\cap \Q[W]^W\) where $\widetilde{\alpha}, \widetilde{\beta}$ are liftings of $\alpha,\beta$ in $\Q[V]$. 
Since \(Q_m\) is free over \(\Q[V]^W\), $(\widetilde{\alpha}-\widetilde{\beta}) \in \theta \cdot \Q[V]$, so $\alpha=\beta\in \Q[H]^{W_H}$.
Thus \(\pi_{m+1}^\ast\) is injective, with image
\[
\Q[H]^{W_H} + \theta\cdot Q_m
= \Q[V]^W + \theta\cdot Q_m = Q_{m+1}\, ,
\]
where the last equality follows from \Cref{prop:basis} for \(P_0=\Q[V]\, ,P_\infty=\Q[V]^W\).
\end{proof}


\subsection{Examples}
\label{sec:caff-eg}

Classical examples include the pairs $(S_n,S_{n-1}), (B_n,B_{n-1})$. We present the symmetric group case, as the others are analogous. We also include an example of the exceptional pair $(B_3,D_2)$.
\begin{eg}
    For $V=k^n$ with $W=S_n$ acting by permutation of indices, and $H=k^{n-1}$ with restricted $S_{n-1}$-action,  
    we can choose generators $t_i$ such that 
    \begin{equation*}
    \begin{array}{ll}
        V^c = \cs \,\Q[t_1,\ldots,t_n]\, , & V^c/\!/W = \cs\,\Q[\theta_1,\ldots,\theta_n]\, ,\\
        H^c=\cs\,\Q[t_1,\ldots,t_{n-1}]\, ,& H^c/\!/W_H = \cs\,\Q[\widetilde{\theta}_1,\ldots,\widetilde{\theta}_{n-1}]\, ,
    \end{array}
    \end{equation*}
    where $\theta_i=\sigma_i(t_1,\ldots,t_n)$ and $\widetilde{\theta}_i=\sigma_i(t_1,\ldots,t_{n-1})$ the $i$-th elementary symmetric polynomials, and the map \eqref{eq:fib-W} takes $\theta_i \mapsto \widetilde{\theta}_i$ for $i<n$ and $\theta_n\mapsto 0$. We have
    \[
    Q_m(S_n,S_{n-1})= \{\, f\in \Q[t_1,\ldots,t_n] \, | \, s_{ij}(f) \equiv f \, \mod \, (\theta_n)^m\,, \forall s_{ij}\in S_n \, \} \, ,
    \]
    where $s_{ij}$ acts by permuting indices.
\end{eg}

\begin{eg}
\la{eq:B3D6}
    For $V=k^3$ with $B_3$ action, and $H\subset V$ the hyperplane $\{t_1+t_2+t_3=0\}$ with $D_6$ acting by 
    \begin{equation*}
        \begin{array}{rllcrll}
            s_{(1,-1,0)}: & t_1\mapsto -t_2\, , & t_3\mapsto -t_3\, ,  && 
            s_{(-1,2,-1)}: & t_1\mapsto t_3\, , & t_2\mapsto t_2\, , \\
            s_{(1,0,-1)}: & t_1\mapsto -t_3\, , & t_2\mapsto -t_2\, , && 
            s_{(-1,-1,2)}: & t_1\mapsto t_2\, , & t_3\mapsto t_3\, , \\
            s_{(0,1,-1)}: & t_2\mapsto -t_3\, , & t_1\mapsto -t_1\, , && 
            s_{(2,-1,-1)}: & t_2\mapsto t_3\, , & t_1\mapsto t_1\, . \\ 
        \end{array}
    \end{equation*}
    We have 
    \begin{equation*}
    \begin{array}{ll}
        V^c = \cs \,\Q[t_1,t_2,t_3]\, ,& V^c/\!/W = \cs\,\Q[\theta_1,\theta_2,\theta_3]\, ,\\
        H^c=\cs\,\Q[t_1,t_2,t_3]/(t_1+t_2+t_3), & H^c/\!/W_H = \cs\,\Q[\theta_1,\theta_3]\, ,
    \end{array}
    \end{equation*}
    where $\theta_1=t_1^2+t_2^2+t_3^2, \theta_2=t_1^2t_2^2+t_2^2t_3^2+t_3^2t_1^2$ and $\theta_3=t_1^2t_2^2t_3^2$. 
    We have 
    \[
    Q_m(B_3,D_6)= \{\, f\in \Q[t_1,t_2,t_3] \, |\, s_\alpha(f) \equiv f \, \mod\, (\theta_2)^m\, , \forall s_\alpha \in D_3 \, \} \, .
    \]
\end{eg}


\section{Ganea towers of Borel fibrations}
\label{sec:tower}

Throughout this section, we work in the (pointed) model category of (pointed) topological spaces.
Let $F\to X\to B$ and $F'\to X'\to B$ be two fibration sequences over the same base $B$. 
There is an induced fibration
\begin{equation}\la{GaTop}
    F\ast F' \to E\ast_B E' \to B
\end{equation}
where $X\ast_B X' \coloneqq \hcolim\{X\leftarrow X\times_B X' \to X'\}$. 
This is the \emph{relative Ganea (join) construction} in a pointed model category satisfying the (second) Mather's cube axiom (see, e.g. \cite{Doe98}*{Proposition 4.2}).
A comparison between the relative Ganea (join) constructions in the model categories and $\infty$-categories is discussed in \Cref{compcolim}.

Iterating this construction gives us a tower of fibration sequences analogous to \eqref{Gatow} 
where $F_m$ and $X_m$ are defined inductively by $F_0=F, F_{m+1}=F_m \ast F'$ and $X_0=X, X_{m+1}=X_m \ast_B X'$. We refer to this as a \emph{Ganea tower of fibrations}.

We focus on the case when the base is $B=BG$ for a compact connected Lie group $G$. We further assume
\begin{itemize}
    \item $F$ is a $\Q$-finite $G$-CW complex with rational cohomology vanishing in odd degrees, together with a Borel fibration
    \begin{equation}\label{eq:fib-F0}
        F \to X= EG \times_G F \to BG \, .
    \end{equation}
    In particular, $F$ is \emph{equivariant formal} in the sense that associated Serre spectral sequence collapses.
    \item $ F' $ has $G$-homotopy type of an odd-dimensional sphere $ \bS^{2n-1} $ equipped with an effective and transitive $G$-action, together with a Borel fibration 
    \begin{equation}\label{eq:fib-F'}
        F' \to X'= EG \times_G F' \to BG \, .
    \end{equation}
\end{itemize}

As both \eqref{eq:fib-F0} and \eqref{eq:fib-F'} are Borel fibrations, the relative Ganea (join) construction \eqref{GaTop} also gives us a Borel fibration (see, e,g, \cite{liu2023towers}*{Proposition 4.6.6}).
Therefore we get a tower of Borel fibrations analugous to \eqref{Gatow} where $F_0(F,F')=F, F_{m+1}(F,F') = F_m(F,F') \ast F'$, and $X_0(F,F')=X, X_m(F,F') = X_{m-1}(F,F')\ast_BX'$, such that 
\[X_m(F,F') \simeq EG\times_G F_m(F,F')\, .\]

\subsection{Main results}
Now we are ready to state our main result in this section.
\begin{thm}
\label{thm:borel-tow}
Let $G$ be a compact connected Lie group with maximal torus $T$ and Weyl group $W$.
Assume that 
\begin{enumerate}
    \item $ F $ is a $\Q$-finite $G$-CW complex with rational cohomology vanishing in odd degrees,
    \item $ F'$ has $G$-homotopy type of an odd sphere $\bS^{2n-1}$ equipped with a transitive $G$-action. 
\end{enumerate}
Then, for all $ m \ge 0 $, the algebras $Q_m(F,F')= H^\ast(X_m(F,F');\Q)$ are free modules over $H^\ast(BG;\Q)=\Q[V]^W$ of rank $\,\dim_{\Q}[H^{\ast}(F,\,\Q)]\,$ and hence $($graded$)$ Cohen-Macaulay. Each map $ \pi_m $ in the diagram \eqref{QtowF} induces an {\rm injective} algebra homomorphism on rational cohomology, defining the descending filtration
\begin{equation*}
Q_0(F,F') \hookleftarrow  Q_1(F,F')  \hookleftarrow \ldots  
\hookleftarrow Q_m(F,F') \xhookleftarrow{\,\pi_m^*\,} Q_{m+1}(F,F') \hookleftarrow \ldots
\end{equation*}
such that  $ \,\varprojlim \,Q_m(F,F') \cong \Q[V]^W $. Furthermore, $F_m(F,F')$ and $X_m(F,F')$ are formal spaces for $m>0$.
\end{thm}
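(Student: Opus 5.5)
We will prove the theorem by running the same algebraic induction as in the proof of \Cref{thm:alg-gen-quasi}, now applied to the rational cochain models of the Borel fibrations. This reduces the topology to \Cref{prop:basis}.

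\emph{Rational models of the two input fibrations.} Since $G$ acts transitively on $F'\simeq \bS^{2n-1}$, we have $F'\cong G/H$ and $X'\simeq BH$. The Serre (Gysin) spectral sequence of $\bS^{2n-1}\to BH\to BG$ has $H^*(BH;\Q)$ concentrated in even degrees. Hence the transgression of the fundamental class is a nonzero Euler class $\theta\in H^{2n}(BG;\Q)=\Q[V]^W$, and
\[
H^*(X';\Q)\cong \Q[V]^W/(\theta).
\]
Moreover $BH$ is formal, so $X'$ is modelled by this quotient, and $\theta$ is a nonzero divisor. For $F$, odd-degree vanishing forces the Serre spectral sequence of $F\to X\to BG$ to collapse. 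Therefore $Q_0=H^*(X;\Q)$ is a free $\Q[V]^W$-module of rank $\dim H^*(F;\Q)$, and a Leray--Hirsch basis $\{b_i\}$ can be chosen with $b_1=1$.

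\emph{Inductive step.} Suppose $Q_m$ is free over $\Q[V]^W$ with basis $\{1,\theta^m b_i\}_{i>1}$, and $Q_m\hookrightarrow Q_0$. We compute $X_{m+1}=X_m\ast_{BG}X'$ as a homotopy pushout of $X_m\leftarrow X_m\times^h_{BG}X'\to X'$. By Eilenberg--Moore (with $BG$ simply connected), and since $Q_m$ is free over $\Q[V]^W$, we get
\[
H^*(X_m\times^h_{BG}X')\cong Q_m\otimes_{\Q[V]^W}\Q[V]^W/(\theta)=Q_m/\theta Q_m,
\]
concentrated in even degrees. The map $Q_m\oplus \Q[V]^W/(\theta)\to Q_m/\theta Q_m$ is surjective, because $Q_m$ already surjects. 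So the Mayer--Vietoris sequence breaks into short exact sequences, giving
\[
Q_{m+1}\cong Q_m\times_{Q_m/\theta Q_m}\Q[V]^W/(\theta),
\]
concentrated in even degrees. Exactly as in the proof of \Cref{thm:alg-gen-quasi}, the projection to $Q_m$ is injective. Injectivity uses $\theta Q_m\cap \Q[V]^W=\theta\,\Q[V]^W$, which follows from freeness with $1$ as a basis element. The image of the projection is $\Q[V]^W+\theta Q_m$. By \Cref{prop:basis} with $P_0=Q_0$ and $P_\infty=\Q[V]^W$, this image equals $\Q[V]^W+\theta^{m+1}Q_0$, which is free of rank $\dim H^*(F)$. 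Cohen--Macaulayness follows because $\Q[V]^W$ is polynomial. The filtration is injective by construction, and
\[
\varprojlim Q_m=\bigcap_m\bigl(\Q[V]^W+\theta^mQ_0\bigr)=\Q[V]^W,
\]
by Krull intersection in each graded degree: $\theta^mQ_0$ vanishes in degrees below $2nm$.

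\emph{Formality.} This is the step I expect to be the main obstacle. Formality must be proved at the level of models, not just cohomology: we need the homotopy pullback and homotopy pushout of the formal models to be quasi-isomorphic to the strict fiber product above. I would work in $\cAff_\Q$ (coaffine stacks) as in \Cref{sec:cAff}. First, the derived tensor $Q_m\otimes^{\bL}_{\Q[V]^W}\Q[V]^W/(\theta)$ is formal, since $\theta$ is regular and $Q_m$ is flat; it is therefore quasi-isomorphic to $Q_m/\theta Q_m$. Second, the homotopy limit of the resulting cospan of CDGAs with zero differential is computed by the strict fiber product, because one leg, $Q_m\to Q_m/\theta Q_m$, is surjective and hence a fibration. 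The inductive hypothesis that $X_m$ is formal then gives formality of $X_{m+1}$; the base case is $X_0$, whose formality is again the obstacle. This is also why the theorem states formality only for $m>0$: the first pushout replaces the possibly non-formal model of $X$ by data depending only on $H^*(X)$ as a $\Q[V]^W$-module, using that the map $X\to BG$ is determined up to homotopy by freeness and even-degree concentration. I would try to justify this via the even-degree concentration of the cospan. For $F_m$, formality follows from the fibre square over $BG$: $F_m$ is modelled by $Q_m\otimes_{\Q[V]^W}\Q$, again a strict tensor because $Q_m$ is free.
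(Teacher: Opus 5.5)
Your cohomological argument (freeness, injectivity of $\pi_m^*$, and $\varprojlim Q_m=\Q[V]^W$) is correct and somewhat more economical than the paper's. The paper first identifies $F_m$ rationally with a wedge of even spheres, so that evenness and freeness of $Q_m$ come from the collapsing Serre spectral sequence. It then proves injectivity by a snake-lemma reduction to $m=0$. You instead get evenness of $H^*(X_{m+1};\Q)$ directly from the surjectivity of $Q_m\oplus\Q[V]^W/(\theta)\to Q_m/\theta Q_m$, and carrying the basis $\{1,\theta^m b_i\}$ through the induction gives injectivity at every stage at once.

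The gap is in the formality part, and the route you suggest cannot work. Your inductive step is fine. Formality of $X_m$ automatically makes the cospan $X_m\to BG\leftarrow BH$ formal, because $H^*(BG;\Q)$ is free graded-commutative; your Koszul/strict-fibre-product computation then shows $X_{m+1}$ is formal. But the induction needs $X_0=X$ formal, which the hypotheses do not give. Nor does the first pushout forget the model of $X$: the model of $X_1$ is a homotopy fibre product built from $A_{PL}(X)$ itself, not from $H^*(X;\Q)$.

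In fact the claim fails. Let $G=S^1$ act trivially on $F=(\bS^2)^{\vee 4}\cup_w e^6$, with $w=[\iota_1,[\iota_2,[\iota_3,\iota_4]]]$. This $w$ is rationally nonzero, so $F$ is not formal. Let $G$ act by translation on $F'=S^1$. Then $X'\simeq\mathrm{pt}$ and $X_1\simeq(BS^1\times F)/(\ast\times F)$, with model $\Q\oplus u\Q[u]\otimes A_{PL}(F)$.
\begin{itemize}
\item Write $a_1,\dots,a_4\in H^2(F;\Q)$ and $c\in H^6(F;\Q)$ for the cell classes. Transferring along a $u$-linear contraction gives $m_4(ua_1,\dots,ua_4)=u^4m_4^F(a_1,\dots,a_4)$, a nonzero multiple of $u^4c$ (dual to $w$).
\item Since everything lies in even degrees, $m_3=0$ and a $C_\infty$-isomorphism has no quadratic component. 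So formality of $X_1$ would force $m_4=\delta f_3$ for some $f_3$.
\item However, $(ua_i)(ua_j)=0$ and $ua_1\cdot H^{10}(X_1;\Q)=\Q\,u^6a_1$. Hence $\delta f_3(ua_1,\dots,ua_4)$ lies in the span of $u^6a_1$ and $u^6a_4$ and cannot equal $m_4(ua_1,\dots,ua_4)$.
\end{itemize}
So $X_1$ is not formal, even though its fibre $F_1\simeq_{\Q}\Sigma^2F$ is.

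The assertion about $X_m$ therefore needs an extra hypothesis, for example that $X$ is formal (as for $X=BT$); under that hypothesis your induction is a valid proof. For $F_m$ with $m>0$, do not go through $X_m$. Since $F_m=F_{m-1}\ast F'$ is a suspension, it is rationally a wedge of spheres and hence formal; this is what the paper uses. The paper then passes from $F_m$ to $X_m$ using the quoted results of Lupton and Lillywhite. Those results assume the fibre is rationally elliptic, and a wedge of two or more spheres is not, so the paper's route does not supply the missing base case either.
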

As each $Q_m^j(F,F')= H^j(X_m(F,F');\Q)$ is finite, $\varprojlim^1_m Q_m^j(F,F')=0$ (see, e.g.,  \cite{BK72}*{Chapter IX \S 3 Corollary 3.5, p256}).
Consequently, 
\begin{cor}
\label{cor:htpy-decomp}
The diagram \eqref{QtowF} provides a {\rm sharp}
homotopy decomposition of $BG$ $($with respect to rational cohomology$)$  in the sense that the associated  Bousfield-Kan cohomology spectral sequence degenerates, with all higher limits vanishing:
$$
{\varprojlim}^{i}\,Q_m^j(F,F')\,=\,0 \, ,\quad \forall\,i \ge 1\,.
$$
\end{cor}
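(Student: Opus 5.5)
The statement to prove is \Cref{cor:htpy-decomp}. It is essentially a formal consequence of \Cref{thm:borel-tow} combined with the Mittag-Leffler/finiteness remark made just before it. The plan is to identify the Bousfield--Kan spectral sequence of the telescope, show its $E_2$-page is concentrated in the $i=0$ column, and then read off that this column is $H^*(BG,\Q)$.

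\emph{Setting up the spectral sequence.} For a $\Z_+$-indexed tower $X_0\to X_1\to\cdots$ with $\hocolim_m X_m\simeq BG$ (by \eqref{equivF}), the Bousfield--Kan cohomology spectral sequence has
\[
E_2^{i,j}={\varprojlim_m}^{i}\,H^j(X_m(F,F'),\Q)\;\Longrightarrow\; H^{i+j}(BG,\Q).
\]
The indexing category is $\Z_+^{\rm op}$ for cohomology, i.e.\ an inverse sequence. For such a sequence, $\varprojlim^i=0$ for all $i\ge 2$ automatically, because $\varprojlim$ of a tower of abelian groups is computed by the two-term complex $\prod_m A_m\xrightarrow{1-\text{shift}}\prod_m A_m$. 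So only $\varprojlim^1$ needs to be discussed.

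\emph{Vanishing of $\varprojlim^1$.} Fix a degree $j$. By \Cref{thm:borel-tow}, each $Q_m^j(F,F')=H^j(X_m(F,F');\Q)$ is a graded piece of a free module of finite rank $\dim_\Q H^*(F,\Q)$ over the polynomial algebra $\Q[V]^W$. Since $F$ is $\Q$-finite and $\Q[V]^W$ has finite-dimensional graded pieces, $Q_m^j$ is a finite-dimensional $\Q$-vector space. A tower of finite-dimensional vector spaces satisfies the Mittag-Leffler condition: the descending chains of images ${\rm im}(Q^j_{m+k}\to Q^j_m)$ are chains of subspaces of a finite-dimensional space and therefore stabilize. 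Hence $\varprojlim^1_m Q_m^j=0$, as in \cite{BK72}*{Ch.~IX, \S3, Cor.~3.5}. In our situation this is even more transparent: by \Cref{thm:borel-tow} the maps $\pi_m^*$ are injective. Moreover, for fixed $j$, the chain $Q_0^j\supseteq Q_1^j\supseteq\cdots$ of subspaces of $Q_0^j$ must stabilize, so the tower is eventually constant.

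\emph{Degeneration and identification of the limit.} It follows that $E_2^{i,j}=0$ for $i\ge 1$. The spectral sequence is therefore concentrated in the column $i=0$, all differentials vanish, and there are no extension problems. This gives
\[
H^*(BG,\Q)\cong \varprojlim_m Q_m(F,F'),
\]
which is consistent with the identification $\varprojlim Q_m(F,F')\cong\Q[V]^W$ in \Cref{thm:borel-tow}. This is exactly the sharpness claimed.

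The only point needing care is the finite-dimensionality of each $Q^j_m$. This uses the free-of-finite-rank conclusion of \Cref{thm:borel-tow} rather than anything new, so I do not expect a genuine obstacle. One should, however, check that the cohomology spectral sequence of a telescope converges in this setting. It does, because with $\varprojlim^1=0$ the Milnor sequence gives $H^j(\hocolim X_m)\cong\varprojlim_m H^j(X_m)$ degreewise.
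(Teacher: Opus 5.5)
Your proposal is correct and is essentially the paper's argument: the paper only notes that each $Q_m^j(F,F')$ is finite-dimensional, so $\varprojlim^1_m Q_m^j(F,F')=0$ by the Mittag-Leffler criterion of \cite{BK72}, and leaves implicit that $\varprojlim^i=0$ for $i\ge 2$ holds automatically for $\Z_+$-indexed towers. Your additional details are all sound and simply make explicit what the paper omits: the two-term complex computing $\varprojlim$ and $\varprojlim^1$, the observation that injectivity of $\pi_m^*$ makes each degree-$j$ tower eventually constant, and the Milnor sequence for convergence.
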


We have an explicit presentation of $Q_m(F,F')$.
\begin{prop}
\label{prop:presentation}
    Let $\theta\in H^{\ast}(BG;\Q)=\Q[V]^W$ be the Euler class of the spherical fiber bundle \eqref{eq:fib-F'}, then
    \begin{equation}
    \label{eq:HQ-Xm1}
        Q_m(F,F')= \{\, f\in Q_0(F,F') \, |\, s_\alpha(f)\equiv f \, \mod\, (\theta)^m \cdot Q_0(F,F') \, \} \, .
    \end{equation}
    If we have a basis $\{b_i\}_{i=1}^{d}$ of the $\Q$-algebra $Q_0(F,F')$ as a free module over $\Q[V]^W$ of rank $d=\dim_{\Q}H^{\ast}(F;\Q)$ with $b_1=1$, then 
    $\{1, b_i\theta^m,1< i\leq d\}$
    forms a basis of $Q_m(F,F')$ as a free module over $\Q[V]^W$. In particular, $Q_m(F,F')$ is Cohen-Macaulay for any $m\geq 0$.
\end{prop}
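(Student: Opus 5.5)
The plan is to compute the rational cohomology of the join $X_{m+1}=X_m\ast_{BG}X'$ inductively through the Mayer--Vietoris sequence of the homotopy pushout $X_m\leftarrow X_m\times^h_{BG}X'\to X'$, exactly parallel to the coaffine computation in the proof of \Cref{thm:alg-gen-quasi}.

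\emph{Input data.} Since $F'\simeq \bS^{2n-1}$ with transitive $G$-action, $F'=G/H$ and $X'\simeq BH$. The Gysin sequence of the spherical fibration \eqref{eq:fib-F'} shows that, provided the Euler class $\theta\in H^{2n}(BG;\Q)$ is nonzero, multiplication by $\theta$ on $\Q[V]^W$ is injective (it is a domain). Hence
\[H^*(X';\Q)\cong \Q[V]^W/(\theta),\]
with the map from $H^*(BG;\Q)$ surjective. For $F$, the cohomology is concentrated in even degrees, so the Serre spectral sequence collapses and $Q_0=H^*(X;\Q)$ is a free $\Q[V]^W$-module of rank $d=\dim H^*(F;\Q)$ with a basis $b_1=1,\dots,b_d$. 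The Weyl action on $Q_0$ used in \eqref{eq:HQ-Xm1} is the one for which $Q_0^W=\Q[V]^W$; via \Cref{rmk:EqPre}, \eqref{eq:HQ-Xm1} is then equivalent to $Q_m=\Q[V]^W+\theta^m Q_0$. So the real content is to prove this second presentation together with the basis statement.

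\emph{Inductive step.} Assume $Q_m$ is free over $\Q[V]^W$ and concentrated in even degrees; both hold at $m=0$. The Eilenberg--Moore spectral sequence for the homotopy pullback $X_m\times^h_{BG}X'$ over the simply connected $BG$ gives
\[H^*\cong \mathrm{Tor}_{\Q[V]^W}\bigl(Q_m,\Q[V]^W/(\theta)\bigr)=Q_m/\theta Q_m,\]
using freeness; this is again even. The Mayer--Vietoris sequence then has all three terms even, so it splits into short exact sequences
\[0\to H^*(X_{m+1})\to Q_m\oplus \Q[V]^W/(\theta)\to Q_m/\theta Q_m\to 0.\]
Here surjectivity is clear because $Q_m\to Q_m/\theta Q_m$ is already onto. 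Therefore
\[H^*(X_{m+1})\cong Q_m\times_{Q_m/\theta Q_m}\Q[V]^W/(\theta),\]
and $H^*(X_{m+1})$ is even.

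\emph{Identifying the fibre product.} Projection to the first factor is injective, by the argument in the proof of \Cref{thm:alg-gen-quasi}. Indeed, if $(0,\alpha)$ lies in the fibre product, lift $\alpha$ to $a\in\Q[V]^W$; then $a\in \theta Q_m\cap\Q[V]^W=\theta\Q[V]^W$, since $1$ is part of a basis of $Q_m$, so $\alpha=0$. The image is $\Q[V]^W+\theta Q_m$. This map is precisely $\pi_m^*$, giving injectivity and the recursion $Q_{m+1}=\Q[V]^W+\theta Q_m$. \Cref{prop:basis} with $P_0=Q_0$ and $P_\infty=\Q[V]^W$ then yields $Q_{m+1}=\Q[V]^W+\theta^{m+1}Q_0$ with basis $\{1,\theta^{m+1}b_i\}_{i>1}$. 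This also recovers freeness of rank $d$, which closes the induction. Cohen--Macaulayness follows because $Q_m$ is free over a polynomial ring. We also get $\varprojlim Q_m=\Q[V]^W$, since $\theta^m$ raises degree without bound, so each graded piece stabilizes.

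\emph{Main obstacles.} The delicate points are twofold.
\begin{enumerate}
\item Justifying $\theta\neq 0$. A transitive action on an odd sphere has $\mathrm{rk}H=\mathrm{rk}G-1$, so $H^*(BG)\to H^*(BH)$ is onto with kernel $(\theta)$; I would cite the classification in \Cref{thm:RHS}, or verify it via the Gysin sequence together with the rank count.
\item Checking convergence of the Eilenberg--Moore spectral sequence, with multiplicative identification; this is standard since $BG$ is simply connected.
\end{enumerate}

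Formality of $X_m$ for $m>0$ I would deduce by building the model as a pullback of formal CDGAs, or by transporting the coaffine computation of \Cref{thm:alg-gen-quasi} through Lurie's comparison, since everything is even.
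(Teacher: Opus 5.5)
Your proposal is correct and follows essentially the same route as the paper. Both arguments use Mayer--Vietoris for the homotopy pushout $X_m\leftarrow X_m\times_{BG}BH\to BH$, and Eilenberg--Moore plus freeness to get $H^*(X_m\times_{BG}BH)\cong Q_m/\theta Q_m$. Both then split the sequence into short exact sequences, use the fibre-product description to obtain $Q_{m+1}=\Q[V]^W+\theta\,Q_m$, and conclude with \Cref{prop:basis} and \Cref{rmk:EqPre}. The only differences are minor: you prove injectivity of $\pi_m^*$ directly from $1$ being a basis element of $Q_m$ (so $\theta Q_m\cap\Q[V]^W=\theta\,\Q[V]^W$) rather than by the paper's snake-lemma reduction to $m=0$, and you get freeness and evenness of $Q_m$ inductively (your surjectivity check ensures $H^{\mathrm{odd}}(X_{m+1})=0$) rather than from collapse of the Serre spectral sequence for $F_m\to X_m\to BG$ via the wedge-of-spheres model of $F_m$.
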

We prove \Cref{prop:presentation} using the presentation
\(
Q_m(F,F')= \Q[V]^W + \theta^m \cdot Q_0(F,F').
\)
For simplicity, we write $F_m=F_m(F,F')$ and $X_m=X_m(F,F')$ when there is no ambiguity.
\begin{rmk}
    If we restrict the $G$-actions on the fibers $F$ and $F'$ to $T$-actions and replace the two Borel fibrations \eqref{eq:fib-F0} and \eqref{eq:fib-F'} with 
    \begin{eqnarray}
        \label{eq:fib-F0-hT}
        F \to ET\times_T F \to BT \\
        \label{eq:fib-F'-hT}
        F'\to  ET\times_T F' \to BT
    \end{eqnarray}
    then an analogous result holds for $\bQ_m(F,F')\coloneqq H^\ast_T(F_m;\Q)$. In particular, 
    \begin{equation}
    \label{eq:HT-Xm}
        \bQ_m(F,F')= \{\, f\in \bQ_0(F,F') \, |\, s_\alpha(f)\equiv f \, \mod\, (\theta)^m \cdot \bQ_0(F,F') \, \} \, .
    \end{equation}
    Note $\theta\in H^{\ast}(BG;\Q)$ is the Euler class of the spherical fiber bundle \eqref{eq:fib-F'}. 
    Under the natural map $H^\ast(BG;\Q)\hookrightarrow H^\ast(BT;\Q)$, it can be identified with the image of the Euler class of the spherical fiber bundle \eqref{eq:fib-F'-hT}. We will further identify $\theta$ with its image in the structure map $H^{\ast}(BT)\hookrightarrow H_T(F;\Q)$.
\end{rmk}


\subsection{Proof of main results}
\subsubsection{Rational homotopy type of $F_m$} 
\label{sec:rat-join}
Before proving \Cref{thm:borel-tow}, we need to examine the rational homotopy type of the joins $F_m$ \cite{quillen1969rational}.
By \cite{felix2012rational}*{Theorem 24.5}, if $X$ is a path-connected space with reduced rational homology $\Tilde{H}_{\ast}(X)$ spanned by (finitely many) homology classes $\{v_i\}_{i \in I}$ of dimensions $|v_i| = n_i \geq 1$, then
$\Sigma(X)$ is rationally equivalent to the wedge of spheres $\bS^{n_i+1}$, one for each $i \in I$. By induction on $m$, we have 
\begin{equation} \label{eq:fmjoin}
    F_m = F\ast^m F' \simeq \Sigma(F\ast^{m-1}F')\wedge \bS^{2k-1} \simeq_{\Q} \Sigma\left(\bigvee_{i\in I}\bS^{n_i+2k(m-1)}\right)\wedge \bS^{2k-1} \simeq_{\Q} \bigvee_{i\in I}\bS^{n_i+2km}\, .
\end{equation}
If $F$ has only finite even dimensional rational cohomology, we see that $F_m$ is rationally equivalent to a wedge of even dimensional spheres and $\dim_{\Q}H^{\ast}(F_m;\Q)=\dim_{\Q}H^{\ast}(F;\Q)$. 

\subsubsection{Freeness property}
Since the fiber $F_m$ has only even degree rational cohomology, the Leray-Serre spectral sequence associated to the fibration $F_m \to X_m \to BG$ collapses. 
Applying \cite{mimura1991topology}*{Chapter III Theorem 4.2} to this fibration we see that 
\begin{equation}
\label{eq:LSSS}
     Q_m(F,F') = H^{\ast}(X_m;\Q) = H^{\ast}(F_m;\Q)\bigotimes H^\ast(BG;\Q)
\end{equation}
is a free module over $ H^\ast(BG;\Q) $ of rank $\dim_{\Q}H^{\ast}(F;\Q)$ and is concentrated in even degrees.

\subsubsection{Injectivity property} 
\begin{lem}
\label{lem:coh-pback}
There is an isomorphism of algebras
\begin{equation}
    H^{\ast}(X_m\times_{BG}BH;\Q) \cong H^{\ast}(X_m;\Q)\bigotimes_{H^\ast(BG;\Q)}H^{\ast}(BH;\Q) \,.
\end{equation}
\end{lem}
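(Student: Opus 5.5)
The plan is to apply the Eilenberg--Moore spectral sequence to the homotopy pullback square
\[
\begin{tikzcd}
X_m\times_{BG}BH \ar[r]\ar[d] & BH \ar[d] \\
X_m \ar[r] & BG
\end{tikzcd}
\]
with $BG$ simply connected, since $G$ is connected. The spectral sequence has
\[
E_2=\mathrm{Tor}^{H^\ast(BG;\Q)}\bigl(H^\ast(X_m;\Q),\,H^\ast(BH;\Q)\bigr)
\]
and converges strongly to $H^\ast(X_m\times_{BG}BH;\Q)$. Here $BH=EG\times_G F'$, so $X_m\times^h_{BG}BH$ is the Borel construction of the $G$-space $F_m\times F'$. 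All spaces are of finite type over $\Q$, so convergence is not an issue.

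By the freeness property \eqref{eq:LSSS}, $H^\ast(X_m;\Q)$ is a free $H^\ast(BG;\Q)$-module. The higher Tor groups therefore vanish, and $E_2$ is concentrated in homological degree $0$, where it equals $H^\ast(X_m;\Q)\otimes_{H^\ast(BG;\Q)}H^\ast(BH;\Q)$. The spectral sequence degenerates at $E_2$, and the edge homomorphism gives the map. Since there is only one filtration degree, there are no extension problems, and the map is an isomorphism of graded vector spaces.

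The edge map is the multiplicative map $a\otimes b\mapsto \mathrm{pr}_1^\ast a\cdot \mathrm{pr}_2^\ast b$ induced by the two projections of the pullback. It is an algebra homomorphism, hence an algebra isomorphism.

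An alternative that avoids Eilenberg--Moore: the fibration $F_m\to X_m\times_{BG}BH\to BH$ is pulled back from $F_m\to X_m\to BG$, and its fiber has even-degree cohomology. Hence its Serre spectral sequence collapses, and Leray--Hirsch gives $H^\ast(F_m)\otimes H^\ast(BH)$ as an $H^\ast(BH)$-module. The comparison map from the tensor product is a map of free $H^\ast(BH)$-modules. It is an isomorphism modulo the augmentation ideal, since both sides reduce to $H^\ast(F_m)$ via the restrictions to fibers, which are compatible. A graded Nakayama argument, using that both modules are bounded below, then finishes the proof.

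The main obstacle is verifying the hypotheses of the Eilenberg--Moore theorem, namely simple connectivity of the base and finite type. Alternatively, in the Leray--Hirsch route, one must check that the comparison map really restricts to the fiber identification. I would go with the Leray--Hirsch/Nakayama argument, since it uses only the results already established in the paper.
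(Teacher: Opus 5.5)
Your Eilenberg--Moore argument is exactly the paper's proof, which simply cites Smith for it: the $E_2$-term collapses to the $\mathrm{Tor}_0$ column because $H^\ast(X_m;\Q)$ is free over $H^\ast(BG;\Q)$, and the edge map is multiplicative. The route you say you would actually take is genuinely different, and it is also correct. It uses Leray--Hirsch for the pulled-back fibration $F_m\to X_m\times_{BG}BH\to BH$, plus graded Nakayama for the comparison map. What it buys is elementarity: there is no convergence theorem for the Eilenberg--Moore spectral sequence, only the Serre spectral sequence input already used in \eqref{eq:LSSS}. It also gives an explicit $H^\ast(BH;\Q)$-basis of the pullback's cohomology. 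The price is that it needs the geometric fact that $H^\ast(X_m;\Q)\to H^\ast(F_m;\Q)$ is surjective, with kernel $H^{+}(BG;\Q)\cdot H^\ast(X_m;\Q)$; this is available here from the collapse in \eqref{eq:LSSS}. Eilenberg--Moore, by contrast, needs only the algebraic fact that $H^\ast(X_m;\Q)$ is a free $H^\ast(BG;\Q)$-module. One step should be tightened. The claim ``the fiber has even cohomology, hence the Serre spectral sequence collapses'' is a parity argument, and it also needs $H^\ast(BH;\Q)$ to be even. That is true, by the Gysin sequence, but you do not say so. It is cleaner to argue as follows: lifts to $H^\ast(X_m;\Q)$ of a basis of $H^\ast(F_m;\Q)$ exist by \eqref{eq:LSSS}. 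They pull back to classes on $X_m\times_{BG}BH$ that restrict to the same basis on its fiber, because $F_m\to X_m\times_{BG}BH\to X_m$ is the fiber inclusion of $X_m\to BG$. This gives Leray--Hirsch directly and also settles the compatibility you flagged as the main obstacle.
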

\begin{proof}
This follows from the Eilenberg-Moore spectral sequence and the fact that $H^{\ast}(X_m;\Q)$ is a finitely generated free module over $H^\ast(BG;\Q)$, see \cite{smith1967homological}*{Proposition 4.3, Corollary 4.4}.
\end{proof}

\begin{cor}
The odd cohomology of $ X_m\times_{BG}BH $ vanishes for any $m$. 
\end{cor}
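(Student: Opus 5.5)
By \Cref{lem:coh-pback}, it suffices to show that the ring
\[
H^{\ast}(X_m;\Q)\otimes_{H^\ast(BG;\Q)}H^{\ast}(BH;\Q)
\]
is concentrated in even degrees. We then identify this ring with $H^\ast(X_m\times_{BG}BH;\Q)$ via the isomorphism of \Cref{lem:coh-pback}.

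First, we check the evenness of each factor.
\begin{itemize}
\item By \eqref{eq:LSSS}, $H^\ast(X_m;\Q)\cong H^\ast(F_m;\Q)\otimes H^\ast(BG;\Q)$. This is a free $H^\ast(BG;\Q)$-module on generators of even degree, since $F_m$ is rationally a wedge of even-dimensional spheres by \eqref{eq:fmjoin}.
\item $H^\ast(BG;\Q)=\Q[V]^W$ is a polynomial algebra on even-degree generators, because $H^\ast(BT;\Q)$ is generated in degree $2$.
\item $H^\ast(BH;\Q)$ is likewise a polynomial algebra on even-degree generators. Here $H$ is a closed connected subgroup with $G/H\simeq\bS^{2n-1}$, and $H$ is connected since $G$ is connected and $G/H$ is simply connected when $n\ge 2$. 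The case $n=1$ gives a circle quotient; there we still have $H^\ast(BH;\Q)\cong H^\ast(BT_H;\Q)^{W_H}$ by \Cref{BThm}, which is even.
\end{itemize}

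Next, choose a homogeneous basis $\{c_i\}$ of $H^\ast(X_m;\Q)$ over $H^\ast(BG;\Q)$, with all $c_i$ of even degree. Freeness gives a graded isomorphism
\[
H^\ast(X_m;\Q)\otimes_{H^\ast(BG;\Q)}H^\ast(BH;\Q)\;\cong\;\bigoplus_i c_i\cdot H^\ast(BH;\Q).
\]
Each summand is an even shift of an evenly graded ring, so the tensor product vanishes in odd degrees.

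The only point needing care is the evenness of $H^\ast(BH;\Q)$. The substantive input, the collapse of the Eilenberg--Moore spectral sequence, is already packaged in \Cref{lem:coh-pback}.
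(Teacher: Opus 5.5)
Your argument is correct and is essentially the paper's. There the corollary is read off from \Cref{lem:coh-pback}, together with two evenness facts: $H^\ast(X_m;\Q)$ is free over $H^\ast(BG;\Q)$ on even-degree generators by \eqref{eq:LSSS}, and $H^\ast(BH;\Q)$ is evenly graded, so the tensor product is evenly graded. The one difference is that the paper gets evenness of $H^\ast(BH;\Q)$ from the Gysin sequence, $H^\ast(BH;\Q)=\Q[V]^W/(\theta)$, which treats all $n$ uniformly. Your Borel-theorem route also works, but for disconnected $H$ (possible only when $n=1$) you should cite $H^\ast(BH;\Q)\cong H^\ast(BH_0;\Q)^{\pi_0 H}$ rather than \Cref{BThm} verbatim.
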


As $X_{m+1}$ is defined as the homotopy pushout 
\( \hcolim \{X_m \leftarrow X_m\times_{BG} BH\to BH\} \),
we have a (Mayer-Vietoris) long exact sequence in rational cohomology (\cite{dugger2008primer}*{Section 18.1})
\begin{equation}
\label{eq:LES-Coh}
    \ldots \to H^{r}(X_{m+1}) \to H^{r}(X_m)\oplus H^{r}(BH) \to H^{r}(X_m\times_{BG}BH) \to H^{r+1}(X_m) \to \ldots
\end{equation}
In particular, as the spaces $ X_m, X_{m+1}, BH $ and $ X_m\times_{BG}BH $ have vanishing odd degree rational cohomology, the long exact sequence \eqref{eq:LES-Coh} splits into short exact sequences 
\begin{equation*}
    \begin{tikzcd}[sep=small]
    0 \ar[r] & H^{2t}(X_{m+1};\Q) \ar[r] &  H^{2t}(X_m;\Q)\oplus H^{2t}(BH;\Q) \ar[r,"\alpha_m"] & H^{2t}(X_m\times_{BG}BH;\Q) \ar[r] & 0\, .
    \end{tikzcd}
\end{equation*}
 
Therefore we can compute rational cohomology of $ X_m $ inductively by the following formula.
\begin{prop}\label{prop:Xm+1}
For $m\geq 0$, we have 
$$H^{\ast}(X_{m+1};\Q) \cong \,\ker\,(\, \alpha_m:H^{\ast}(X_m;\Q)\oplus H^{\ast}(BH;\Q) \to H^{\ast}(X_m\times_{BG}BH;\Q) \,)\, .$$
\end{prop}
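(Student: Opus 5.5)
This follows directly from the Mayer--Vietoris sequence \eqref{eq:LES-Coh} for the homotopy pushout $X_{m+1}=\hcolim\{X_m \leftarrow X_m\times_{BG}BH \to BH\}$, once the odd-degree cohomology of all four spaces is known to vanish. So the plan has two parts: verify that vanishing, then read off the kernel description.

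\emph{Vanishing in odd degrees.} I will argue by induction on $m$. By the freeness statement \eqref{eq:LSSS}, $H^\ast(X_m;\Q)\cong H^\ast(F_m;\Q)\otimes H^\ast(BG;\Q)$. Here $F_m$ is rationally a wedge of even spheres by \eqref{eq:fmjoin}, since $F$ has only even cohomology and each join with $F'\simeq\bS^{2n-1}$ raises degrees by $2n$. Hence $H^{\mathrm{odd}}(X_m;\Q)=0$ for every $m$; this uses only the Borel fibration $F_m\to X_m\to BG$ and does not require the induction. Next, $H^{\mathrm{odd}}(BH;\Q)=0$ because $H$ is a compact connected Lie group (it is connected since $G/H$ is a simply connected sphere and $G$ is connected). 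Finally, \Cref{lem:coh-pback} gives $H^\ast(X_m\times_{BG}BH)\cong H^\ast(X_m)\otimes_{H^\ast(BG)}H^\ast(BH)$, a tensor product of evenly graded algebras, so it too is concentrated in even degrees. This is exactly the content of the Corollary following \Cref{lem:coh-pback}.

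\emph{Reading off the kernel.} With all odd groups zero, the segment
\[
H^{2t-1}(X_m\times_{BG}BH)\to H^{2t}(X_{m+1})\to H^{2t}(X_m)\oplus H^{2t}(BH)\xrightarrow{\alpha_m}H^{2t}(X_m\times_{BG}BH)\to H^{2t+1}(X_{m+1})
\]
has zero outer terms. Therefore $H^{2t}(X_{m+1})\to H^{2t}(X_m)\oplus H^{2t}(BH)$ is injective with image $\ker\alpha_m$. In odd degrees, $H^{2t+1}(X_{m+1})$ sits between $H^{2t}(X_m\times_{BG}BH)$, which is hit surjectively by $\alpha_m$, and $H^{2t+1}(X_m)\oplus H^{2t+1}(BH)=0$. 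It therefore vanishes, consistent with \eqref{eq:LSSS} for $m+1$, and both sides of the claimed isomorphism are zero in odd degrees.

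\emph{Main obstacle: surjectivity of $\alpha_m$.} The only nontrivial input in the odd-degree step is that $\alpha_m$ is surjective in even degrees. Here $\alpha_m$ is the difference of the two restriction maps. Its component from $H^\ast(X_m)$ is $f\mapsto f\otimes 1$, and since $H^\ast(X_m)\otimes_{H^\ast(BG)}H^\ast(BH)$ is generated by such elements over $H^\ast(BH)$, this component alone need not be onto. However, the $H^\ast(BH)$-component contributes $1\otimes h$. To get surjectivity I will use that the map $H^\ast(BG)\to H^\ast(BH)$ is surjective: a sphere bundle with even Euler class $\theta$ gives $H^\ast(BH)\cong H^\ast(BG)/(\theta)$ via the Gysin sequence. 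Then $f\otimes h=f\tilde h\otimes 1$ for a lift $\tilde h$ of $h$, so even the first component alone is surjective.

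\emph{Multiplicativity.} The identification respects products because the map $H^\ast(X_{m+1})\to H^\ast(X_m)\oplus H^\ast(BH)$ is induced by maps of spaces, so it is a ring homomorphism into the product ring. Its image $\ker\alpha_m$ is then the fiber product $H^\ast(X_m)\times_{H^\ast(X_m\times_{BG}BH)}H^\ast(BH)$, which is a subring.
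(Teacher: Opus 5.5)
Your proposal is correct and is essentially the paper's argument. Rational cohomology vanishes in odd degrees for $X_m$ and $X_{m+1}$ (by \eqref{eq:LSSS}), for $BH$, and for $X_m\times_{BG}BH$ (by \Cref{lem:coh-pback}), so the Mayer--Vietoris sequence \eqref{eq:LES-Coh} splits into short exact sequences whose first map identifies $H^\ast(X_{m+1};\Q)$ with $\ker\alpha_m$.

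Two small points. Your separate surjectivity argument for $\alpha_m$ is correct but redundant, since $H^{\rm odd}(X_{m+1};\Q)=0$ already follows from \eqref{eq:LSSS}. Your reason for $H$ being connected does not cover $n=1$, where $\bS^1$ is not simply connected; this is harmless, because $H^{\rm odd}(BH;\Q)=0$ for any compact Lie group $H$.
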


It follows that we can identify $H^{\ast}(X_m;\Q)$ as a subalgebra of $H^{\ast}(X_0;\Q)$.
\begin{prop}\label{prop:inj}
    The map $\pi_m^{\ast}: H^{\ast}(X_{m+1};\Q) \hookrightarrow H^{\ast}(X_m;\Q) $ is injective for every $m\geq0$. 
\end{prop}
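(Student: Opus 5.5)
Since the odd-degree rational cohomology of $X_m$, $X_{m+1}$, $BH$ and $X_m\times_{BG}BH$ vanishes, \Cref{prop:Xm+1} identifies $H^\ast(X_{m+1};\Q)$ with $\ker(\alpha_m)$, and the map $\pi_m^\ast$ becomes the composite
\[
\ker(\alpha_m)\hookrightarrow H^\ast(X_m;\Q)\oplus H^\ast(BH;\Q)\xrightarrow{\ \mathrm{pr}_1\ } H^\ast(X_m;\Q).
\]
So we have to show that $\ker(\alpha_m)$ meets the summand $0\oplus H^\ast(BH;\Q)$ only in zero. An element $(0,\beta)$ lies in $\ker(\alpha_m)$ exactly when $\beta$ restricts to zero on $X_m\times_{BG}BH$. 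By \Cref{lem:coh-pback} this restriction is
\[
H^\ast(BH;\Q)\to H^\ast(X_m;\Q)\otimes_{H^\ast(BG;\Q)}H^\ast(BH;\Q),\qquad \beta\mapsto 1\otimes\beta .
\]
The proof therefore reduces to showing that this map is injective.

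To prove injectivity, use the freeness established in \eqref{eq:LSSS}: $H^\ast(X_m;\Q)$ is a free $H^\ast(BG;\Q)$-module of rank $\dim_\Q H^\ast(F;\Q)$. Choose a homogeneous basis containing the unit $1=p_m^\ast(1)$. This is possible because $H^\ast(X_m;\Q)$ is connected graded with degree-zero part $\Q$, so by the graded Nakayama lemma any homogeneous lift of a $\Q$-basis of $H^\ast(X_m)\otimes_{H^\ast(BG)}\Q\cong H^\ast(F_m;\Q)$ starting with $1$ is a free basis. Then $H^\ast(BG;\Q)\cdot 1$ is a direct summand of $H^\ast(X_m;\Q)$ as an $H^\ast(BG;\Q)$-module. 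Tensoring with $H^\ast(BH;\Q)$ over $H^\ast(BG;\Q)$, the summand $H^\ast(BG;\Q)\otimes_{H^\ast(BG)}H^\ast(BH;\Q)\cong H^\ast(BH;\Q)$ remains a direct summand of $H^\ast(X_m;\Q)\otimes_{H^\ast(BG)}H^\ast(BH;\Q)$. Its inclusion is exactly $\beta\mapsto 1\otimes\beta$, which is therefore injective. Hence if $(f,\beta)\in\ker(\alpha_m)$ and $f=0$, then $\beta=0$, so $\pi_m^\ast$ is injective.

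The main point needing care is that $\pi_m^\ast$ really is the first-coordinate projection on $\ker(\alpha_m)$, with $\alpha_m(f,\beta)=\mathrm{res}(f)-\mathrm{res}(\beta)$. This holds because $\pi_m$ is the structure map $X_m\to X_{m+1}$ into the homotopy pushout, so naturality of the Mayer--Vietoris sequence \eqref{eq:LES-Coh} gives it. One must also check that the identification of \Cref{lem:coh-pback} is compatible with the restriction map from $BH$, namely that it sends the pullback of $\beta$ to $1\otimes\beta$. This follows from the fact that the Eilenberg--Moore isomorphism is induced by the two projections.
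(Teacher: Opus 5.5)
Your proof is correct. Its key step is the same as the paper's: $\ker(\pi_m^\ast)$ is identified with the kernel of $\beta\mapsto 1\otimes\beta$ from $H^\ast(BH;\Q)$ to $H^\ast(X_m;\Q)\otimes_{H^\ast(BG;\Q)}H^\ast(BH;\Q)$, and this kernel vanishes by freeness.

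The difference is in how general $m$ is handled. The paper runs this argument only for $m=0$ and reaches arbitrary $m$ by induction. It maps the short exact sequence for $X_{m+1}$ to the one for $X_m$, using the map of pushout diagrams induced by $\pi_{m-1}$ and the identity on $BH$. The snake lemma then embeds $\ker(\pi_m^\ast)$ into $\ker(\pi_{m-1}^\ast)$.

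You instead apply the base-case argument directly at each level. This is legitimate because freeness of $H^\ast(X_m;\Q)$ over $H^\ast(BG;\Q)$, from \eqref{eq:LSSS}, and \Cref{lem:coh-pback} are already established for every $m$. The paper's induction needs them for every $m$ anyway, just to have the short exact sequences.

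Your route buys three things:
\begin{itemize}
\item It is shorter.
\item It avoids having to identify the map induced on homotopy pushouts with the canonical leg $X_m\to X_{m+1}$. The two agree up to homotopy, but the paper does not address this.
\item Your graded Nakayama remark, that $1$ can be chosen as a basis element so that $H^\ast(BG;\Q)\cdot 1$ is a direct summand, supplies the justification the paper leaves implicit for why freeness forces $\beta\mapsto 1\otimes\beta$ to be injective.
\end{itemize}
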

\begin{proof}
    We will use induction on $m$ to show that each $ H^{\ast}(X_{m+1};\Q) \hookrightarrow H^{\ast}(X_m;\Q) $ is an embedding. 
    Note we have natural maps of commutative diagrams
    \[\begin{tikzcd}
    X_m & X_m\times_{BG}BH \ar[l]\ar[r] & BH \ar[d,equal] \\
    X_{m-1} \ar[u] & X_{m-1}\times_{BG}BH \ar[u]\ar[l]\ar[r] & BH
    \end{tikzcd}\]
    which induces maps on homotopy colimits $ X_m \to X_{m+1} $, so we have a commutative diagram of short exact sequences 
    \begin{equation*}
        \begin{tikzcd}[sep=small]
        0 \ar[r] & H^{\ast}(X_{m+1};\Q) \ar[d]\ar[r] &  H^{\ast}(X_m;\Q)\oplus H^{\ast}(BH;\Q) \ar[d]\ar[r] & H^{\ast}(X_m\underset{BG}{\times}BH;\Q) \ar[d]\ar[r] & 0 \\
        0 \ar[r] & H^{\ast}(X_m;\Q) \ar[r] &  H^{\ast}(X_{m-1};\Q)\oplus H^{\ast}(BH;\Q) \ar[r] & H^{\ast}(X_{m-1}\underset{BG}{\times}BH;\Q) \ar[r] & 0
        \end{tikzcd}
    \end{equation*}
    where the middle vertical map is identity on $ H^{\ast}(BH;\Q) $, so by snake lemma 
    \[ \ker\left(H^{\ast}(X_{m+1};\Q) \xrightarrow{\pi_m^{\ast}}H^{\ast}(X_m;\Q)\right) \subset \ker\left(H^{\ast}(X_m;\Q) \xrightarrow{\pi_{m-1}^{\ast}} H^{\ast}(X_{m-1}; \Q)\right) \]
    and by induction it suffices to show that $\ker\left(H^{\ast}(X_1;\Q) \xrightarrow{\pi_0^{\ast}}H^{\ast}(X_0;\Q)\right)=0$.
    
    Consider the short exact sequence
    \begin{equation*}
        \begin{tikzcd}[sep=small]
        0 \ar[r] & H^{\ast}(X_1;\Q) \ar[r,"\gamma"] &  H^{\ast}(X_0;\Q)\oplus H^{\ast}(BH;\Q) \ar[r,"\alpha"] & H^{\ast}(X_0\times_{BG}BH;\Q) \ar[r] & 0 
        \end{tikzcd}
    \end{equation*}
    where 
    $\alpha(x,y)=p_{X_0}^{\ast}(x)\otimes1-1\otimes p_{BH}^{\ast}(y)$. 
    For simplicity we write $M = H^{\ast}(X_0;\Q), N = H^{\ast}(BH;\Q),$ and 
    $$\displaystyle{P = H^{\ast}(X_0;\Q)\bigotimes_{H^\ast(BG;\Q)}H^{\ast}(BH;\Q)}\,,$$ 
    then we have 
    \begin{equation*}
        \begin{tikzcd}
            & N \ar[r]\ar[d,"\beta"'] & N\oplus M \ar[r]\ar[d,"\alpha"] & M \ar[r]\ar[d] & 0 \\
            0 \ar[r] & P \ar[r,equal] & P \ar[r] & 0
        \end{tikzcd}
    \end{equation*}
    where all horizontal sequences are exact. Thus by snake lemma there is an exact sequence 
    \begin{equation*}
        \begin{tikzcd}[sep=small]
        0 \ar[r] & \ker(\beta) \ar[r] & \ker(\alpha) \ar[r] & M \ar[r] & \coker(\beta) \ar[r] & 0
        \end{tikzcd}
    \end{equation*}
    It suffices to show $\ker(\beta)=0$. Observe that
    \begin{equation*}
        \beta=p_{BH}^{\ast}:H^{\ast}(BH;\Q) \to \displaystyle{H^{\ast}(X_0;\Q)\bigotimes_{H^\ast(BG;\Q)}H^{\ast}(BH;\Q)}
    \end{equation*}
    is injective because $ H^{\ast}(X_0;\Q) $ is free over $ H^\ast(BG;\Q) $. So we're done with our proof. 
\end{proof}

\subsubsection{Presentation and basis}
Since $G$ is connected, the system of local coefficient rings $\cH^{i}(G/H;\bR)=H^{i}(G/H;\bR)$ is trivial. The Gysin exact sequence associated to the spherical fiber bundle \eqref{eq:fib-F'} gives us $H^{\ast}(BH;\Q)=\Q[V]^W/(\theta)$ where $\theta\in \Q[V]^W$ is the Euler class of the fiber bundle \eqref{eq:fib-F'}.
Combining \Cref{lem:coh-pback} and \Cref{prop:Xm+1}, we see that 
\begin{equation*} 
  H^{\ast}(X_{m+1};\Q) \cong \big\{\, (f, g)\,|\, f \in H^{\ast}(X_m;\Q)\, ,\,g \in H^{\ast}(BH;\Q)\, , g \equiv f \,\mod\,[\theta\cdot H^\ast(X_m,\Q)] \,\big\}\, .\end{equation*}
\Cref{prop:inj} implies that 
\begin{equation*}
    H^{\ast}(X_{m+1};\Q) = H^\ast(BG;\Q) + \theta \cdot H^{\ast}(X_m;\Q) \, .
\end{equation*}
Hence, by \Cref{prop:basis} and \Cref{rmk:EqPre}, we can obtain the desired explicit description of $Q_m(F,F')=H^{\ast}(X_m;\Q)$ and its basis as a free module over $H^\ast(BG;\Q)$.

\subsubsection{\texorpdfstring{$T$}--equivariant rational cohomology}

The natural $G$-action on the fiber $F$ restricts to a $T$-action which is also equivariant formal.

\subsection*{$W$-action}
The canonical map $F \to \pt$ induces a $W$-equivariant map on $T$-equivariant cohomology. Explicitly, the map 
$H^{\ast}(BT;\Q)= H^{\ast}_{T}(\pt;\Q)\to H^{\ast}_{T}(F;\Q)$ 
is given by $f\mapsto f\otimes 1$, and the natural $W$-action on $H^{\ast}_{T}(F;\Q)$ is induced by the $W$-action on $ET\times_T F$. Furthermore $H^\ast_T(F;\Q)^W \cong H^\ast_G(F;\Q)$. 

To obtain the $T$-equivariant cohomology of $G/H$, we first observe the following result (see also \cite{anderson2023equivariant}*{Chapter 3, Example 4.4}). 

\begin{prop}
    Let $G$ be a Lie group and $K$ a Lie subgroup of $G$. For any $G$-space $X$, there are $G$-equivariant isomorphisms
    \[\begin{tikzcd}[sep=tiny]
        \varphi: G\times_{K}X \ar[rr,shift left] && G/K\times X :\phi \ar[ll,shift left]\\
        \left[g,x\right]_{K} \ar[rr,mapsto] && (gK,gx) \\
        \left[g,g^{-1}x\right]_{K} && (gK,x) \ar[ll,mapsto]
    \end{tikzcd}\]
    where the $G$-action on the left hand side is on the first factor and the $G$-action on the right hand side is on the diagonal. 
\end{prop}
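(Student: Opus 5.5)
The plan is to verify directly that the two formulas define mutually inverse, well-defined, $G$-equivariant continuous maps. Throughout, $G\times_K X$ is the quotient of $G\times X$ by the relation $(gk,x)\sim(g,kx)$ for $k\in K$. Here $K$ acts on $X$ by restricting the $G$-action, and $G$ acts on the left via $h\cdot[g,x]_K=[hg,x]_K$. On $G/K\times X$ the action is diagonal: $h\cdot(gK,x)=(hgK,hx)$.

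First, well-definedness. For $\varphi$, replace $(g,x)$ by the equivalent representative $(gk,k^{-1}x)$. The image is then $(gkK,gkk^{-1}x)=(gK,gx)$, so $\varphi$ does not depend on the representative. For $\phi$, replace $g$ by $gk$. The image becomes $[gk,k^{-1}g^{-1}x]_K$, which equals $[g,kk^{-1}g^{-1}x]_K=[g,g^{-1}x]_K$ by the defining relation. Hence $\phi$ is well defined on $G/K$.

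Next, the two maps are inverse to each other:
\[
\phi\varphi[g,x]_K=\phi(gK,gx)=[g,g^{-1}gx]_K=[g,x]_K,\qquad \varphi\phi(gK,x)=\varphi[g,g^{-1}x]_K=(gK,x).
\]
Equivariance also holds: $\varphi(h\cdot[g,x]_K)=(hgK,hgx)=h\cdot(gK,gx)$. Since $\phi=\varphi^{-1}$, it is equivariant as well.

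The main point requiring care is continuity. For $\varphi$ this is automatic: it is induced by the continuous map $G\times X\to G/K\times X$, $(g,x)\mapsto(gK,gx)$, and this map is constant on equivalence classes. For $\phi$ there is no global continuous section of $G\to G/K$ to use. However, $K$ is a closed Lie subgroup, so $G\to G/K$ is a principal $K$-bundle and admits local sections $\sigma:U\to G$ over an open cover of $G/K$. Over each $U$, define $\phi(u,x)=[\sigma(u),\sigma(u)^{-1}x]_K$. This is continuous as a composite of continuous maps, and by the well-definedness check above it is independent of the choice of $\sigma$. The local definitions therefore glue to a continuous global map.

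Thus $\varphi$ and $\phi$ are mutually inverse $G$-equivariant homeomorphisms. In the application at hand, $K=H$ or $K=T$ and $X=F$.
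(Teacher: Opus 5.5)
Your proof is correct and takes the same approach as the paper, whose proof consists of exactly your two well-definedness computations; your checks of mutual inverses, equivariance, and continuity of $\phi$ fill in what the paper leaves implicit. For continuity you can avoid local sections, and hence any use of closedness of $K$: the map $(g,x)\mapsto(gK,x)$ is an open surjection $G\times X\to G/K\times X$, hence a quotient map, and $(g,x)\mapsto[g,g^{-1}x]_K$ is continuous and constant on its fibers, so it descends to $\phi$.
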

\begin{proof}
    It suffices to check the maps $\varphi$ and $\phi$ are well-defined. For any $k\in K$, 
    \[\varphi([gk,k^{-1}x]_{K}) = (gkK, gkk^{-1}x) = (gK,gx)\]
    and
    \[ \phi(gkK,x) = [gk,k^{-1}g^{-1}x]_{K}= [g,g^{-1}x]_{K}\,.\]
\end{proof}
In particular, if we choose $X=G/H$, then we obtain an isomorphism 
\begin{equation}\label{eq:G-ext-iso}
    G\times_{K}G/H \cong G/K\times G/H.
\end{equation}
\begin{prop}[\cite{carlson2015equivariant}*{Proposition 6.2.4}] 
\label{prop:hty-q-homog}
Let $ G $ be a compact Lie group and $ H, K $ be closed subgroups. Then there is a homeomorphism of $ G/H $-bundles over $ BK $
\begin{equation}
\la{eq:GHK} 
\begin{tikzcd}[sep=tiny]
    \varphi: EG \times_{K} G/H \ar[rr,shift left] && BK \times_{BG} BH: \phi \ar[ll,shift left] \\
    \left[e,gH\right]_{K} \ar[rr,mapsto] && (eK,egH) \\
    \left[e,gH\right]_{K} && (eK,egH) \ar[ll,mapsto] 
\end{tikzcd}
\end{equation}
\end{prop}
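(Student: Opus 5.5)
The plan is to model every classifying space as an orbit space of one fixed universal bundle. Take $EG$ with a free right $G$-action, so that $EG \to BG = EG/G$ is a principal $G$-bundle. Since $K$ and $H$ are closed subgroups of a compact Lie group, $EG$ is also a universal space for them, and we may take $BK = EG/K$ and $BH = EG/H$. The maps $BK \to BG$ and $BH \to BG$ are the orbit projections, and $BK\times_{BG}BH$ is the strict pullback; this is a model for the homotopy pullback because $BH \to BG$ is a fibre bundle. The space $EG\times_K G/H$ is the quotient of $EG \times G/H$ by $(e,gH)\sim (ek, k^{-1}gH)$.

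The first step is to check that $\varphi([e,gH]_K) = (eK, egH)$ is well defined and lands in the pullback. Under $(e,gH)\mapsto (ek,k^{-1}gH)$ the formula gives $(ekK, ekk^{-1}gH) = (eK, egH)$, so it is independent of the representative. Both coordinates lie over the orbit $eG = egG$ in $BG$, so the image is in the pullback. Continuity of $\varphi$ follows because $EG\times G/H \to EG\times_K G/H$ is a quotient map and the lifted map to $BK\times BH$ is visibly continuous. Projecting to the first coordinate, $\varphi$ covers the identity of $BK$, so it is a map over $BK$.

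Next I will construct the inverse. Let $\tau\colon EG\times_{BG}EG \to G$ be the translation function of the principal bundle, characterised by $e' = e\,\tau(e,e')$; it is continuous by local triviality. A point of the pullback is $(eK, e'H)$ with $eG = e'G$, and I set $\phi(eK,e'H) = [e, \tau(e,e')H]_K$. Independence of representatives is a direct check. Replacing $e'$ by $e'h$ gives $\tau(e,e'h) = \tau(e,e')h$, which yields the same coset. Replacing $e$ by $ek$ gives $\tau(ek,e') = k^{-1}\tau(e,e')$, and $[ek, k^{-1}\tau H]_K = [e,\tau H]_K$. With the representative $e' = eg$ this recovers the formula $[e,gH]_K$ in the statement. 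The identities $\phi\varphi = \id$ and $\varphi\phi = \id$ then follow immediately from $\tau(e, eg) = g$.

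The step I expect to be the main obstacle is the continuity of $\phi$, that is, showing the pullback carries the quotient topology. The approach I would try first is to observe that $EG\times_{BG}EG \subseteq EG\times EG$ is saturated for the $K\times H$-action. The projection $EG\times EG \to BK\times BH$ is an open map, since orbit maps of group actions are open and products of open surjections are open. Its restriction to a saturated subset is therefore still open, hence a quotient map onto $BK\times_{BG}BH$. The lift $(e,e')\mapsto [e,\tau(e,e')H]_K$ is continuous, so $\phi$ is continuous. Finally, both spaces are locally trivial with fibre $G/H$ over $BK$ and $\varphi$ is a fibrewise homeomorphism over $BK$, so $\varphi$ is an isomorphism of $G/H$-bundles. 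Alternatively, one can deduce the result from the isomorphism \eqref{eq:G-ext-iso} by applying $EG\times_G(-)$ to $G\times_K G/H \cong G/K\times G/H$.
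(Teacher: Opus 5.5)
Your proof is correct, and it takes essentially the same approach as the statement. The paper gives no argument of its own and only cites Carlson, so its content is the displayed pair of maps; you check that these are well defined and mutually inverse, make explicit (via the translation function $\tau$, with $e'=e\,\tau(e,e')$) the identification hidden in the formula for $\phi$, and supply the continuity of $\phi$ through the openness of the orbit map restricted to the saturated subset $EG\times_{BG}EG$. The paper leaves that continuity step unaddressed, as in its proof of $G\times_K X\cong G/K\times X$, where only well-definedness is checked. Your alternative of applying $EG\times_G(-)$ to $G\times_K G/H\cong G/K\times G/H$ is the route suggested by where the paper places this result.
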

%
%
\begin{cor}\label{cor:coh-hty-q}
    The rational cohomology of the homotopy quotient $ (G/H)_{hK} $ is given by 
    \[
    H^{\ast}((G/H)_{hK};\Q) \cong H^{\ast}(BK \times_{BG} BH;\Q)  \cong \Tor^{\ast}_{H^\ast(BG;\Q)}\left(H^{\ast}(BK;\Q), H^{\ast}(BH;\Q)\right)\, .
    \]
\end{cor}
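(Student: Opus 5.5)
The first isomorphism is immediate from \Cref{prop:hty-q-homog}. By definition $(G/H)_{hK} = EK\times_K G/H$. Since $EG$ with the restricted $K$-action is a model for $EK$, we have $(G/H)_{hK}\simeq EG\times_K G/H$. The homeomorphism \eqref{eq:GHK} then identifies this space with $BK\times_{BG}BH$, a model for the homotopy pullback of $BK\to BG\leftarrow BH$. This gives the first isomorphism on rational cohomology. It is useful to note that this pullback is the total space of the pulled-back fibration $G/H\to BK\times_{BG}BH\to BK$.

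For the second isomorphism I will use the Eilenberg--Moore spectral sequence of the pullback square
\[
BK\times_{BG}BH \to BH,\qquad BK\to BG,
\]
in the same way as in \Cref{lem:coh-pback}. Its hypotheses hold: $BG$ is simply connected because $G$ is connected, and all cohomology is of finite type over $\Q$. The spectral sequence therefore converges strongly, with
\[
E_2=\Tor^{\ast,\ast}_{H^\ast(BG;\Q)}\bigl(H^\ast(BK;\Q),H^\ast(BH;\Q)\bigr)\Longrightarrow H^\ast(BK\times_{BG}BH;\Q).
\]
Two things remain to show: the spectral sequence collapses at $E_2$, and there is no multiplicative extension problem.

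This collapse and extension step is where the real work lies. In the setting of the paper, $G/H\simeq\bS^{2n-1}$ and $H^\ast(BH;\Q)=\Q[V]^W/(\theta)$, where $\theta$ is the Euler class of degree $2n$. I would first try the following direct argument. Resolve $H^\ast(BH)$ over $H^\ast(BG)$ by the two-term Koszul complex
\[
H^\ast(BG)\xrightarrow{\cdot\theta} H^\ast(BG).
\]
Then $\Tor_{H^\ast(BG)}(H^\ast(BK),H^\ast(BH))$ is the homology of $H^\ast(BK)\xrightarrow{\cdot\theta}H^\ast(BK)$ (with the appropriate degree shift), so it is concentrated in $\Tor_0$ and $\Tor_1$.

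To see that this $\Tor$ actually computes the cohomology, I would compare it with the Gysin sequence of the pulled-back sphere bundle $G/H\to BK\times_{BG}BH\to BK$, whose Euler class is the image of $\theta$ in $H^\ast(BK)$. The Gysin sequence gives a short exact sequence
\[
0\to \coker(\theta)\to H^\ast(BK\times_{BG}BH)\to \ker(\theta)[-(2n-1)]\to 0.
\]
Its outer terms are exactly $\Tor_0$ and $\Tor_1$ in the correct total degrees. Comparing dimensions degreewise then forces the Eilenberg--Moore differentials to vanish.

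As additive groups, this identifies $H^\ast$ with $\Tor$. For the algebra structure, the Koszul model of the pullback can be taken to be the CDGA
\[
\bigl(H^\ast(BK)\otimes\Lambda(u),\ du=\theta\bigr),\qquad |u|=2n-1.
\]
It is a model because $BK$ is formal (its cohomology is polynomial) and $BH\to BG$ is modeled by the semifree extension $H^\ast(BG)\otimes\Lambda(u)\to H^\ast(BH)$ (a quasi-isomorphism since $\theta$ is a nonzerodivisor). The cohomology of this CDGA is the derived tensor product, that is, $\Tor$ with its natural algebra structure. This settles the multiplicative question.

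For general closed subgroups $H$ and $K$, I would instead invoke the standard collapse results for homogeneous spaces (Baum, May--Gugenheim) whenever $H^\ast(BH)$ and $H^\ast(BK)$ are finite over a polynomial $H^\ast(BG)$. The sphere case above, however, is all that is needed in the rest of the paper.
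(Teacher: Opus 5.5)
Your proposal is correct. The first isomorphism is obtained exactly as in the paper, but for the second you take a genuinely different route.

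The paper disposes of the second isomorphism as ``similar to \Cref{lem:coh-pback}'', i.e.\ Eilenberg--Moore plus freeness. The key input is that one of the two modules is free over $H^\ast(BG;\Q)$; for $K=T$, the only case used later in \Cref{prop:T-HQ-GH}, this is $H^\ast(BT;\Q)=\Q[V]$. Then $E_2$ is concentrated in $\Tor_0$, the spectral sequence collapses, and there is no extension problem. That argument works for an arbitrary closed $H$, but it only ever yields the tensor product $H^\ast(BK;\Q)\otimes_{H^\ast(BG;\Q)}H^\ast(BH;\Q)$.

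You instead exploit the sphere-bundle structure of $BH\to BG$. The length-one Koszul resolution of $H^\ast(BG;\Q)/(\theta)$ puts $E_2$ in two columns. This gives the genuine $\Tor_0\oplus\Tor_1$ answer for any connected $K$, including cases such as $K=H$, where $H^\ast(BK;\Q)$ is not free and $\ker\theta\neq 0$. The price is the hypothesis $G/H\simeq\bS^{2n-1}$.

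Two simplifications are available:
\begin{itemize}
\item The Gysin comparison is not needed. With $E_2$ supported in the columns $p=0,-1$, every $d_r$ with $r\ge 2$ vanishes for degree reasons.
\item Once you have the model $(H^\ast(BK;\Q)\otimes\Lambda(u),\,du=\theta)$, it gives the additive and multiplicative statements at once, so the spectral sequence becomes superfluous. This model does require $K$ connected: then $BK$ is formal, and $BK\to BG$ is formal because $H^\ast(BG;\Q)$ is a free CDGA.
\end{itemize}
The same formality argument, applied to $H^\ast(BK;\Q)\leftarrow H^\ast(BG;\Q)\to H^\ast(BH;\Q)$, also settles the general case of connected $H,K$ over $\Q$ directly. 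You therefore do not need to invoke a separate collapse theorem such as Baum's or May--Gugenheim's.
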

\begin{proof}
    The previous proposition gives the first isomorphism. The second isomorphism is similar to \Cref{lem:coh-pback}.
\end{proof}
In particular, when $K=T$ is a maximal torus, there is a natural $W=N_G(T)/T$-action on $EG\times_TG/H$ given by $w\cdot[e,gH]_T=[en^{-1},ngH]$ where $w=nT$. The homeomorphisms in \eqref{eq:GHK} are $W$-equivariant where the $W$-action on $BK\times_{BG}BT$ is induced by the $W$-action on $BT$.
As a corollary, we obtain the following result.
\begin{prop}\label{prop:T-HQ-GH}
    The $T$-equivariant cohomology of $G/H$ is given by 
    \begin{equation*}
    H^{\ast}_{T}(G/H;\Q) = H^\ast(BT;\Q)\bigotimes_{H^\ast(BG;\Q)}H^{\ast}(BH;\Q)= \Q[V]/(\theta)
\end{equation*}
where $\theta$ is the same Euler class as the $G$-equivariant case and we identify it with its image in $\Q[V]=H^\ast(BT;\Q)$. Its $W$-action is inherited from $H^\ast(BT;\Q)=\Q[V]$.
\end{prop}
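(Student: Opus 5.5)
The plan is to specialize \Cref{cor:coh-hty-q} to $K=T$, and then to compute the resulting $\Tor$ term using two facts: Chevalley freeness, and the Gysin presentation $H^\ast(BH;\Q)=\Q[V]^W/(\theta)$ established earlier in this section.

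First, \Cref{prop:hty-q-homog} with $K=T$ identifies $ET\times_T G/H\simeq EG\times_T G/H$ with $BT\times_{BG}BH$. By \Cref{cor:coh-hty-q} this gives
\[
H^\ast_T(G/H;\Q)\cong \Tor^\ast_{\Q[V]^W}\bigl(\Q[V],\,H^\ast(BH;\Q)\bigr).
\]
By Borel's \Cref{BThm} together with Chevalley's \Cref{CThm}, $\Q[V]=H^\ast(BT;\Q)$ is a free module of rank $|W|$ over $\Q[V]^W=H^\ast(BG;\Q)$. All higher $\Tor$ terms therefore vanish, and the Eilenberg–Moore spectral sequence collapses to the ordinary tensor product, exactly as in \Cref{lem:coh-pback}. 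Because the spectral sequence is concentrated in homological degree zero, there are no multiplicative extension problems, and the isomorphism is one of algebras.

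Next, substitute the Gysin presentation $H^\ast(BH;\Q)=\Q[V]^W/(\theta)$, where $\theta$ is the Euler class of $G/H\to BH\to BG$. This yields
\[
\Q[V]\otimes_{\Q[V]^W}\Q[V]^W/(\theta)\cong \Q[V]/\theta\,\Q[V],
\]
with $\theta$ viewed in $\Q[V]$ through the injection $p^\ast$. For the remark in the statement that this $\theta$ is also the Euler class of the $T$-bundle $G/H\to ET\times_T G/H\to BT$, I would use naturality of Euler classes: this bundle is the pullback of $G/H\to BH\to BG$ along $BT\to BG$, as the pullback square in \Cref{prop:hty-q-homog} shows.

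The step I expect to need the most care is the $W$-action. I would use the formula $w\cdot[e,gH]_T=[en^{-1},ngH]$ and check that under $\varphi$ it corresponds to the action on $BT\times_{BG}BH$ induced from the $W$-action on $BT$, with $BH$ fixed. The $W$-action on $BT$ lies over the identity of $BG$ only up to homotopy, so I would work with the model $ET=EG$, $BT=EG/T$. In that model $n$ acts by $eT\mapsto en^{-1}T$, which covers the identity of $EG/G$ strictly, and $\varphi$ intertwines the two actions on the nose. The Eilenberg–Moore isomorphism is natural in the first variable, so the induced action on $\Q[V]\otimes_{\Q[V]^W}\Q[V]^W/(\theta)$ is $w\otimes\mathrm{id}$. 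Since $\theta$ is $W$-invariant, the ideal $\theta\,\Q[V]$ is $W$-stable, and the action descends to $\Q[V]/(\theta)$ as the one inherited from $\Q[V]$.
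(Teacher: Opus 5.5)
Your proposal is correct and follows the paper's route: specialize \Cref{cor:coh-hty-q} to $K=T$, collapse the $\Tor$ term using freeness of $\Q[V]$ over $\Q[V]^W$, substitute the Gysin presentation $H^\ast(BH;\Q)=\Q[V]^W/(\theta)$, and take the $W$-action from the $W$-equivariance of the homeomorphism \eqref{eq:GHK}. Your explicit check of that equivariance in the model $BT=EG/T$, and your naturality argument for the Euler class, fill in details the paper only asserts.
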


Similar to the $G$-equivariant case, we can explicit compute $\bQ_m(F,F')=H^\ast_T(F_m;\Q)$ and identify it as a subalgebra of $H^{\ast}_{T}(F;\Q)$ if we replace $BG$ by $BT$ and the two Borel fibrations for join construction by \eqref{eq:fib-F0-hT} and \eqref{eq:fib-F'-hT}. 

\begin{thm}
\la{thm:HTFm}
    The $T$-equivariant cohomology of $F_m$ is given by 
    \begin{equation*}
        \bQ_m(F,F')= \{\, f\in H_T(F;\Q) \, |\, s_\alpha(f)\equiv f \, \mod\, (\theta)^m \cdot H_T(F;\Q) \, \}\, .
    \end{equation*}
    with $W$-action inherited from $H_T(F;\Q)$.
\end{thm}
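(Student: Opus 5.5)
We run the $G$-equivariant argument of \Cref{thm:borel-tow} and \Cref{prop:presentation} again, now over the base $BT$. We use the Borel fibrations \eqref{eq:fib-F0-hT} and \eqref{eq:fib-F'-hT} and write $\bQ_m := H^\ast_T(F_m;\Q) = H^\ast(ET\times_T F_m;\Q)$. The join over $BT$ of these two fibrations is again a Borel fibration with fiber $F_m\ast F'$. Hence $ET\times_T F_{m+1}$ is the homotopy pushout of
\[
ET\times_T F_m \leftarrow (ET\times_T F_m)\times_{BT}(ET\times_T F') \to ET\times_T F'.
\]
By \Cref{prop:T-HQ-GH}, the right-hand space has cohomology $\Q[V]/(\theta)$.

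\textbf{Step 1 (freeness).} By \eqref{eq:fmjoin}, $F_m$ is rationally a wedge of even spheres. So the Serre spectral sequence over $BT$ collapses, and $\bQ_m$ is a free $\Q[V]$-module of rank $\dim H^\ast(F;\Q)$, concentrated in even degrees.

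\textbf{Step 2 (fiber product).} Because $\bQ_m$ is free, the Eilenberg--Moore argument of \Cref{lem:coh-pback} applies and gives
\[
H^\ast\bigl((ET\times_T F_m)\times_{BT}(ET\times_T F')\bigr)\cong \bQ_m\otimes_{\Q[V]}\Q[V]/(\theta)=\bQ_m/\theta\bQ_m .
\]
This ring is even. The Mayer--Vietoris sequence therefore splits into short exact sequences, and $\bQ_{m+1}$ is the fiber product $\bQ_m\times_{\bQ_m/\theta}\Q[V]/(\theta)$.

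\textbf{Step 3 (injectivity and recursion).} The snake-lemma argument of \Cref{prop:inj} carries over verbatim. The base case uses that $\Q[V]/(\theta)\to \bQ_0/\theta\bQ_0$ is injective, which holds because $\bQ_0$ is free over $\Q[V]$ with basis containing $1$. This gives
\[
\bQ_{m+1}=\Q[V]+\theta\,\bQ_m ,
\]
and by \Cref{prop:basis} with $P_\infty=\Q[V]$ we get $\bQ_m=\Q[V]+\theta^m\bQ_0$.

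\textbf{Step 4 (the main obstacle).} We must match $\Q[V]+\theta^m\bQ_0$ with the quasi-invariant description in the statement. The naive averaging argument of \Cref{prop:gen-quasi} does not apply, because averaging over $W$ yields $\bQ_0^W\cong H_G^\ast(F)$, not $\Q[V]$. The resulting sum $\bQ_0^W+\theta^m\bQ_0$ is the correct answer only after the identifications are made compatibly.

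To handle this, we run the whole tower $W$-equivariantly. The $W$-action on $ET\times_T F_m$ comes from $N_G(T)$ acting on the Borel construction. By \Cref{prop:hty-q-homog}, the $W$-action on the $T$-quotient of $F'=G/H$ corresponds to the action on $BT\times_{BG}BH$, and $\theta$ is $W$-invariant. So every map in the pushout square is $W$-equivariant, and $\bQ_m$ is a $W$-stable subalgebra of $\bQ_0$.

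We then compare with the $G$-equivariant tower. By \Cref{prop:presentation} together with $\bQ_m^W=H^\ast_G(F_m)$ (from the collapse), we have $\bQ_m^W=Q_m(F,F')=\bQ_0^W+\theta^m Q_0(F,F')$. The step we expect to be hardest is then proving the set equality
\[
\{f\in\bQ_0 : s_\alpha f\equiv f \bmod \theta^m\bQ_0\}=\bQ_0^W+\theta^m\bQ_0 ,
\]
and showing that this set is $\bQ_m$. The inclusion $\supseteq$ is clear. For $\subseteq$ we use the averaging decomposition of \Cref{rmk:EqPre}: a $W$-quasi-invariant $f$ is congruent modulo $\theta^m\bQ_0$ to its average. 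The decomposition is valid because $\theta^m\bQ_0$ is a $W$-stable ideal, so congruence under reflections propagates to all of $W$. This identifies the right-hand side with \eqref{Qm2} for $Q_0=\bQ_0$. Finally, we would reconcile this with the recursion in Step 3 by checking ranks and Hilbert series degreewise, using the freeness from Step 1 and the basis from \Cref{prop:basis}. This identification of the image of $\bQ_m$ inside $\bQ_0$ is where we expect the real work to be.
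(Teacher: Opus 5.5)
Your Steps 1--3 are what the paper's ``identical arguments'' amount to, and they are sound. They give $\bQ_m=\iota(\Q[V])+\theta^m\bQ_0$ inside $\bQ_0=H^\ast_T(F;\Q)$, where $\iota$ is the structure map from $H^\ast(BT;\Q)$. The gap is Step 4, and it is not a matter of bookkeeping. You fix the $W$-action induced by $N_G(T)$ on $ET\times_T F$, which is also the action the paper declares, with $\bQ_0^W\cong H^\ast_G(F;\Q)$. Your averaging argument then correctly shows that the quasi-invariant set is $H^\ast_G(F;\Q)+\theta^m\bQ_0$. In general this is a different subspace from $\iota(\Q[V])+\theta^m\bQ_0$. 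Throughout, take $W\neq 1$ and $m\deg\theta>2$.

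If $x\in V^\ast$ is not $s_\alpha$-fixed, then $\iota(x)\in\bQ_m$. But $s_\alpha\iota(x)-\iota(x)=\iota(s_\alpha x-x)$ is a nonzero class of degree $2$, so it cannot lie in $\theta^m\bQ_0$. Two concrete cases:
\begin{itemize}
\item For $F=\mathrm{pt}$, every $F_m$ with $m\ge 1$ is a cone-type join and hence contractible, so $\bQ_m=\Q[V]$. The quasi-invariants, however, are $\Q[V]^W+\theta^m\Q[V]\subsetneq\Q[V]$.
\item For $F=G/T$, one has $\bQ_0=\Q[V]\otimes_{\Q[V]^W}\Q[V]$ with $\iota(\Q[V])=\Q[V]\otimes 1$ and $H^\ast_G(G/T)=1\otimes\Q[V]$. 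In degree $2$, $\bQ_m$ is $V^\ast\otimes 1$ while the quasi-invariants are $1\otimes V^\ast$, and neither contains the other.
\end{itemize}
Your proposed reconciliation by ranks and Hilbert series cannot work. For $G/T$ the two subspaces are exchanged by the flip of tensor factors, so they have identical Hilbert series and yet are different. Your formula $\bQ_m^W=\bQ_0^W+\theta^m Q_0$ is also off: averaging Step 3 gives $\bQ_m^W=\Q[V]^W+\theta^m\bQ_0^W$. That recovers the $G$-equivariant result but says nothing about quasi-invariance inside $\bQ_0$.

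The missing ingredient is a $W$-action on $\bQ_0$ by algebra automorphisms fixing $\theta$ whose invariant subring is exactly $\iota(\Q[V])$. Given such an action, \Cref{rmk:EqPre} applied to Step 3 finishes the proof at once. The action has to come from $F$, not from the Borel construction. For $F=G/T$ it is induced by the right $W$-action on $G/T$, which commutes with $G$. It acts on the factor $1\otimes\Q[V]$ and has invariants $\Q[V]\otimes 1$; this is the ``second factor'' convention used in \Cref{eg:U}. For a general $F$ no such action is available, and what the argument actually proves is $\bQ_m=H^\ast(BT;\Q)+\theta^m H^\ast_T(F;\Q)$. The paper's one-line proof passes over exactly this point. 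You were right to flag it, but the identity your Step 4 sets out to prove is false for the action you chose.
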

The other properties for the $T$-equivariant case follow from identical arguments, so we omit the proofs.


\subsubsection{Rational homotopy types}
We can enhance the result of \Cref{thm:borel-tow} from rational cohomology to rational homotopy type.
Recall a space $F$ is \emph{formal} if the rational cohomology $H^\ast(F;\Q)$ of $F$ is quasi-isomorphic to $A_{PL}^\bullet(F)$ the Sullivan-de Rham algebra of $F$ and therefore determines its rational homotopy type. 
There is a different notion of equivariant formality of spaces. 
For a $G$-space $F$, if the Serre spectral sequence associated with the fibration $F\to (F)_{hG}\to BG$ degenerates at the $E_2$-page, $M$ is called \emph{equivariantly formal}. In particular, the $G$-spaces $F_m=F\ast^m G/H$ are equivariantly formal. 
In general the two notions are not related, but in good cases equivariant formality and the formality of the fiber space $F$ implies the formality of $F_{hG}$.

\begin{prop}[\cite{lupton1998variations}*{Proposition 3.2}]
    Let $F\to E \to B$ be a fibration of simply connected spaces of finite type over $\Q$\footnote{A simply connected space is \emph{of finite type} if it has finite-dimensional rational homology groups.}, in which $F$ is formal and rationally elliptic\footnote{A \emph{rationally elliptic space} $F$ is a space such that both $H^n(F;\Q)$ and $\pi_n(X)\otimes \Q$ are finite dimensional. } and $B$ is formal, then $E$ is formal if the associated Serre spectral sequence degenerates at the $E_2$-page.
\end{prop}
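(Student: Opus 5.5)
The plan is to work with Sullivan models and reduce formality of $E$ to a Koszul-complex computation over $H^\ast(B;\Q)$. Let $(\Lambda V_B,d)\to A_{PL}(B)$ be the minimal model of $B$, and let $(\Lambda V_B\otimes \Lambda V_F,D)$ be the relative Sullivan model of $F\to E\to B$, whose fiber $(\Lambda V_F,\bar d)$ is the minimal model of $F$. Since $B$ is formal, there is a quasi-isomorphism $\Lambda V_B\to H:=H^\ast(B;\Q)$. Pushing the relative model out along this map gives a CDGA $(H\otimes\Lambda V_F,D)$. By the standard fact that relative Sullivan extensions preserve quasi-isomorphisms, this is again a model of $E$. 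So it suffices to show that $(H\otimes\Lambda V_F,D)$ is formal.

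Next I use the structure of $F$. A formal, rationally elliptic, simply connected space has a minimal model of the form $\Lambda(Q\oplus P)$ with $\bar d Q=0$ and $\bar dP\subset \Lambda Q$. Moreover, if $p_1,\dots,p_r$ is a basis of $P$, then $\bar dp_1,\dots,\bar dp_r$ is a regular sequence in $\Lambda Q$, so that $H^\ast(F)=\Lambda Q/(\bar dp_i)$; this is the "two-stage / hyperformal" characterization. I would quote it or re-derive it from ellipticity via the odd spectral sequence.

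Now I use degeneration of the Serre spectral sequence at $E_2$. It gives $H^\ast(E)\cong H\otimes H^\ast(F)$ additively, and in particular $H^\ast(E)\to H^\ast(F)$ is surjective. Each $q\in Q$ is a $\bar d$-cocycle, so it lifts to a $D$-cocycle $q+\xi_q$ with $\xi_q\in H^{+}\otimes\Lambda V_F$. Arguing by induction on degree, I change generators $q\mapsto q+\xi_q$; this is an automorphism of the underlying free algebra over $H$. After this change the model becomes $(A\otimes\Lambda P,D)$ with $A:=H\otimes\Lambda Q$ carrying zero differential and $Dp_i=:\theta_i\in A$, where $\theta_i$ reduces to $\bar dp_i$ modulo $H^+$. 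Thus the model of $E$ is exactly the Koszul complex $K(\theta_1,\dots,\theta_r;A)$.

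The main step, and the one I expect to be the real obstacle, is to show that $\theta_1,\dots,\theta_r$ is a regular sequence in $A$. Equivalently, I need the Koszul homology $H_j(K)$ to vanish for $j>0$. I would filter $K$ by $H$-degree (the Serre filtration). Its $E_1$-page is $H\otimes K(\bar dp;\Lambda Q)$, whose homology is $H\otimes H^\ast(F)$, concentrated in Koszul degree $0$ because $\bar dp_1,\dots,\bar dp_r$ is regular. Degeneration says the total cohomology has the same dimension in each degree, which is finite by the finite-type hypothesis. The Koszul grading is compatible with the filtration, so one should then conclude $H_j(K)=0$ for $j>0$. A cleaner alternative is to argue via graded Nakayama: $A$ is free over $H$, and the sequence is regular modulo $H^+$, so it is regular in $A$ by the local flatness criterion. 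I would try the Nakayama argument first.

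Granting regularity, the projection
\[
(A\otimes\Lambda P,D)\longrightarrow A/(\theta_1,\dots,\theta_r)
\]
onto the zero-differential quotient is a CDGA quasi-isomorphism. Hence $E$ is formal, with $H^\ast(E)\cong (H\otimes\Lambda Q)/(\theta_i)$.
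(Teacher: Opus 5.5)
The paper gives no proof of this statement (it is quoted from Lupton), so your argument has to stand on its own. Most of it is sound:
\begin{itemize}
\item the pushout along $\Lambda V_B\to H$;
\item the F\'elix--Halperin form $\Lambda(Q\oplus P)$ of a formal elliptic fibre;
\item the use of collapse to make the $Q$-generators $D$-cocycles by a triangular substitution;
\item the graded Nakayama argument for regularity.
\end{itemize}
The gap is the sentence asserting that after $q\mapsto q+\xi_q$ one has $Dp_i=\theta_i\in A=H\otimes\Lambda Q$. Nothing forces this. In general
\[
Dp_i=\theta_i+\omega_i,\qquad \theta_i\in A,\qquad \omega_i\in H^{+}\otimes\Lambda(Q\oplus P)\ \text{in the ideal generated by } P .
\]
Examples of such terms are $h\,x\,p_j$ with $x\in Q$ odd, $h\,p_jp_k$, or $h\,q\,p_j$ with $h$ odd. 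A substitution on the $Q$-generators alone does not remove them. Indeed, a triangular change $p_k\mapsto p_k+h\,p_ip_j$ applied to a genuine Koszul model produces exactly such terms. So your model is not literally the Koszul complex $K(\theta;A)$, and the final claim that the projection to $A/(\theta)$ is a quasi-isomorphism is not justified as written.

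Closing the gap is not hard.
\begin{itemize}
\item Take $\theta_i$ to be the $P$-free part of $Dp_i$. It still reduces to $\bar d p_i$ modulo $H^{+}A$. So your Nakayama argument, applied to the abstract Koszul complex $K(\theta;A)$, shows that $\theta$ is regular and that $A/(\theta)$ is $H$-free with $A/(\theta)\otimes_H\Q\cong H^*(F;\Q)$.
\item Let $\varphi\colon H\otimes\Lambda(Q\oplus P)\to A/(\theta)$ be the algebra map that is the projection on $A$ and kills $P$. It is a CDGA map to a zero-differential target, since $\varphi(Dp_i)$ is the class of $\theta_i$, i.e.\ $0$, and $\varphi(\omega_i)=0$.
\item $\varphi$ is onto in cohomology, because $A/(\theta)$ is generated by $H$ and $Q$, which are images of cocycles.
\item By collapse and finite type, $\dim H^n(E)=\dim (H\otimes H^*(F))^n=\dim (A/(\theta))^n$. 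Hence $\varphi$ is a quasi-isomorphism and $E$ is formal.
\end{itemize}
This dimension count is also where collapse does real work; in your version it only supplies $DQ=0$.

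Alternatively, you can remove the $\omega_i$ by induction on $P$-word-length. The identity $D^2p_i=0$ makes the lowest-length part of $\omega_i$ a cycle for the Koszul differential $\delta$. By acyclicity of $K(\theta;A)$ in positive degrees it equals $\delta\eta_i$. Then $p_i\mapsto p_i-\eta_i$ is a triangular substitution that raises the length of $\omega_i$.
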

\begin{cor}
    If a simply-connected $G$-space $F$ is rationally elliptic, formal and equivariant formal, then $F_{hG}$ is also formal.
\end{cor}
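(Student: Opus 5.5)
The plan is to deduce the corollary directly from the Lupton criterion stated just above (\cite{lupton1998variations}*{Proposition 3.2}), applied to the Borel fibration
\[
F \to F_{hG} = EG\times_G F \to BG .
\]
It suffices to check each hypothesis of that proposition in turn.

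First, the base $BG$. Since $G$ is compact and connected, $BG$ is simply connected. By Borel's \Cref{BThm}, $H^*(BG;\Q)\cong\Q[V]^W$ is a free polynomial algebra, so it is formal: the map from a minimal Sullivan model on generators with zero differential realizes the cohomology. It is also of finite type, since each $H^j(BG;\Q)$ is finite dimensional.

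Second, the fiber $F$. It is simply connected and formal by assumption. Rational ellipticity gives that both $H^*(F;\Q)$ and $\pi_*(F)\otimes\Q$ are finite dimensional, so $F$ is in particular of finite type.

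Third, the total space $E=F_{hG}$. I must check that $F_{hG}$ is simply connected and of finite type, as the proposition requires.
\begin{itemize}
\item \emph{Simple connectivity.} Apply the long exact homotopy sequence to $F\to F_{hG}\to BG$. Since $\pi_1(F)=0$ and $\pi_1(BG)=\pi_0(G)=0$, we get $\pi_1(F_{hG})=0$. We also need $\pi_0(F)=0$, which holds because $F$ is simply connected and hence connected.
\item \emph{Finite type.} Run the Serre spectral sequence with $E_2=H^*(BG)\otimes H^*(F)$; the local system is trivial because $BG$ is simply connected. Both factors are finite dimensional in each degree, and $H^*(F)$ is bounded, so each total degree receives only finitely many finite-dimensional contributions.
\end{itemize}

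Finally, equivariant formality of $F$ means precisely that this Serre spectral sequence degenerates at $E_2$. All hypotheses are now satisfied, and Lupton's proposition yields that $F_{hG}$ is formal.

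The only step needing care is confirming that the precise hypotheses of Lupton's statement (simple connectivity and finite type for all three spaces) are met. The potential obstacle is the connectivity of $F_{hG}$, which is handled by the connectedness of $G$ as above.
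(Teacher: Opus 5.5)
Your proposal is correct and follows the paper's approach: the paper states this corollary without a separate proof, as an immediate application of Lupton's Proposition 3.2 to the Borel fibration $F \to F_{hG} \to BG$. There $BG$ is formal because $H^*(BG;\Q)$ is polynomial, and equivariant formality supplies the $E_2$-collapse of the Serre spectral sequence. Your explicit checks that $F_{hG}$ is simply connected and of finite type are correct; the paper leaves these routine verifications implicit.
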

There is a stronger notion of \emph{$G$-formality} introduced in \cite{lillywhite2003formality}, i.e., requiring $A_{PL}^\bullet(F_{hG})$ to be formal in the category $\CDGA_R$ where $R=\Q[V]^W$. Note the canonical map $\eta: A_{PL}^\ast(\pt_{hG})\to A_{PL}^\ast(F_{hG})$ equips $A_{PL}^\ast(F_{hG})$ with an $A_{PL}^\ast(\pt_{hG})$-module structure, and precomposing with the natural inclusion $R \hookrightarrow A_{PL}^\ast(\pt_{hG})$ makes $A_{PL}^\ast(F_{hG})$ a differential graded commutative $R$-algebra structure.

\begin{thm}[\cite{lillywhite2003formality}*{Theorem 5.5}]
    Let $F$ be a simply connected, rationally elliptic $G$-space. If $F$ is formal and equivariantly formal, then $F$ is $G$-formal.
\end{thm}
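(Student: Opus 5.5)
The plan is to work with relative Sullivan models over $R=\Q[V]^W=H^\ast(BG;\Q)$. I would then show that the relative model of $F_{hG}$ maps by a quasi-isomorphism of $R$-CDGAs onto an $R$-algebra with zero differential, which must then be $H^\ast(F_{hG};\Q)$. Since $BG$ is rationally a product of even Eilenberg--MacLane spaces, $R$ is a (formal) Sullivan model of $BG$. The Borel fibration $F\to F_{hG}\to BG$ therefore has a relative Sullivan model
\[
(R,0)\to (R\otimes \Lambda U, D)\to (\Lambda U,d),
\]
where $(\Lambda U,d)$ is the minimal model of $F$ and $(R\otimes\Lambda U,D)$ is quasi-isomorphic to $A_{PL}^\bullet(F_{hG})$ as an $R$-CDGA. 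It therefore suffices to produce an $R$-linear quasi-isomorphism from $(R\otimes\Lambda U,D)$ to a CDGA with zero differential.

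First, I would use the structure theorem for formal, rationally elliptic, simply connected spaces (Félix--Halperin). It says the minimal model of $F$ can be taken pure and two-stage: $U=Q\oplus P\oplus P'$ with $Q$ even, $P,P'$ odd, $dQ=0$, $dP'=0$, and $d$ sending a basis $y_1,\dots,y_r$ of $P$ to a regular sequence $f_1,\dots,f_r$ in $\Lambda Q$. In that presentation
\[
H^\ast(F;\Q)\cong \Lambda Q/(f_1,\dots,f_r)\otimes \Lambda P'.
\]
Second, I would use equivariant formality, i.e.\ collapse of the Serre spectral sequence. It makes $H^\ast(F_{hG})\to H^\ast(F)$ surjective and $H^\ast(F_{hG})$ a free $R$-module isomorphic to $R\otimes H^\ast(F)$. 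Surjectivity lets me change generators: each $q\in Q$ and $z\in P'$ is a cocycle in the fibre, so I replace it by $q+(\text{terms in } R^+\otimes\Lambda U)$ so that it becomes a $D$-cocycle. Induction on degree keeps this an automorphism of the relative model. After this change, $DQ=0$ and $DP'=0$, and
\[
Dy_i=\tilde f_i\in R\otimes \Lambda Q,\qquad \tilde f_i\equiv f_i \bmod R^+ .
\]
I must check that $Dy_i$ has no components involving odd generators. I plan to argue this from purity: one arranges the modification so that $D$ raises the lower grading by odd-generator count as in a pure model.

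Third, I would show that $R\otimes\Lambda Q$ is a graded polynomial ring in which $\tilde f_1,\dots,\tilde f_r$ form a regular sequence. This follows from the graded Nakayama/flatness argument: $R\otimes\Lambda Q/(\tilde f)$ reduced mod $R^+$ is $\Lambda Q/(f)$, which is finite-dimensional of the expected dimension. Alternatively, the Koszul homology of $\tilde f$ is a graded $R$-module whose reduction vanishes in positive Koszul degree, given the freeness from equivariant formality. Consequently the projection
\[
(R\otimes \Lambda Q\otimes \Lambda P\otimes\Lambda P', D)\longrightarrow \big(R\otimes\Lambda Q/(\tilde f_1,\dots,\tilde f_r)\otimes \Lambda P',\,0\big),
\]
which kills $P$, is an $R$-CDGA quasi-isomorphism. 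The target is then $H^\ast(F_{hG};\Q)$ with zero differential, and this gives $G$-formality.

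I expect the main obstacle to be the second step. One has to make the lifts of the generators of $Q$ and $P'$ to $D$-cocycles compatibly, so that $D(P)$ lands in $R\otimes\Lambda Q$ with no odd-generator terms. If direct purity bookkeeping fails, I would instead use a spectral sequence (filtration by $R$-degree) comparison to show that the map to $R\otimes H^\ast(F)$ is a quasi-isomorphism directly.
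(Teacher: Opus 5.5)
The paper gives no proof of this statement; it is quoted from Lillywhite \cite{lillywhite2003formality}, so your argument has to stand on its own. Your overall strategy is sound: a relative model over $R$, surjectivity of $H^*(F_{hG})\to H^*(F)$ to make the fibre's closed generators $D$-closed, deforming the regular sequence, then graded Nakayama. As written, however, it has two gaps.

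\textbf{First gap: the structure theorem.} The theorem you quote in your first step is wrong. Félix--Halperin give a \emph{two-stage} minimal model $\Lambda(V_0\oplus V_1)$ with $dV_0=0$, where $d$ sends a basis of $V_1$ to a regular sequence in the free graded-commutative algebra $\Lambda V_0$. But $V_0$ may contain odd generators that enter the relations. So the model need not be pure, and $H^*(F)$ need not be a complete intersection tensored with an exterior algebra.

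For example, take $(\Lambda(x_4,y_3,z_5,w_7),\ dw=x^2-yz)$.
\begin{itemize}
\item It is simply connected and elliptic.
\item It is formal: $x^2-yz$ is a nonzerodivisor in $\Lambda(x,y,z)$, so $w\mapsto 0$ is a quasi-isomorphism onto $\Lambda(x,y,z)/(x^2-yz)$.
\item It is not pure: every choice of generators gives $dw'$ containing a $yz$ term.
\item Its cohomology has $x^2=yz\neq 0$, so it is not of your complete-intersection-times-exterior form.
\end{itemize}
So you cannot appeal to purity, and the polynomial-ring/Krull-dimension reasoning in your third step does not apply as stated.

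\textbf{Second gap: the normalization $Dy_i\in R\otimes\Lambda Q$.} Your second step never establishes this. Purity of the fibre model says nothing about the $R^+$-components of $Dy_i$, which in general do involve odd generators. Your fallback presupposes an $R$-CDGA map to $R\otimes H^*(F)$, which is exactly what is missing. Moreover, $H^*(F_{hG})$ is generally not isomorphic to $R\otimes H^*(F)$ as an algebra: for $G=\SU(2)$, $F=G/T$ one has $H^*(F_{hG})=\Q[t]$, which has no nilpotents.

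\textbf{The repair.} The normalization is unnecessary.
\begin{enumerate}
\item Once $V_0$ is $D$-closed, let $\tilde f_i$ be the image of $Dy_i$ under the $R$-algebra map $R\otimes\Lambda U\to R\otimes\Lambda V_0$ that kills $V_1$.
\item Put $B:=R\otimes\Lambda V_0/(\tilde f_1,\dots,\tilde f_r)$, and let $\phi\colon R\otimes\Lambda U\to B$ be the induced $R$-algebra map.
\item Then $\phi\circ D$ is a derivation along $\phi$ that vanishes on $R$, $V_0$ and $V_1$. Hence $\phi$ is a map of $R$-CDGAs to $(B,0)$, whatever the $V_1$-components of $Dy_i$ are.
\item The Koszul complex of $\tilde f$ over $R\otimes\Lambda V_0$ consists of graded-free $R$-modules of finite type. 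Mod $R^+$ it reduces to the fibre model, which is acyclic in positive Koszul degree. Splitting off from the top by graded Nakayama shows that $B$ is $R$-free with $B\otimes_R\Q\cong H^*(F)$.
\item Filtering by $R$-degree, $\phi$ is a quasi-isomorphism, because $\phi\otimes_R\Q$ is the formality quasi-isomorphism $(\Lambda U,d)\to\Lambda V_0/(f)$.
\end{enumerate}
Equivariant formality is used only to make $V_0$ closed. The argument is insensitive to odd elements in $V_0$, so it closes both gaps.
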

By \eqref{eq:fmjoin}, $F_m$ is rationally equivalent to a finite wedge of even spheres of dimension at least 2 for $m>0$ and thus simply connected, rationally elliptic and formal. Therefore,
\begin{prop}
\label{prop:formal}
    $F_m$ is formal, $G$-formal and $T$-formal for $m>0$. In particular, $(F_m)_{hG}$ and $(F_m)_{hT}$ are formal for $m>0$.
\end{prop}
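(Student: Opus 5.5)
Our plan is to assemble the statement from the three results quoted just before it, using the rational homotopy type of $F_m$ computed in \eqref{eq:fmjoin}. First we record the properties of $F_m$ for $m>0$. Since $F$ is $\Q$-finite with rational cohomology only in even degrees, \eqref{eq:fmjoin} gives a rational equivalence $F_m\simeq_\Q\bigvee_{i\in I}\bS^{n_i+2km}$ with all $n_i+2km\ge 2k\ge 2$ even. A finite wedge of spheres of dimension at least $2$ is simply connected. It is formal, since wedges of formal spaces are formal: the minimal model is the coproduct, or equivalently its cochains are quasi-isomorphic to the cohomology algebra with trivial products. It also has finite-type rational cohomology. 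The one point to check here is rational ellipticity: a wedge of two or more spheres has infinite-dimensional total rational homotopy. So I would interpret ``rationally elliptic'' in the sense of the paper's footnote, which requires only that each $H^n$ and each $\pi_n\otimes\Q$ be finite dimensional. This finite-type condition does hold for a finite wedge of simply connected spheres, by the Hilton--Milnor theorem or simply because the minimal model is of finite type.

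Second, we establish $G$-formality and $T$-formality. Equivariant formality of $F_m$ for both $G$ and $T$ follows from the collapse of the Serre spectral sequence, since all cohomology is in even degrees; this is \eqref{eq:LSSS} and its $T$-analogue. Feeding simple connectivity, (finite-type) ellipticity, formality and equivariant formality into \cite{lillywhite2003formality}*{Theorem 5.5} gives that $F_m$ is $G$-formal. Applying the same theorem to the restricted $T$-action gives $T$-formality. A $G$-space is a $T$-space by restriction, and equivariant formality for $T$ follows by the same even-degree argument.

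Finally, formality of $(F_m)_{hG}$ and $(F_m)_{hT}$ follows from the corollary of \cite{lupton1998variations}*{Proposition 3.2} applied to $F_m\to (F_m)_{hG}\to BG$, whose base is formal with polynomial cohomology. Alternatively, it is immediate from $G$-formality: a zigzag of quasi-isomorphisms of $R$-CDGAs between $A_{PL}(F_{hG})$ and $H^*(F_{hG};\Q)$ is in particular a zigzag of CDGAs. The $T$ case is identical with $BT$.

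The main obstacle is the ellipticity hypothesis. If the cited theorems genuinely require finite total rational homotopy, they do not apply to wedges of several spheres. In that case I would bypass them and argue directly. By \eqref{eq:LSSS} and \Cref{prop:presentation}, $H^*((F_m)_{hG})$ is a free $R$-module concentrated in even degrees. I would then construct an $R$-CDGA quasi-isomorphism from a Sullivan $R$-model, using the formality of the fibre together with the vanishing of odd cohomology. Since the relevant obstructions live in odd degrees, they vanish, and the map to cohomology can be built degree by degree.
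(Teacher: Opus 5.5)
Your route is the same as the paper's. It reads off from \eqref{eq:fmjoin} that $F_m$ is rationally a finite wedge of even spheres, declares it simply connected, formal and rationally elliptic, and then invokes \cite{lillywhite2003formality}*{Theorem 5.5} and \cite{lupton1998variations}*{Proposition 3.2}. The non-equivariant part is fine, since $F_m\simeq\Sigma^{2km}F$ is a simply connected suspension and hence formal. You have, however, put your finger on a real gap, and the paper's proof shares it: a wedge of two or more spheres is rationally hyperbolic, which already happens for $F=G/T$ with $|W|>2$.

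Reading the footnote degreewise does not repair this, because the equivariant conclusions are false under your weakened hypotheses. Take $G=\SU(2)$ and $H=\{e\}$, so $F'=\bS^3$ and $\theta=c_2$. Let $F$ be a finite complex of rational type $(\bS^2\vee\bS^2)\cup e^6$ with trivial $G$-action, the $6$-cell attached along a suitable length-four iterated Whitehead product. Choose it so that $H^*(F;\Q)$ has basis $1,a,b,e$, all products of positive-degree classes vanish, and $\langle a,a,b,b\rangle=e$; there is no indeterminacy since $H^4(F;\Q)=0$. This $F$ satisfies the standing hypotheses. Then $F_1\simeq\Sigma^4F\simeq_\Q\bS^6\vee\bS^6\vee\bS^{10}$ passes every check you make: simply connected, formal, degreewise finite, equivariantly formal. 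Yet $(F_1)_{hG}\simeq\hocolim\{F\times BG\leftarrow F\to\ast\}=F_+\wedge BG$ has cochain model $\Q\oplus\bar R\otimes A$, where $R=\Q[c_2]$ and $A$ is a model of $F$. In this model $\langle c_2a,c_2a,c_2b,c_2b\rangle=c_2^4e+\Q c_2^5a+\Q c_2^5b$, the indeterminacy coming only from $H^{16}=\Q c_2^4$, and this set does not contain $0$. So $(F_1)_{hG}$ is not formal and $F_1$ is not $G$-formal, which contradicts both cited results under your reading.

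The same example disposes of your fallback. The obstruction is a quadruple Massey product of even classes, living in even degree $22$, so parity of degrees kills nothing. What is actually needed is a hypothesis on $X_0$ together with a different mechanism. Suppose $X_0=F_{hG}$ is formal as an $R$-algebra, with $R=H^*(BG;\Q)$; for example $F=G/T$ with $X_0=BT$. Then induct on $m$:
\begin{enumerate}
\item $A_{PL}$ turns $X_{m+1}=\hocolim\{X_m\leftarrow X_m\times_{BG}BH\to BH\}$ into a homotopy pullback.
\item Eilenberg--Moore, the freeness of $Q_m$ over $R$, and the $R$-formality of $BH$ (it has polynomial cohomology, as $H$ is connected) model the fibre product by $Q_m\otimes^{\bL}_R R/(\theta)\simeq Q_m/\theta Q_m$.
\item Since $Q_m\to Q_m/\theta Q_m$ is surjective, the homotopy pullback is the strict pullback $Q_m\times_{Q_m/\theta Q_m}R/(\theta)=Q_{m+1}$, which has zero differential.
\end{enumerate}
This is the computation in the proof of \Cref{thm:alg-gen-quasi} carried out on cochains. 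The $T$-equivariant version runs the same way over $\Q[V]$, starting from $(G/T)_{hT}$ with model $\Q[V]\otimes_R\Q[V]$.
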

It follows that the $G$-equivariant rational cohomology $H_G^\ast(F_m;\Q)$ defines a coaffine stack $\, \cSpec\,[H_G^\ast(F_m;\Q)] \in \cAff_{\Q} \,$ that determines the rational homotopy type of $(F_m)_{hG}$ for any $m>0$. 

\begin{rmk}
Our assumption of $F$ having rational cohomology concentrated in even degrees ensures $F$ is equivariant formal. If, in addition, $F$ is also simply connected of finite type, rationally elliptic and formal, $F$ is also $G$-formal and $T$-formal.
\end{rmk}


\section{Examples}
\label{sec:eg}
In this section, we focus on the two major examples we mentioned in the introduction $F=G/T$ and $F=E_{\com}G$.

\subsection{Flag manifolds}
\label{sec:flag}
Let $EG$ be Milnor's model of universal $G$-bundle and $BG=EG/G$ be a model of classifying space of $G$. As $EG$ is contractible, we can restrict the right $G$-action on $EG$ to obtain a $T$-action, making $EG$ a model of universal $T$-bundle. Consequently, we can use $BT=EG/T$ as our model of classifying space for $T$. Moreover, the Weyl group $W$ naturally acts on $BT$ via: 
\[W\times BT\to BT, \quad (nT,\left[x\right]_{T}) \mapsto \left[xn^{-1}\right]_{T}\,.\]
Let $p: BT \to BG$ be the natural fibration induced by the inclusion map $i: T \hookrightarrow G$. 

\subsubsection{Construction}
Let $ H $ be a closed subgroup of $G$ such that $G/H\cong \bS^{2k-1}$ for some $k\geq 1$. 
Applying \Cref{thm:borel-tow} to the two fibrations $G/T \hookrightarrow BT \to BG$ and $G/H \hookrightarrow BH \to BG$
we obtain a tower of Borel fibrations
\begin{equation*}
    F_m(G/T,G/H)=G/T\ast^m G/H \to X_m(G/T,G/H)=BT\ast_{BG}^m BH \to BG
\end{equation*}
for $m\geq 0$. 
The fiber carries a natural left holonomy action $\Omega BG \times F_m(G/T,G/H) \to F_m(G/T,G/H)$, which under the identification $\Omega BG \simeq G$, corresponds to the diagonal action of $G$ on $F_m(G/T,G/H)$:
\begin{equation}\label{eq:G-act-F-m}
    g\cdot (t_0g_0T+t_1g_1H+\ldots+t_ng_nH) = t_0gg_0T+t_1gg_1H+\ldots+t_ngg_nH \,.
\end{equation}

The Weyl group $W=N_{G}(T)/T$ acts on $G/T$ by $nT\cdot (gT)=gn^{-1}T$, inducing $W$-action on $F_m(G/T,G/H)$ which commutes the $G$-action \eqref{eq:G-act-F-m}, inducing a $W$-action on the homotopy quotient $X_m(G/T,G/H)$ given by 
\begin{equation*}
    nT\cdot[x,t_0g_0T+t_1g_1H+\ldots+t_ng_nH] = [x,t_0g_0n^{-1}T+t_1g_1H+\ldots+t_ng_nH]
\end{equation*}
where $x\in EG$ and $nT\in W$. The inclusions $F_m(G/T,G/H)\hookrightarrow F_{m+1}(G/T,G/H)$ given by 
\begin{equation*}
    t_0g_0T+t_1g_1H+\ldots+t_ng_nH \mapsto t_0g_0T+t_1g_1H+\ldots+t_ng_nH + 0 
\end{equation*}
are $G\times W$-equivariant, and therefore induce $W$-equivariant maps 
$$\pi_m: X_m(G/T,G/H) \to X_{m+1}(G/T,G/H)\,.$$ 
The whisker maps $p_m: X_m(G/T,G/H)\to BG$ are also $W$-equivariant.  

In particular, we see that
$H^{\ast}_W(F_m(G/T,G/H);\Q) \cong \Q$
using the same argument as the rank one case in \cite{berest2023topological} if we replace $G$ by $G/H$, and it follows that we have an algebra isomorphism
\begin{equation*}
    Q_m(G/T,G/H)^W  = H^{\ast}_W(X_m(G/T,G/H);\Q)\cong H^{\ast}(BG;\Q) = \Q[V]^W \,.
\end{equation*}

Gathering all the properties discussed above, we can now formulate a result parallel to \cite{berest2023topological}*{Theorem 3.9}.

\begin{prop}\label{prop:rel-quasi}
    Let $G$ be a compact connected Lie group with torsion free fundamental group, with maximal torus $T \subseteq G $ and Weyl group $ W_G = N_G(T)/T $, and a closed subgroup $H$ such that $G/H\cong \bS^{2k-1}$. 
    Then we have a diagram of $G\times W$-spaces 
    \begin{equation}
    \label{eq:Fm-diag}
    \begin{tikzcd}[sep=small]
        G/T = F_0 \ar[r] & \ldots \ar[r] & F_m \ar[r,"\pi_m"] & F_{m+1} \ar[r,"\pi_{m+1}"] & \ldots 
    \end{tikzcd}
    \end{equation} 
    where $F_m=F_m(G/T,G/H)=(G/T)\ast^m (G/H)$ equipped with diagonal $G$-action and induced $W$-action from $G/T$.  Then 
    \begin{enumerate}
        \item $F_m$ is formal, $G$-formal and $T$-formal for all $m\geq0$.
        \item $Q_m(G/T,G/H)=H^\ast_G(F_m;\Q)$ is a free module over $\Q[V]^W=H^{\ast}(BG;\Q)$ of rank $|W|$ for all $m\geq0$.
        \item $\bQ_m(G/T,G/H)=H^\ast_T(F_m;\Q)$ is a free module over $\Q[V]=H^{\ast}(BT;\Q)$ of rank $|W|$ for all $m\geq0$.
        \item $Q_m(G/T,G/H)^W=\Q[V]^W$. 
        \item $\bQ_m(G/T,G/H)^W=Q_m(G/T,G/H)$.
        \item $\bQ_m(G/T,G/H)=Q_m(G/T,G/H)\otimes_{\Q[V]^W}\Q[V]$.
        \item The diagram \eqref{eq:Fm-diag} induces a $W$-equivariant descending filtration on $H^\ast_G$
        \begin{equation*}
        \Q[V] \hookleftarrow  Q_1(G/T,G/H)  \hookleftarrow \ldots  
        \hookleftarrow Q_m(G/T,G/H) \xhookleftarrow{\,\pi_m^*\,} Q_{m+1}(G/T,G/H) \hookleftarrow \ldots
        \end{equation*}
        such that  $ \,\varprojlim \,Q_m(G/T,G/H) \cong \Q[V]^W $. 
        \item The diagram \eqref{eq:Fm-diag} induces a $W\times W$-equivariant descending filtration on $H^\ast_T$
        \begin{equation*}
        \Q[V]\otimes_{\Q[V]^W}\Q[V] \hookleftarrow  \bQ_1(G/T,G/H)  \hookleftarrow \ldots  
        \hookleftarrow \bQ_m(G/T,G/H) \xhookleftarrow{\,\pi_m^*\,} \bQ_{m+1}(G/T,G/H) \hookleftarrow \ldots
        \end{equation*}
        such that  $ \,\varprojlim \,\bQ_m(G/T,G/H) \cong \Q[V]$. 
    \end{enumerate}
\end{prop}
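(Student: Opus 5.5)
The plan is to specialize \Cref{thm:borel-tow}, \Cref{prop:presentation}, \Cref{thm:HTFm} and \Cref{prop:formal} to $F=G/T$, $F'=G/H$, and then supply the extra inputs particular to flag manifolds. These inputs are: Borel's \Cref{BThm}, which gives $H^\ast_G(G/T;\Q)=H^\ast(BT;\Q)=\Q[V]$ and $\dim_\Q H^\ast(G/T;\Q)=|W|$; the vanishing of odd cohomology of $G/T$; and the identification $H^\ast_T(G/T;\Q)\cong\Q[V]\otimes_{\Q[V]^W}\Q[V]$ with its $W\times W$-action. The last identification follows from \Cref{cor:coh-hty-q} with $K=H=T$, since $\Q[V]$ is free over $\Q[V]^W$.

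Items (2) and (7) are then immediate from \Cref{thm:borel-tow}: the rank is $|W|$, and $Q_0=\Q[V]$. For (1), when $m>0$ we use \Cref{prop:formal}; when $m=0$, $G/T$ is formal (classical) and simply connected, elliptic and equivariantly formal, so the cited Lillywhite theorem gives $G$- and $T$-formality. For (3), we run the $T$-equivariant version of the freeness argument: by \eqref{eq:fmjoin}, $F_m$ has even cohomology of total dimension $|W|$, so the Serre spectral sequence of $F_m\to (F_m)_{hT}\to BT$ collapses and $\bQ_m$ is free over $\Q[V]$ of rank $|W|$. For (8), injectivity and the presentation come from \Cref{thm:HTFm}, which says $\bQ_m=\Q[V]\otimes_{\Q[V]^W}\Q[V]\cdot$-invariants modulo $\theta^m$. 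By \Cref{rmk:EqPre}, for the $W$ acting on the right factor this equals $\bQ_0^{W}+\theta^m\bQ_0=\Q[V]\otimes 1+\theta^m\bQ_0$. The limit computation is as in \Cref{prop:basis} applied over $P_\infty=\Q[V]$ with $P_0$ free of rank $|W|$: the filtration $\bigcap_m(\Q[V]+\theta^m\bQ_0)=\Q[V]$ holds by degree reasons, since elements of bounded degree cannot lie in $\theta^m\bQ_0$ for large $m$.

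For (6), I would compare the two Borel constructions. We have $(F_m)_{hT}\simeq BT\times_{BG}(F_m)_{hG}$, and \Cref{lem:coh-pback} (the Eilenberg--Moore argument, applied now with $BT\to BG$ and using freeness of $Q_m$ over $\Q[V]^W$) gives $\bQ_m\cong\Q[V]\otimes_{\Q[V]^W}Q_m$. Then (5) follows by taking $W$-invariants for the Weyl action on $F_m$, namely the action that sits inside $\bQ_m$ via the $G/T$ factor. The point to check is which $W$ is meant. Using the presentation $\bQ_m=\Q[V]\otimes1+\theta^m(\Q[V]\otimes\Q[V])$ and averaging (the Reynolds operator, valid since $\theta$ is invariant), the invariants under the $G/T$-side $W$ are $(\Q[V]\otimes1)^W+\theta^m(\Q[V]\otimes\Q[V])^{W}$. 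Matching this with $Q_m\cong 1\otimes$-side requires carefully tracking the two copies of $\Q[V]$. I expect this bookkeeping of the two $W$-actions, and of which factor $H^\ast(BT)$ versus $H^\ast_G(G/T)$ corresponds to, to be the main obstacle.

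For (4), the quickest route is the same averaging in $Q_m=\Q[V]^W+\theta^m\Q[V]$: we get $Q_m^W=\Q[V]^W+\theta^m\Q[V]^W=\Q[V]^W$. This agrees with the $H^\ast_W$ computation recalled just before the statement, which also gives the $W$-equivariance of the filtrations in (7) and (8), because the maps $\pi_m$ are $G\times W$-equivariant. Torsion-freeness of $\pi_1(G)$ enters only to guarantee that $H\supseteq$ a suitable torus and that the list of pairs applies, so that $\theta\in\Q[V]^W$ is a nonzero divisor with $H^\ast(BH)=\Q[V]^W/(\theta)$.
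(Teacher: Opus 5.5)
Your overall route matches the paper's. The paper gives no separate proof of this proposition; it only gathers \Cref{thm:borel-tow}, \Cref{prop:presentation}, \Cref{thm:HTFm}, \Cref{prop:formal} and its remarks on $W$-actions. Items (2), (3), (6), (7), (8) go through as you describe, and your averaging proof of (4) is cleaner than the paper's appeal to $H^*_W(F_m;\Q)\cong\Q$.

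The genuine gap is (5), and it is not mere bookkeeping: for the action you commit to, the claim is false. The $W$-action induced from $G/T\subset F_m$ acts on the factor $H^*_G(G/T)$, i.e.\ the right factor of $\bQ_0=\Q[V]\otimes_{\Q[V]^W}\Q[V]$. Your own averaging then gives $\bQ_m^W=\Q[V]\otimes 1+\theta^m(\Q[V]\otimes 1)=\Q[V]\otimes 1\cong H^*(BT;\Q)$. Equivalently, by (6) and (4), $\Q[V]\otimes_{\Q[V]^W}Q_m^W=\Q[V]$. This is not $Q_m$ for $m\ge1$ and $W\ne 1$, since the Hilbert series differ (cf.\ \Cref{cor:hilb}).

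The invariants in (5) must be taken for the \emph{other} Weyl action: $N_G(T)$ acting on $EG\times_T F_m$ by $[e,x]\mapsto[en^{-1},nx]$. This action covers the standard action on $BT=EG/T$, is trivial on $X_m=EG\times_G F_m$, and acts only on the left factor $H^*(BT;\Q)$. It is not induced by any action on $F_m$; together with the $G/T$-action it is what produces the $W\times W$ in (8). With it, (5) is one line from your (6). The Eilenberg--Moore isomorphism $\bQ_m\cong\Q[V]\otimes_{\Q[V]^W}Q_m$ is equivariant with $W$ acting on $\Q[V]$ alone. Since $Q_m$ is free over $\Q[V]^W$, the invariants are $\Q[V]^W\otimes_{\Q[V]^W}Q_m=Q_m$. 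In your presentation, averaging over the left factor gives $1\otimes(\Q[V]^W+\theta^m\Q[V])=1\otimes Q_m$. This is the general fact $H^*_T(Y;\Q)^W\cong H^*_G(Y;\Q)$ recorded in the paper's ``$W$-action'' paragraph. Note the asymmetry: the quasi-invariance conditions in your presentation for (8) refer to the $G/T$-side action, while the invariants in (5) refer to the action on the $H^*(BT)$ factor.

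Two smaller points. First, for (1) with $m>0$ you rely on \Cref{prop:formal}, whose argument asserts that $F_m$ is rationally elliptic. But by \eqref{eq:fmjoin}, $F_m$ is rationally a wedge of $|W|-1$ spheres, which is rationally hyperbolic once $|W|\ge 3$. So the Lupton and Lillywhite results, as quoted with an ellipticity hypothesis, do not apply; this defect is shared with the paper. An argument avoiding ellipticity is the CDGA induction behind \Cref{thm:alg-gen-quasi}:
\begin{itemize}
\item If $C^*(X_m;\Q)$ is formal over $\Q[V]^W$, Eilenberg--Moore gives $C^*(X_m\times_{BG}BH;\Q)\simeq Q_m\otimes^{\bL}_{\Q[V]^W}\Q[V]^W/(\theta)=Q_m/\theta Q_m$.
\item The homotopy pullback of $Q_m\to Q_m/\theta Q_m\leftarrow \Q[V]^W/(\theta)$ is the strict one, namely $Q_{m+1}$, because the first map is surjective. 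So $X_{m+1}$ is again formal over $\Q[V]^W$.
\item Base change along $\Q[V]^W\to\Q[V]$ then gives $T$-formality, and $F_m$ is formal as a rational wedge of spheres.
\end{itemize}

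Second, torsion-freeness of $\pi_1(G)$ plays no role in (1)--(8). The identity $H^*(BH;\Q)=\Q[V]^W/(\theta)$, with $\theta$ a nonzero divisor, follows from the Gysin sequence together with $H^{\rm odd}(BH;\Q)=0$. The hypothesis matters only for Hodgkin's spectral sequence in the $K$-theory computations.
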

In particular, we can recover the examples of coaffine stacks of relative quasi-invariants defined in \Cref{sec:cAff}.
\begin{thm}
\label{thm:Qm-rht}
    Let $G$ be a compact connected Lie group with torsion free fundamental group, with maximal torus $T \subseteq G $ and Weyl group $ W_G = N_G(T)/T $, and a closed subgroup $H$ such that $G/H\cong \bS^{2k-1}$. Let $S=T\cap H$ be the maximal torus of $H$, and Weyl group $ W_H = N_H(S)/S $.
    Then, for any $ m \in \bZ_+ $, 
    there are natural equivalences in $\cAff_{\Q} $:
\begin{eqnarray}
    \cSpec \,[C^\ast(X_m(G/T,G/H);\Q)\,] & \simeq & V^c_m(W_G,W_H)\, , 
\end{eqnarray}
where $X_m(G/T,G/H)=EG\times_G F_m(G/T,G/H)$ and $C^\ast(X_m(G/T,G/H);\Q)$ is the rational cochain complex of $X_m(G/T,G/H)$, and $V^c_m(W_G, W_H)$ is defined in \eqref{eq:Vcm}.
\end{thm}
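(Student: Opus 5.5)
We argue by induction on $m$, comparing the topological Ganea tower with the algebraic one of \Cref{thm:alg-gen-quasi}. The comparison runs through the contravariant functor $X \mapsto \cSpec\,[C^\ast(X;\Q)]$ from (nilpotent, $\Q$-finite type) spaces to $\cAff_{\Q}$. The key input is that this functor sends the homotopy pushout $X_{m+1} = \hocolim\{X_m \leftarrow X_m\times_{BG}BH \to BH\}$ to the corresponding colimit of coaffine stacks. At the cochain level this amounts to two facts:
\begin{itemize}
\item $C^\ast$ turns homotopy pushouts of spaces into homotopy pullbacks of CDGAs; this always holds, by Mayer--Vietoris.
\item $C^\ast$ turns the homotopy fiber product $X_m\times_{BG}BH$ into the derived tensor product $C^\ast(X_m)\otimes^{\bL}_{C^\ast(BG)}C^\ast(BH)$.
\end{itemize}
The second is an Eilenberg--Moore statement. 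It holds because $BG$ is simply connected and all spaces involved are of $\Q$-finite type; this is the same input as \Cref{lem:coh-pback}.

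The base case $m=0$ needs three identifications:
\begin{itemize}
\item $\cSpec\,C^\ast(BT;\Q) \simeq \cSpec\,\Q[V] = V^c$, since $BT$ and $BG$ are formal with polynomial cohomology.
\item $\cSpec\,C^\ast(BG;\Q)\simeq V^c/\!/W_G$, by Borel's \Cref{BThm}.
\item $\cSpec\,C^\ast(BH;\Q)\simeq V_H^c/\!/W_H$, via $H^\ast(BH;\Q)\cong \Q[V_H]^{W_H}$ for the maximal torus $S=T\cap H$.
\end{itemize}
We must also check that the maps $BT\to BG$ and $BH\to BG$ correspond to \eqref{eq:fib-Wn} and \eqref{eq:fib-W}. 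Formality of $BG$, $BT$, $BH$ gives equivalences of CDGAs with their cohomology. Moreover, any map between polynomial (free) CDGAs with zero differential is determined up to homotopy by its effect on cohomology. So the morphisms are identified once the cohomology maps agree, and they do: both are the restriction maps $\Q[V]^{W_G}\to\Q[V]$ and $\Q[V]^{W_G}\to\Q[V]^{W_H}\cong\Q[V_H]^{W_H}$. The Gysin sequence discussion preceding \Cref{prop:T-HQ-GH} shows the latter is surjective with kernel $(\theta)$, so the hypothesis of \Cref{thm:alg-gen-quasi} holds.

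For the inductive step, suppose $\cSpec\,C^\ast(X_m)\simeq V_m^c(W_G,W_H)$, compatibly with the structure maps to $V^c/\!/W_G$. Applying the cochain functor to the pushout square defining $X_{m+1}$ gives a pullback of CDGAs. By the two bullet facts above, this matches termwise the diagram defining $V^c_{m+1}$ in \eqref{eq:Vcm}: the colimit in $\cAff_{\Q}$, i.e.\ the limit in CDGA, of the span built from $V^c_m\times_{V^c/\!/W}V_H^c/\!/W_H$. We therefore obtain the natural equivalence at level $m+1$. Naturality in $m$ follows because the maps $\pi_m$ are induced by maps of spans, exactly as in the proof of \Cref{prop:inj}. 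As a consistency check, taking cohomology recovers $Q_{m+1}=\Q[V]^W+\theta\, Q_m$ from \Cref{prop:presentation}, matching \Cref{thm:alg-gen-quasi}.

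The main obstacle is justifying that $\cSpec\circ C^\ast$ preserves these particular colimits, together with the identification of the structure maps as maps of coaffine stacks rather than merely on cohomology. For the fiber product one needs convergence of Eilenberg--Moore: $BG$ is simply connected and everything is of finite type, which suffices. For the pushout, $\cAff_{\Q}$ is the opposite of coconnective CDGAs, and one needs $X_{m+1}$ to be nilpotent, or at least simply connected, so that $C^\ast$ detects its rational type. Here $X_m$ is simply connected for $m\geq 1$, since $F_m$ is a wedge of spheres of dimension $\geq 2$ and $BG$ is simply connected. Alternatively, one can sidestep the pushout issue using the formality of $X_m$ from \Cref{prop:formal}. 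Since $Q_{m+1}$ is free over $\Q[V]^W$ and the relevant maps are determined by cohomology, the equivalence can also be verified directly on cohomology algebras; \Cref{thm:borel-tow} and \Cref{thm:alg-gen-quasi} give the same algebra $\Q[V]^W+\theta^{m}\Q[V]$. I would present the functorial colimit argument and use formality as the fallback.
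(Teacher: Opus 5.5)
Your proposal is correct. It shares the inductive skeleton with the paper, but its key input is different.

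\textbf{The paper's route.} The paper first invokes the formality of $X_m$ (\Cref{prop:formal}, which rests on the wedge-of-spheres description \eqref{eq:fmjoin} of $F_m$ and the formality theorems quoted before it). This lets it replace $C^\ast(X_m;\Q)$ by $H^\ast(X_m;\Q)$. It then matches algebras by induction on $m$. The topological side gives $H^\ast(X_{m+1};\Q)=\Q[V]^W+\theta\cdot H^\ast(X_m;\Q)$ from Section~\ref{sec:tower}. The algebraic side gives $V^c_m=\cSpec\,Q_m(W,W_H)$ from \Cref{thm:alg-gen-quasi}. The base case is $H^\ast(BT;\Q)=\Q[V]$ and $H^\ast(BS;\Q)=\Q[V_H]$. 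This is exactly your fallback.

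\textbf{Your route.} Your main argument shows instead that $\cSpec\circ C^\ast$ carries the topological relative join to the relative join in $\cAff_{\Q}$. It uses Mayer--Vietoris for the pushout and Eilenberg--Moore over the simply connected $BG$ for the fiber product. It therefore needs neither formality nor any cohomology computation.

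\textbf{What each buys.} Your route gives naturality directly. At each level the equivalence comes with its structure map to $V^c/\!/W$ and is compatible with the $\pi_m$. The formality route, as written, identifies the objects and leaves the compatibility of maps implicit. That compatibility is not automatic there: formality quasi-isomorphisms are non-canonical, and maps out of $Q_m$ (which is not a free CDGA) are not a priori determined by their effect on cohomology. Your route also reverses the logic. Since \Cref{thm:alg-gen-quasi} identifies $V^c_m$ with $\cSpec$ of an algebra with zero differential, you obtain formality of $X_m$ as a corollary rather than an input. The paper's route is shorter only because formality and $H^\ast(X_m;\Q)$ had already been established.

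\textbf{Three small remarks.} First, the cospan shape $\{1\to 0\leftarrow 1'\}$ has no composable arrows. So your identification of the two legs up to homotopy already gives an equivalence of cospans in $\Fun((\Delta^1\vee\Delta^1)^{\rm op},\cAff_{\Q})$, which is what you need to apply the join functor. Second, the nilpotency of $X_{m+1}$ plays no role: the statement concerns $\cSpec\,C^\ast$ itself, and Mayer--Vietoris needs no connectivity. Third, in the base case the second map is the restriction $\Q[V]^{W_G}\subseteq\Q[V]^{W_H}\to\Q[V_H]^{W_H}$ along $V_H\subseteq V$. It is not an isomorphism $\Q[V]^{W_H}\cong\Q[V_H]^{W_H}$.
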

\begin{proof}
    As $X_m(G/T,G/H)$ is formal, 
    \[\cSpec \,[C^\ast(X_m(G/T,G/H);\Q)\,] \simeq \cSpec \,[H^\ast(X_m(G/T,G/H);\Q)\,] \,, \]
    and we can prove by induction on $m$. Note $H^\ast(BT;\Q)=\Q[V]$ and $H^\ast(BS;\Q)=\Q[V_H]$ corresponds to $V^c$ and $V_H^c$ respectively.
\end{proof}

\subsubsection{\texorpdfstring{$G$}--equivariant rational cohomology}
An explicit presentation of the $G$-equivariant cohomology of $F_m(G/T,G/H)$ follows from \Cref{prop:presentation}. 
\begin{prop}
Let $\theta\in H^{\ast}(BG;\Q)=\Q[V]^W$ be the Euler class of the spherical fiber bundle \eqref{eq:fib-F'}, then 
\begin{equation}
\label{prop:QmGTH}
Q_m(G/T,G/H) = \{\, f\in \Q[V] \, |\, s_{\alpha}(f) \equiv f \, \mod\, (\theta)^m \,, \,\forall s_{\alpha}\in \cA_W \,\}
\end{equation}
where $\cA_W$ is the set of reflections in $W$.
\end{prop}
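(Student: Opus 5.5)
The plan is to specialize \Cref{prop:presentation} to $F=G/T$ and then rewrite the congruence condition using reflections only. Concretely, I will take $Q_0(G/T,G/H)=H^\ast_G(G/T;\Q)=H^\ast(BT;\Q)=\Q[V]$. Here the $G$-equivariant cohomology of $G/T$ is the cohomology of $EG\times_G G/T\simeq BT$, and the $W$-action is induced from the right action of $W$ on $G/T$, which matches the standard action on $\Q[V]$. By Borel's \Cref{BThm} and Chevalley's \Cref{CThm}, $\Q[V]$ is a free $\Q[V]^W$-module of rank $|W|=\dim_\Q H^\ast(G/T;\Q)$, with a basis that may be chosen to contain $b_1=1$. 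This is exactly the input required by \Cref{prop:presentation}.

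Next I will recall from the proof of \Cref{thm:borel-tow} (the subsection on presentation and basis) the identity
\[
H^\ast(X_{m+1};\Q)=\Q[V]^W+\theta\cdot H^\ast(X_m;\Q),
\]
obtained from \Cref{lem:coh-pback}, \Cref{prop:Xm+1}, \Cref{prop:inj}, and the Gysin computation $H^\ast(BH;\Q)=\Q[V]^W/(\theta)$. Applying \Cref{prop:basis} with $P_0=\Q[V]$ and $P_\infty=\Q[V]^W$ then gives
\[
Q_m(G/T,G/H)=\Q[V]^W+\theta^m\cdot\Q[V].
\]
This step uses that $\theta$ is a nonzero divisor in $\Q[V]$, which holds since $\theta\neq 0$ in a domain.

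The remaining step is to identify $\Q[V]^W+\theta^m\Q[V]$ with the set on the right of \eqref{prop:QmGTH}, which is the argument of \Cref{prop:gen-quasi}. For the inclusion $\subseteq$: elements of $\Q[V]^W$ are fixed by every $s_\alpha$, and for $f=\theta^m g$ we have $s_\alpha(f)-f=\theta^m(s_\alpha g-g)$ because $\theta$ is $W$-invariant. For $\supseteq$: the ideal $(\theta)^m$ is $W$-stable and $W$ is generated by the reflections in $\cA_W$. So the congruence $s_\alpha f\equiv f$ for all reflections propagates to $w f\equiv f$ for all $w\in W$ by telescoping, using
\[
s_1 s_2 f - f = s_1(s_2 f-f)+(s_1 f-f).
\]
Averaging then gives
\[
f=\tfrac{1}{|W|}\sum_w w f+\tfrac{1}{|W|}\sum_w(f-wf),
\]
which lies in $\Q[V]^W+\theta^m\Q[V]$.

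I expect the only delicate point to be the identification of $Q_0$ with $\Q[V]$ \emph{as $W$-algebras}, compatibly with the Euler class $\theta$ sitting in $\Q[V]^W\subset\Q[V]$ via $p^\ast$. I would handle this by invoking the $W$-equivariance of $BT\to BG$ discussed in \Cref{sec:flag}, together with Borel's identification $p^\ast:H^\ast(BG;\Q)\cong\Q[V]^W$.
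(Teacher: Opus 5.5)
Your proposal is correct and follows essentially the paper's route. The paper simply specializes \Cref{prop:presentation} to $F=G/T$ (so $Q_0=H^\ast(BT;\Q)=\Q[V]$), and that proposition rests on the identity $Q_{m+1}=\Q[V]^W+\theta\cdot Q_m$, on \Cref{prop:basis}, and on the averaging argument of \Cref{prop:gen-quasi} and \Cref{rmk:EqPre}. Your explicit telescoping step, showing that the congruences for reflections imply the congruences for all $w\in W$, fills in a point that the paper's proof of \Cref{prop:gen-quasi} leaves implicit.
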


\begin{cor}\label{cor:hilb}
    The Hilbert series of $X_m(G/T,G/H)$ is given by 
    \begin{equation*}
    p_{X_m}(t) =  \frac{1-t^{2mk}}{\prod_{i=1}^n(1-t^{2d_i})} + \frac{t^{2mk}}{(1-t^2)^n}
    \end{equation*}
    where $d_i$ are the degrees of the Weyl group $W=N_{G}(T)$.
\end{cor}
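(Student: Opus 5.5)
We compute the Hilbert series directly from the free-module description of $Q_m(G/T,G/H)$ given in \Cref{prop:presentation}, rather than through the quasi-invariant presentation \eqref{prop:QmGTH}. Take $F=G/T$, so that $Q_0=H^\ast(BT;\Q)=\Q[V]$ with generators in degree $2$. By \Cref{thm:borel-tow} this is a free $\Q[V]^W$-module of rank $|W|=\dim_\Q H^\ast(G/T;\Q)$. Choose a homogeneous basis $\{b_i\}_{i=1}^{|W|}$ with $b_1=1$; for instance, lifts of a homogeneous basis of the coinvariant algebra $\Q[V]/(\Q[V]^W_+)\cong H^\ast(G/T;\Q)$. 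Then \Cref{prop:presentation} gives the direct sum decomposition
\[
Q_m=\Q[V]^W\oplus\bigoplus_{1<i\le |W|}\theta^m b_i\cdot\Q[V]^W .
\]

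Hilbert series are additive over direct sums and multiplicative under shifts by homogeneous elements. We therefore need three inputs. First, the Euler class of the sphere bundle with fiber $\bS^{2k-1}$ has degree $2k$, so $\deg\theta^m=2mk$. Second, $\Q[V]^W$ is a polynomial algebra on generators of degrees $2d_i$ by Chevalley's \Cref{CThm}, so its Hilbert series is $\prod_{i=1}^n(1-t^{2d_i})^{-1}$. Third, the basis degrees satisfy
\[
\sum_{i}t^{\deg b_i}=\frac{\prod_{i}(1-t^{2d_i})}{(1-t^2)^n},
\]
because $\Q[V]=\bigoplus_i b_i\Q[V]^W$ and $\Q[V]$ has Hilbert series $(1-t^2)^{-n}$.

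Combining these, the Hilbert series of $Q_m$ is
\[
\frac{1}{\prod_i(1-t^{2d_i})}+t^{2mk}\Bigl(\sum_i t^{\deg b_i}-1\Bigr)\frac{1}{\prod_i(1-t^{2d_i})}.
\]
Substituting the identity for $\sum_i t^{\deg b_i}$ and simplifying yields
\[
\frac{1-t^{2mk}}{\prod_i(1-t^{2d_i})}+\frac{t^{2mk}}{(1-t^2)^n},
\]
which is the claimed formula.

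The one point needing care is whether the sum is genuinely direct with the stated degrees. Directness is guaranteed by \Cref{prop:basis}, using that $\theta$ is a nonzero divisor in the domain $\Q[V]$. One should also check that the Euler class $\theta$ is nonzero, of degree exactly $2k$, and lands in $\Q[V]^W$. This follows from the Gysin sequence used in the proof of \Cref{thm:borel-tow}: since $G/H$ is an odd sphere with $H^\ast(BH)=\Q[V]^W/(\theta)$ of rank-deficient type, $\theta$ must be nonzero.
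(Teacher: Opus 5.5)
Your proof is correct, but it takes a different route from the paper's.

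The paper never uses the explicit module basis. It reads off $p_{X_m}=p_{F_m}\,p_{BG}$ from the collapse of the Serre spectral sequence, i.e.\ the graded isomorphism \eqref{eq:LSSS}. It then computes $p_{F_m}$ topologically from the rational splitting \eqref{eq:fmjoin}, $F_m\simeq_{\Q}\bigvee_i \bS^{n_i+2km}$, so that every positive-degree class of $G/T$ is shifted up by $2mk$. Finally it inserts the Poincar\'e polynomial of $G/T$ in the form $\sum_{w\in W}t^{2\ell(w)}=\prod_i(1-t^{2d_i})/(1-t^2)$, quoted from Humphreys.

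You work entirely on the algebraic side. You use the homogeneous basis $\{1,\theta^m b_i\}_{i>1}$ of \Cref{prop:presentation}, and you get the generating function $\sum_i t^{\deg b_i}$ directly from the freeness of $\Q[V]$ over $\Q[V]^W$. So you never need the length-function identity.

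The two computations agree term by term: the basis element $\theta^m b_i$ corresponds to the wedge summand of $F_m$ of dimension $\deg b_i+2mk$. The trade-off is as follows.
\begin{itemize}
\item The paper's version needs only the vector-space splitting $H^\ast(X_m)\cong H^\ast(F_m)\otimes H^\ast(BG)$ and the rational homotopy type of the iterated join.
\item Yours rests on the finer structural result \Cref{prop:presentation}, and hence on the injectivity argument and the Gysin description of $H^\ast(BH)$. In return it is self-contained on the combinatorial side.
\end{itemize}

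One small point: your closing justification that $\theta\neq 0$ (``rank-deficient type'') is vague. A clean argument is that $H^\ast(BH;\Q)$ vanishes in odd degrees. The Gysin sequence of $\bS^{2k-1}\to BH\to BG$ then forces multiplication by $\theta$ on $H^\ast(BG;\Q)$ to be injective. So $\theta\neq 0$, and it is a nonzero divisor, which is what \Cref{prop:basis} needs.
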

\begin{proof}
    The isomorphism in \eqref{eq:LSSS} implies that the Hilbert series of $X_m(G/T,G/H)$ is given by 
    $$p_{X_m(G/T,G/H)}(t) = p_{F_m(G/T,G/H)}(t) p_{BG}(t).$$ 
    For a compact connected Lie groups $ G $, $p_{BG}(t) = \prod_{i=1}^n\frac{1}{1-t^{2d_i}}$ 
    where $ d_i $ are the degrees of the homogeneous algebraically independent generators of $ H^{\ast}(BG) \cong \Q[V]^{W} $, so it suffices for us to compute $p_{F_m}(t)$. 
    
    By \eqref{eq:fmjoin}, the Hilbert series of $ F_m $ is given by 
    \begin{equation*}
        q_{F_m}(t) = 1 + \sum_{w\in W\backslash{\{e\}}}t^{2l(w)+2mk} = 1- t^{2mk} + t^{2mk} \sum_{w\in W}(t^2)^{l(w)}.
    \end{equation*}
    We can write
    \begin{equation*}
        W(t) = \sum_{w\in W}t^{l(w)} = \prod_{i=1}^n\frac{1-t^{d_i}}{1-t}
    \end{equation*}
    (see, e.g., \cite{humphreys1990reflection}*{Theorem 3.15}). 
    Replacing $ t $ by $ t^2 $ and plugging $ W(t^2) $ in, we get 
    \begin{equation*}
        p_{F_m}(t) = 1- t^{2mk} + t^{2mk} W(t^2) = 1-t^{2mk} + t^{2mk}\prod_{i=1}^n\frac{1-t^{2d_i}}{1-t^2}
    \end{equation*}
    and the desired Hilbert series of $X_m(G/T,G/H)$.
\end{proof}

\subsubsection{\texorpdfstring{$T$}--equivariant rational cohomology}
By \Cref{thm:HTFm}, we have
\begin{prop}
    The $T$-equivariant cohomology of $F_m(G/T,G/H)$ is given by 
    \begin{equation*}
        \bQ_m(G/T,G/H)= \{\, f\in \Q[V]\bigotimes_{\Q[V]^W}\Q[V] \, |\, s_\alpha(f)\equiv f \, \mod\, (\theta)^m \, \}
    \end{equation*}
    where the $W$-action is taken on the first tensor factor.
    Moreover, the natural $W$-action on $G/T$ induces a second $W$-action on both $Q_m$ and $\bQ_m$. With respect to this action, we obtain a $W$-equivariant isomorphism 
    \begin{equation*}
        \bQ_m(G/T,G/H)^{W} \cong Q_m(G/T,G/H) \,.
    \end{equation*}
\end{prop}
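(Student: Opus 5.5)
The plan is to specialize \Cref{thm:HTFm} to $F=G/T$ and then take $W$-invariants for the second $W$-action.

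\textbf{Step 1: the base case.} First identify $\bQ_0(G/T,G/H)=H^\ast_T(G/T;\Q)$. Apply \Cref{cor:coh-hty-q} with $K=T$ and $H=T$. This gives $H^\ast_T(G/T;\Q)\cong H^\ast(BT\times_{BG}BT;\Q)\cong \Q[V]\otimes_{\Q[V]^W}\Q[V]$. The higher $\Tor$ terms vanish because $\Q[V]$ is free over $\Q[V]^W$ by \Cref{CThm}.

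The two $W$-actions need to be matched with the tensor factors.
\begin{itemize}
\item The first $W$-action is the one used in \Cref{thm:HTFm}, coming from the identification $W=N_G(T)/T$ acting on $BT=EG/T$. Under the homeomorphism \eqref{eq:GHK}, it acts on the first factor.
\item The second $W$-action comes from $nT\cdot gT=gn^{-1}T$ on $G/T$. It acts on the second factor $H^\ast(BT)$, which is pulled back along the $BT$ that plays the role of $BH$ in \eqref{eq:GHK}.
\end{itemize}
The Euler class $\theta\in\Q[V]^W$ lies in the common image of $\Q[V]^W$, so multiplication by $\theta$ is the same from either side. With these identifications, \Cref{thm:HTFm} gives the displayed presentation of $\bQ_m(G/T,G/H)$ verbatim.

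\textbf{Step 2: the invariance isomorphism.} The maps $\pi_m$ are $G\times W$-equivariant, so the second $W$-action preserves each $\bQ_m\subseteq\bQ_0$ and commutes with the first action. By \Cref{rmk:EqPre} applied with $Q_0=\bQ_0$, we have $\bQ_m=\bQ_0^{W_1}+\theta^m\bQ_0$, where $W_1$ denotes the first copy of $W$. Both summands are stable under the second action $W_2$. Taking $W_2$-invariants is exact over $\Q$, and the Reynolds operator commutes with multiplication by the $W_2$-invariant element $\theta$. Therefore
\[
\bQ_m^{W_2}=(\bQ_0^{W_1})^{W_2}+\theta^m\bQ_0^{W_2}.
\]
Since $\bQ_0^{W_2}=\Q[V]\otimes_{\Q[V]^W}\Q[V]^W=\Q[V]$, the second term is $\theta^m\Q[V]$. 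The first term is $\Q[V]^W$. Hence $\bQ_m^{W_2}=\Q[V]^W+\theta^m\Q[V]$, which equals $Q_m(G/T,G/H)$ by \Cref{prop:gen-quasi} together with \eqref{prop:QmGTH}.

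\textbf{Step 3: compatibility with the second action on $Q_m$.} For the isomorphism to be $W$-equivariant, we must check that the action on $Q_m$ induced from $G/T$ corresponds to the residual $W_1$-action. This holds because the isomorphism $\bQ_0^{W_2}\cong\Q[V]$ intertwines $W_1$ with the standard action.

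An alternative route to the same isomorphism is topological: $(F_m)_{hT}\to(F_m)_{hG}$ has fiber $G/T$, and the transfer argument shows $H_G^\ast=(H_T^\ast)^W$. I would mention this only as a cross-check.

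\textbf{Main obstacle.} The hardest step is the bookkeeping in Step 1: confirming through \eqref{eq:GHK} which tensor factor carries which $W$-action, and that the ideal $(\theta)$ is the same from both sides. Once that is fixed, the rest is the formal computation in Step 2.
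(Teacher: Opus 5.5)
Your Step~1 puts the two Weyl actions in the wrong roles. Because your later algebra is symmetric in the two factors, this is easy to mistake for a labelling convention, but it is a genuine error.

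What the proof of \Cref{thm:HTFm} (the $BT$-version of the argument for \Cref{prop:presentation}) gives is $\bQ_{m+1}=s(\Q[V])+\theta\,\bQ_m$, hence $\bQ_m=s(\Q[V])+\theta^m\bQ_0$. Here $s\colon H^*(BT;\Q)\to H^*_T(G/T;\Q)$ is the structure map induced by $[e,gT]_T\mapsto eT$, and under \eqref{eq:GHK} its image is the first factor $\Q[V]\otimes 1$. To rewrite $\bQ_m$ as $\bQ_0^{W'}+\theta^m\bQ_0$ via \Cref{rmk:EqPre}, you need $\bQ_0^{W'}=\Q[V]\otimes 1$. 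So $W'$ must act on the second factor: it is the action induced by $gT\mapsto gn^{-1}T$ (your $W_2$). It is not the $N_G(T)$-action on $EG\times_T(-)$ (your $W_1$), which covers the standard action on $BT$ and therefore acts nontrivially on $\Q[V]\otimes 1$.

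With your choice, $\bQ_0^{W_1}+\theta^m\bQ_0=1\otimes\Q[V]+\theta^m\bQ_0$. For $m\ge 1$ this contains no $x\otimes 1$ with $x$ a linear form not in $(V^*)^W$. That is impossible, because $\bQ_m\hookrightarrow\bQ_0$ is a map of $H^*(BT)$-algebras, so $\bQ_m$ must contain $\Q[V]\otimes 1$. Examples~\ref{eg:U} and~\ref{eg:Spin-G2} also put the reflections on the second factor.

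As a result, Step~2 takes invariants under the wrong group. For the true $\bQ_m=\bQ_0^{W_2}+\theta^m\bQ_0$ one gets $\bQ_m^{W_2}=\bQ_0^{W_2}=\Q[V]\otimes 1\cong\Q[V]$ for every $m$. This is as it must be: $H^*_T(F_m)^{W_2}\cong H^*_T(F_m/W)$, and $F_m/W=G/N_G(T)\ast G/H\ast\cdots\ast G/H$ is rationally acyclic. So the isomorphism you reach is false for $m\ge 1$. The correct computation, using exactly your Reynolds-operator argument, is
\[
\bQ_m^{W_1}=(\Q[V]^W\otimes 1)+\theta^m\,(1\otimes\Q[V])\,\cong\,\Q[V]^W+\theta^m\Q[V]=Q_m(G/T,G/H)\,.
\]
This is what the transfer argument you mention predicts, since $(F_m)_{hT}\to(F_m)_{hG}$ gives $H^*_G=(H^*_T)^W$ for the $N_G(T)$-action, i.e.\ for $W_1$. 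That cross-check in fact refutes your Step~1: quasi-invariance under $W_1$ would force $\bQ_m^{W_1}=\bQ_0^{W_1}\cong\Q[V]$.

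The equivariance in Step~3 should then be for the residual $W_2$-action on $\bQ_m^{W_1}\subseteq 1\otimes\Q[V]\cong H^*_G(G/T)$. That is the standard action, and it matches the action on $Q_m$ induced from $G/T$. In your version $W_2$ acts trivially on $\bQ_m^{W_2}$, which is why you had to compare $W_1$ on one side with the $G/T$-action on the other. Swapping the roles of $W_1$ and $W_2$ throughout repairs the argument.
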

In other words, the $G$-equivariant cohomology can be recovered from the $T$-equivariant cohomology by taking $W$-invariants, while retaining the additional $W$-action from $G/T$.

\subsubsection{\texorpdfstring{$G$}--equivariant \texorpdfstring{$K$}--theory}

For a compact Lie group $G$ acting continuously on a compact topological space $X$, $K_{G}(X)$ is defined to be the Grothendieck group of $G$-equivariant complex topological vector bundles over $X$. As shown in\, \cite{segal1968classifying}, this construction extends to a $\bZ/2$-graded multiplicative generalized cohomology theory $K_{G}^{\ast}$ on the category of locally compact $G$-spaces, known as $G$-equivariant topological complex $K$-theory.

We write $K_{G}^{\ast}(X)=K_{G}^{0}(X) \oplus K_{G}^{1}(X)$ with the convention that $K_{G}^{0}(X) \cong K_{G}^{2n}(X)$ and $K_{G}^{1}(X)\cong K_{G}^{2n+1}(X)$ for all $n\in\bZ$. 
When $G$ is trivial, $K_{G}^{\ast}(X)$ coincides with the ordinary complex $K$-theory $K^{\ast}(X)$ of $X$ and when $X$ is a point, $K_{G}^{\ast}(\pt)=R(G)$ is the representation ring of $G$, concentrated in even degrees. Functoriality of $K_{G}^{\ast}$ endows $K_{G}^{\ast}(X)$ with a canonical $R(G)$-module structure, induced by the unique map $X\to\pt$. For further background on equivariant $K$-theory and Atiyah-Segal completion theorem that relates $K_{G}^{\ast}(X)$ and $K(X_{hG})$, we refer the reader to\, \cites{segal1968classifying, atiyah1969equivariant}.

The equivariant $K$-theory of the spaces $F_m(G/T,G/H)$ are closely analogous to the rank-one case studied in\, \cite{berest2023topological}.
Our computation relies on two classical results. 
The first result is a K\"{u}nneth type of formula for equivariant $K$-theory first studied by Hodgkin, which requires $\pi_1(G)$ to be torsion free. 
\begin{thm}[\cite{hodgkin1975equivariant}, \cite{brylinski2000equivariant}*{Theorem 2.3}]
\label{thm:hodgkin}
For a compact connected Lie group $ G $ such that $ \pi_1(G) $ is torsion free, for any $ G $-spaces $ X,Y $, there is a spectral sequence $ E_{r} \Rightarrow K^{\ast}_{G}(X\times Y) $ with $ E_2 $-term 
\[ E_2^{\ast,\ast} = \Tor_{R(G)}^{\ast,\ast}(K_{G}^{\ast}(X),K_{G}^{\ast}(Y)) \]
that converges to $K_{G}^{\ast}(X\times Y)$ where $X\times Y$ is viewed as a $G$-space with diagonal action.
\end{thm}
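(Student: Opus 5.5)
The plan is to recast the statement as a Künneth theorem for modules over a ring spectrum, and then to use the hypothesis on $\pi_1(G)$ to show that the relevant comparison map is an equivalence. Throughout I restrict to finite $G$-CW complexes $X,Y$, which is the case needed for the spaces $F_m$ in this paper; the general case should then follow by passing to colimits or using $\varprojlim^1$ arguments.

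\textbf{Setup.} Let $\mathbf{K}_G$ denote the genuine $G$-equivariant $K$-theory spectrum, and let $R:=\mathbf{K}_G^G$ be its $G$-fixed point ring spectrum. Then
\[
\pi_*R=R(G)[\beta^{\pm1}].
\]
For a finite $G$-CW complex $X$, set $M_X:=F(X_+,\mathbf{K}_G)^G$. This is an $R$-module with $\pi_*M_X=K_G^*(X)$. The diagonal of $X\times Y$ gives a natural map of $R$-modules
\[
\mu_{X,Y}\colon M_X\wedge_R M_Y\to M_{X\times Y}.
\]
Once $\mu_{X,Y}$ is known to be an equivalence, the Künneth spectral sequence for smash products over a commutative ring spectrum (in the EKMM sense),
\[
E_2=\Tor^{\pi_*R}_{*,*}(\pi_*M_X,\pi_*M_Y)\Rightarrow \pi_*(M_X\wedge_R M_Y),
\]
is exactly the asserted spectral sequence. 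Convergence is automatic, because $\pi_*R$ has finite global dimension: $R(G)$ is a Laurent-polynomial ring tensored with a polynomial ring when $\pi_1(G)$ is torsion free.

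\textbf{Proving $\mu_{X,Y}$ is an equivalence.} Both source and target are exact in each variable and commute with finite homotopy colimits. By cell induction it therefore suffices to check the claim for $X=G/K$ and a fixed finite $Y$. In that case $M_{G/K\times Y}\simeq F(Y_+,\mathbf{K}_K)^K$ and $M_{G/K}\simeq \mathbf{K}_K^K$, so the claim becomes that restriction induces
\[
\mathbf{K}_K^K\wedge_R M_Y\xrightarrow{\ \sim\ } (\mathrm{res}^G_K M_Y)^K,
\]
that is, $K_K^*(Y)\cong R(K)\otimes^{\bL}_{R(G)}K_G^*(Y)$ at the level of homotopy. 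This is false for arbitrary $K$, so the argument must go through the maximal torus.

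\textbf{Reduction to the torus (the main obstacle).} The step I expect to be hardest is the case $K=T$, together with the descent from $T$ back to general $K$. This is where the hypothesis on $\pi_1(G)$ is used.

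First I would invoke Pittie–Steinberg: $R(T)$ is free over $R(G)$ of rank $|W|$, which requires $\pi_1(G)$ torsion free. I would also use the Hodgkin/McLeod lemma $K_G^*(G/T\times Y)\cong K_T^*(Y)$, which holds by induction.

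Second, I would prove $K_T^*(Y)\cong R(T)\otimes_{R(G)}K_G^*(Y)$. To do this, filter $G/T$ by Bruhat cells. Each cell is $T$-equivariantly a complex representation, so by equivariant Bott periodicity for complex $T$-representations the associated graded pieces are free. The Steinberg basis then splits the filtration $R(G)$-linearly, which identifies $G/T_+\wedge Y$ with a wedge of $|W|$ copies of $Y$ after applying $K_G$.

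Third, with the torus case in hand, I would handle general $K$ by faithfully flat descent along $R(G)\to R(T)$. Applying $-\wedge_R\mathbf{K}_T^T$, which reflects equivalences because $\pi_*$ is free and nonzero, reduces the claim for $K$ to the claim for $T$ acting on $G/K\times Y$. That case has already been established.

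Once these three points are in place, the reduction in the previous paragraph shows $\mu_{X,Y}$ is an equivalence, and the Künneth spectral sequence of the setup gives the stated $E_2$-term converging to $K_G^*(X\times Y)$ with the diagonal action.
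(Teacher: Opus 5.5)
The paper gives no proof of this statement; it is quoted from Hodgkin and Brylinski--Zhang, so your argument has to stand on its own. Your overall strategy is sound and close to the classical proof (torus case, plus McLeod's isomorphism, plus faithful flatness of $R(T)$ over $R(G)$). That strategy is: a K\"unneth spectral sequence over $\mathbf{K}_G^G$, cell induction down to orbits, and descent along $R(G)\to R(T)$. However, two of the three steps in your reduction to the torus fail as written. Separately, your remark that the displayed equivalence is ``false for arbitrary $K$'' is a misstatement. With the derived tensor product it is exactly what the theorem asserts for every closed $K$; only the underived $R(K)\otimes_{R(G)}K_G^*(Y)$ fails.

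\textbf{The Bruhat step.} The Schubert cells of $G/T=G_{\mathbb{C}}/B$ are $B$-orbits, so they are $T$-stable but not $G$-stable. They therefore give no filtration of the $G$-space $G/T\times Y$ with diagonal action, and say nothing about $K_G^*(G/T\times Y)=K_T^*(Y)$. Filtering $T$-equivariantly computes $K_T^*(G/T\times Y)$, which is a different group, and there is no $G$-equivariant filtration for the Steinberg basis to ``split''.

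McLeod's isomorphism $R(T)\otimes_{R(G)}K_G^*(Y)\cong K_T^*(Y)$ needs different inputs:
\begin{enumerate}
\item The holomorphic-induction (Dolbeault) pushforward $p_!\colon K_G^*(G/T\times Y)\to K_G^*(Y)$. It satisfies the projection formula and restricts to the Weyl character formula on $R(T)=K_G^*(G/T)$.
\item Steinberg's theorem that his basis $\{e_w\}$ has a dual basis $\{f_w\}$ for the pairing $(a,b)\mapsto p_!(ab)$ on $R(T)$ over $R(G)$. This unimodularity, not mere freeness, is what uses $\pi_1(G)$ torsion-free. It gives $\Phi\colon (y_w)_w\mapsto\sum_w e_w\,p^*y_w$ and $\Psi(z)=(p_!(f_wz))_w$ with $\Psi\Phi=\mathrm{id}$.
\item A separate proof that $\Phi\Psi=\mathrm{id}$. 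One way is to decompose the diagonal class in $K_G^*(G/T\times G/T)=K_T^*(G/T)\cong R(T)\otimes_{R(G)}R(T)$; this is where Bruhat cells legitimately enter, $T$-equivariantly. Another is via the divided-difference operators of Harada--Landweber--Sjamaar.
\end{enumerate}

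\textbf{The descent step.} After applying $\mathbf{K}_T^T\wedge_R-$ to $\mu_{G/K,Y}$, the torus case identifies the target $\mathbf{K}_T^T\wedge_R M_{G/K\times Y}$ with $F((G/K\times Y)_+,\mathbf{K}_T)^T$. The source is $\mathbf{K}_T^T\wedge_R M_{G/K}\wedge_R M_Y\simeq M_{G/T\times G/K}\wedge_R M_Y$, and it is identified with the same spectrum only if $\mu_{G/T\times G/K,\,Y}$ is an equivalence.

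Now $G/T\times G/K\cong G\times_T\mathrm{res}^G_T(G/K)$ has $G$-cells of type $G/H$, with $H$ ranging over closed subgroups of $T$, finite ones included. So what you actually need is $\mu_{G/H,Y}$ for all closed $H\le T$, which is in effect the K\"unneth theorem for the torus. Your torus case only covers $H=T$, and running the descent again is circular. So ``that case has already been established'' is not true.

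This is repairable. Choose a basis $\chi_1,\dots,\chi_s$ of the characters of $T/H$; then $T/H\cong S(\mathbb{C}_{\chi_1})\times\dots\times S(\mathbb{C}_{\chi_s})$ as $T$-spaces. Hence $G/H$ is an iterated circle bundle over $G/T$. The induced $G$-line bundles have Euler classes $1-\chi_i$ up to units, which form a regular sequence in $R(T)$ with quotient $R(H)$. Their Gysin cofibre sequences reduce $\mu_{G/H,Y}$ inductively to $\mu_{G/T,Y}$ by the five lemma.

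With these two repairs your argument proves the statement for finite $G$-CW complexes, which is all the paper uses.
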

The second result is a Mayer-Vietoris type of formula (see, e.g., \cite{berest2023topological}*{Lemma 5.2}). 
\begin{lem}
\label{lem:h-pushout}
Let $ f: U \to X $ and  $ g: U \to Y $ be proper equivariant maps of $ G $-spaces. Let $ Z = \hcolim \{ X \xleftarrow{f} U \xrightarrow{g} Y \} $ where the homotopy colimit is taken in the category of $ G $-spaces. Then the abelian groups $ K_{G}^{\ast}(X), K_{G}^{\ast}(Y) $ and $ K_{G}^{\ast}(Z) $ are related by the following six-term exact sequence
\[\begin{tikzcd}
K_{G}^{0}(Z) \ar[r] & K_{G}^{0}(X) \oplus K_{G}^{0}(Y) \ar[r,"f^{\ast}-g^{\ast}"] & K_{G}^{0}(U) \ar[d,"\p"] \\
K_{G}^{1}(U)\ar[u,"\p"] & K_{G}^{1}(X) \oplus K_{G}^{1}(Y) \ar[l,"f^{\ast}-g^{\ast}"'] & K_{G}^{1}(Z) \,. \ar[l]
\end{tikzcd}\]
\end{lem}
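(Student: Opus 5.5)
The argument reduces to the standard Mayer--Vietoris sequence for a homotopy pushout. I would first represent $Z$ by the double mapping cylinder
\[
Z \,=\, X \cup_{f} (U\times[0,1]) \cup_{g} Y ,
\]
with $G$ acting trivially on the interval factor. This is a $G$-space, and it is covered by two $G$-invariant open sets. The first is $A = X\cup_f U\times[0,2/3)$, which $G$-equivariantly deformation retracts onto $X$. The second is $B = U\times(1/3,1]\cup_g Y$, which retracts onto $Y$. Their intersection $A\cap B = U\times(1/3,2/3)$ is $G$-homotopy equivalent to $U$.

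Properness of $f$ and $g$ is what allows these retractions to be used in $K_G^\ast$. Since $K_G^\ast$ is defined on locally compact $G$-spaces via compactly supported classes, I need the deformations to be proper homotopies, and they are, because $f$ and $g$ are proper. Proper homotopy invariance then identifies $K_G^\ast(A)\cong K_G^\ast(X)$, $K_G^\ast(B)\cong K_G^\ast(Y)$ and $K_G^\ast(A\cap B)\cong K_G^\ast(U)$. Under these identifications the two restriction maps out of $K_G^\ast(A)\oplus K_G^\ast(B)$ become $f^\ast$ and $g^\ast$.

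Next I would invoke the Mayer--Vietoris sequence for the open cover $\{A,B\}$. This holds for any $\bZ$-graded multiplicative equivariant cohomology theory satisfying excision, and Segal \cite{segal1968classifying} shows $K_G^\ast$ is such a theory. The sequence can be derived from the long exact sequence of the pair $(Z,B)$ together with excision $K_G^\ast(Z,B)\cong K_G^\ast(A,A\cap B)$. The unbounded long exact sequence reads
\[
\cdots\to K_G^n(Z)\to K_G^n(X)\oplus K_G^n(Y)\xrightarrow{f^\ast-g^\ast} K_G^n(U)\xrightarrow{\partial} K_G^{n+1}(Z)\to\cdots .
\]

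Finally, equivariant Bott periodicity gives $K_G^n\cong K_G^{n+2}$, naturally in the space. Applying it folds the long exact sequence into the stated six-term cyclic exact sequence, with the connecting maps $\partial$ running from $K_G^0(U)$ to $K_G^1(Z)$ and from $K_G^1(U)$ to $K_G^0(Z)$.

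The one delicate point is the compatibility of supports. I must check that the excision isomorphism and the homotopy equivalences above are valid for $K$-theory with compact supports, and this is exactly where the properness hypothesis on $f$ and $g$ is used. In the compact case, which is the main one needed later, these issues disappear and the statement is the standard Mayer--Vietoris sequence; one may also cite \cite{berest2023topological}*{Lemma 5.2} for it.
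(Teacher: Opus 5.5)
The paper gives no proof of its own here; it only cites \cite{berest2023topological}*{Lemma 5.2}. Your route (double mapping cylinder, Mayer--Vietoris, Bott periodicity) is the natural one, but the step where you invoke properness is false as written.

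\textbf{Where it fails.} Take $K_G^\ast$ on locally compact spaces to be $K$-theory with compact supports, written $K_c^\ast$, as you do. Then the open set $A=X\cup_f U\times[0,2/3)$ is \emph{not} proper-homotopy equivalent to $X$, even when $f$ is proper. The fibre of the retraction $r\colon A\to X$ over $f(u)$ contains the closed half-open segment $\{u\}\times[0,2/3)$, so $r^{-1}(f(u))$ is not compact. Already for $G$ trivial and $X=Y=U=\mathrm{pt}$ you get $Z=[0,1]$ and $A=[0,2/3)$, with $K_c^\ast(A)=0\neq K^\ast(\mathrm{pt})$.

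There is a second problem with the direction of the maps. In compactly supported theory, open inclusions induce maps covariantly (extension by zero), and the long exact sequence of a pair is for closed subspaces. An open cover therefore yields $\cdots\to K_c^\ast(A\cap B)\to K_c^\ast(A)\oplus K_c^\ast(B)\to K_c^\ast(Z)\to\cdots$, not a sequence with restriction maps $f^\ast,g^\ast$.

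The problem does not go away in the compact case either. $A$ and $B$ are non-compact even when $X,Y,U$ are, and Segal's $K_G$ on them is the compactly supported one.

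\textbf{The repair.} Use the closed pieces $Z_1=X\cup_f U\times[0,\tfrac12]$ and $Z_2=U\times[\tfrac12,1]\cup_g Y$. Their intersection is $Z_1\cap Z_2=U\times\{\tfrac12\}\cong U$ on the nose.

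Restriction to closed subspaces is defined, since closed inclusions are proper. Properness of $f$ is now exactly what is needed. The collapse $r\colon Z_1\to X$ has $r^{-1}(K)=K\cup f^{-1}(K)\times[0,\tfrac12]$, which is compact. The homotopy $H_s(u,t)=(u,st)$, $H_s|_X=\mathrm{id}$, from $i\circ r$ to $\mathrm{id}_{Z_1}$ satisfies $r\circ H_s=r$, so it is proper as well. Hence $K_G^\ast(Z_1)\cong K_G^\ast(X)$, and likewise $K_G^\ast(Z_2)\cong K_G^\ast(Y)$ using properness of $g$.

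Excision is then automatic. Since $Z\setminus Z_2=Z_1\setminus(Z_1\cap Z_2)$, you get $K_G^\ast(Z,Z_2)=K_G^\ast(Z_1,Z_1\cap Z_2)$. Comparing the long exact sequences of these two closed pairs gives the Mayer--Vietoris sequence with maps $f^\ast-g^\ast$. Bott periodicity then folds it into the six-term sequence exactly as you describe.

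Alternatively, when all spaces are compact, you can work in representable equivariant $K$-theory. There ordinary homotopy invariance holds and your open cover is fine.
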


Combining the above two results we can show the following proposition.
\begin{prop}\label{prop:K_GFm}
    If $R(H) = R(G)/(\Theta)$ for some $\Theta\in R(G)$, then the $G$-equivariant $K$-theory of $F_m=G/T\ast^mG/H$ (as an Abelian group) is given by 
    \[K_{G}(F_m) = \{\, f\in R(T) \, |\, s_\alpha(f) \equiv f \, \mod\, (\Theta)^m \, \}\,.\]
\end{prop}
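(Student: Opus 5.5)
We argue by induction on $m$, mirroring the cohomological argument of \Cref{prop:Xm+1}, \Cref{prop:inj} and \Cref{prop:basis} with $H^\ast(BG;\Q)$ replaced by $R(G)$ and $\theta$ replaced by $\Theta$. The case $m=0$ is $K_G(G/T)\cong R(T)$, and $R(T)$ is free over $R(G)$ of rank $|W|$ because $\pi_1(G)$ is torsion free (Pittie--Steinberg). Write $K_m:=K_G^\ast(F_m)$. The inductive hypothesis is stronger than the statement: $K_m$ is concentrated in even degree, is a free $R(G)$-module of finite rank, and as a subring of $R(T)$ satisfies
\[
K_m = R(G)+\Theta^m R(T).
\]
By the generalization in \Cref{rmk:EqPre}, this is equivalent to the stated presentation by congruences modulo $(\Theta)^m$, since $\Theta$ is $W$-invariant and $R(T)^W=R(G)$.

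For the inductive step we use $F_{m+1}=F_m\ast G/H=\hcolim\{F_m\leftarrow F_m\times G/H\to G/H\}$ as $G$-spaces, and apply \Cref{lem:h-pushout}. We have $K_G(G/H)=R(H)=R(G)/(\Theta)$, concentrated in even degree. To compute $K_G(F_m\times G/H)$ we apply Hodgkin's spectral sequence (\Cref{thm:hodgkin}). Since $K_m$ is free over $R(G)$, all higher $\Tor$ vanish and the sequence collapses, giving
\[
K_G^\ast(F_m\times G/H)\cong K_m\otimes_{R(G)}R(G)/(\Theta)=K_m/\Theta K_m .
\]
This is even. The map $K_m\oplus R(H)\to K_m/\Theta K_m$ is surjective, since already $K_m\to K_m/\Theta K_m$ is. Hence the six-term sequence collapses, $K^1_G(F_{m+1})=0$, and
\[
K_{G}(F_{m+1})\cong K_m\times_{K_m/\Theta K_m}R(G)/(\Theta).
\]
Here we need $\Theta$ to be a nonzerodivisor on $K_m$, which follows from $K_m\subseteq R(T)$ and $R(T)$ being a domain. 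We also need the map $R(G)/(\Theta)\to K_m/\Theta K_m$ to be injective, so that the fiber product projects injectively onto $K_m$. This injectivity holds because $1$ can be taken as part of an $R(G)$-basis of $K_m$. For $m=0$ this is the standard fact that $R(G)$ is a direct summand of $R(T)$; for general $m$ it is the basis statement of \Cref{prop:basis}.

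Given this, the image of the fiber product in $K_m$ is $R(G)+\Theta K_m$. By the induction hypothesis this equals $R(G)+\Theta(R(G)+\Theta^m R(T))=R(G)+\Theta^{m+1}R(T)$. \Cref{prop:basis}, applied with $P_0=R(T)$ and $P_\infty=R(G)$, gives freeness with basis $\{1,\Theta^{m+1}b_i\}$ and closes the induction. The remaining identification with the congruence description is exactly the argument of \Cref{prop:gen-quasi}, averaging over $W$ in $R(T)\otimes\Q$. To stay integral, note that $f-s_\alpha f\in\Theta^mR(T)$ for $f=g+\Theta^m h$ is obvious. For the converse inclusion, work with the presentation $R(G)+\Theta^mR(T)$ as the definition and check equality inside $R(T)$ using a basis $\{b_i\}$ with $b_1=1$: if $f=\sum c_ib_i$ with $w f-f\in\Theta^m R(T)$ for all $w$, one deduces $\Theta^m\mid c_i$ for $i>1$.

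The main obstacle is this integral converse. It needs a $W$-compatible structure on the basis of $R(T)$ over $R(G)$, for instance the Steinberg basis together with the fact that $\Theta$ is prime to the relevant discriminant. Over $\Q$ it is trivial. If the integral version fails, I would state the result as the image $R(G)+\Theta^mR(T)$, which is what the Mayer--Vietoris argument produces directly. A secondary technical point is that \Cref{lem:h-pushout} requires the maps to be proper and the spaces compact. This is fine, since $F_m$ is a finite $G$-CW complex and the join is realized by the standard compact model.
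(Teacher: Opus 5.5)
Your argument is the paper's proof. Both run induction on $m$ through the presentation $R(G)+\Theta^m R(T)$, collapse Hodgkin's spectral sequence to $K_m/\Theta K_m$, use the six-term sequence of \Cref{lem:h-pushout} with $\pi-i$ surjective, and then use injectivity of the projection of the fiber product to $K_m$, whose image is $R(G)+\Theta K_m$. There are two small differences. The paper kills $\Tor_1$ using only that $K_m\subseteq R(T)$ is a domain, rather than freeness. It also asserts without comment that $R(G)/(\Theta)\to K_m/\Theta K_m$ is injective. Your basis argument justifies this, but the domain property alone also does: if $a=\Theta b+\Theta^{m+1}c$ with $a,b\in R(G)$, then $\Theta^{m+1}(wc-c)=0$ forces $c\in R(T)^W=R(G)$.

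The one step you leave open is the integral equality of the congruence description with $R(G)+\Theta^mR(T)$. The paper dispatches exactly this step by citing \Cref{rmk:EqPre}, but the proof behind that remark (\Cref{prop:gen-quasi}) averages over $W$ and so only works over $\Q$; your caution is warranted. It is not a real obstacle, though, and needs neither the Steinberg basis nor a discriminant condition.

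\begin{itemize}
\item Set $I=\Theta^mR(T)$. Since reflections generate $W$ and $I$ is $W$-stable, the congruences say precisely that $f \bmod I$ lies in $(R(T)/I)^W$.
\item Multiplication by $\Theta^m$ is a $W$-equivariant isomorphism $R(T)\cong I$.
\item $R(T)=\Z[\widehat T]$ is a permutation module: the direct sum, over $W$-orbits in the character lattice $\widehat T$, of modules $\Z[W/W_\lambda]$.
\item By Shapiro's lemma, $H^1(W,\Z[W/W_\lambda])\cong\Hom(W_\lambda,\Z)=0$. Cohomology of the finite group $W$ commutes with direct sums, so $H^1(W,I)=0$.
\item The exact sequence $R(T)^W\to(R(T)/I)^W\to H^1(W,I)$ then shows that every such $f$ is congruent mod $I$ to an element of $R(T)^W=R(G)$, i.e.\ $f\in R(G)+\Theta^mR(T)$.
\end{itemize}

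With this added, your proof establishes the statement as written and is somewhat more careful than the paper's.
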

\begin{proof}
    As explained in the \Cref{rmk:EqPre}, we will prove for the alternative presentation
    \begin{equation*}
        K_{G}(F_m) = R(G) + \Theta^m \cdot R(T)
    \end{equation*}
    where $R(G)=R(T)^{W}$ is identified with its image in $R(T)$ induced by the map $i:T\hookrightarrow G$.
    
    We will prove it by induction on $m$.
    For simplicity let's write $\cQ_m \coloneqq R(G) + \Theta^m\cdot R(T)$ which can be viewed as a subalgebra of $R(T)$. 
    When $m=0$, we have $ K_{G}^{\ast}(F_0) = K_{G}^{\ast}(G/T) = R(T)$. 
    When $ m \geq 1 $, we have
    \[
    F_{m+1} = \hcolim \, \{\, F_m \leftarrow F_m\times G/H \to G/H \, \} \,.
    \]
    To compute $ K_{G}(F_m\times G/H) $, note the Tor group 
    \[
    \Tor_{R(G)}^{\ast}(K_{G}^{\ast}(F_m),K_{G}^{\ast}(G/H)) = \Tor_{R(G)}^{\ast}(K_{G}(F_m),R(H))
    \]            
    can be identified with $ \Tor^\ast_{R(G)}\left(\cQ_m, R(H)\right)$ which is given by the homology of the complex
    \[
    0 \to \cQ_m \xrightarrow{\cdot \Theta} \cQ_m \to 0 \, .
    \]
    Since $ \cQ_m $ is an integral domain the first homology of the above complex vanishes, thus by \Cref{thm:hodgkin}, the Hodgkin's spectral sequence collapses on the $ E_2 $-page and we have 
    \[ 
    K_{G}^{\ast}(F_m\times G/H) \cong  \cQ_m/\left( \Theta \right)\, . 
    \]
    The projection $ F_m\times G/H \to F_m $ induces the quotient map 
    \(
    \pi: \cQ_m \to \cQ_m/\left( \Theta \right)
    \)
    on equivariant $ K $-theory, and $ F_m\times G/H \to G/H $ induces an injective map
    \(
    i: R(G)/\left(\Theta\right) \hookrightarrow \cQ_m/\left( \Theta \right) .
    \)
    
    By \Cref{lem:h-pushout}, we have the following six term exact sequence
    \[\begin{tikzcd}
    0\oplus 0 \ar[r] & 0 \ar[r] & K_{G}^{0}(F_{m+1}) \ar[d,"{(i_{m,m+1},f_{m+1})}"] \\
    K_{G}^{1}(F_{m+1}) \ar[u] & \cQ_m/\left( \Theta \right) \ar[l,"\partial"'] & \cQ_m \oplus R(G)/\left(\Theta\right) \ar[l,"\pi-i"'] 
    \end{tikzcd}\]
    It can be seen that $ \pi-i $ is surjective and thus $ K^{1}_{G}(F_{m+1}) = 0 $, and $K_{U(n)}^{0}(F_{m+1}) = \ker(\pi-i)$. 
    Furthermore, in this case we see the composite
    \begin{equation*}
        \begin{tikzcd}
            K_{G}^{0}(F_{m+1}) \ar[rr,"{(i_{m,m+1},f_{m+1})}"]  && \cQ_m \oplus R(G)/\left(\Theta\right) \ar[r,"pr_1"] & \cQ_m
        \end{tikzcd}
    \end{equation*}
    is also injective and thus as in \Cref{prop:inj}, we have 
    \[ K_{G}^{0}(F_{m+1}) = \ker(\pi-i) = R(G) + \Theta \cdot \cQ_m = \cP_{m+1}\, . \]
\end{proof}

\subsubsection{\texorpdfstring{$T$}--equivariant \texorpdfstring{$K$}--theory}

By the theorem of Kostant-Kumar\, \cite{kostant1987t}, the $T$-equivariant $K$-theory of the flag manifold $G/T$ is given by $K_{T}(G/T) \cong R(T)\otimes_{R(T)^{W}}R(T)$. Further, under this isomorphism, the homomorphism $R(T)=K_{T}(\pt) \rightarrow K_T(G/T)$ corresponding to the map $G/T \rightarrow \pt$ is identified with the map 
\begin{equation} \label{rttoktgt} R(T) \hookrightarrow R(T) \otimes_{R(T)^W} R(T)\, ,\qquad f \mapsto f \otimes 1\, . \end{equation}
Hence, the natural $R(T)=K_{T}(\pt)$-module structure on $K_T(G/T)$ is given by multiplication on the first copy of $R(T)$. 
By \eqref{eq:G-ext-iso}, we can compute the $T$-equivariant $K$-theory of $G/H$ by observing 
\begin{equation}
    K_{T}(G/H) \cong K_{G}(G\times_{T}G/H) \cong K_{G}(G/T\times G/H) \, ,
\end{equation}
therefore by \Cref{thm:hodgkin}, we obtain the following result for $K_{T}(G/H)$. 
\begin{prop}\label{prop:K_T(G/H)}
    If $R(H) = R(G)/(\Theta)$ for some $\Theta\in R(G)$ then $K_{T}(G/H) \cong R(T)/(\Theta)$.
\end{prop}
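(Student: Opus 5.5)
The plan is to reduce $K_T(G/H)$ to a $G$-equivariant computation and then apply Hodgkin's spectral sequence (\Cref{thm:hodgkin}), exactly as in the proof of \Cref{prop:K_GFm}. The first step is the identification $K_T(G/H)\cong K_G(G\times_T G/H)$. This is the standard induction isomorphism $K_G(G\times_T Y)\cong K_T(Y)$ for a $T$-space $Y$, applied to $Y=G/H$ with the restricted $T$-action. Composing with the $G$-equivariant homeomorphism \eqref{eq:G-ext-iso}, we get $K_T(G/H)\cong K_G(G/T\times G/H)$, where $G$ acts diagonally on the right-hand side.

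Next I would apply \Cref{thm:hodgkin}, which is available since $\pi_1(G)$ is torsion free, to $X=G/T$ and $Y=G/H$. The inputs are $K_G^\ast(G/T)=K_T(\pt)=R(T)$ and $K_G^\ast(G/H)=K_H(\pt)=R(H)=R(G)/(\Theta)$, both concentrated in even degree. The $R(G)$-module structures are given by restriction, so $R(H)$ is the cyclic module $R(G)/(\Theta)$. Since $\Theta\neq 0$ (otherwise $R(H)=R(G)$, which is impossible for a proper subgroup), the complex $0\to R(G)\xrightarrow{\cdot\Theta}R(G)\to 0$ is a free resolution of $R(H)$. Tensoring it with $R(T)$ gives $0\to R(T)\xrightarrow{\cdot\Theta}R(T)\to 0$. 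Because $R(T)$ is an integral domain (a Laurent polynomial ring) and $\Theta$ has nonzero image under the injection $R(G)\hookrightarrow R(T)$, this map is injective. Hence $\Tor_1$ vanishes and $\Tor_0=R(T)/(\Theta)$.

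The $E_2$-page is therefore concentrated in the single column $\Tor^0$, and in even total degree. The spectral sequence collapses with no extension problems, giving $K^0_G(G/T\times G/H)\cong R(T)/(\Theta)$ and $K^1=0$. Collapse also shows the edge map $R(T)\otimes_{R(G)}R(H)\to K_G(G/T\times G/H)$ is an isomorphism of rings, so the identification holds as algebras and not merely as groups.

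The step needing the most care, and the main obstacle, is the bookkeeping of module structures. One must check that under the composite isomorphism the $R(T)=K_T(\pt)$-module structure on $K_T(G/H)$ corresponds to the $R(T)=K_G(G/T)$-factor in the Hodgkin computation. One must also check that $\Theta$ acts through the restriction $R(G)\to R(T)$, so that the answer is $R(T)/(\Theta)$ as an $R(T)$-algebra. I would verify this by tracking the projection $G\times_T G/H\to G/T$ through $\varphi$ in \eqref{eq:G-ext-iso}. This projection corresponds to the first-factor projection $G/T\times G/H\to G/T$, and it induces the $R(T)$-structure on both sides. This is the $K$-theoretic analogue of \Cref{prop:T-HQ-GH}.
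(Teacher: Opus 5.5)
Your proposal is correct and follows the paper's route: the paper also identifies $K_T(G/H)\cong K_G(G\times_T G/H)\cong K_G(G/T\times G/H)$ via \eqref{eq:G-ext-iso} and then applies Hodgkin's spectral sequence (\Cref{thm:hodgkin}), and your $\Tor$ computation is the same one used in the proof of \Cref{prop:K_GFm}. The paper leaves implicit two details that you check: that $\Theta\neq 0$, which holds, for instance, because $\chi(G/H)=0$ forces $\mathrm{rk}\,H<\mathrm{rk}\,G$, and that the $R(T)$-module structures match.
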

Combining the Kostant-Kumar theorem with \Cref{prop:K_T(G/H)}, we can compute the $T$-equivariant $K$-theory of $F_m$ analogously to the computation of the $G$-equivariant $K$-theory.
\begin{prop}
    If $R(H) = R(G)/(\Theta)$ for some $\Theta\in R(G)$, then the $T$-equivariant $K$-theory of $F_m=G/T\ast^mG/H$ is given by 
    \[
    K_T(F_m) = \{\, f\in R(T)\otimes_{R(T)^{W}}R(T) \, |\, s_\alpha(f) \equiv f \, \mod\, (\Theta)^m \, \} \,.
    \]
\end{prop}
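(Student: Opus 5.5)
We will mirror the proof of \Cref{prop:K_GFm}, replacing $R(G)$ by $R(T)$ as the coefficient ring and $K_G(G/T)=R(T)$ by $K_T(G/T)\cong R(T)\otimes_{R(T)^W}R(T)$. As in \Cref{rmk:EqPre}, the target is the alternative presentation
\[
K_T(F_m)\;=\;\cK_0^{W}+\Theta^m\cdot \cK_0,\qquad \cK_0:=R(T)\otimes_{R(T)^W}R(T),
\]
with $W$ acting on the first tensor factor and $\Theta\in R(G)=R(T)^W$ viewed in $\cK_0$ through the structure map \eqref{rttoktgt}. Since $\Theta$ is $W$-invariant, \Cref{rmk:EqPre} identifies this with the congruence description in the statement. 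We also use $\cK_0^W=R(T)^W\otimes_{R(T)^W}R(T)\cong R(T)$ (the second factor), because $R(T)$ is free over $R(T)^W$ by Pittie–Steinberg when $\pi_1(G)$ is torsion free. Writing $\cK_m:=\cK_0^W+\Theta^m\cK_0$, the claim $K_T(F_{m+1})=\cK_0^W+\Theta\cdot\cK_m$ then gives $\cK_{m+1}$ by \Cref{prop:basis}, provided $\Theta$ is a nonzero divisor. This holds because $\cK_0$ is free over $R(T)$, hence torsion free.

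We argue by induction on $m$. The base case $m=0$ is Kostant–Kumar. For the inductive step we write $F_{m+1}=\hcolim\{F_m\leftarrow F_m\times G/H\to G/H\}$ as $T$-spaces. The key is to compute $K_T(F_m\times G/H)$. Following \eqref{eq:G-ext-iso}, we have $K_T(F_m\times G/H)\cong K_G(G/T\times F_m\times G/H)$, and we apply \Cref{thm:hodgkin} over $R(G)$ to the pair $G/T\times F_m$ and $G/H$. Here $K_G(G/T\times F_m)=K_T(F_m)=\cK_m$ by induction, and $K_G(G/H)=R(H)=R(G)/(\Theta)$. The Tor groups are the homology of $\cK_m\xrightarrow{\cdot\Theta}\cK_m$. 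Since $\cK_m\subseteq\cK_0$ and $\Theta$ is a nonzero divisor on $\cK_0$, multiplication by $\Theta$ is injective, so the spectral sequence collapses and
\[
K_T^*(F_m\times G/H)\cong \cK_m/(\Theta),
\]
concentrated in even degree. \Cref{prop:K_T(G/H)} gives $K_T(G/H)=R(T)/(\Theta)$.

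Next we apply the six-term sequence of \Cref{lem:h-pushout}. The map $\pi-i\colon \cK_m\oplus R(T)/(\Theta)\to\cK_m/(\Theta)$ is surjective because $\pi$ already is, so $K_T^1(F_{m+1})=0$ and $K_T^0(F_{m+1})=\ker(\pi-i)$. To see that the projection of this kernel to $\cK_m$ is injective, it suffices, as in \Cref{prop:inj}, to show that $i\colon R(T)/(\Theta)\to\cK_m/(\Theta)$ is injective. Through $\cK_0^W\cong R(T)$ this map is $R(T)/(\Theta)\to\cK_m/\Theta\cK_m$. It is injective if $\Theta\cK_m\cap\cK_0^W=\Theta\cK_0^W$. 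Using the free basis $\{1,\Theta^m b_i\}$ of \Cref{prop:basis} over $\cK_0^W$ (with $b_1=1$ a basis of $\cK_0$ over the second-factor $R(T)$), this reduces to a coefficient comparison. With injectivity in hand, the kernel projects isomorphically onto $\{f\in\cK_m : f \bmod \Theta \in \text{image of } R(T)\}=\cK_0^W+\Theta\cK_m$, which closes the induction.

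The main obstacle is this injectivity and intersection step, together with checking that $\cK_0$ is free over $\cK_0^W\cong R(T)$ with a basis containing $1$. That freeness is exactly what is needed to run \Cref{prop:basis} here in place of the polynomial setting. I would first try to obtain it directly from Steinberg's basis of $R(T)$ over $R(T)^W$ tensored up. I would also verify that the $W$-action on the first factor commutes with the $T$-equivariant maps in the pushout, so that all identifications are compatible with the congruence description.
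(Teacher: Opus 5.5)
Your route is the paper's: its proof is only the remark that the argument of \Cref{prop:K_GFm} carries over once $K_T(G/T)$ is supplied by Kostant--Kumar and $K_T(G/H)$ by \Cref{prop:K_T(G/H)}. Your Hodgkin collapse, six-term sequence, and reduction of the injectivity of $i$ to $\Theta\cK_m\cap\cK_0^W=\Theta\cK_0^W$ (via a Steinberg basis containing $1$ and \Cref{prop:basis}) are a sound expansion of that remark. Two points need repair, though.

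First, the factor on which $W$ acts. The map $i\colon K_T(G/H)\to K_T(F_m\times G/H)$ is a map of $K_T(\mathrm{pt})$-algebras, and $K_T(G/H)=R(T)/(\Theta)$ is a quotient of $K_T(\mathrm{pt})$. So the image of $i$ in $\cK_m/\Theta\cK_m$ is the image of the structure map, which by \eqref{rttoktgt} is $R(T)\otimes 1$. Identifying $\operatorname{im}(i)$ with $\cK_0^W$ therefore forces $W$ to act on the \emph{second} factor. That is the action induced by the right $W$-action on $G/T$; it commutes with $T$ and hence fixes $R(T)\otimes 1$. This is exactly the compatibility you planned to check at the end, and it fails for the first factor. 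With $W$ on the first factor, as you wrote (and as the sentence after the statement in the paper says), you get $\cK_0^W=1\otimes R(T)$. Then for $m\ge 1$ your $\cK_m=\cK_0^W+\Theta^m\cK_0$ does not even contain $R(T)\otimes 1$, which every $K_T(F_m)$ must contain. So the inductive identity $K_T(F_{m+1})=\cK_0^W+\Theta\cK_m$ is false as written. Swapping the tensor factors is a ring automorphism fixing $\Theta$, so only the bookkeeping changes. Put $W$ on the second factor, so that $\cK_0^W=R(T)\otimes 1$ and $\{1\otimes e_v\}$ is a basis of $\cK_0$ over it; your argument then goes through verbatim.

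Second, the passage between $\cK_0^W+\Theta^m\cK_0$ and the congruence description. \Cref{rmk:EqPre} rests only on the averaging argument of \Cref{prop:gen-quasi}, which divides by $|W|$, whereas $\cK_0$ is a $\bZ$-algebra. The paper's proof of \Cref{prop:K_GFm} has the same gap. Reflections generate $W$ and $\Theta^m\cK_0$ is $W$-stable, so the congruences say exactly that $\bar f\in(\cK_0/\Theta^m\cK_0)^W$. From the $W$-equivariant sequence $0\to\cK_0\xrightarrow{\Theta^m}\cK_0\to\cK_0/\Theta^m\cK_0\to 0$, the obstruction to lifting $\bar f$ to $\cK_0^W$ lies in $H^1(W,\cK_0)$. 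This group vanishes. Indeed, $\cK_0=\bigoplus_v e_v\otimes R(T)$ is a direct sum of copies of the permutation module $R(T)=\bZ[X^*(T)]$. By Shapiro's lemma, $H^1(W,\cK_0)$ is then a sum of groups $\Hom(W_\lambda,\bZ)=0$, where $W_\lambda$ is the finite stabilizer of a character $\lambda$. Using this in place of \Cref{rmk:EqPre}, the two presentations agree over $\bZ$.
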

The natural $W$-action is on the first copy of $R(T)$, and we have $K_{G}(F_m) = K_{T}(F_m)^{W}$.

\subsubsection{Examples}
\label{sec:examples}

The following classification theorem of homogeneous spheres gives us a complete list of $F'$ we should consider. 
A similar table can be found in Salzmann et al. \cite{salzmann2011compact}*{96.22},
Biller \cite{biller1999actions}*{Theorem 3.1.1}, 
Grundh\"{o}fer-Knarr-Kramer \cite{grundhofer2000flag}*{Proposition 1.1}, 
or Bletz-Siebert \cite{bletz2002homogeneous}*{Theorem 2.4.1}.

\begin{thm}
\label{thm:RHS}
    Let $G$ be a compact connected Lie group and $H$ a closed subgroup such that $G$ acts effectively on $G/H\cong S^{n}$, then there are (up to equivalence) only the following possibilities.
    \begin{enumerate}
        \item $G=\SO(n), H=\SO(n-1), G/H=\bS^{n-1}$.
        \item $G=\SU(k), H=\SU(k-1), G/H=\bS^{2k-1}, k\geq 2$.
        \item $G=\U(k),H=\U(k-1), G/H= \bS^{2k-1}, k\geq 2$. 
        \item $G=\U(1)\cdot\SP(k), H=\U(1)\cdot\SP(k-1), G/H=\bS^{4k-1}, k\geq 2$.
        \item $G=\SP(k), H=\SP(k-1), G/H=\bS^{4k-1}, k\geq 2$.
        \item $G=\SP(1)\cdot\SP(k), H=\SP(1)\cdot\SP(k-1), G/H=\bS^{4k-1}, k\geq 2$.
        \item $G=\Spin(9), H=\Spin(7), G/H=\bS^{15}$.
        \item $G=\Spin(7), H=G_2, G/H=\bS^7$.
        \item $G=G_2,H=\SU(3), G/H\cong S^6$.
    \end{enumerate}
\end{thm}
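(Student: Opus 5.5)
This is a classification statement quoted from the literature, so I would not reprove it from scratch. The plan is to reduce it to the classical Montgomery--Samelson--Borel classification of transitive effective actions of compact connected Lie groups on spheres, and then read off the list above.

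\emph{Reduction to a simple factor.} Since $G$ is compact and connected, write $G=(Z\times G_1\times\cdots\times G_r)/\Gamma$, with $Z$ a torus, the $G_i$ simple and simply connected, and $\Gamma$ finite central. Effectivity and transitivity on $\bS^n$ force the sphere to be irreducible in a suitable sense. Using the Poincar\'e polynomial of $G/H$ together with the rank inequality $\mathrm{rk}\,G-\mathrm{rk}\,H\le 1$ for spheres (rational homotopy of $G/H$), one shows that some simple factor $G_1$ already acts transitively. The remaining factors then commute with this action, so they lie in the centralizer. That centralizer acts freely on $\bS^n$ and hence is one of $\{1\},\U(1),\SP(1)$. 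This is exactly what produces the products $\U(1)\cdot\SP(k)$, $\SP(1)\cdot\SP(k)$ and $\U(k)$ in the list.

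\emph{Simple transitive groups.} For $G_1$ simple, Borel's theorem on the cohomology of homogeneous spaces compares the degrees of the primitive generators of $H^*(G_1;\Q)$ and $H^*(H;\Q)$. For $G/H\simeq_\Q\bS^{n}$, $H^*(H)$ must equal $H^*(G_1)$ with exactly one primitive generator deleted (odd $n$), or $H$ must have equal rank (even $n$). Running through the simple Lie algebras and their subalgebras of the right dimension ($\dim G_1-\dim H=n$) yields $\SO(n)\supset\SO(n-1)$, $\SU(k)\supset\SU(k-1)$, $\SP(k)\supset\SP(k-1)$, $G_2\supset\SU(3)$, $\Spin(7)\supset G_2$ and $\Spin(9)\supset\Spin(7)$. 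One then checks the exceptional embeddings via the octonions and the spin representations.

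\emph{Main obstacle.} The hardest step is excluding the remaining subalgebra candidates in the exceptional and low-rank cases, and pinning down the precise global forms (covers versus quotients) needed for effectivity. Rather than redo this, I would cite the standard tables in \cite{salzmann2011compact}*{96.22} or \cite{bletz2002homogeneous}*{Theorem 2.4.1} for this part.
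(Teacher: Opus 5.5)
Your proposal matches the paper, which also gives no independent proof: it simply points to the classical Montgomery--Samelson--Borel classification as tabulated in \cite{salzmann2011compact}*{96.22}, \cite{bletz2002homogeneous}*{Theorem 2.4.1} and related sources, and your outline is a fair summary of how those sources argue (a transitive simple factor, a freely acting centralizer whose identity component is $\{1\}$, $\U(1)$ or $\SP(1)$, and Borel's rational constraint on $H$). The one small slip is that the reduction to a transitive simple factor needs $n\ge 2$: for $\bS^1$ the transitive group $\SO(2)$ is the torus factor itself, so that case has to be handled separately.
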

If we restrict to compact connected Lie groups with torsion-free fundamental group and odd spheres, we only need to consider the five cases 
\[\begin{array}{lllll}
    (\SU(k),\SU(k-1)) & (\U(k),\U(k-1)) & (\SP(k),\SP(k-1)) &
    (\Spin(9), \Spin(7)) & (\Spin(7), G_2)
\end{array}\]
as given in the Introduction.

In what follows, we present the examples $(\U(k),\U(k-1))$ and $(\Spin(7), G_2)$. The remaining classical cases are entirely analogous and may be found in the second author's thesis \cite{liu2023towers}.

\begin{eg}\label{eg:U}
    For $G=\U(n), H= \U(n-1)$ and $F_m=F_m(\U(n)/T,\U(n)/\U(n-1))$, 
    \begin{enumerate}
        \item The $T$-equivariant rational cohomology of $F_m$ is 
        \[
        H_T^\ast(F_m;\Q) = \left\{\, f \in\Q[t_1,\ldots,t_n]\bigotimes_{\Q[c_1,\ldots,c_n]}\Q[t_1,\ldots,t_n] \, \middle|\, s_{ij}(f) \equiv f \, \mod\, (c_n)^m, \forall s_{ij}\in S_n \right\}
        \]
        where $c_i=\sigma_i(t_1,\ldots,t_n)$ is the $i$-th elementary symmetric polynomial, and $s_{ij}$ acts by switching the $i$-th and $j$-th indices in the second factor.
        \item The $G$-equivariant rational cohomology of $F_m$ is 
        \[
        H_G^\ast(F_m;\Q) = \{\, f \in \Q[t_1,\ldots,t_n] \, |\, s_{ij}(f) \equiv f \, \mod\, (c_n)^m, \forall s_{ij}\in S_n \, \}
        \]
        where $c_n=t_1t_2\ldots t_n$ and $s_{ij}$ acts by switching the $i$-th and $j$-th indices.
        \item The $T$-equivariant $ K $-theory of $F_m$ is given (as abelian group) by 
        \[
        K_T^\ast(F_m;\Q) = \left\{\, f \in\bZ[z_1^\pm,\ldots,z_n^\pm] \bigotimes_{\bZ[c_1,\ldots,c_n,c_n^{-1}]} \bZ[z_1^\pm,\ldots,z_n^\pm] \middle| s_{ij}(f) \equiv f \, \mod\, (1\otimes \Theta)^m \right\}
        \]
        where $c_i=\sigma_i(z_1,\ldots,z_n)$ is the $i$-th elementary symmetric polynomial, and
        $$\displaystyle{\Theta = \sum_{i=0}^{n}(-1)^{i}c_{i}= \prod_{i=1}^n(1-z_i)}\,.$$
        \item The $G$-equivariant $ K $-theory of $F_m$ is given (as abelian group) by 
        \[
        K_G^\ast(F_m) = \{\, f\in \bZ[z_1^\pm,\ldots,z_n^\pm] \, |\, s_{ij}(f) \equiv f \, \mod\, (\Theta)^m, \forall s_{ij}\in S_n \, \}
        \]
        where $s_{ij}$ acts by switching the $i$-th and $j$-th indices.
    \end{enumerate}
\end{eg}

\begin{rmk}
For the pair $(\Spin(9), \Spin(7))$, as the Spin groups $\text{Spin}(n)$ are two-fold covering spaces of $\SO(n)$, the rational cohomology of $B\Spin(9)$ and $B\Spin(7)$ agree with the ones of $B\SO(9)$ and $B\SO(7)$ respectively, whose the equivariant cohomology computation is the same as the pair $(\SP(4), \SP(3))$. 
Meanwhile, the condition $R(H)=R(G)/(\Theta)$ for some $\Theta\in R(G)$ in \Cref{prop:K_GFm} fails for $G=\Spin(9), H=\Spin(7)$, as the natural map $R(\Spin(9))\to R(\Spin(7))$ is not surjective. So we skip the $K$-theory computation.
\end{rmk}

\begin{eg}
\label{eg:Spin-G2}
    Consider $(\Spin(7), G_2)$ and $F_m=F_m(\Spin(7)/T,\Spin(7)/G_2)$.
    
    We have 
    \[\begin{array}{rcl}
        H^\ast(BT;\Q)&=&\Q[t_1,t_2,t_3] \,, \\
        H^\ast(B\Spin(7);\Q) &=&\Q[q_1,q_2,q_3]\,, \\
        H^\ast(BG_2;\Q) &=&H^\ast(B\Spin(7);\Q)/(q_2)= \Q[q_1,q_3]
    \end{array}\]
    where $q_1=t_1^2+t_2^2+t_3^2, q_2=t_1^2t_2^2+t_1^2t_3^2+t_2^2t_3^2$ and $q_3=t_1^2t_2^2t_3^2$. Therefore equivariant rational cohomology can be computed using our results.

    Consider the map $R(\Spin(7))\to R(G_2)$ of representation rings induced by subgroup inclusions (see also \cite{yokota1968representation}).
    $G_2$ has two fundamental representations: a 7-dimensional standard representation $i\Q$ given by imaginary octonions, and the adjoint representation $\mathfrak{g}_2$ which is 14-dimensional (see, e.g., \cite{fulton2013representation}). If we write $\alpha=[i\Q],\beta=[\mathfrak{g}_2]\in R(G_2)$, then $R(G_2)=\bZ[\alpha,\beta]$.
    
    Let $V$ be the standard representation of $SO(2n+1)$, and denote $\Lambda^i\coloneqq \Lambda^iV, 1\leq i \leq n$ and let $\Delta^n$ be the spin representations of $\Spin(2n+1)$. Write $\lambda_i=[\Lambda^i]\, ,\Delta=[\O]\in \Spin(7)$, then 
    \[
    R(\Spin(2n+1))=\bZ[\lambda^1,\lambda^2,\ldots,\lambda^{n-1},\Delta^n]
    \]
    satisfying 
    $\Delta^n \cdot \Delta^n = \lambda^n +\lambda^{n-1} + \lambda^{n-2} + \ldots + \lambda^1 + 1$
    (see, e.g., \cite{brocker2003representations}).
    
    The spin representation of $\Spin(7)$ is $\O\cong \bR^8$, which restricts to a transitive $\Spin(7)$-action on $S^7\subseteq \bR^8$ with stabilizer (of any unit vector) a $G_2$ subgroup of $\Spin(7)$, and in fact all $G_2$-subgroups of $\Spin(7)$ are obtained in this way. In particular, $\Spin(7)/G_2 \cong S^7$ and all $G_2$-subgroups of $\Spin(7)$ are conjugate (see, e.g., \cite{varadarajan2001spin}*{Theorem 3}).
    Therefore the spin representation of \( \mathrm{Spin}(7) \) restricts to \( G_2 \) decomposes as $\O |_{G_2} = \mathbf{1} \oplus i\O $ where \( i\O \) is the imaginary octonions viewed as the standard representation of $G_2$ and $\mathbf{1}$ is the trivial $G_2$-action on the reals $\bR\subseteq \Q$.
    
    By \cite{varadarajan2001spin}*{Lemma 1}, $G_2$ can also be embedded in $\SO(7)$.
    For the standard representation $\Lambda^1$ of $\Spin(7)$ coming from the standard representation of $\SO(7)$, it restrict to an $\SO(7)$-action on $S^6$. The stabilizer of any unit vector is $\SO(6)$ and we have $\SO(7)/\SO(6)\cong S^6$.
    The restriction of $\SO(7)$-action on $S^6$ to $G_2$ is nontrivial as $G_2\not\subseteq \SO(6)$, thus $G_2$ acts non-trivially on $\Lambda^1$, showing that $\Lambda^1$ restricts to the imaginary octonion representation of $G_2$. 
    
    Finally, the adjoint representation $\Lambda^2 =\mathfrak{so}(7)$ of $\Spin(7)$ has a copy of $\mathfrak{g}_2$ in it, thus we only need to determine how the remaining $7$-dimensional representation $V$ decomposes as a $G_2$-representation. 
    Use the restriction of the spin representation $\O$ of $\Spin(7)$ to $S^7$ and at $e=(1,0,\ldots,0)\in S^7\subseteq\O$, the restricted $G_2$-action on $T_e(S^7) \cong V$ is precisely the $G_2$ action on imaginary octonions, thus $V\cong i\O$.
    %
    Therefore the restriction map
    \[
    R(\Spin(7))=\bZ\left[\lambda_1,\lambda_2,\Delta\right] \to \bZ\left[\alpha,\beta\right] = R(G_2) 
    \]
    given by $\lambda_1 \mapsto \alpha, \lambda_2 \mapsto \alpha+\beta, \Delta \mapsto 1+\alpha$
    is surjective with kernel generated by \( \Theta=\lambda^1 - \Delta + 1 \) and we have $R(G_2)=R(\Spin(7))/(\Theta)$. 

    Combining what we have, we get
    \begin{enumerate}
        \item The $T$-equivariant rational cohomology of $F_m$ is 
        \[H_T^\ast(F_m;\Q) = \left\{\, f\in \Q[t_1,t_2,t_3]\bigotimes_{\Q[q_1,q_2,q_3]}\Q[t_1,t_2,t_3] \middle| s_\alpha(f) \equiv f \, \mod\, (q_2)^m, \forall s_\alpha\in B_3 \right\}\]
        where $s_\alpha$ acts on the second factor.
        \item The $G$-equivariant rational cohomology of $F_m$ is 
        \[
        H_G^\ast(F_m;\Q) = \{\, f\in \Q[t_1,t_2,t_3] \, |\, s_\alpha(f)\equiv f \, \mod\, (q_2)^m, \forall s_\alpha\in B_3 \, \} \,.
        \]
        \item The $T$-equivariant $ K $-theory of $F_m$ is 
        \[
        K_{T}^{\ast}(F_m) = \left\{\, f\in \bZ[z_1^\pm,z_2^\pm,z_3^\pm]\bigotimes_{\bZ[\lambda_1,\lambda_2,\Delta]}\bZ[z_1^\pm,z_2^\pm,z_3^\pm] \middle| s_{\alpha}(f)\equiv f \, \mod\, (\Theta)^m, \forall s_\alpha\in B_3 \right\}
        \]
        where $\lambda_i=\sigma_i(z_1,z_1^{-1},z_2,z_2^{-1},z_3,z_3^{-1},1), 1\leq i \leq 3$, $\Delta \cdot \Delta= 1+\lambda_1+\lambda_2+\lambda_3$, and 
        $\displaystyle{\Theta = 1+\lambda_1-\Delta.}$
        Note $s_\alpha$ acts on the second factor.
        \item The $G$-equivariant $ K $-theory of $F_m$ is
        \[
        K_{\Spin(7)}^{\ast}(F_m) = \{\, f\in \bZ[z_1^\pm,z_2^\pm,z_3^\pm] \, |\, s_{\alpha}(f)\equiv f \, \mod\, (\Theta)^m, \forall s_\alpha\in B_3 \, \} \,.
        \]
    \end{enumerate}
\end{eg}

\subsection{Classifying spaces for commutativity}
\label{sec:comm}

Instead of considering the fibration $G/T\to BT\to BG$, we are also interested in other $G$-spaces $F$ that are equivariantly formal.
One natural candidate is $E_{\com}G_1$, the identity component of the universal bundle of the classifying space of commuting elements in a Lie group $G$ introduced in \cite{adem2012commuting}. 

\begin{defn}[\cite{adem2012commuting}]\label{defn:Bcom}
Let $ G $ be a topological group and write $BG_{\ast}$ the simplicial classifying space of $G$. Consider the simplicial subspace $ B_{\com}(G)_{\ast} = \{\Hom(\bZ^n,G)\}_{n\geq0} $ of $BG_{\ast}$, and take its geometric realization produces a topological space $B_{\com}G$, called a \textit{classifying space of commutativity} of $G$. 
The simplicial space $E_{\com}G_{\ast} = \{ \Hom(\bZ^n,G)\times G \subset G^{n+1} \}_{n\geq 0}$ can be defined analogously with face and degeneracy maps similar to the simplicial universal $G$-bundle $EG_{\ast}$, its geometric realization is denoted by $E_{\com}G$.
\end{defn}
The projection on the first $n$-coordinates defines a simplicial map 
$E_{\com}(G)_{\ast} \to B_{\com}(G)_{\ast}$ 
which induces a continuous map on geometric realizations 
$p_{\com}:E_{\com}G \to B_{\com}G$. 
Let $E_{\com}G_1$ and $B_{\com}G_1$ denote their identity component respectively, then we have  the following diagram of morphisms of principal $ G $-bundles 
\[\begin{tikzcd}
E_{\com}G_1 \ar[r]\ar[d,"p_{\com}"'] & EG \ar[d,"p"] \\
B_{\com}G_1 \ar[r,"i"] & BG
\end{tikzcd}\]
thus up to homotopy it gives rise to a fibration sequence 
\begin{equation}\label{eq:fib-com}
    E_{\com}G_1 \to B_{\com}G_1 \to BG \, .
\end{equation}
If $ G $ is connected, $ B_{\com}G $ is simply-connected. 
The homotopy theoretical properties of these spaces are studied in \cite{adem2015classifying}. In particular, these spaces admits a homotopy colimit decomposition and they classify transitionally commutative bundles, i.e. principal bundles that have open covers with transition functions commute with each other whenever they are simultaneously defined.

The rational cohomology of $B_{\com}G$ is given in the following result. 
\begin{prop}[\cite{adem2015classifying}*{Proposition 7.1}]
    Suppose that $G$ is a compact connected Lie group with $T\subset G$ a maximal torus and associated Weyl group $W$, then there is a natural isomorphism of rings
    \begin{equation}
    \label{eq:coh-BGcom}
        \alpha_{G}: H^{\ast}(B_{\com}G;\Q) \xrightarrow{\cong} (\Q[V]\otimes \Q[V])^{W}/\cJ_{G}
    \end{equation}
    where $W$ acts diagonally on $\Q[V]\otimes \Q[V]$ and $\cJ_{G}$ is the ideal generated by elements of positive degrees in the image of 
    \[i_1:\Q[V]^W \to (\Q[V]\otimes \Q[V])^W \, , \quad x \mapsto x\otimes 1\,.\]
\end{prop}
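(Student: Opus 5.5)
The plan is to compare $B_{\com}G$ with a Weyl-group quotient built from the torus, for which commutativity is automatic. For a torus we have $B_{\com}T = BT$, since every tuple in $T$ commutes. Consider the conjugation maps
\[
\varphi_n : G/T \times \Hom(\bZ^n, T) = G/T \times T^n \longrightarrow \Hom(\bZ^n, G)_{\bf 1}\, ,\qquad (gT, t_1,\ldots,t_n) \mapsto (g t_1 g^{-1}, \ldots, g t_n g^{-1}) .
\]
These are well defined, compatible with the simplicial faces and degeneracies, and invariant under the diagonal $W$-action $nT\cdot (gT, \underline t) = (gn^{-1}T, n\underline t n^{-1})$. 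They therefore assemble into a map of simplicial spaces
\[
\bigl(G/T \times B_{\com}(T)_\ast\bigr)/W \longrightarrow B_{\com}(G)_{\ast,{\bf 1}},
\]
where the target is the simplicial subspace of levelwise identity components. Its geometric realization is a map $(G/T \times BT)/W \to |B_{\com}(G)_{\ast,{\bf 1}}|$.

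The steps, in order, are as follows.

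\textbf{Step 1 (levelwise).} Invoke Baird's theorem: $\varphi_n$ is surjective, and it induces an isomorphism
\[
H^\ast(\Hom(\bZ^n,G)_{\bf 1};\Q) \cong \bigl(H^\ast(G/T;\Q)\otimes H^\ast(T^n;\Q)\bigr)^W .
\]
The reason is that the fibres of $\varphi_n$ are rationally acyclic modulo $W$, and in characteristic zero rational cohomology of a finite quotient is the invariants.

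\textbf{Step 2 (realization).} Pass to geometric realizations using the Segal/Bousfield--Friedlander spectral sequence $E_1^{p,q} = H^q(Y_p;\Q) \Rightarrow H^{p+q}(|Y_\bullet|;\Q)$. A map of proper (good) simplicial spaces that is a levelwise rational cohomology isomorphism induces an isomorphism on the realizations. We also need the levelwise invariants to commute with the spectral sequence; this holds because taking $W$-invariants is exact over $\Q$.

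\textbf{Step 3 (identity components).} Identify $|B_{\com}(G)_{\ast,{\bf 1}}|$ with $B_{\com}G$ rationally. For connected $G$, the components of $\Hom(\bZ^n,G)$ other than the identity component do not contribute to low simplicial degrees. More precisely, $B_{\com}G$ and $B_{\com}G_{\bf 1}$ agree on rational cohomology, as in the paper's cited setup. I would quote this from \cite{adem2015classifying} rather than reprove it.

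\textbf{Step 4 (algebra).} Compute the cohomology of the quotient. We have
\[
H^\ast\bigl((G/T\times BT)/W;\Q\bigr) = \bigl(H^\ast(G/T;\Q)\otimes \Q[V]\bigr)^W .
\]
By Borel's \Cref{BThm} together with the Leray--Hirsch/Eilenberg--Moore argument, $H^\ast(G/T;\Q) \cong \Q[V]/(\Q[V]^W_+)$, $W$-equivariantly. Hence
\[
H^\ast(G/T;\Q)\otimes\Q[V] = (\Q[V]\otimes\Q[V])\,/\,\bigl(i_1(\Q[V]^W_+)\bigr).
\]
Applying the Reynolds operator, the invariants of this quotient are $(\Q[V]\otimes\Q[V])^W$ modulo the invariant part of the ideal. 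That invariant part is exactly the ideal $\cJ_G$ of the invariant ring generated by $i_1(\Q[V]^W_+)$, because $\mathrm{Reynolds}(x\,y) = x\,\mathrm{Reynolds}(y)$ for invariant $x$. This yields $\alpha_G$. Naturality holds because every map in the argument is induced by natural constructions.

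The main obstacle is Step 1, namely Baird's levelwise computation. Its proof requires showing that the fibres of $\varphi_n$ over a commuting tuple $\underline g$ are of the form $Z(\underline g)_0$-orbit data modulo a Weyl group of the centralizer, and hence rationally acyclic. If quoting Baird is not allowed, I would first try the Vietoris--Begle theorem, applied to the map from $G/T\times_W T^n$ together with the stratification by centralizer types.
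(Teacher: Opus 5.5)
The paper gives no proof here; it only cites \cite{adem2015classifying}. Your Steps~1, 2 and~4 reproduce the argument of that source:
\begin{itemize}
\item Baird's levelwise rational isomorphism for $G/T\times_W T^n\to\Hom(\bZ^n,G)_{\mathbf 1}$.
\item Passage to realizations of proper simplicial spaces, giving $(G/T\times BT)/W\to B_{\com}G_{\mathbf 1}$.
\item Borel's presentation $H^*(G/T;\Q)\cong\Q[V]/(\Q[V]^W_+)$, combined with exactness of $W$-invariants over $\Q$. The Reynolds identity $R(xy)=xR(y)$ for invariant $x$ then correctly identifies $\bigl(i_1(\Q[V]^W_+)\cdot(\Q[V]\otimes\Q[V])\bigr)^W$ with $\cJ_G$.
\end{itemize}
That part is sound. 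It proves the ring isomorphism for $B_{\com}G_{\mathbf 1}=|\Hom(\bZ^\bullet,G)_{\mathbf 1}|$.

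The gap is Step~3. The cited Proposition~7.1 is a statement about $B_{\com}G_{\mathbf 1}$; the subscript was dropped when the paper transcribed it. You can see this in the paper's own usage: it works with the fibration \eqref{eq:fib-com}, whose total space is $B_{\com}G_1$, and its introduction quotes the companion Corollary~7.4 for $E_{\rm com}G_{\mathbf 1}$ (see \eqref{EGcom}). Consequently there is nothing in \cite{adem2015classifying} you can quote to identify $H^*(B_{\com}G;\Q)$ with $H^*(B_{\com}G_{\mathbf 1};\Q)$.

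The heuristic you offer is also not an argument:
\begin{itemize}
\item Non-identity components occur already in simplicial degree $2$ for connected $G$. For $G=\SO(3)$, pairs of rotations by $\pi$ about perpendicular axes, which generate a Klein four-subgroup, form a second component of $\Hom(\bZ^2,\SO(3))$.
\item Even if those components only appeared in high simplicial degrees, that could not yield an isomorphism in all cohomological degrees.
\end{itemize}
Whether $B_{\com}G_{\mathbf 1}\to B_{\com}G$ is a rational equivalence is a separate and nontrivial question. You have two options:
\begin{itemize}
\item State and prove the proposition for $B_{\com}G_{\mathbf 1}$. That is what is intended and all the paper uses downstream.
\item Supply a real comparison argument. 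For example, show that the relative realization spectral sequence with $E_1^{p,q}=H^q\bigl(\Hom(\bZ^p,G),\Hom(\bZ^p,G)_{\mathbf 1};\Q\bigr)$ has vanishing $E_2$-term.
\end{itemize}
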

Under \eqref{eq:coh-BGcom}, the $\Q[V]^W$-module structure on $H^{\ast}(B_{\com}G;\Q)$ is $f\cdot[x\otimes y]= [x\otimes fy].$ 
In particular we can identify $\Q[V]^W$ as a subalgebra in $H^{\ast}(B_{\com}G;\Q)$. 
As a consequence,
\begin{thm}[\cite{adem2015classifying}*{Theorem 7.2}]
\label{thm:G-com-rank}
    For $G$ a compact connected Lie group, $H^{\ast}(B_{\com}G;\Q)$ is a free module over $\Q[V]^W$ of rank $|W|$. 
\end{thm}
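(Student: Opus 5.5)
The plan is to deduce freeness from the presentation \eqref{eq:coh-BGcom} together with Chevalley's theorem. Write $R=\Q[V]$ and $R^W=\Q[V]^W$. By Chevalley's \Cref{CThm}, $R$ is a free graded $R^W$-module of rank $|W|$, say $R\cong R^W\otimes \mathcal{H}$, where $\mathcal{H}$ is a graded complement; for instance the harmonics, which form a graded $W$-representation isomorphic to the regular representation. Then $R\otimes R\cong (R^W\otimes\mathcal H)\otimes(R^W\otimes\mathcal H)$ as $R^W\otimes R^W$-modules. The quotient by $\cJ_G$ kills the positive-degree part of the first copy of $R^W$, so on the level of $R^W\otimes R^W$-modules we have
\[
(R\otimes R)/\big(R^W_{+}\otimes R\big)\cong \mathcal H\otimes R,
\]
which is free over the second copy of $R^W$ (acting by $f\cdot[x\otimes y]=[x\otimes fy]$). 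This module is isomorphic to $\mathcal H\otimes\mathcal H\otimes R^W$.

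Next I pass to $W$-invariants. Because we work over $\Q$ and $W$ is finite, taking invariants is exact. The ideal generated by $i_1(R^W_+)$ inside $(R\otimes R)^W$ equals $\big(R^W_+\cdot (R\otimes R)\big)^W$, by averaging with the Reynolds operator, since $R^W_+\otimes 1$ consists of $W$-invariant elements. Hence
\[
(R\otimes R)^W/\cJ_G\cong\big((R\otimes R)/R^W_+(R\otimes R)\big)^W\cong(\mathcal H\otimes R)^W .
\]
Here $W$ acts diagonally on $\mathcal H\otimes R$: on $\mathcal H\cong R/(R^W_+)$ it acts by the coinvariant action, and $R^W$ on the right is central.

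It then remains to show that $(\mathcal H\otimes R)^W$ is free of rank $|W|$ over $R^W$. Write $R\cong \mathcal H'\otimes R^W$ $W$-equivariantly, which is possible since $R^W$ is invariant and the Reynolds-type complement can be chosen as a $W$-stable graded subspace. Then
\[
(\mathcal H\otimes R)^W\cong(\mathcal H\otimes\mathcal H')^W\otimes R^W.
\]
Since both $\mathcal H$ and $\mathcal H'$ carry the regular representation, $\dim(\mathcal H\otimes\mathcal H')^W=\dim\Hom_W(\Q W,\Q W)=|W|$, using the self-duality of the regular representation. This proves freeness of rank $|W|$.

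The main obstacle I expect is justifying the graded $W$-equivariant splitting $R\cong\mathcal H\otimes R^W$ compatibly with the module structures. I would obtain it by taking $\mathcal H$ to be a $W$-stable graded complement of $R^W_+R$ (which exists by Maschke in each degree) and applying the graded Nakayama lemma together with Chevalley's rank count.
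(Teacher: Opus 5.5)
Your proof is correct and follows the route the paper indicates. The paper quotes this result from Adem--G\'omez as a consequence of the presentation \eqref{eq:coh-BGcom}, and your argument supplies that deduction: the Reynolds operator identifies $(\Q[V]\otimes\Q[V])^W/\cJ_G$ with $(\mathcal H\otimes\Q[V])^W$, the $W$-equivariant splitting $\Q[V]\cong\mathcal H'\otimes\Q[V]^W$ turns this into $(\mathcal H\otimes\mathcal H')^W\otimes\Q[V]^W$ (the invariant factor is exactly $H^*(E_{\com}G_1;\Q)$ of \Cref{cor:EcomG}), and $\dim(\Q W\otimes\Q W)^W=|W|$ gives the rank. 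Two small points: the fact that the coinvariant algebra affords the regular representation goes beyond \Cref{CThm} as stated in the paper and should be cited (Chevalley) or derived from Molien's formula, and the ideal you write as $\Q[V]^W_+\otimes\Q[V]$ should be $(\Q[V]^W_+\Q[V])\otimes\Q[V]$.
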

The rational cohomology of the fiber $E_{\com}G$ has a similar presentation.
\begin{cor}[\cite{adem2015classifying}*{Corollary 7.4}]
\label{cor:EcomG}
    Suppose that $G$ is a compact connected Lie group with $T\subset G$ a maximal torus and associated Weyl group $W$, then there is a natural isomorphism of rings
    \[\begin{tikzcd}
        \widetilde{\alpha}_{G}: H^{\ast}(E_{\com}G;\Q) \ar[r,"\cong"] & (H^{\ast}(G/T;\Q)\otimes H^{\ast}(G/T;\Q))^{W}\,.
    \end{tikzcd}\]
\end{cor}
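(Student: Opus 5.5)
The plan is to realize $E_{\com}G_1$ as the homotopy fiber of $i\colon B_{\com}G_1\to BG$ and compute its cohomology with the Eilenberg--Moore spectral sequence. Then I would use the freeness result of \Cref{thm:G-com-rank} together with the presentation \eqref{eq:coh-BGcom} to identify the answer with $W$-invariants.

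\textbf{Step 1 (fiber square).} The diagram of principal $G$-bundles above is a pullback: $E_{\com}G_1\to B_{\com}G_1$ is the pullback of the universal bundle $EG\to BG$ along $i$. Since $EG$ is contractible, this gives a homotopy pullback square
\[
E_{\com}G_1 \simeq B_{\com}G_1\times^h_{BG}\pt .
\]
Because $G$ is connected, $BG$ is simply connected, so the Eilenberg--Moore spectral sequence converges strongly. Its $E_2$-term is
\[
E_2=\Tor^{*,*}_{H^*(BG;\Q)}\bigl(H^*(B_{\com}G_1;\Q),\Q\bigr).
\]
By \Cref{thm:G-com-rank}, $H^*(B_{\com}G_1;\Q)$ is free over $\Q[V]^W$ for the module structure $f\cdot[x\otimes y]=[x\otimes fy]$ induced by $i$. (For connected $G$ one should check that the cohomology computed there is that of the identity component; otherwise restrict to the component of ${\bf 1}$.) Hence $\Tor$ is concentrated in homological degree $0$, the spectral sequence collapses, and there are no multiplicative extension problems. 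This is the same mechanism as in \Cref{lem:coh-pback}, and it yields a ring isomorphism
\[
H^*(E_{\com}G_1;\Q)\cong H^*(B_{\com}G_1;\Q)\otimes_{\Q[V]^W}\Q
=(\Q[V]\otimes\Q[V])^W\big/\bigl(\Q[V]^W_+\otimes 1,\;1\otimes \Q[V]^W_+\bigr).
\]
Here the ideal is taken inside the invariant ring.

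\textbf{Step 2 (invariants commute with the quotient).} Let $I=\Q[V]^W_+\cdot\Q[V]$. Borel's \Cref{BThm} gives $H^*(G/T;\Q)\cong\Q[V]/I$. The ideal $J=I\otimes\Q[V]+\Q[V]\otimes I$ of $\Q[V]\otimes\Q[V]$ is $W$-stable under the diagonal action, and
\[
H^*(G/T)\otimes H^*(G/T)=(\Q[V]\otimes\Q[V])/J .
\]
Since we work over $\Q$, taking $W$-invariants is exact, using the Reynolds operator $R=\frac1{|W|}\sum_w w$. Therefore
\[
\bigl((\Q[V]\otimes\Q[V])/J\bigr)^W=(\Q[V]\otimes\Q[V])^W/J^W .
\]
It remains to show that $J^W$ equals the ideal of $(\Q[V]\otimes\Q[V])^W$ generated by $a\otimes1$ and $1\otimes a$ for $a\in\Q[V]^W_+$. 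One inclusion is clear. For the other, a typical element of $J$ is a sum of terms $(a\otimes1)(f\otimes g)$ and $(1\otimes a)(f\otimes g)$ with $a$ invariant of positive degree. If such a sum is invariant, applying $R$ fixes it and gives $\sum (a\otimes 1)R(f\otimes g)+\dots$, since $R$ is linear over invariants. This lies in the generated ideal. Combining with Step 1 gives the isomorphism $\widetilde\alpha_G$, and naturality follows from the naturality of $\alpha_G$ and of the Eilenberg--Moore identification.

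\textbf{Main obstacle.} The delicate point is Step 1: making sure the Eilenberg--Moore isomorphism is one of rings, and that the $\Q[V]^W$-module structure used in $\Tor$ is exactly the one via $i^*$ described after \eqref{eq:coh-BGcom} (the second tensor factor). Freeness in \Cref{thm:G-com-rank} is stated for precisely that structure, so collapse at $E_2$ with a single column settles the ring structure. If needed, I would instead argue with the Serre spectral sequence of \eqref{eq:fib-com}, using freeness to force degeneration.
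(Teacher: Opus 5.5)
Your proposal is correct. The paper gives no argument of its own here: the statement is imported from \cite{adem2015classifying}, right after \Cref{thm:G-com-rank}. Your deduction from the two quoted inputs is the natural route and is complete: freeness collapses the Eilenberg--Moore spectral sequence of $E_{\com}G_1=B_{\com}G_1\times_{BG}EG$, exactly the mechanism of \Cref{lem:coh-pback}, and the Reynolds operator then identifies $J^W$ with the ideal generated by $a\otimes1$ and $1\otimes a$ ($a\in\Q[V]^W_+$).

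The only inaccuracy is in your parenthetical remark. $B_{\com}G_1$ is not a path component of $B_{\com}G$, which is already connected since its space of $0$-simplices is a point; it is the realization of $\{\Hom(\Z^n,G)_1\}_{n\ge 0}$. The results \eqref{eq:coh-BGcom} and \Cref{thm:G-com-rank} are really statements about this space, and the paper simply drops the subscript. So with $E_{\com}G_1$ defined as its pullback of $EG\to BG$, nothing further needs checking.
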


We apply the join construction to \eqref{eq:fib-com} and $G/H \hookrightarrow BH \to BG$ to get a tower of fibrations:
\begin{equation}\label{eq:fib-com-m}
    \begin{tikzcd}[sep=small]
    F_m = E_{\com}G\ast^m G/H \to X_m = B_{\com}G\ast_{BG}^mBH \to BG
    \end{tikzcd}
\end{equation}
for $ m\geq 0 $. 
The space $E_{\com} G$ satisfies the conditions in \Cref{thm:borel-tow}.

\subsubsection{Rational cohomology}

Write 
\[H^{\ast}(B_{\com}G;\Q) = \Q[x_1,\ldots,x_n,y_1,\ldots,y_n]^{W}/(\theta_1(X),\ldots,\theta_n(X))\]
where $\theta_i(X)$ are the images of generators $\theta_i$ of $H^{\ast}(BG;\Q)=\Q[V]^W$ in 
\[i: \Q[V]^W \to (\Q[V]\otimes \Q[V])^W \, , \quad x \mapsto x\otimes 1\, .\]
For simplicity we write $\Q[X,Y]=\Q[x_1,\ldots,x_n,y_1,\ldots,y_n]$. 
By \Cref{prop:presentation}, we get the following result for $X_0=B_{\com}G$. 

\begin{prop}
Let $Q_m(E_{\com}G,G/H)=H^\ast(X_m(E_{\com}G,G/H);\Q)$, then
\[ Q_m(E_{\com}G,G/H) = \{~f\in H^{\ast}(B_{\com}G;\Q) \, |\, s_\alpha(f)\equiv f \, \mod\, (\theta)^m, \forall s_\alpha\in W\, \}\,, \]
which is a free module over $\Q[V]^W$ of rank $|W|$.
\end{prop}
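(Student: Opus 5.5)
This proposition is a direct specialization of \Cref{prop:presentation} (together with \Cref{thm:borel-tow}) to the pair of fibrations $F = E_{\com}G_1$, $F' = G/H$. The plan is to check the hypotheses of \Cref{thm:borel-tow}, identify $Q_0$ and its rank, and then read off the presentation.

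\emph{Hypotheses.} By \Cref{cor:EcomG}, $H^\ast(E_{\com}G_1;\Q)\cong (H^\ast(G/T;\Q)\otimes H^\ast(G/T;\Q))^W$. Since $H^\ast(G/T;\Q)$ lives in even degrees, so does this ring, and it is finite-dimensional. The space $E_{\com}G_1$ carries the $G$-action coming from the principal bundle $p_{\com}$, and I will take for granted that it has a $\Q$-finite $G$-CW structure, as the paper implicitly does. The fibre $G/H\cong\bS^{2k-1}$ is a transitive odd sphere. Hence \Cref{thm:borel-tow} and \Cref{prop:presentation} apply.

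\emph{Base case.} By construction $X_0 = (E_{\com}G_1)_{hG}\simeq B_{\com}G_1$, so $Q_0 = H^\ast(B_{\com}G;\Q)$, with the $\Q[V]^W$-module structure $f\cdot[x\otimes y]=[x\otimes fy]$. By \Cref{thm:G-com-rank} this is free of rank $|W|$. This is consistent with the rank $\dim_\Q H^\ast(E_{\com}G_1;\Q)$ in \Cref{thm:borel-tow}, via the collapse \eqref{eq:LSSS} at $m=0$. The $W$-action on $Q_0$ comes from the reflection action on one tensor factor of $(\Q[V]\otimes\Q[V])^W/\cJ_G$; because the action is diagonal, acting on one factor is equivalent to acting on the other modulo invariants. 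Its invariants satisfy $Q_0^W=\Q[V]^W$, which is what \Cref{rmk:EqPre} requires.

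\emph{Induction.} The inductive step in the proof of \Cref{prop:presentation} gives $Q_{m+1}=\Q[V]^W+\theta\cdot Q_m$, where $\theta$ is the Euler class of $BH\to BG$. This uses \Cref{lem:coh-pback}, \Cref{prop:Xm+1} and \Cref{prop:inj}, all of which hold for any $X_0$ satisfying the hypotheses. Now apply \Cref{prop:basis} with $P_0=Q_0$, $P_\infty=\Q[V]^W$. Here $\theta$ is a nonzero divisor on $Q_0$ because $Q_0$ is free over the domain $\Q[V]^W$. This yields $Q_m=\Q[V]^W+\theta^m Q_0$, free of rank $|W|$ with basis $\{1,\theta^m b_i\}$. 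Finally, \Cref{rmk:EqPre}, applied with the $W$-algebra $Q_0$ and $\theta\in Q_0^W$, converts this into the stated congruence description. Note that since $\theta$ is invariant, the congruence for reflections is equivalent to the congruence for all $w\in W$.

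\emph{Main obstacle.} The delicate point is making precise the $W$-action on $Q_0=H^\ast(B_{\com}G;\Q)$ and verifying that $Q_0^W=\Q[V]^W$, i.e.\ that the averaging argument of \Cref{prop:gen-quasi} lands in the image of $H^\ast(BG)$. I would first define the action through the identification $\alpha_G$, letting $W$ act on the second factor $y$. Then I would compute $\big((\Q[X,Y])^W/\cJ_G\big)^{W}$ using that $(\Q[X,Y])^{W\times W}$ surjects onto it. This is valid because $W$ is finite and we work over characteristic zero, where taking invariants is exact. The positive-degree invariants in $X$ die in the quotient, leaving $\Q[Y]^W=\Q[V]^W$.
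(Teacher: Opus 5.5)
Your route is the paper's. Its proof is simply \Cref{prop:presentation} applied with $X_0=B_{\com}G$. Your checks of the hypotheses, the rank $|W|$ via \Cref{thm:G-com-rank}, the recursion $Q_{m+1}=\Q[V]^W+\theta Q_m$, and the conclusion $Q_m=\Q[V]^W+\theta^m Q_0$ with basis $\{1,\theta^m b_i\}$ via \Cref{prop:basis} are all sound, and none of this uses a $W$-action.

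The gap is in the final conversion to the congruence description, which is exactly the point you flagged. Letting $W$ act on the $Y$-variables alone does not define an action on $Q_0=(\Q[X]\otimes\Q[Y])^W/\cJ_G$ when $W$ is nonabelian. For diagonally invariant $f$ one has $(v\otimes v)(1\otimes w)f=(1\otimes vwv^{-1})f$, which differs from $(1\otimes w)f$ unless $w$ is central; the normalizer of $W_{\mathrm{diag}}$ in $W\times W$ is only $W_{\mathrm{diag}}\cdot(1\times Z(W))$. Concretely, for $G=\U(3)$ take $f=x_1y_1+x_2y_2+x_3y_3$, which is diagonally invariant. Let $g=x_1y_2+x_2y_1+x_3y_3$, i.e.\ $s_{12}$ applied to $Y$ only. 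With $s_{13}$ acting diagonally, $g-s_{13}g\equiv x_1(2y_2-y_1-y_3)+x_2(y_1+y_2-2y_3)\not\equiv 0$ modulo $c_1(X)$, so $g$ is not invariant even modulo $\cJ_G$. The identity $(w\otimes 1)f=(1\otimes w^{-1})f$ behind your remark about the two factors is true, but its output leaves $Q_0$. So $Q_0$ is not a $W$-algebra under your action, and \Cref{rmk:EqPre} cannot be applied to it as you state. The paper's one-line proof never pins this action down either.

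The fix is to read the congruence in the ambient ring $R:=H^{\ast}(G/T;\Q)\otimes\Q[V]\cong\Q[X,Y]/(\Q[X]^W_+)$. On $R$, $1\times W$ does act by algebra automorphisms fixing $\theta$, and $R$ contains $Q_0=R^{W_{\mathrm{diag}}}$ by exactness of invariants in characteristic zero. Then rerun the averaging of \Cref{prop:gen-quasi} in $R$, using two facts:
\begin{enumerate}
\item Since $W_{\mathrm{diag}}$ normalizes $1\times W$, the Reynolds operator of $1\times W$ maps $R^{W_{\mathrm{diag}}}$ into $R^{W\times W}=H^{\ast}(G/T;\Q)^W\otimes\Q[V]^W=\Q[V]^W$. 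This is your invariant computation, now carried out where it makes sense.
\item $\theta^mR\cap R^{W_{\mathrm{diag}}}=\theta^mR^{W_{\mathrm{diag}}}$, by applying the $W_{\mathrm{diag}}$-Reynolds operator.
\end{enumerate}
Together these give $\{f\in Q_0 : (1\otimes s_\alpha)f\equiv f \bmod \theta^m R \ \text{for all}\ s_\alpha\}=\Q[V]^W+\theta^mQ_0=Q_m$, which is the intended presentation.
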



\subsubsection{Examples}

We compute the algebras $Q_m(E_{\com}G,G/H)$ for $G=\U(n),\SU(n),$ and $\SP(n)$, and give explicit descriptions of their bases as free modules over $H^{\ast}(BG)$ by known results for basis of $(\Q[V]\otimes \Q[V])^W$ as $\Q[V]^W$-module (see, e.g., \cites{allen1994descent,adin2001flag,vaccarino2005ring,gomez2013basis,griffeth2023diagonal}). 
\begin{eg}
    For $G=\U(n), H= \U(n-1)$ and 
    $X_m=X_m(E_{\com}\U(n),\U(n)/\U(n-1))$,
    \[
    Q_m(E_{\com}G,G/H) = \left\{\, f \in \left(\Q[X,Y]^{S_n}/(c_1(X),\ldots,c_n(X))\right) \middle| s_{ij}(f) \equiv f \, \mod\, (c_n(Y))^m \,\right\}
    \]
    where $S_n$ acts diagonally by $\sigma\cdot f(X,Y) = f(\sigma\cdot X,\sigma\cdot Y)$, and $c_i(X)=\sigma_i(x_1,\ldots,x_n)$ and $c_i(Y)=\sigma_i(y_1,\ldots,y_n)$. 
    The {\it descent monomial} associated with $ \sigma \in S_n $ is defined to be 
    \[h_{\sigma} = \prod_{\sigma^{-1}(i)>\sigma^{-1}(i+1)}(x_1\ldots x_i) \prod_{\sigma(j)>\sigma(j+1)} (y_{\sigma(1)}\ldots y_{\sigma(j)})\,.
    \]
    Consider the averaging operator 
    \[\rho:\Q[X,Y] \to \Q[X,Y]^{S_n}\,, \quad f \mapsto \frac{1}{|S_n|}\sum_{\sigma\in S_n}\sigma\cdot f\]
    and let $ g_{\sigma} = \rho(h_{\sigma}) \in \Q[X,Y]^{S_n} $. 
    The descent monomials form a basis of $H^{\ast}(B_{\com}G;\Q)$ over $\Q[V]^W$ (see e.g., \cite{allen1994descent}*{Theorem 1.3} and \cite{adem2015classifying}*{Section 8.1}), thus $H_{U(n)}^{\ast}(F_m;\Q)$ decomposes as a graded module 
    \[ H^{\ast}(X_m;\Q) \cong \Q[c_1(Y),\ldots,c_n(Y)] \bigoplus \bigl(\bigoplus_{\sigma\in S_n\backslash\{e\}} g_{\sigma}c_n(Y)^m \cdot \Q[c_1(Y),\ldots,c_n(Y)] \bigr) \]
    over $ H^{\ast}(B\U(n);\Q) $ with basis
    $\{ 1,g_{\sigma}c_n(Y)^m\}_{\sigma\in S_n\backslash\{e\}}.$
    Let 
    \[\text{maj}(\sigma)\coloneqq \sum_{\sigma(i)>\sigma(i+1)}i \]
    be the {\it major index} $\text{maj}(\sigma)$ of $\sigma\in S_n$, then the Hilbert series of $H^{\ast}(X_m;\Q)$ is 
    \[ p_{X_m}(t) = \frac{1-t^{2mn}}{\prod_{i=1}^n(1-t^{2i})} + \frac{t^{2mn}(\sum_{\sigma\in S_n}t^{2(\text{maj}(\sigma)+\text{maj}(\sigma^{-1}))})}{\prod_{i=1}^n(1-t^{2i})}\,. \]
\end{eg}

\begin{eg}
    For $G=\SU(n), H= \SU(n-1)$, $X_m=X_m(E_{\com}\SU(n),\SU(n)/\SU(n-1))$, 
    \[
    Q_m(E_{\com}G,G/H) = \left\{\, f \in \left(\Q[X,Y]^{S_n}/(c_1(X),\ldots,c_n(X),c_1(Y))\right) \middle| s_{ij}(f) \equiv f \, \mod\, (c_n(Y))^m \,\right\}
    \]
    by \cite{adem2015classifying}*{Section 8.2} and it decomposes as a graded module 
    \[ H^{\ast}(X_m;\Q) \cong \Q[c_2(Y),\ldots,c_n(Y)] \bigoplus \bigl(\bigoplus_{\sigma\in S_n\backslash\{e\}} g_{\sigma}c_n(Y)^m \cdot \Q[c_2(Y),\ldots,c_n(Y)]\bigr) \]
    over $ H^{\ast}(B\SU(n);\Q) $ with basis
    $\{\, 1,g_{\sigma}c_n(Y)^m\,\}_{\sigma\in S_n\backslash\{e\}\,}$.
    The Hilbert series of $H^{\ast}(X_m;\Q)$ is 
    \[ p_{X_m}(t) = \frac{1-t^{2mn}}{\prod_{i=2}^n(1-t^{2i})} + \frac{t^{2mn}(\sum_{\sigma\in S_n}t^{2(\text{maj}(\sigma)+\text{maj}(\sigma^{-1}))})}{\prod_{i=2}^n(1-t^{2i})}\,. \]
\end{eg}

\begin{eg}
    For $G=\SP(n), H= \SP(n-1)$ and 
    $X_m=X_m(E_{\com}\SP(n),\SP(n)/\SP(n-1)),$
    \[
    Q_m(E_{\com}G,G/H) = \left\{\, f \in \left(\Q[X,Y]^{B_n}/(q_1(X),\ldots,q_n(X))\right) \,\middle|\, s_\alpha(f) \equiv f \, \mod\, (c_n(Y))^m \,\right\}
    \]
    where $q_i(X)=\sigma_i(x_1^2,\ldots,x_n^2)$ and $q_i(Y)=\sigma_i(y_1^2,\ldots,y_n^2)$. 
    We view $B_n$ as the group of signed permutation on $\bI_n=\{-n,-n+1,\ldots,-1,1,\ldots,n\}$. 
    Given $ \sigma\in B_n $, define 
    \[\begin{array}{l}
    d_i(\sigma) \coloneqq \vert \{i\leq j\leq n-1 | \sigma(j)>\sigma(j+1)\} \vert \\
    \varepsilon_i(\sigma) \coloneqq \begin{cases} 
    0, & \sigma(i)>0\,, \\
    1, & \sigma(i)<0\,.
    \end{cases} \\
    f_i(\sigma) \coloneqq 2d_i(\sigma) + \varepsilon_i(\sigma)\,. 
    \end{array}\] 
    The {\it diagonal signed descent monomial} associated to $ \sigma \in B_n $ is 
    \[ b_{\sigma} = \prod_{i=1}^nx_i^{f_i(\sigma^{-1})} y_i^{f_{|\sigma^{-1}(i)|}(\sigma)}\,. \]
    Consider the averaging operator 
    \[\rho: \Q[X,Y] \to \Q[X,Y]^{B_n}\,, \quad f \mapsto \frac{1}{|B_n|}\sum_{\sigma\in B_n}\sigma\cdot f\]
    and let $ c_{\sigma} = \rho(b_{\sigma}) \in \Q[X,Y]^{B_n} $. The diagonal signed descent monomials form a basis of $H^{\ast}(B_{\com}G;\Q)$ over $\Q[V]^W$ (see e.g., \cite{gomez2013basis}*{Theorem 1} and \cite{adem2015classifying}*{Section 8.3}), thus $H^{\ast}(X_m;\Q)$ decomposes as a graded module 
    \[ H^{\ast}(X_m;\Q) \cong \Q[q_1(Y),\ldots,q_n(Y)] \bigoplus \bigl(\bigoplus_{\sigma\in B_n\backslash\{e\}} c_{\sigma}q_n(Y)^m \cdot \Q[q_1(Y),\ldots,q_n(Y)]\bigr) \]
    over $ H^{\ast}(B\SP(n);\Q) $ with basis 
    $\{ 1,c_{\sigma}q_n(Y)^m\}_{\sigma\in B_n\backslash\{e\}}.$
    
    Let $\text{fmaj}(\sigma)\coloneqq 2(\text{maj}(\sigma)+\text{neg}(\sigma))$ be the {\it flag major index} of a signed permutation $\sigma$, 
    where $\text{neg}(\sigma)\coloneqq |\{1\leq i\leq n \, |\, \sigma(i)<0 \}|$.
    Then the Hilbert series of $H^{\ast}(X_m)$ is given by
    \[ p_{X_m}(t) = \frac{1-t^{4mn}}{\prod_{i=1}^n(1-t^{4i})} + \frac{t^{4mn}(\sum_{\sigma\in B_n}t^{2(\text{fmaj}(\sigma)+\text{fmaj}(\sigma^{-1}))})}{\prod_{i=1}^n(1-t^{4i})}\,. \]
\end{eg}

\section{Generalizations}
\label{sec:gen}
The join (relative Ganea) construction over $BG$ has a natural generalization as follows.
\subsection{Generalized join construction}
Let $ \widetilde{G} $ be a closed subgroup of $G$ containing $T$, with Weyl group $\widetilde{W} = N_{\widetilde{G}}(T)/T$. 
Take $H\subseteq G$ a closed subgroup such that $\widetilde{G}/H \cong \bS^{2k-1}$ for some $ k \ge 1$. 
Take $F$ a $\Q$-finite $G$-space and $X=EG\times_G F$. 
If the fibration $X\to BG$ factors through $X\to B\widetilde{G}$, we write $K\coloneqq \text{fib}_\ast(X\to B\widetilde{G})$.
We have
\[
\begin{tikzcd}
    K \ar[r]\ar[d,equal] & F \ar[r]\ar[d] & G/\widetilde{G}\ar[d] \\
    K \ar[r]\ar[d] & X\ar[d] \ar[r] & B\widetilde{G}\ar[d] \\
    \pt \ar[r] & BG \ar[r] & BG
\end{tikzcd}
\]
where all columns are fibrations. As the second and third rows are fibrations, the top row is also a fibration. 
Furthermore, consider the following commutative diagram
\[
\begin{tikzcd}
    F \ar[r]\ar[d] & G/\widetilde{G}\ar[r]\ar[d] & \pt\ar[d] \\
    X \ar[r] & B\widetilde{G}\ar[r] & BG
\end{tikzcd}
\]
as the right and outer square are (homotopy) pullbacks, so is the left square.

Applying the relative Ganea construction iteratively to the top, bottom, and outer squares in the following pullback diagrams
\[
\begin{tikzcd}
    K \ar[r]\ar[d] & \pt \ar[d] & \widetilde{G}/H \ar[l]\ar[d] \\
    F \ar[r]\ar[d] & G/\widetilde{G}\ar[d] & G/H \ar[l]\ar[d] \\
    X \ar[r] & B\widetilde{G} & BH \ar[l]
\end{tikzcd}
\]
we get fibrations
\begin{eqnarray}
    \label{eq:fib-KF}
    K_m \to F_m \to G/\widetilde{G} \\
    \label{eq:fib-K}
    K_m \to \widetilde{X}_m \to B\widetilde{G}
\end{eqnarray}
where $K_m=K\ast^m \widetilde{G}/H$, $F_m=F\ast_{G/\widetilde{G}}^m G/H$ and $\widetilde{X}_m=X\ast_{B\widetilde{G}}(BH)^{\ast m}$. 
Note we have another the fibration 
\begin{equation}
    \label{eq:fib-F}
    F_m \to \widetilde{X}_m \to BG 
\end{equation}
which can be read from the following composite of homotopy pullback diagrams
\begin{equation*}
\begin{tikzcd}
    F_m \ar[r]\ar[d] & G/\Tilde{G} \ar[r]\ar[d] & \pt \ar[d] \\
    X_m \ar[r] & B\Tilde{G} \ar[r] & BG \,
\end{tikzcd}
\end{equation*}
where the left hand side homotopy pullback diagram follows from \cite{Doe98}*{Theorem 3.1}.

In particular, $\widetilde{X}_m$ fits into a diagram of spaces
\begin{equation}
\label{eq:Xrel}
X=\widetilde{X}_0 \to \widetilde{X}_1 \to \ldots \to \widetilde{X}_m \to \ldots \
\end{equation}

The homogeneous space $G/\widetilde{G}$ has only finite even degree rational cohomology (see e.g., \cite{greub1976connections}*{Chapter XI, Section 2, Theorem VII}).  
The Serre spectral sequence associated to the fibration $\widetilde{G}/T\hookrightarrow G/T \twoheadrightarrow G/\widetilde{G}$ degenerates and thus $\dim_{\Q}H^{\ast}(G/\widetilde{G};\Q)= |W|/|\widetilde{W}|$. 
We have another fibration 
$G/\widetilde{G} \to B\widetilde{G} \to BG$
whose Leray-Serre spectral sequence also collapses, thus $ H^{\ast}(B\widetilde{G};\Q) $ is a finite generated free module over $ H^\ast(BG;\Q) $ of rank $ | W|/|\widetilde{W}| $. 
If $K$ is a $\Q$-finite space whose rational cohomology is concentrated in even degrees, then by \eqref{eq:fib-KF} that $F$ is a $\Q$-finite space whose rational cohomology is concentrated in even degrees,
and $\dim_\Q H^\ast(F;\Q)=\dim_\Q H^\ast(K;\Q) \cdot \frac{|W|}{|\widetilde{W}|}$. 

Using \eqref{eq:fib-F} and \eqref{eq:fib-K} we get a generalization of \Cref{thm:borel-tow}. 
\begin{thm}
\label{thm:gen-borel}
For all $ m \ge 0 $, the algebras $\widetilde{Q}_m(F,\widetilde{G},H)=H^\ast(\widetilde{X}_m;\Q)$ are free modules over $H^\ast(B\widetilde{G};\Q)$ of rank $\dim_{\Q}[H^{\ast}(K,\,\Q)]$ and thus also free module over $H^\ast(BG;\Q)$ of rank $\dim_{\Q}[H^{\ast}(F,\,\Q)]$ and hence $($graded$)$ Cohen-Macaulay. The maps in the diagram \eqref{eq:Xrel} induces an {\rm injective} algebra homomorphism on rational cohomology, defining the descending filtration
\begin{equation*}
\widetilde{Q}_0(F,\widetilde{G},H) \hookleftarrow  \widetilde{Q}_1(F,\widetilde{G},H)  \hookleftarrow \ldots  
\hookleftarrow \widetilde{Q}_m(F,\widetilde{G},H) \xhookleftarrow{\,\pi_m^*\,} \widetilde{Q}_{m+1}(F,\widetilde{G},H) \hookleftarrow \ldots
\end{equation*}
such that  $ \,\varprojlim \,\widetilde{Q}_m(F,\widetilde{G},H) \cong \Q[V]^{\widetilde{W}} $. Furthermore, $K_m,F_m$ and $\widetilde{X}_m$ are formal spaces for $m>0$.
\end{thm}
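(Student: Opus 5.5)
The plan is to rerun the proof of \Cref{thm:borel-tow} with the base $BG$ replaced by $B\widetilde{G}$, using the fibration \eqref{eq:fib-K} in place of $F_m\to X_m\to BG$. The assumption that $K$ is $\Q$-finite with even rational cohomology plays the role of hypothesis (1), and the sphere $\widetilde{G}/H\cong\bS^{2k-1}$ with its transitive $\widetilde{G}$-action plays the role of $F'$.

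\emph{Rational type and freeness.} The computation \eqref{eq:fmjoin} applies to $K_m=K\ast^m\widetilde{G}/H$, so $K_m$ is rationally a wedge of even spheres for $m>0$, and $\dim_\Q H^\ast(K_m;\Q)=\dim_\Q H^\ast(K;\Q)$. The Serre spectral sequence of \eqref{eq:fib-K} then collapses, since $B\widetilde{G}$ is simply connected and all classes are even. As in \eqref{eq:LSSS}, this shows that $\widetilde{Q}_m$ is a free $H^\ast(B\widetilde{G};\Q)=\Q[V]^{\widetilde{W}}$-module of rank $\dim H^\ast(K)$, concentrated in even degrees. We have already seen that $H^\ast(B\widetilde{G};\Q)$ is free over $\Q[V]^W$ of rank $|W|/|\widetilde{W}|$. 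Composing the two bases gives freeness over $\Q[V]^W$ of rank $\dim H^\ast(K)\cdot|W|/|\widetilde{W}|=\dim H^\ast(F)$. Cohen--Macaulayness follows because $\Q[V]^W$ is polynomial. Alternatively, \eqref{eq:fib-F} gives this freeness directly once $F_m$ is shown to be even and $\Q$-finite via \eqref{eq:fib-KF}.

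\emph{Injectivity and the limit.} We use $\widetilde{X}_{m+1}=\hcolim\{\widetilde{X}_m\leftarrow \widetilde{X}_m\times_{B\widetilde{G}}BH\to BH\}$. Since $\widetilde{Q}_m$ is finitely generated free over $H^\ast(B\widetilde{G})$, the Eilenberg--Moore argument of \Cref{lem:coh-pback} gives
\[
H^\ast(\widetilde{X}_m\times_{B\widetilde{G}}BH)\cong\widetilde{Q}_m\otimes_{H^\ast(B\widetilde{G})}H^\ast(BH),
\]
which is even. The Gysin sequence gives $H^\ast(BH)=\Q[V]^{\widetilde{W}}/(\theta)$, with $\theta$ the Euler class of $\widetilde{G}/H\to BH\to B\widetilde{G}$. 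The Mayer--Vietoris sequence therefore splits into short exact sequences, and the snake-lemma induction of \Cref{prop:inj} applies verbatim with $BG$ replaced by $B\widetilde{G}$. This shows that each $\pi_m^\ast$ is injective and that
\[
\widetilde{Q}_{m+1}=\Q[V]^{\widetilde{W}}+\theta\,\widetilde{Q}_m .
\]
\Cref{prop:basis} with $P_\infty=\Q[V]^{\widetilde{W}}$ and $P_0=\widetilde{Q}_0$ gives $\widetilde{Q}_m=\Q[V]^{\widetilde{W}}\oplus\bigoplus_{i>1}\theta^m b_i\Q[V]^{\widetilde{W}}$. In each fixed degree the second summand vanishes for $m\gg0$, so $\varprojlim\widetilde{Q}_m\cong\Q[V]^{\widetilde{W}}$.

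\emph{Formality.} For $m>0$, $K_m$ is a finite wedge of even spheres, hence simply connected, rationally elliptic and formal. It is also $\widetilde{G}$-equivariantly formal, so the Lillywhite and Lupton results give formality of $\widetilde{X}_m$.

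The main obstacle is formality of $F_m$. One route is to identify $F_m$ as the homotopy fiber of $\widetilde{X}_m\to BG$ and use that $H^\ast(\widetilde{X}_m)$ is free over $H^\ast(BG)$ with $\widetilde{X}_m$ formal. Then the model is $H^\ast(F_m)\simeq H^\ast(\widetilde{X}_m)\otimes^{\bL}_{H^\ast(BG)}\Q=H^\ast(\widetilde{X}_m)/(H^{>0}(BG))$, and the Eilenberg--Moore spectral sequence degenerates by freeness, giving formality. The delicate point, which I would check first, is making the map $\widetilde{X}_m\to BG$ formal as a map; this should follow from $G$-formality as in \Cref{prop:formal}. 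Alternatively, one can use \eqref{eq:fib-KF} over the formal, elliptic base $G/\widetilde{G}$ with collapsing Serre spectral sequence and apply Lupton's proposition directly. For that, ellipticity of $K_m$ requires $K_m$ to be a single sphere or $\Q$-trivial, which is not true in general, so I would rely on the first route.
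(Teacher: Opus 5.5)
Your treatment of freeness, injectivity and the inverse limit is correct, and it follows the paper's route. You rerun \Cref{thm:borel-tow} over $B\widetilde G$ using \eqref{eq:fib-K}, and get freeness over $\Q[V]^W$ either by composing bases or from \eqref{eq:fib-F}. Your degreewise argument for $\varprojlim\widetilde Q_m\cong\Q[V]^{\widetilde W}$ is a useful addition, since the paper does not spell it out.

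The gap is in the formality part. To apply Lupton and Lillywhite to $\widetilde X_m$, you use that $K_m$, a finite wedge of even spheres, is rationally elliptic. But, as you observe yourself at the end, a wedge of two or more simply connected spheres has infinite-dimensional rational homotopy (a free graded Lie algebra). So those results do not apply once $\dim_\Q H^\ast(K;\Q)\ge 3$. The paper's \Cref{prop:formal} makes the same slip, so citing it does not help. Your route to formality of $F_m$ inherits the problem. The Eilenberg--Moore step is fine, but it needs $C^\ast(\widetilde X_m;\Q)$ to be formal as an algebra over $C^\ast(BG;\Q)\simeq\Q[V]^W$. Getting that from $G$-formality as in \Cref{prop:formal} is either circular (Lillywhite applied to $F_m$ presupposes that $F_m$ is formal) or again relies on ellipticity.

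What does work is the inductive argument behind \Cref{thm:alg-gen-quasi}. Suppose $C^\ast(\widetilde X_m;\Q)\simeq\widetilde Q_m$ as $\Q[V]^{\widetilde W}$-algebras. Then freeness gives $C^\ast(\widetilde X_m\times_{B\widetilde G}BH;\Q)\simeq\widetilde Q_m/(\theta)$, and $C^\ast(\widetilde X_{m+1};\Q)$ is the homotopy pullback of $\widetilde Q_m\to\widetilde Q_m/(\theta)\leftarrow\Q[V]^{\widetilde W}/(\theta)$. Since one leg is surjective, this is the strict pullback, i.e.\ $\widetilde Q_{m+1}$ with zero differential. Restricting along the free algebra $\Q[V]^W\subseteq\Q[V]^{\widetilde W}$ and then applying your Eilenberg--Moore step gives formality of $F_m$. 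Also $K_m\simeq\Sigma^{2km}K$ is formal as a suspension.

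This induction needs a base case: $C^\ast(X;\Q)$ must be formal over $\Q[V]^{\widetilde W}$. That holds automatically for $X=BT$, but it does not follow from the stated hypotheses, and without it the formality claim can fail. For instance, take $\widetilde G=G=\SU(2)$ and $H=\{e\}$. Let $F=K$ be a non-formal $\Q$-finite even space with trivial action, such as $(\bS^2\vee\bS^2)\cup e^6$ attached along $[\iota_1,[\iota_1,[\iota_1,\iota_2]]]$. Then $\widetilde X_1\simeq F_+\wedge B\SU(2)$ has cochain model $\Q\oplus c_2\Q[c_2]\otimes A_{PL}(F)$. A $c_2$-weight argument shows that this model retains the nontrivial $4$-fold Massey product $\langle a_1,a_1,a_1,a_2\rangle$ of $F$, so $\widetilde X_1$ is not formal.
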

We also get a presentation of $\widetilde{Q}_m(F,\widetilde{G},H)$ as \eqref{prop:QmGTH} if we replace $G$ by $\widetilde{G}$.
\begin{prop}
   Let $\Tilde{\theta}\in H^{\ast}(B\Tilde{G};\Q)$ be the Euler class of the spherical fiber bundle $\Tilde{G}/H\hookrightarrow BH \to B\Tilde{G}$, then 
    \begin{equation}\label{eq:HQ-Xm-rel}
        H^{\ast}(\widetilde{X}_m;\Q) = \{\, f \in H^{\ast}(X;\Q)\, |\, s_\alpha(f)\equiv f \, \mod\, (\Tilde{\theta})^m , \, \forall s_\alpha\in \widetilde{W}\, \}
    \end{equation}
    where $\widetilde{W}=W_{\widetilde{G}}=N_{\widetilde{G}}(T)/T$ is the Weyl group of $\widetilde{G}$.
\end{prop}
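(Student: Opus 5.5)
The plan is to rerun the proof of \Cref{thm:borel-tow} and \Cref{prop:presentation} with $BG$ replaced by $B\widetilde{G}$ throughout. The tower \eqref{eq:Xrel} is obtained by iterating the relative join over $B\widetilde{G}$ with the spherical fibration $\widetilde{G}/H \to BH \to B\widetilde{G}$, whose fiber is an odd sphere $\bS^{2k-1}$. The initial space is $X \to B\widetilde{G}$ with fiber $K$, and $K$ is a $\Q$-finite space with cohomology in even degrees. The inductive step of \Cref{thm:borel-tow} only uses the following inputs:
\begin{itemize}
\item the fibration $K_m \to \widetilde{X}_m \to B\widetilde{G}$ from \eqref{eq:fib-K};
\item the computation \eqref{eq:fmjoin}, which shows that $K_m = K \ast^m \widetilde{G}/H$ is rationally a wedge of even spheres with $\dim_\Q H^\ast(K_m)=\dim_\Q H^\ast(K)$;
\item the Gysin sequence, which gives $H^\ast(BH;\Q)=H^\ast(B\widetilde{G};\Q)/(\tilde\theta)$ with $\tilde\theta$ the Euler class.
\end{itemize}

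First, the Serre spectral sequence of \eqref{eq:fib-K} collapses, since $K_m$ has even cohomology. Hence $H^\ast(\widetilde{X}_m)$ is free over $H^\ast(B\widetilde{G})$ of rank $\dim H^\ast(K)$ and is concentrated in even degrees. Next, the Eilenberg--Moore argument of \Cref{lem:coh-pback} gives
\[
H^\ast(\widetilde{X}_m\times_{B\widetilde{G}}BH)\cong H^\ast(\widetilde{X}_m)\otimes_{H^\ast(B\widetilde{G})}H^\ast(BH)=H^\ast(\widetilde{X}_m)/(\tilde\theta),
\]
and this ring is again even. The Mayer--Vietoris sequence of the defining homotopy pushout therefore splits into short exact sequences, exactly as in \Cref{prop:Xm+1}. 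The snake-lemma argument of \Cref{prop:inj} then gives injectivity of $\pi_m^\ast$, together with
\[
H^\ast(\widetilde{X}_{m+1})=H^\ast(B\widetilde{G})+\tilde\theta\cdot H^\ast(\widetilde{X}_m).
\]
Applying \Cref{prop:basis} with $P_\infty=H^\ast(B\widetilde{G};\Q)$, $P_0=H^\ast(X;\Q)$ and the non-zero divisor $\tilde\theta$ yields
\[
H^\ast(\widetilde{X}_m)=H^\ast(B\widetilde{G})+\tilde\theta^m H^\ast(X),
\]
with explicit bases.

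To reach the stated form \eqref{eq:HQ-Xm-rel}, I would identify $H^\ast(B\widetilde{G};\Q)$ with $H^\ast(X;\Q)^{\widetilde{W}}$ and then invoke \Cref{rmk:EqPre} with $Q_0=H^\ast(X)$ and $\theta=\tilde\theta\in Q_0^{\widetilde W}$. This identification is the main obstacle. It requires a $\widetilde{W}$-action on $H^\ast(X;\Q)$ whose invariants are exactly the image of $H^\ast(B\widetilde{G})=\Q[V]^{\widetilde W}$. My approach is to pass to the $T$-equivariant picture, using $H^\ast(X)=H^\ast_G(F)=H^\ast_T(F)^W$ as in the $T$-equivariant subsection. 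I would define the $\widetilde W$-action through $H^\ast_T(F)$, or, in the flag case $F=G/T$, through the right action of $\widetilde W\subseteq W$ on $G/T$, as in \Cref{sec:flag}. The claim to prove is that $H^\ast(X)$ is a free $H^\ast(B\widetilde G)$-module of rank $\dim H^\ast(K)$ with invariants $H^\ast(B\widetilde{G})$, mirroring the argument $Q_m(G/T,G/H)^W=\Q[V]^W$.

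If this fails in general, I would instead use the presentation $H^\ast(B\widetilde{G})+\tilde\theta^m H^\ast(X)$ as the definition of the congruence condition. The first step of the proof of \Cref{prop:gen-quasi} (averaging over $\widetilde W$) then shows that it agrees with the set in \eqref{eq:HQ-Xm-rel} whenever the invariants are $H^\ast(B\widetilde{G})$. Once this is settled, the inclusion
\[
H^\ast(B\widetilde G)+\tilde\theta^m H^\ast(X)\subseteq\{f : s_\alpha f\equiv f \bmod \tilde\theta^m\}
\]
is immediate, and the reverse inclusion follows from the averaging decomposition $f=\frac{1}{|\widetilde W|}\sum_w w f+\frac{1}{|\widetilde W|}\sum_w(1-w)f$.
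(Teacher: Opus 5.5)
Your route is the paper's own. Everything up to $H^\ast(\widetilde X_m)=H^\ast(B\widetilde G)+\tilde\theta^m H^\ast(X)$ is what the paper does, implicitly: it justifies this proposition only by saying one replaces $G$ by $\widetilde G$ in \Cref{thm:borel-tow} and \Cref{prop:presentation}. That is exactly your chain: Serre collapse, Eilenberg--Moore, split Mayer--Vietoris, $H^\ast(\widetilde X_{m+1})=H^\ast(B\widetilde G)+\tilde\theta\,H^\ast(\widetilde X_m)$, then \Cref{prop:basis}. The paper then passes to the congruence form through \Cref{rmk:EqPre}, which silently uses $H^\ast(X;\Q)^{\widetilde W}=H^\ast(B\widetilde G;\Q)$. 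So the obstacle you isolate is real, the paper does not address it, and your fallback is precisely the paper's reading. Both you and the paper also tacitly need $\widetilde G$ connected. This makes $B\widetilde G$ simply connected and the sphere bundle orientable, and makes $H^\ast(B\widetilde G;\Q)=\Q[V]^{\widetilde W}$ a polynomial domain in which $\tilde\theta$ is a non-zero divisor.

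The step that would fail is your first remedy. A $\widetilde W$-action defined through $H^\ast_T(F)$ is useless here: $\widetilde W\subseteq W$ acts trivially on $H^\ast_T(F)^W=H^\ast(X)$, so every $f$ would satisfy the congruences. What you need is a $\widetilde W$-action on $X$ over $B\widetilde G$ whose invariants in cohomology are exactly $H^\ast(B\widetilde G;\Q)$.

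Your second remedy supplies this in the flag case. There $X=BT$, and $\widetilde W$ acts by $[x]_T\mapsto[xn^{-1}]_T$, covering the identity of $B\widetilde G$; hence the action is $H^\ast(B\widetilde G)$-linear and fixes $\tilde\theta$. Borel's theorem for $\widetilde G$ then gives $\Q[V]^{\widetilde W}=H^\ast(B\widetilde G;\Q)$. With this, your averaging argument closes the proof once you add one point. The reflection congruences imply $w(f)\equiv f$ modulo $\tilde\theta^m H^\ast(X)$ for every $w\in\widetilde W$, because $\widetilde W$ is generated by reflections and that ideal is $\widetilde W$-stable.

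For general $F$, such an action on $H^\ast(X)$ need not exist at all. Take $G=\widetilde G=\U(3)$ and $F=K=\mathbb{CP}^2$. Then $H^\ast(X)=H^\ast(B(\U(1)\times\U(2));\Q)=\Q[t_1,t_2,t_3]^{S_2}$. No finite group acts on this ring with invariant ring $\Q[t_1,t_2,t_3]^{S_3}$, since $\Q(t_1,t_2,t_3)^{S_2}$ is not Galois over $\Q(t_1,t_2,t_3)^{S_3}$. So \eqref{eq:HQ-Xm-rel} only makes sense once such extra structure is specified, or when it is read as shorthand for $H^\ast(B\widetilde G)+\tilde\theta^m H^\ast(X)$.
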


\subsubsection{Examples}
\begin{eg}\label{eg:rel-U}
Let $ G= \U(n) $ and $ \widetilde{G} = G_{k} = \U(n-k) \times \U(k) $. Let $ H = H_{k} = \U(n-k) \times \U(k-1) $. The Weyl group of $G_{k}$ is $S_{n-k}\times S_{k}$, and
\[ H^{\ast}(BG_{k};\Q)\cong \Q[V]^{W_{k}} = \Q[t_1\ldots,t_n]^{S_{n-k}\times S_{k}} = \Q[b_1,\ldots,b_{n-k},c_1,\ldots,c_{k}]\]
where $b_i = \sigma_i(t_1,\ldots,t_{n-k}) $ and $ c_i = \sigma_i(t_{n-k+1},\ldots,t_n)$ are the symmetric polynomials. Let $d_i$ be the degrees of the homogeneous algebraically independent generators of $H^{\ast}(BG_{k})$ in the order we present here, i.e. 
\[ d_i= \begin{cases}
    \deg(b_i)= i \,, & 1\leq i \leq n-k \,, \\
    \deg(c_{i-n+k})=i-n+k \,, & n-k+1\leq i \leq n \,.
\end{cases}\]
In this case, the fibration \eqref{eq:fib-K} is given by
\begin{equation}\label{eq:fib-mk}
    \begin{tikzcd}[sep=small]
    F_{m,k} = (G/T)\ast_{G/G_{k}} (G/H_{k})^{\ast m} \ar[r,hook] & X_{m,k} = (BT)\ast_{BG_{k}}(BH_{k})^{\ast m} \ar[r] & BG \, .
    \end{tikzcd}
\end{equation}
The $G$-rational cohomology of $F_{m,k}$ is given by
    \[H^{\ast}_G(F_{m,k};\Q) = H^{\ast}(X_{m,k};\Q) = \{\, f\in \Q[t_1,\ldots,t_n] \, |\, s_\alpha(f)\equiv f \, \mod\, (c_k)^m, \forall s_\alpha \in S_{n-k}\times S_k \, \}\]
and the Hilbert series of $X_{m,k}$ is
\[ p_{X_{m,k}}(t) = \frac{1-t^{2mk}}{\prod_{i=1}^{k}(1-t^{2i})\prod_{i=1}^{n-k}(1-t^{2i})} + \frac{t^{2mk}}{(1-t^2)^n}\,.\]
\end{eg}
\begin{eg}\label{eg:rel-Sp}
Let $G=\SP(n), G_{k}=\SP(n-k)\times \SP(k)$ and $H_{k}=\SP(n-k)\times \SP(k-1)$. Then $G_{k}/H_{k}\cong \bS^{4k-1}$. 
In this case, $ \Q[V] = \Q[t_1,\ldots,t_n] $, $ H^{\ast}(BG_{k};\Q) = \Q[p_1,\ldots,p_{n-k},q_1,\ldots,q_{k}] $ 
where $ p_i=\sigma_i(t_1^2,\ldots,t_{n-k}^2), q_i = \sigma_i(t_{n-k+1}^2,\ldots,t_n^2) $. 
%
The $G$-rational cohomology of $F_{m,k}$ is 
\[H^{\ast}_G(F_{m,k};\Q) = H^{\ast}(X_{m,k};\Q) = \{\, f\in \Q[t_1,\ldots,t_n] \, |\, s_\alpha(f)\equiv f \, \mod\, (q_k)^m, \forall s_\alpha \in B_{n-k}\times B_k \, \}\]
and the Hilbert series of $X_{m,k}$ is
\[ p_{X_{m,k}}(t) = \frac{1-t^{4mk}}{\prod_{i=1}^{k}(1-t^{4i})\prod_{i=1}^{n-k}(1-t^{4i})} + \frac{t^{4mk}}{(1-t^2)^n}\,.\]
\end{eg}

\begin{eg}
\label{eg:quasi-partial}
    Let $G$ be a compact connected Lie group with a maximal torus $T$ and Weyl group $W$ with associated root system $\cR$. Take a positive root $\alpha\in\cR_+$ and $T_\alpha\subseteq T$ a singular subtorus corresponding to $\alpha$, $G_\alpha=C_G(T_\alpha)$ the centralizer of $T_\alpha$ in $G$, then for $\Tilde{G}=G_\alpha$ and $H=T_\alpha$, we get a $G$-space 
    \begin{equation}
        F_{m_\alpha}=F_{m_\alpha}(G,T)=(G/T) \ast_{(G/G_\alpha)} (G/T_\alpha)^{\ast m_\alpha} \cong G\times_{G_\alpha} \left[(G_\alpha/T)\ast (G_\alpha/T_\alpha)^{\ast m_\alpha}\right]
    \end{equation} 
    which is the $G$-space assigned to $W_\alpha$ for the functor $(G\times_T\cF_{2m})^{\natural,+}:\cS(W)\to\Top^G$ defined in \cite{berest2025quasiflag}*{(6.38)} for even multiplicity, and its $G$-equivariant rational cohomology is 
    \[
    H^\ast_G(F_{m_\alpha};\Q) = \Q[V]^{W_\alpha} + \alpha^{2m_\alpha}\cdot \Q[V] = \{\, p \in \Q[V]\, |\, s_{\alpha}(p) \equiv p \, \mod\, \langle \alpha \rangle^{2 k_{\alpha}}\, \}
    \]
    where $W_\alpha=N_{G_\alpha}(T)/T$ is the Weyl group associated to $(G_\alpha,T)$.
\end{eg}


\subsection{Filtered join construction}\label{sec:fil}

We can iterate the above generalization to obtain a filtered join construction if we have
\begin{enumerate}
    \item a sequence of Lie subgroups $T \hookrightarrow G_1 \hookrightarrow \ldots \hookrightarrow G_{l} = G$, such that $G_i\subsetneq G_{i+1}$, 
    \item for each $i>0$, a Lie subgroup $H_i$ such that $G_i/H_i\cong S^{2k_i-1}$ is an odd dimensional sphere,
    \item for each $1\leq i \leq l$, a multiplicity $m_i\in \bN$,
\end{enumerate}
we can construct a Borel fibration $F_{(m)}^l\hookrightarrow X_{(m)}^l \twoheadrightarrow BG$ associated to each multiplicity $m=(m_1,\ldots,m_l)$
as follows.
\begin{enumerate}
    \item The fibration \eqref{eq:fib-F0} and $G/H_1\hookrightarrow BH_1\twoheadrightarrow BG$ factors through $BG_1$ and we can construct the iterated relative join fibration
    \begin{equation*}
       \begin{tikzcd}[sep=small]
           F_{(m)}^1 = F\ast_{G/G_1} (G/H_1)^{\ast m_1} \ar[r,hook] & X_{(m)}^1=X\ast_{BG_1} (BH_1)^{\ast m_1} \ar[r,"p_1"] & BG \, . 
       \end{tikzcd}
    \end{equation*}
    Note $p_1:X_1\to BG$ factors through $BG_2$. 
    \item Assume in step $i$, we have a Borel fibration 
    \begin{equation*}
    \begin{tikzcd}[sep=small]
        F_{(m)}^i = F_{(m)}^{i-1}{\ast}_{G/G_i}^{m_i} G/H_i \ar[r,hook] & X_{(m)}^i=X_{(m)}^{i-1}\ast_{BG_i}^{m_i} BH_i \ar[r,"p_i^m"] & BG 
    \end{tikzcd}
    \end{equation*}
    factoring through $BG_i$ and thus also factoring through $BG_{i+1}$, then we can construct the space 
    $X_{(m)}^{i+1}\coloneqq X_{(m)}^i\ast_{BG_{i+1}}^{m_{i+1}}BH_{i+1}$ and let $F_{(m)}^{i+1}=\hfib\{X_{(m)}^i\xrightarrow{p_i} BG\}$ to obtain the following Borel fibration
    \begin{equation*}
        \begin{tikzcd}[sep=small]
            F_{(m)}^{i+1} = F_{(m)}^i\ast_{G/G_{i+1}}^{m_{i+1}} G/H_{i+1} \ar[r,hook] & X_{(m)}^{i+1}=X_{(m)}^i\ast_{BG_{i+1}}^{m_{i+1}} BH_{i+1} \ar[r,"p_{i+1}^m"] & BG
        \end{tikzcd}
    \end{equation*}
    and the fibration $p_{i+1}^m$ factors through $BG_{i+1}$, thus also through $BG_{i+2}$.
    \item By induction, we obtain a fibration at the last step $l$, 
    \begin{equation}\label{eq:fil-gen-borel-fib-m}
       \begin{tikzcd}[sep=small]
           F_{(m)}^l = G/T\ast_{G/G_1}^{m_1}\ldots\ast_{G/G_l}^{m_l} G/H_l \ar[r,hook] & X_{(m)}^l = BT\ast_{BG_1}^{m_1}\ldots\ast_{BG_l}^{m_l}BH_l \ar[r,"p_l^m"] & BG \, .
       \end{tikzcd} 
    \end{equation}
\end{enumerate}

\begin{thm}
    Let $\theta_i$ be the Euler class of the spherical fiber bundle $G_i/H_i\hookrightarrow BH_i \to BG_i$, and identify $\theta_i\in H^{\ast}(BG_i;\Q)$ with its image in $H_G^{\ast}(F;\Q)$. 
    The rational cohomology of $X_{(m)}^l$ can be identified as a subalgebra of $H_G^{\ast}(F;\Q)$ and is given by
    \[H^{\ast}(X_{(m)}^l) = \{~f\in H_G^{\ast}(F;\Q) \, |\, s_i(f) \equiv f \, \mod\, (\theta_i)^{m_i}, \forall s_i\in W_i \, \} \, \]
    where $s_i$ are the reflections in $W_i = N_{G_i}(T)/T$.
\end{thm}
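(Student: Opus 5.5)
We argue by induction on the level $i$, applying \Cref{thm:gen-borel} (with its presentation \eqref{eq:HQ-Xm-rel}) at each step with $\widetilde G=G_i$ and $H=H_i$. The input space at step $i$ is $X^{i-1}_{(m)}$, with $X^0_{(m)}=X=EG\times_G F$. For the induction we carry the following statement $(\ast)_i$:
\begin{itemize}
\item $H^\ast(X^i_{(m)};\Q)$ is a free module over $H^\ast(BG_{i+1};\Q)=\Q[V]^{W_{i+1}}$;
\item it is concentrated in even degrees;
\item the maps in the tower identify it with a subalgebra $\{f\in H^\ast_G(F;\Q)\,|\,s_j(f)\equiv f \bmod (\theta_j)^{m_j},\ \forall s_j\in W_j,\ j\le i\}$.
\end{itemize}

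The base case is $i=0$. Here $H^\ast(X;\Q)=H^\ast_G(F;\Q)$ is free over $\Q[V]^W$ by the collapse \eqref{eq:LSSS}. It is then also free over $\Q[V]^{W_1}$, because $\Q[V]^{W_1}=H^\ast(BG_1;\Q)$ is itself free over $\Q[V]^W$ and $H^\ast_G(F)\cong H^\ast(F)\otimes H^\ast(BG)$ can be rewritten as $H^\ast(K)\otimes H^\ast(BG_1)$. The latter uses the collapse of the Serre spectral sequence of $K\to X\to BG_1$, which holds since $K$ has even cohomology by the discussion preceding \Cref{thm:gen-borel}.

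For the inductive step, assume $(\ast)_{i-1}$. The map $X^{i-1}_{(m)}\to BG$ factors through $BG_i$, so we are exactly in the setting of \Cref{thm:gen-borel} with fiber $K^{(i)}=\mathrm{hofib}(X^{i-1}_{(m)}\to BG_i)$. This fiber has even, finite rational cohomology, because the freeness in $(\ast)_{i-1}$ forces the Serre spectral sequence over $BG_i$ to collapse and identifies $H^\ast(K^{(i)})$ with $H^\ast(X^{i-1}_{(m)})\otimes_{H^\ast(BG_i)}\Q$. The proofs of \Cref{lem:coh-pback}, \Cref{prop:Xm+1} and \Cref{prop:inj} go through over the base $BG_i$ in place of $BG$. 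Applied $m_i$ times, and using the Gysin identification $H^\ast(BH_i)=H^\ast(BG_i)/(\theta_i)$, they give
\[
H^\ast(X^i_{(m)};\Q)=\Q[V]^{W_i}+\theta_i^{m_i}\cdot H^\ast(X^{i-1}_{(m)};\Q)\subseteq H^\ast(X^{i-1}_{(m)};\Q).
\]
By \Cref{prop:basis} this ring is free over $\Q[V]^{W_i}$; it is free over $\Q[V]^{W_{i+1}}$ by the same argument as in the base case. \Cref{rmk:EqPre}, applied with $Q_0=H^\ast(X^{i-1}_{(m)})$ and $W_i$, rewrites it as the set of $f\in H^\ast(X^{i-1}_{(m)})$ with $s_i(f)\equiv f \bmod (\theta_i)^{m_i}$ for all reflections $s_i\in W_i$.

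Combining this with the description of $H^\ast(X^{i-1}_{(m)})$ from $(\ast)_{i-1}$ yields $(\ast)_i$, and $i=l$ is the theorem. The one gap is that \Cref{rmk:EqPre} imposes the congruence only for $f$ in $H^\ast(X^{i-1}_{(m)})$; since that subalgebra is already cut out of $H^\ast_G(F)$ by the earlier congruences, intersecting the conditions gives exactly the stated set.

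The main obstacle is checking the hypotheses of \Cref{rmk:EqPre} at each stage. We need that $\Q[V]^{W_i}$ acts on $H^\ast(X^{i-1}_{(m)})$ compatibly with a $W_i$-action on it, that $\theta_i$ is $W_i$-invariant (so the congruence ideal is $W_i$-stable), and that $Q_0^{W_i}=\Q[V]^{W_i}$. The last point holds because the earlier steps only impose congruences for the smaller groups $W_j\subseteq W_i$, so the averaging argument of \Cref{prop:gen-quasi} applies. A second subtlety is interpreting the congruence modulo $(\theta_i)^{m_i}$ as modulo $\theta_i^{m_i}\cdot H^\ast(X^{i-1}_{(m)})$ rather than modulo $\theta_i^{m_i}H^\ast_G(F)$. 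We plan to resolve this via freeness, which gives $\theta_i^{m_i}H^\ast_G(F)\cap \mathrm{span}\,\{f-s(f)\}\subseteq\theta_i^{m_i}H^\ast(X^{i-1}_{(m)})$. This relies on the fact that, in the decomposition of \Cref{prop:basis}, differences $f-s(f)$ lie in the $\theta$-multiple part.
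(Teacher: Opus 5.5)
Your induction on the level $i$ is the paper's route (its proof is just ``induction on $l$''). The part where you run \Cref{lem:coh-pback}, \Cref{prop:Xm+1}, \Cref{prop:inj} and \Cref{prop:basis} over the base $BG_i$ is sound and gives
\[
H^\ast(X^i_{(m)};\Q)=\Q[V]^{W_i}+\theta_i^{m_i}\cdot H^\ast(X^{i-1}_{(m)};\Q)\subseteq H^\ast(X^{i-1}_{(m)};\Q).
\]
Freeness over $\Q[V]^{W_{i+1}}$ then follows by transitivity, since $\Q[V]^{W_i}$ is free over $\Q[V]^{W_{i+1}}$; you do not need the fiber $K$ or a Serre spectral sequence for that.

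The gap is the last step, converting this nested recursion into the flat list of congruences inside $H^\ast_G(F;\Q)$, and it cannot be closed. First, \Cref{rmk:EqPre} needs $Q_0=H^\ast(X^{i-1}_{(m)})$ to be a $W_i$-algebra. For $i\ge 2$ it is not $W_i$-stable, because $\Q[V]^{W_{i-1}}$ and $\theta_{i-1}$ are not. Imposing congruences for the smaller groups $W_j$ therefore destroys $W_i$-stability rather than preserving it, and there is no action to average over. Second, your proposed inclusion $\theta_i^{m_i}H^\ast_G(F)\cap\mathrm{span}\{f-s(f)\}\subseteq\theta_i^{m_i}H^\ast(X^{i-1}_{(m)})$ is false.

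Concretely, take the paper's $\U(3)$ example: $F=G/T$, $H^\ast_G(F)=\Q[t_1,t_2,t_3]=:\Q[t]$, $\theta_1=t_1t_2$, $\theta_2=t_1t_2t_3$, and $m=(1,1)$. Then
\[
H^\ast(X^1_{(m)})=\Q[t]^{s_{12}}+t_1t_2\,\Q[t],\qquad H^\ast(X^2_{(m)})=\Q[t]^{S_3}+t_1t_2t_3\,H^\ast(X^1_{(m)}).
\]
Here $t_1+t_2\in H^\ast(X^1_{(m)})$, but $s_{13}(t_1+t_2)=t_2+t_3\notin H^\ast(X^1_{(m)})$.

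Now let $f=t_1^2t_2t_3$. It satisfies $s_{12}(f)-f=t_1t_2t_3(t_2-t_1)\in(t_1t_2)$, $s_{13}(f)-f=t_1t_2t_3(t_3-t_1)\in(t_1t_2t_3)$, and $s_{23}(f)=f$. So $f$ lies in the set on the right-hand side of the statement. Yet $f\notin H^\ast(X^2_{(m)})$. Indeed, if $f=a+\theta_2 p$ with $a$ symmetric and $p\in H^\ast(X^1_{(m)})$, then $t_1-p=a/\theta_2$ is symmetric. Taking homogeneous parts, $t_1-c(t_1+t_2+t_3)\in H^\ast(X^1_{(m)})$ for some $c\in\Q$. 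That is impossible, since $s_{12}$ moves this element by $t_2-t_1\notin(t_1t_2)$. This agrees with the basis the paper lists for this example: its part of polynomial degree $4$ is $\Q[t]^{S_3}_4\oplus\Q\,t_1t_2t_3^2$.

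So the flat description, with ideals taken in $H^\ast_G(F)$, is false. Reading the level-$i$ congruence modulo $\theta_i^{m_i}H^\ast(X^{i-1}_{(m)})$ does not rescue a reflection-by-reflection description either. For example, $\theta_2(t_1+t_2)\in H^\ast(X^2_{(m)})$, but $s_{13}$ moves it by $\theta_2(t_3-t_1)\notin\theta_2H^\ast(X^1_{(m)})$.

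What the induction (yours or the paper's) actually proves is the nested formula
\[
H^\ast(X^l_{(m)})=\Q[V]^{W_l}+\theta_l^{m_l}\bigl(\Q[V]^{W_{l-1}}+\theta_{l-1}^{m_{l-1}}\bigl(\cdots\bigl(\Q[V]^{W_1}+\theta_1^{m_1}H^\ast_G(F)\bigr)\cdots\bigr)\bigr).
\]
For $F=G/T$ this is equivalent to the condition that $f\in H^\ast(X^i_{(m)})$ if and only if $f-\rho_i(f)\in\theta_i^{m_i}H^\ast(X^{i-1}_{(m)})$, where $\rho_i$ is the $W_i$-averaging operator. The conclusion should be stated in this form rather than as the flat intersection of congruences.
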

\begin{proof}
    This can be proved by induction on $l$.
\end{proof}

\subsubsection{Examples}

We consider the examples when $G= \U_{\bF}(n)$ where $\bF=\Q$ or $\mathbb{H}$, i.e. $G=\U(n)$ or $\SP(n)$. We choose $F=G/T$.

Let $\lambda= (\lambda_1,\ldots,\lambda_{k}) $ be a partition of $n$, i.e. $\lambda_i>0$ and $\sum_{i=1}^{k}\lambda_i = n$. Let $\gamma=(\gamma_1,\ldots,\gamma_{l})$ be another partition of $n$, we say $\lambda \leq \gamma$ if $\lambda$ is a refinement of $\gamma$, i.e. $k>l$ and there exists a surjective order preserving map $\varphi:[k]\twoheadrightarrow [l]$ where $[k]=\{1\to2\to\ldots\to k\}$ is the poset of $k$ elements, such that $\sum_{j\in\varphi^{-1}(i)}\lambda_{j}=\gamma_i$. Let $\Lambda$ be the poset of all partitions of $n$.

For each partition $\lambda$ of $n$, we associate to it a subgroup $G_{\lambda}=U_{\bF}(\lambda_1)\times U_{\bF}(\lambda_2) \times \ldots \times U_{\bF}(\lambda_k)$ of $G= U_{\bF}(n)$. For instance, $(1,\ldots,1)$ is the smallest partition of $n$ and the associated subgroup is $G_{(1,\ldots,1)}=T$ a maximal torus of $G$; $(n)$ is the biggest partition of $n$ and the associated subgroup is $G_{(n)}=G$. Whenever $\lambda\leq \gamma$, we have a canonical inclusion $G_{\lambda}\hookrightarrow G_{\gamma}$, which induces a map $BG_{\lambda}\to BG_{\gamma}$.

We choose a sequence of partitions $\lambda_0=(1,\dots,1)<\lambda_1<\ldots<\lambda_l$ for $1\leq l\leq n-1$, and associated to $G_i=G_{\lambda_i}$, we choose a subgroup $H_i=H_{\lambda_i'}$ of corank $1$ associated to a partition $\lambda_i'$ given by $H_{\lambda_i'}=U_{\bF}(\lambda_{i,1})\times\ldots\times U_{\bF}(\lambda_{i,j_i}-1) \times \ldots \times U_{\bF}(\lambda_{i,k_i})$ where each factor is the same as the factor in $G_{\lambda_1}$, except for the $j_i$-th component we consider the subgroup $U_{\bF}(\lambda_i-1)$ in $U_{\bF}(\lambda_i)$. 

In this case each $G_i/H_i\cong \bS^{d(\lambda_{i,j_i})-1}$ where $d=\dim_{\bR}\bF$. 
The sequence of subgroups $(G_i,H_i)$ satisfies the requirement of the filtered join construction so for any multiplicity $m=(m_1,\ldots,m_l)$, we can construction a Borel fibration as in \eqref{eq:fil-gen-borel-fib-m} by taking $X=BG_0=BT$. 

\begin{eg}
    Let $\lambda_0=(1,\ldots,1)<\lambda_1=(n)$ and $\lambda_1'=(n-1)$, we recover \Cref{eg:U}.
\end{eg}
\begin{eg}
    For $1\leq k\leq n$, let $\lambda_0=(1,\ldots,1)<\lambda_1=(n-k,k)$ and $\lambda_1'=(n-k,k-1)$, this recovers \Cref{eg:rel-U} and \Cref{eg:rel-Sp}.
\end{eg}
\begin{eg}
    Let $G=\U(3)$ and $\lambda_0=(1,1,1)<\lambda_1=(2,1)<\lambda_2=(3)$ be a maximal sequence of partitions. 
    In this case $G_1=\U(2)\times\U(1), G_2=\U(3)$ and we can choose $\lambda_1'=(1,1),\lambda_2 = (2)$ such that $H_1=\U(1)\times\{1\}\times\U(1), H_2=\U(2)\times\{1\}$. For any multiplicity $m=(m_1,m_2)$, the rational cohomology of $X_{(m)}^2$ is 
    \[H^\ast(X_{(m)}^l;\Q) = \{\, f\in \Q[t_1,t_2,t_3] \,|\, s_i(f) \equiv f \,\mod\, (\theta_i)^{m_i}, \forall s_i \in W_i \,\} \]
    where $\theta_1=t_1t_2$, $W_1=\langle s_{12}\rangle$ and $\theta_2=t_1t_2t_3$, $W_2=S_3$.
    A basis of $H^{\ast}(X_{(m)}^2;\Q)$ is given by \\
    $\{ 1, t_3(t_1t_2t_3)^{m_2},t_3^2(t_1t_2t_3)^{m_2},t_2(t_1t_2t_3)^{m_2}(t_1t_2)^{m_1}, t_2t_3(t_1t_2t_3)^{m_2}(t_1t_2)^{m_1}$, $t_2t_3^2(t_1t_2t_3)^{m_2}(t_1t_2)^{m_1} \}.$
\end{eg}

\appendix
\section{Abstract Ganea construction}
\label{sec:ganea}
In this Appendix, we re-examine the classical fibre-cofibre construction and its relative version in the context of $\infty$-categories. 

\subsection{\texorpdfstring{$\infty$}--categories}
We begin by recalling a few basic definitions related to $\infty$-categories. Our main references are Lurie's seminal monograph \cite{lurie2009higher} and his online notes \cite{HA} and \cite{kerodon}, which notation and terminology we will mostly follow. As in {\it loc. cit}, 
by an $\infty$-category we mean an $(\infty,1)$-category, or more specifically, 
a quasi-category (in the sense of Joyal \cite{J02}). 
Recall that a {\it quasi-category} is a (not necessarily small) simplicial 
set $ \Cc \in \sset $ that satisfies the lifting property of a weak Kan complex: every map of simplicial sets $ f: \Lambda^n_i \to \mathscr{C} $ from the $i$-th horn $\Lambda^n_i \subset \Delta^n $ of an $n$-simplex can be extended to the full $n$-simplex $ \widetilde{f}: \Delta^n \to \mathscr{C} $ provided that $ 0 < i < n $. 
The $\infty$-categories can be viewed as a common generalization of ordinary (1-)categories and topological spaces.
\begin{eg}
\la{Cat}
If $\mathscr{C}$ is an ordinary category, its simplicial nerve $ N \Cc  $ is a weak Kan complex with the additional property that the above extensions $ \widetilde{f} $ are unique for all maps $f$ (see, e.g., \cite{lurie2009higher}*{1.1.2.2}). It is well known that the nerve $ N \Cc $ determines the category $ \Cc $ uniquely (up to isomorphism): in this way, we may identify `$\Cc = N \Cc$' and think of $ \Cc $ as an $\infty$-category. 
\end{eg}
\begin{eg}
\la{Top}
If $S$ is a topological space, its singular complex $ {\rm Sing}(S) = \{\Hom(\dDel^n, S)\}_{n \ge 0}$ is a Kan complex, i.e. a weak Kan complex with the additional property that, besides
the maps from inner horns, the extensions $ \widetilde{f} $ also exist for all maps $f$ from the external horns $ \Lambda^n_0 \subset \Delta^n $ and $\Lambda^n_n \subset \Delta^n $. It is well known that the simplicial set  ${\rm Sing}(S)$ determines the space $S$ up to (weak) homotopy equivalence: in this way, we may identify `$ {\rm Sing}(S) = S$' and think of a space $S$ 
as an $\infty$-category (which is, in fact, an $\infty$-groupoid).
\end{eg}
Basic categorical and homotopy-theoretic notions extend to $\infty$-categories in a natural way. 
Given an $\infty$-category $\Cc $, we refer to the set of its $0$-simplices $\Cc_0 = \Hom(\Delta^0, \Cc)$ as the {\it objects} of $\Cc$ and the set of its $1$-simplices $ \Cc_1 = \Hom(\Delta^1, \Cc) $ as the {\it morphisms} of $\cC$. For two objects $ X,Y \in \Cc_0 $,
the {\it mapping space} $\,{\rm Map}_{\Cc}(X,Y)\,$ in $\Cc$ is then defined by the pullback 
%
\begin{equation}\la{Mapdef}
\begin{diagram}[small]
{\rm Map}_{\Cc}(\SEpbk X,Y) &\rTo & {\rm Fun}(\Delta^1,\Cc)\\
\dTo & & \dTo_{(\delta^{\ast}_1,\,\delta^{\ast}_0)}\\
\Delta^0 & \rTo^{(X,Y)} & \Cc \times \Cc\\
\end{diagram}
\end{equation}
where 
\begin{equation}
\la{Funcat}
\Fun(\Dc,\Cc) := \Map_{\sset}(\Dc, \Cc) = \{\Hom(\Dc \times \Delta^n,\,\Cc)\}_{n \ge 0}
\end{equation}
stands for the standard mapping complex (internal `Hom') in the category of simplicial sets, and
the right vertical map is the restriction along the canonical (coface) maps\footnote{Recall that, in general, the $i$-th coface map $ \delta_i: \Delta^{n-1} \to \Delta^n $ on standard simplices corresponds to the 
(unique) order-preserving inclusion $ [n-1] \into [n] $ that does {\it not} include the element $i$ in its image.} $\,\Delta^0 \xhookrightarrow{\,\delta_1\,}\Delta^1 \xhookleftarrow{\,\delta_0\,} \Delta^0$. The set of morphisms from $X$ to $Y$ in $ \Cc $ is identified with the set of $0$-simplices in this mapping space: i.e., $\, \Hom_{\Cc}(X,Y) = \Map_{\Cc}(X,Y)_0 $, while the set of homotopy classes of morphisms in $\Cc$ is given by $\,[X,Y]_{\Cc} := \pi_0[\Map_{\Cc}(X,Y)]\,$. The latter form an ordinary category denoted by $ h\Cc $ (or $\pi \Cc$) and called the {\it homotopy category} of $ \Cc $. A morphism $\, f \in \Hom_{\Cc}(X,Y) $ is called a {\it homotopy equivalence} (or simply, an {\it equivalence}) in $ \Cc$ if it induces an isomorphism in $ h\Cc$. 

An alternative (equivalent) way to define the homotopy category $h\Cc $ is to apply to (the underlying simplicial set of)  $\Cc$ the functor $ h: \sset \to {\rm Cat} $, the left adjoint of the classical nerve functor $N: {\rm Cat} \to \sset $ (see \cite{lurie2009higher}*{1.2.3}). The unit of the adjunction $(h,N)$ yields then a canonical map in $\sset$ 
(a functor between $\infty$-categories) $\,\Cc \to N[h\Cc]\,$  that acts as the identity on vertices (objects) of $\Cc_0$. Given a(ny) collection of objects in $ \Cc$, say $ S \subseteq \Cc_0 $, we then
define the {\it full $($$\infty$-$)$subcategory $ \Cc_S \subseteq \Cc $ spanned by $S$} to be the pullback in $\sset$:
\begin{equation*}\la{subcat}
\begin{diagram}[small]
{\Cc}_S \SEpbk  &\rTo & \Cc\\
\dTo & & \dTo\\
N(h\Cc_S) & \rTo & N(h\Cc)\\
\end{diagram}
\end{equation*}
where $ h\Cc_S \subseteq h\Cc $ denote the (ordinary) full subcategory in $h\Cc $ spanned by
$S$.

It is a fundamental fact of the theory that, for any $\infty$-category $ \Cc $,  the mapping space $ \Map_{\Cc}(X,Y) $ defined by \eqref{Mapdef} is a Kan complex (and hence,  a topological space) for all $ X,Y \in \Cc_0 $. Another fundamental fact is that, for any $\infty$-category $\Cc$, the simplicial set  \eqref{Funcat} is itself an $\infty$-category called the {\it functor $\infty$-category} (see \cite{lurie2009higher}*{1.2.7.3}). The objects of $ \Fun(\Dc, \Cc) $
are simply the maps of simplicial sets $ \cF: \Dc \to \Cc $, which we refer to as {\it functors
from $\Dc$ to $\Cc$}; the morphisms  $ \Fun(\Dc, \Cc)_1 $ are given by simplicial maps 
$ \eta: \Dc \times \Delta^1 \to \Cc $ called the {\it natural transformations}. If $ \Dc $
and $\Cc $ are both $\infty$-categories, a functor $ \cF: \Dc \to \Cc $ is called an 
{\it equivalence of $\infty$-categories} if it is an equivalence in $ \Fun(\Dc,\Cc)$. 

\subsection{Limits and colimits}
Let $\Cc$ be an $\infty$-category, and $ I$ a fixed (small) simplicial set. We refer to the functors $ I \to \Cc $ (i.e., the objects of the $\infty$-category $ \Fun(I, \Cc)$) as the {\it $ I$-diagrams} in $\Cc$. Every object $ A \in \Cc_0 $ yields a {\it constant} $I$-diagram (at $A$) by composition 
$$
\ua:\, I \to \Delta^0 \xrightarrow{A} \Cc  
$$
where $ I \to \Delta^0 $ is the canonical (terminal) map in $\sset$. Then, given any $I$-diagram $\cF: I \to \Cc $, a {\it left cone on $\cF$} with vertex $ A \in \Cc_0 $ is defined to be a natural transformation $ \alpha:\, \ua \to \cF$  in $ \Fun(I, \Cc) $, i.e. a functor $\,\alpha: I\times\Delta^1 \to \Cc \,$ such that $ \alpha|_{I \times\{0\}} = \ua $ and $ \alpha|_{I \times \{1\}} = \cF $. Similarly, we define a {\it right cone on $\cF$} to be a natural transformation $ \beta: \cF \to \ua $, i.e. $\,\beta: I\times\Delta^1 \to \Cc \,$  in $ \Fun(I, \Cc) $ such that $ \beta|_{I \times\{0\}} = \cF $ and $ \beta|_{I \times \{1\}} = \ua $.
There is an alternative way to define left and right cones.
Using the join operation `$\star $' on $\sset$,  we first construct two simplicial sets $ I^{\triangleleft} := \Delta^0 \star I $ and  $ I^{\triangleright} := I \star \Delta^0 $, called the left and right cones on $I$, each being equipped
with a pair of canonical maps $\, \Delta^0 \xhookrightarrow{i_0} I^{\star} \xhookleftarrow{i}\,I\,$, called the vertex and base inclusions (see \cite{lurie2009higher}*{1.2.8.4}). A left (resp., right) cone on an $I$-diagram 
$ \cF $ with vertex $ A \in \Cc_0 $ can then be described as a functor 
$ \widetilde{F}: \, I^{\triangleleft} \to \Cc $ (resp., $ \widetilde{\cF}:  \,I^{\triangleright} \to \Cc $)  such that $\,  i^* \widetilde{F} = F \,$ and $\,  i_0^* \widetilde{F} = A\,$, where $ i^* $ and 
$ i_0^*$ are the restrictions along the base and the vertex inclusions. 

Now, extending the standard definitions of limits and colimits in category 
theory, we can introduce their $\infty$-categorical analogs as universal (respectively, left and right) cones, where the term `universal' is understood in the following homotopical sense 
({\it cf.} \cite{kerodon}*{7.1.1.1}]).
\begin{defn}
A left cone $ \alpha:\, \ua \to \cF $ represents the {\it limit of $\cF$} if the  composite map 
\begin{equation*}
\Map_{\Cc}(X,A)\,\to\,  \Map_{\Fun(I, \Cc)}(\ux, \ua)\, \xrightarrow{\alpha_*}\, \Map_{\Fun(I, \Cc)}(\ux, \cF)  
\end{equation*}
is a homotopy equivalence of spaces (Kan complexes) for every object $X \in \Cc_0 $. 

Dually, a right cone $ \beta:\, \cF \to \ua $ represents the {\it colimit of $\cF$}  if the composite map 
\begin{equation*}
\Map_{\Cc}(A,X)\, \to \,  \Map_{\Fun(I, \Cc)}(\ua, \ux)\, 
\xrightarrow{\beta^*}\, \Map_{\Fun(I, \Cc)}(\cF, \ux)  
\end{equation*}
is a homotopy equivalence of spaces (Kan complexes)  for every object $X \in \Cc_0 $. 
\end{defn} 
We write $\, \blim_I(\cF) = A \,$ if there is a left cone $ \alpha $ with vertex $A$ that represents the limit of an $I$-diagram $F$, and dually, $\, \bcolim_I(\cF) = A \,$ if there is a right cone $ \beta $ with vertex $A$ that represents the colimit of an $I$-diagram $F$.

We consider two basic examples that play a special role in Ganea construction.

\begin{eg}[Pushout squares]
\la{PushSq}
Let $ I = \Delta^1 \vee \Delta^1 : = \Delta^1 \sqcup_{\Delta^0} \Delta^1 $, where 
the two copies of $ \Delta^1 $ are amalgamated over $\Delta^0$ 
via the inclusion $ \delta_1: \Delta^0 \into \Delta^1 $. This simplicial set is an $\infty$-category isomorphic to the nerve of the poset $ [1] \sqcup_{[0]} [1] = \{1 \leftarrow 0 \rightarrow 1'\} $. As in ordinary category theory, the diagrams $ \cF: I \to \Cc $ indexed 
by $I$ are visualized as $\,\{X \xleftarrow{f} Z \xrightarrow{g} Y\}\,$, where $ X,Y,Z\in \Cc_0 $ and $f,g \in \Cc_1 $. The colimits of such diagrams are referred to as the pushouts in $\Cc$. Observe that  $\, I^{\triangleright} = (\Delta^1 \vee \Delta^1)\star \Delta^0 \cong  \Delta^1 \times \Delta^1 \,$ in $ \sset $: thus, if $ \widetilde{\cF}:  I^{\triangleright} \to \Cc $ is a right cone with vertex $A \in \Cc_0 $ representing $ \bcolim_I(F) = A $, the diagram
$ \widetilde{\cF} $ is a commutative square in $\Cc $ which we refer to as a {\it pushout square} and depict it as 
\begin{equation}
\la{push}
\begin{diagram}[small]
Z & \rTo^{f} & X\\
\dTo^{g} & & \dTo_i\\
Y & \rTo^j & \NWpbk A\\
\end{diagram}
\end{equation}
Note that, in an ordinary category, being `commutative' is a {\it property} of a square diagram.
In contrast, to give a commutative square \eqref{push} in an arbitrary $\infty$-category
amounts to giving --- besides the two pairs $ \{\, f, i\} $ and $ \{ g, j\} $ of composable morphisms (1-simplices) in $\Cc$ --- the {\it additional} data consisting of another morphism $ h: Z\to A $, and two homotopies (2-simplices) 
$ \sigma_1 \in \Cc_2 $ and $ \sigma_2 \in \Cc_2 $ that exhibit $h$ as compositions
$h \simeq i \circ f $ and $ h \simeq j \circ g $.
\end{eg}
\begin{eg}[Pullback squares] 
\la{PullSq}
Let $ I = (\Delta^1 \vee \Delta^1)^{\rm op}$ be the opposite
simplicial set to that of \Cref{PushSq}. This is an $\infty$-category isomorphic to the nerve of the opposite poset $ ([1] \sqcup_{[0]} [1])^{\rm op} = \{1 \rightarrow 0 \leftarrow 1'\} $.  The limits of diagrams $\cF: I \to \Cc $ are called pullbacks in $\Cc$, and the left cones $ \widetilde{F}: I^{\triangleleft} \to \Cc $ representing such limits are referred to as {\it pullback squares}. If $\, F = \{X \xrightarrow{f} Z \xleftarrow{g} Y\}\,$ and $ \blim_I(\cF) = A  $,  the corresponding pullback square $ \widetilde{\cF} $ is depicted as
\begin{equation}
\la{pull}
\begin{diagram}[small]
A \SEpbk & \rTo & X\\
\dTo & & \dTo_f\\
Y & \rTo^g & Z\\
\end{diagram}
\end{equation}
\end{eg}

%

\subsection{The fiber-cofiber construction}
\la{AS2}
From now on, we assume that $ \Cc $ is 
a pointed $\infty$-category. Recall that $\Cc$ is {\it pointed} if it contains a distinguished object $ 0 \in \Cc_0 $
which is both initial and terminal (in the sense that the mapping spaces
$ \Map_{\Cc}(0,X)$ and $ \Map_{\Cc}(X,0)$ are contractible for all objects $ X \in \Cc_0$).
We write $\, \Box := \Delta^1 \times \Delta^1 \,$ and label the vertices of this simplicial set as
\begin{equation}
\la{sqvert}
\begin{diagram}[small]
(0,0) & \rTo & (1,0)\\
\dTo & & \dTo\\
(0,1) & \rTo & (1,1)\\
\end{diagram}
\end{equation}
%
%
There are four canonical inclusions $\Delta^1 \into \Box $ 
corresponding to the four arrows in \eqref{sqvert} that we denote by 
\begin{equation}
\la{boxin}
\Delta^1 \cong \Delta^1 \times \Delta^0 \, \doublerightarrow{1 \times \delta_1}{ 1 \times \delta_0}\, \Delta^1 \times \Delta^1 \quad \mbox{and} \quad
\Delta^1 \cong \Delta^0 \times \Delta^1
\, \doublerightarrow{\delta_1 \times 1}{\delta_0 \times 1} \Delta^1 \times \Delta^1\,.
\end{equation}
(Thus, $ 1\times \delta_1 $ corresponds to the top horizontal arroW \, , $\, \delta_1 \times 1 $ to the left vertical arroW \, , etc.)

Following \cite{HA}, we define a {\it triangle} in $ \Cc $ to be a functor 
$ \cF: \Box \to \Cc $ such that $ (\delta_1 \times \delta_0)^*\cF = 0 \,$: that is, 
a $\Box$-diagram in $\Cc $ with the zero object $0$ at vertex $ (0,1)$:
\begin{equation}
\la{tria}
\begin{diagram}[small]
X & \rTo^{f} & Y\\
\dTo & & \dTo_{g}\\
0 & \rTo & Z\\
\end{diagram}
\end{equation}
We call a triangle a {\it fiber} (resp., {\it cofiber}) {\it sequence} in $\Cc $ if it is a pullback (resp., pushout) square in $\Cc$ (see Examples\, \ref{PushSq} and \ref{PullSq}). 
We write $ {\rm Tria}(\Cc)$, $\, {\rm Tria}_{\rm fib}(\Cc) $ and $  {\rm Tria}_{\rm cof}(\Cc) $   
for the full subcategories of the $\infty$-category $ \Fun(\Box, \Cc) $  
spanned by all triangles, the fiber and the cofiber ones, respectively. Clearly, there are natural inclusions 
$\,{\rm Tria}_{\rm fib}(\Cc) \subseteq  {\rm Tria}(\Cc) \supseteq  {\rm Tria}_{\rm cof}(\Cc)\,$.

Our goal is to construct a pair of adjoint endofunctors $\Fib $ and $\Cof$ on the $\infty$-category of morphisms in $\Cc$ called the fiber and cofiber functors.  We will use the following special case of \cite{lurie2009higher}*{Proposition~4.3.2.15}, which is an important principle in the theory of $\infty$-categories that has many applications (see, e.g., \cites{DKS23, DKSS24}).
\begin{prop}[Lurie]
\la{LuProp}
Let $ \Cc $ be an $\infty$-category, $\,I $ a small $\infty$-category, $\, I_0\subseteq I $ a full subcategory of $I$, and $ \cK \subseteq \Fun(I_0, \Cc) $ a full subcategory of the functor $\infty$-category $\Fun(I_0, \Cc)$. Let $ \Lc_u(\cK) $ $($resp., 
$ \Rc_u(\cK) $$)$ denote the full subcategory of $\Fun(I,\Cc) $ spanned by
all functors $\cF: I \to \Cc $ which are left $($resp., right$)$ Kan extensions of $ \cF|_{I_0} \in \cK $ along the inclusion $ u: I_0 \into I $.

\vspace*{1ex}

$(1)$ 
Assume that, for every diagram $I_0 \to \Cc $ in $\cK$ and every object $ x \in I $,
the induced diagram on the coslice category $ u_{/x} \to I_0 \to \Cc $ admits a colimit. Then, the restriction functor $\,u^*: \Lc_u(\cK) \to \cK \,$ is a trivial 
Kan fibration, and hence admits a $($homotopically$)$ unique section denoted by
$u_{!}: \cK \to \Fun(I, \Cc)$.

\vspace*{1ex}

$(2)$ 
Assume that, for every diagram $I_0 \to \Cc $ in $\cK$ and every object $ x \in I $,
the induced diagram on the slice category $ u_{x/} \to I_0 \to \Cc $ admits a limit. Then, the restriction functor $\,u^*: \Rc_u(\cK) \to \cK \,$ is a trivial 
Kan fibration and hence admits a $($homotopically$)$ unique section denoted by
$u_{*}: \cK \to \Fun(I, \Cc)$.
\end{prop}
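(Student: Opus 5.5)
This statement is a special case of Lurie's \cite{lurie2009higher}*{Proposition~4.3.2.15}. The plan is therefore to reduce to that proposition rather than reprove it, and to explain the mechanism briefly. The key input is the pointwise characterization of Kan extensions. A functor $\cF: I\to\Cc$ is a left Kan extension of $\cF|_{I_0}$ along $u$ exactly when, for each $x\in I$, the induced cone $(u_{/x})^{\triangleright}\to\Cc$ with vertex $\cF(x)$ is a colimit cone. For $x\in I_0$ this is automatic, since $u_{/x}$ has the terminal object $\mathrm{id}_x$ (here we use that $I_0$ is full). The case $(2)$ is formally dual: it follows by applying $(1)$ to $\Cc^{\rm op}$, $I^{\rm op}$, $I_0^{\rm op}$, using $\Fun(I,\Cc)^{\rm op}\cong\Fun(I^{\rm op},\Cc^{\rm op})$ and that right Kan extensions in $\Cc$ are left Kan extensions in $\Cc^{\rm op}$. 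So it suffices to treat $(1)$.

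For $(1)$ we must show that $u^*:\Lc_u(\cK)\to\cK$ is a trivial Kan fibration. First, $u^*:\Fun(I,\Cc)\to\Fun(I_0,\Cc)$ is an inner fibration, being restriction along a monomorphism. Since $\Lc_u(\cK)$ and $\cK$ are full subcategories, the restricted map is again an inner fibration. It remains to solve the lifting problems against $\partial\Delta^n\subseteq\Delta^n$. Via adjunction, such a problem becomes an extension problem: extend a map
$$I_0\times\Delta^n\cup_{I_0\times\partial\Delta^n} I\times\partial\Delta^n\to\Cc$$
to $I\times\Delta^n$, with every $\Delta^0\to\Delta^n$ restriction lying in $\Lc_u(\cK)$.

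We filter $I$ by adding the nondegenerate simplices not in $I_0$ one at a time, ordered by dimension, which is Lurie's strategy in 4.3.2. At each step the extension is governed by a relative colimit problem over the slice $u_{/x}$. The hypothesis that every $u_{/x}\to I_0\to\Cc$ admits a colimit gives existence of the required colimit cones. The uniqueness of colimits, in the form that the space of colimit cones extending a given diagram is contractible, is \cite{lurie2009higher}*{4.1.1 / 1.2.12}; it is exactly what fills the boundary condition with contractible choice. Lurie packages this as Lemma 4.3.2.13, which says that a relative left Kan extension exists whenever the pointwise colimits exist, together with Proposition 4.3.2.15 itself; I would cite these directly.

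Once $u^*$ is shown to be a trivial Kan fibration, the section $u_!$ comes for free, because trivial Kan fibrations have sections. Any two sections are connected by a contractible space of homotopies, since the space of sections of a trivial Kan fibration is a contractible Kan complex. This gives the homotopical uniqueness. Composing with the inclusion $\Lc_u(\cK)\subseteq\Fun(I,\Cc)$ yields $u_!:\cK\to\Fun(I,\Cc)$.

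The main obstacle is the simplex-by-simplex extension: keeping the boundary compatibility while passing to relative colimits over slices. I would not reprove this but defer to Lurie's relative Kan extension machinery, checking only that our hypotheses, namely full $I_0$ and existence of pointwise colimits for diagrams in $\cK$, match his.
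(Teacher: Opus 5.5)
Your proposal is correct and takes the same route as the paper: the paper does not reprove the statement but records it as a special case of \cite{lurie2009higher}*{Proposition~4.3.2.15}. Your additional remarks are sound glosses on that citation rather than an independent proof, and that is all the statement requires. These remarks are the pointwise characterization of Kan extensions, automaticity on $I_0$ by fullness, existence via Lemma~4.3.2.13, part $(2)$ by passing to opposites, and homotopical uniqueness of $u_!$ from contractibility of the space of sections of a trivial Kan fibration.
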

Note that if the $\infty$-category $ \Cc $ is bicomplete, the assumptions of both parts of \Cref{LuProp} hold automatically, and the restriction functor $u^*: \Fun(I, \Cc) \to \Fun(I_0, \Cc) $ restricts to equivalences of $\infty$-categories: 
$$ 
\Lc_u(\cK)\, \xrightarrow{\sim} \,\cK\, \xleftarrow{\sim}\,\Rc_u(\cK) \, , 
$$
the section functors $ u_! $ and $ u_* $ providing the inverses.

To apply the above result in our situation we consider the forgetful functor $ 
{\rm Tria}(\Cc) \to \, \Fun(\Delta^1, \Cc) $ that associates to a triangle
\eqref{tria} the corresponding map $ f $. Observe that this last functor is just the restriction along the inclusion $ 1 \times \delta_1: \Delta^1 \into \Box $
(see \eqref{boxin}): 
\begin{equation}
\la{rfunc}
(1 \times \delta_1)^*:\, {\rm Tria}(\Cc)\,\to \, \Fun(\Delta^1, \Cc)    
\end{equation}
To construct the cofiber functor we factor the above inclusion $ 1 \times \delta_1 $ as
\begin{equation}\la{1del1}
1 \times \delta_1:\, \Delta^1 \xhookrightarrow{u} \Delta^1 \vee \Delta^1 \xhookrightarrow{v} \Box\, ,
\end{equation}
where $\,\Delta^1 \vee \Delta^1 $ is the $\infty$-category introduced in \Cref{PushSq} and $ v = (\delta_1 \times 1) \vee (1 \times \delta_1) $ is the embedding identifying $\,\Delta^1 \vee \Delta^1\,$ with the full subcategory of $\Box \,$ spanned by the vertices 
$ \{(0,1) \leftarrow (0,0) \rightarrow (1,0)\} $ (see  \eqref{sqvert}). Now, assuming that $\Cc $ admits (at least, finite) limits and colimits, we let $ \cK = \Fun(\Delta^1, \Cc) $ and apply \Cref{LuProp} twice: first,  taking the  right Kan extension along the functor $u$ and then the left Kan extension
along $v$. Since $u$ is a sieve functor, the right Kan extension along $u$ is the extension 
by the terminal (i.e., zero) object. The $\infty$-subcategory $\Rc_u(\cK) \subseteq \Fun(\Delta^1 \vee \Delta^1, \Cc)$ is thus spanned by the diagrams of the form $\{0 \leftarrow X \to Y\}$. On the other hand, the functor $v$ is isomorphic to the base inclusion of the right cone $ (\Delta^1 \vee \Delta^1) \into (\Delta^1 \vee \Delta^1)^{\triangleright} $ (see \Cref{PushSq}), hence the
left Kan extensions along $v$ are simply the pushouts. It follows that 
$\,\Lc_v(\Rc_u(\cK)) = {\rm Tria}_{\rm cof}(\Cc)\,$  in $ \Fun(\Box, \Cc) $.
By \Cref{LuProp},  the restriction functors
$$
{\rm Tria}_{\rm cof}(\Cc) \xrightarrow{v^*} \Rc_u(\Fun(\Delta^1, \Cc)) \xrightarrow{u^*} \Fun(\Delta^1, \Cc)
$$
are both trivial fibrations in $\sset $, and hence so is their composition $\,u^* v^* = (vu)^* = (1 \times \delta_1)^*\,$. Thus, \eqref{rfunc} admits a section given by the composition  $ v_! \,u_* $ of Kan extensions, using which we define 
\begin{equation}
\la{Cofun}
\Cof:\, \Fun(\Delta^1, \Cc) \xrightarrow{u_*} \Rc_u(\Fun(\Delta^1, \Cc)) 
\xrightarrow{v_!}  {\rm Tria}(\Cc) \xrightarrow{(\delta_0 \times 1)^*} \Fun(\Delta^1, \Cc)
\end{equation}
We call \eqref{Cofun} the {\it cofiber functor} on $ \Fun(\Delta^1, \Cc)$: 
pictorially, it can be described as
\[\begin{tikzcd}
	(A & X)
	\arrow["f", from=1-1, to=1-2]
\end{tikzcd}
\quad \longmapsto \quad
\begin{tikzcd}
	A & X \\
	0
	\arrow["f", from=1-1, to=1-2]
	\arrow[from=1-1, to=2-1]
\end{tikzcd}
\quad \longmapsto \quad
\begin{tikzcd}
	A & X \\
	  0 & X'
	\arrow["f", from=1-1, to=1-2]
	\arrow[from=1-1, to=2-1]
	\arrow["{\Cof(f)}", from=1-2, to=2-2]
	\arrow[from=2-1, to=2-2]
\end{tikzcd}
\quad \longmapsto \quad
\begin{tikzcd}
	X \\
	X'
	\arrow["{\Cof(f)}", from=1-1, to=2-1]
\end{tikzcd}
\]
%
%
where the intermediate triangle is the cofiber sequence associated to $f$.

Next, observe that the  functor $(\delta_0 \times 1)^*$
used in the last step of definition \eqref{Cofun} can itself be factored as $\,(\delta_0 \times 1)^* = p^* q^*\,$,
where the restrictions $p^*$ and $q^*$ are taken along the inclusions 
$\,\delta_0 \times 1:\, \Delta^1  \xhookrightarrow{p} (\Delta^1 \vee \Delta^1)^{\rm op} \xhookrightarrow{q} \Box $, which are `formally dual' to those in \eqref{1del1}. Thus,  $ \Cof $ can be written as the composition of four functors:
\begin{equation*}
\Fun(\Delta^1, \Cc) \xrightarrow{u_*} \Rc_u(\Fun(\Delta^1, \Cc)) 
\xrightarrow{v_!}  {\rm Tria}(\Cc) \xrightarrow{q^*}  \Lc_p(\Fun(\Delta^1, \Cc)) \xrightarrow{p^*} \Fun(\Delta^1, \Cc)\,.   
\end{equation*}
Each of these four functors has a right adjoint. Indeed, by \Cref{LuProp}, the first and the last functors, $u_*$ and $p^*$, are equivalences of $ \infty$-categories whose inverses $u^*$ and $p_!$
are both left and right adjoints. The functors $v_!$ and $ q^* $ are given by left Kan extension
and restriction and hence admit right adjoints: $ v^*$ and $q_*$, respectively. Hence, the composition 
\begin{equation*}
\Fun(\Delta^1, \Cc) \xleftarrow{u^*} \Rc_u(\Fun(\Delta^1, \Cc)) 
\xleftarrow{v^*}  {\rm Tria}(\Cc) \xleftarrow{q_*}  \Lc_p(\Fun(\Delta^1, \Cc)) \xleftarrow{p_!} \Fun(\Delta^1, \Cc)\,.   
\end{equation*}
represents a right adjoint to the cofiber functor $\Cof$. Since $u^* v^* = (vu)^* = (1 \times \delta_1)^* $, we can rewrite this right adjoint in the form similar to \eqref{Cofun}:
\begin{equation}
\la{Fifun}
\Fib:\, \Fun(\Delta^1, \Cc) \xrightarrow{p_!}  \Lc_p(\Fun(\Delta^1, \Cc)) 
\xrightarrow{q_*} {\rm Tria}(\Cc) \xrightarrow{(1 \times \delta_1)^*} \Fun(\Delta^1, \Cc)
\end{equation}
and call $\Fib$ the {\it fiber functor} on $ \Fun(\Delta^1, \Cc)$: 
pictorially, \eqref{Fifun} can be described as follows:
\[
\begin{tikzcd}
	X \\
	B
	\arrow["p"', from=1-1, to=2-1]
\end{tikzcd}    
\quad    \longmapsto \quad
\begin{tikzcd}
	& X \\
	0 & B
	\arrow["p", from=1-2, to=2-2]
	\arrow[from=2-1, to=2-2]
\end{tikzcd}
\quad    \longmapsto \quad
\begin{tikzcd}
	F & X \\
	0 & B
	\arrow["{\Fib(p)}", from=1-1, to=1-2]
	\arrow[from=1-1, to=2-1]
	\arrow["p", from=1-2, to=2-2]
	\arrow[from=2-1, to=2-2]
\end{tikzcd}    
\quad   \longmapsto \quad    
\begin{tikzcd}
	(F & X)
	\arrow["{\Fib(p)}", from=1-1, to=1-2]
\end{tikzcd}
\]
%
%
where the triangle in the middle is the fiber sequence associated to the morphism $ g $.
Thus, we have established the following result.
\begin{prop}
\la{Propadj}  
Let $\Cc $ be a pointed $\infty$-category with $($finite$)$ limits and colimits. The 
functors $\Cof$ and $\Fib$ defined by \eqref{Cofun} and \eqref{Fifun} form an adjunction of $\infty$-categories
\begin{equation}
\la{CFib}
\Cof: \,\Fun(\Delta^1, \Cc) \rightleftarrows \Fun(\Delta^1, \Cc):\, \Fib
\end{equation}
which we call the {\rm fiber-cofiber adjunction}.
\end{prop}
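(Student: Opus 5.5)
The plan is to exhibit $\Cof$ and $\Fib$ as composites of four functors each and pair them off factor by factor. A composite of adjunctions is an adjunction, so it suffices to check that each factor of $\Cof$ has the corresponding factor of $\Fib$ as right adjoint, taken in reverse order.

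Write $\cK=\Fun(\Delta^1,\Cc)$. As in the discussion preceding the statement, $\Cof$ factors as
\[
\cK \xrightarrow{u_*} \Rc_u(\cK) \xrightarrow{v_!} {\rm Tria}(\Cc) \xrightarrow{q^*} \Lc_p(\cK) \xrightarrow{p^*} \cK .
\]
Here $\delta_0\times 1$ factors through $q:(\Delta^1\vee\Delta^1)^{\rm op}\into\Box$, the full subcategory on $(1,0)\to(1,1)\leftarrow(0,1)$. Since the vertex $(0,1)$ carries $0$ in any triangle, $q^*$ lands in $\Lc_p(\cK)$. Indeed, $p$ is a cosieve inclusion, so left Kan extension along $p$ is extension by the initial object, which is $0$ because $\Cc$ is pointed.

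Next, by \Cref{LuProp} applied under finite (co)limits, both $u^*:\Rc_u(\cK)\to\cK$ and $p^*:\Lc_p(\cK)\to\cK$ are trivial Kan fibrations. Hence they are equivalences of $\infty$-categories, with inverses the sections $u_*$ and $p_!$. An equivalence is both left and right adjoint to its inverse, so $u_*\dashv u^*$ and $p^*\dashv p_!$. Two adjunctions remain to be produced.

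For $v_!\dashv v^*$, I use the standard fact that left Kan extension along $v$ is left adjoint to restriction (\cite{lurie2009higher}*{4.3.3.7}). I then check that this adjunction restricts to the full subcategories involved. Restriction $v^*$ takes ${\rm Tria}(\Cc)$ into $\Rc_u(\cK)$, because the value at $(0,1)$ is $0$, which is terminal. In the other direction, $v_!$ takes $\Rc_u(\cK)$ into ${\rm Tria}_{\rm cof}(\Cc)\subseteq{\rm Tria}(\Cc)$. Adjunctions restrict along full subcategories preserved by both functors, so $v_!\dashv v^*$ holds between these subcategories. The case $q^*\dashv q_*$ is dual: right Kan extension along $q$ is computed by pullbacks, and $q_*$ sends $\Lc_p(\cK)$ into ${\rm Tria}_{\rm fib}(\Cc)\subseteq{\rm Tria}(\Cc)$.

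Composing the four adjunctions gives $\Cof \dashv u^*v^*q_*p_!$. Since $u^*v^*=(vu)^*=(1\times\delta_1)^*$, this right adjoint equals $\Fib$ as defined in \eqref{Fifun}.

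I expect the main obstacle to be making the restriction step rigorous. Concretely, one must show that a unit/counit pair, or equivalently a mapping-space equivalence $\Map(v_!X,Y)\simeq\Map(X,v^*Y)$, survives passage to the full subcategories in the quasi-categorical setting. The route I would take is the mapping-space characterisation of adjunctions. Since the relevant subcategories are full, their mapping spaces coincide with those of the ambient functor categories. The only remaining point is existence of the Kan extensions, which here reduce to finite pushouts and pullbacks.
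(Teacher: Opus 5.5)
Your proposal is correct and follows essentially the same route as the paper. Both factor $\Cof=p^*q^*v_!u_*$, pair $u_*\dashv u^*$ and $p^*\dashv p_!$ (these are equivalences) with the Kan-extension/restriction adjunctions $v_!\dashv v^*$ and $q^*\dashv q_*$, and identify the composite right adjoint $u^*v^*q_*p_!$ with $\Fib$ via $u^*v^*=(1\times\delta_1)^*$. Your extra checks that these adjunctions restrict to the full subcategories $\Rc_u(\cK)$, $\mathrm{Tria}(\Cc)$ and $\Lc_p(\cK)$ fill in a step the paper leaves implicit.
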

\begin{rmk}
The fiber and cofiber functors for pointed $\infty$-categories were defined in \cite{HA}
(for a brief outline of the construction of these functors based on \Cref{LuProp} 
see {\it loc. cit.}, Remark~1.1.1.7). To avoid confusion, however, we should point out that our definition differs from that of \cite{HA}: the functors $\Cof$ and $\Fib$ are defined in \cite{HA} as functors $\Fun(\Delta^1,\Cc) \to \Cc $ with values in $ \Cc$, and therefore do not form an adjunction. Our adjunction \eqref{CFib} is analogous to the fiber-cofiber adjunction for
Grothendieck derivators constructed by M. Groth in \cite{Gr13} and used in his subsequent 
work on abstract representation theory in the context of stable derivators (see, e.g., \cites{GS18, GR19}). 
\end{rmk}

The advantage of (re)defining $\Cof$ and $\Fib $ as adjoint functors is the existence of two natural transformations --- the adjunction unit and counit --- that can be used to construct
a canonical $\infty$-functor $ \Xi: \Fun(\Delta^1, \Cc) \to \Fun(\cube, \Cc) $ with values in 3D cubical diagrams satisfying the (second) Mather cube axiom.
 The key to this construction is the natural involution ('reflection symmetry') of $\Box$ interchanging the `vertical' and `horizontal' morphisms. Applying this involution to the unit and counit diagrams determines the connecting morphisms in the Ganea tower. In what follows
we will focus on the counit leaving the dual (unit) construction as an exercise to the reader. 

Recall that, by definition, the counit of \eqref{CFib} is a natural transformation (1-simplex) in $\Fun(\Fun(\Delta^1, \Cc), \,\Fun(\Delta^1, \Cc))$ represented by the functor
\begin{equation}
\la{counit}
\varepsilon:\, \Fun(\Delta^1, \Cc) \times \Delta^1\,\to\,\Fun(\Delta^1, \Cc)
\end{equation}
such that $\, \varepsilon|_{\Fun(\Delta^1, \Cc) \times \{0\}}\,=\,\Cof \circ \Fib\,$ and $ \,\varepsilon|_{\Fun(\Delta^1, \Cc) \times \{1\}}\,=\,\id_{\Fun(\Delta^1, \Cc)}\,$. 
We rewrite this functor using the following canonical isomorphisms in $\sset$:
\begin{eqnarray}
\Fun(\Fun(\Delta^1, \Cc) \times \Delta^1,\, \Fun(\Delta^1, \Cc)) &\cong &
\Fun(\Fun(\Delta^1, \Cc),\,  \Fun(\Delta^1,\, \Fun(\Delta^1, \Cc)))\nonumber\\
&\cong & 
\Fun(\Fun(\Delta^1, \Cc),\,  \Fun(\Delta^1 \times \Delta^1, \Cc)) \nonumber\\
& = & 
\Fun(\Fun(\Delta^1, \Cc),\,  \Fun(\Box, \Cc))\la{Delcube}
\end{eqnarray}
It is easy to see that, under these isomorphisms,  \eqref{counit} corresponds to the functor
\begin{equation}
\la{counit1}
\widetilde{\varepsilon}:\, \Fun(\Delta^1, \Cc)\,\to\,\Fun(\Box, \Cc)  
\end{equation}
the values of which on the two vertical edges of $\Box$ are 
\begin{equation}
\la{counit2}
(\delta_1 \times 1)^* \widetilde{\varepsilon} \,=\,\Cof \circ \Fib\, ,\quad
(\delta_0 \times 1)^* \widetilde{\varepsilon} \,
\,=\,\id_{\Fun(\Delta^1, \Cc)}\, ,
\end{equation}
and the value on the top horizontal edge is
\begin{equation}
\la{horc}
 (1 \times \delta_1)^*\widetilde{\varepsilon} \,=\, s^*_0 \, \delta_1^*  \, ,  
\end{equation}
where the right-hand side is the composition of two restriction functors%
\begin{equation}
\la{const}
\Fun(\Delta^1, \Cc) \xrightarrow{\delta_1^*} \Fun(\Delta^0, \Cc) \xrightarrow{s^*_0}\Fun(\Delta^1, \Cc) 
\end{equation}
sending a morphism $ X \to Y $ in $\Cc$ to the identity morphism $\id_X$ on its source. 
Now, the restriction of \eqref{counit1} to the remaining (bottom horizontal) edge of $\Box$ is 
an interesting functor that we christen in the following
\begin{defn}
We call $ \,\Gan := (1 \times \delta_0)^* \widetilde{\varepsilon}:\, \Fun(\Delta^1, \Cc) \to \Fun(\Delta^1, \Cc) \,$  the {\it Ganea functor} on $\Cc$. Pictorially, it can be represented by
\begin{equation}
\la{Gan}
\begin{tikzcd}
	X \\
	B
	\arrow["p"', from=1-1, to=2-1]
\end{tikzcd}    
\quad   \stackrel{\widetilde{\varepsilon}}{\longmapsto} \quad
 \begin{tikzcd}
	X & X \\
	{X_1} & B
	\arrow["{\id}", from=1-1, to=1-2]
	\arrow["{\pi(p)}"', from=1-1, to=2-1]
	\arrow["p", from=1-2, to=2-2]
	\arrow["{p_1}", from=2-1, to=2-2]
\end{tikzcd}
\quad  \longmapsto \quad  
\begin{tikzcd}
	X_1 \\
	B
	\arrow["{p_1}"', from=1-1, to=2-1]
\end{tikzcd}   
\end{equation}
where $\pi(p) := \Cof(\Fib(p))$ is the composition of the functors \eqref{Cofun} and \eqref{Fifun}.
\end{defn}
The above diagrammatic description of the functor $\Gan$ suggests that there is a natural transformation   
\begin{equation}
\la{Gantr}
\pi: \id_{\Fun(\Delta^1, \Cc)} \to \Gan 
\end{equation}
obtained by `flipping the square in the middle of \eqref{Gan} across its diagonal'. Formally, this can be defined as follows. 
Let us identify the standard $2$-simplex $\Delta^2 = N([2]) $ with the full $\infty$-subcategory of $\Box $ spanned by the vertices $\{(0,0),(1,0),(1,1)\}$
(see  \eqref{sqvert}):
\begin{equation*}
\la{sqvert1}
\begin{tikzcd}
    (0,0) \ar[r]\ar[dr] & (1,0) \ar[d] \\
    & (1,1)
\end{tikzcd}
\end{equation*}
and let $\,i: \Delta^2 \into \Box \,$ denote the corresponding inclusion. By \Cref{LuProp}$(2)$, the restriction
functor $i^*: \Rc_i(\Fun( \Delta^2, \Cc)) \to \Fun( \Delta^2, \Cc)$ is a trivial fibration that admits a section: 
\begin{equation*}
\la{rKan} 
i_*: \Fun( \Delta^2, \Cc) \to \Rc_i(\Fun( \Delta^2, \Cc)) \into \Fun(\Box, \Cc)
\end{equation*}
which is the right Kan extension along  $i$. Using this Kan extension, we define the functor 
%
\begin{equation}
\la{tilpi} 
\widetilde{\pi}:\, \Fun(\Delta^1, \Cc) \xrightarrow{\widetilde{\varepsilon}}
\Fun(\Box, \Cc) \xrightarrow{\tau^*} \Fun(\Box, \Cc) 
\xrightarrow{i^*} \Fun(\Delta^2, \Cc)  \xrightarrow{i_*} \Fun(\Box, \Cc)
\end{equation}
where $\widetilde{\varepsilon}$ is the counit \eqref{counit1} and $\tau^* $ is the automorphism of $ \Fun(\Box, \Cc) $ 
induced by the canonical involution $ \tau: \Box \to \Box \,$ interchanging the two factors in $ \Box = \Delta^1 \times \Delta^1 $. 
\begin{lem}
Under the identification \eqref{Delcube}, the functor \eqref{tilpi} corresponds to
a map
$$
\pi:\, \Fun(\Delta^1, \Cc) \times \Delta^1 \to \Fun(\Delta^1, \Cc)
$$
defining the natural transformation \eqref{Gantr}.
\end{lem}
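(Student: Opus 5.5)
The plan is to unwind $\widetilde{\pi}$ through \eqref{Delcube}. Under that identification a map $\Fun(\Delta^1,\Cc)\times\Delta^1\to\Fun(\Delta^1,\Cc)$ is the same as a functor $\Fun(\Delta^1,\Cc)\to\Fun(\Box,\Cc)$. The convention is that the second factor of $\Box=\Delta^1\times\Delta^1$ is the arrow parameter and the first factor is the transformation parameter, consistent with \eqref{counit2}. Accordingly, it suffices to compute the restrictions of $\widetilde{\pi}$ to the two vertical edges $\delta_1\times 1$ and $\delta_0\times 1$ of $\Box$. I will show that
\[
(\delta_1\times 1)^*\widetilde{\pi}\simeq \id_{\Fun(\Delta^1,\Cc)}, \qquad (\delta_0\times 1)^*\widetilde{\pi}\simeq \Gan .
\]
These functors are the source and target of the transformation $\pi$, which is exactly the content of \eqref{Gantr}.

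First I record the values of $\tau^*\widetilde{\varepsilon}$ on the edges of $\Box$. The involution $\tau$ interchanges $\delta_j\times 1$ with $1\times\delta_j$. Hence, by \eqref{counit2}, \eqref{horc} and the definition of $\Gan$:
\begin{itemize}
\item the left vertical edge of $\tau^*\widetilde{\varepsilon}$ is $s_0^*\delta_1^*$, the identity on the source;
\item the right vertical edge is $\Gan$;
\item the top horizontal edge is $\Cof\circ\Fib$, i.e. $\pi(p)$;
\item the bottom horizontal edge is $\id$.
\end{itemize}
In the picture \eqref{Gan}, for an arrow $p\colon X\to B$ this is the transposed square, with corners $X, X_1$ on top and $X, B$ on the bottom.

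Next, $i^*$ keeps only the $2$-simplex on $(0,0)\to(1,0)\to(1,1)$. Its long edge $(0,0)\to(1,1)$ is the composite $X\to B$, and its last edge $(1,0)\to(1,1)$ is $p_1$.

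The key computation is the right Kan extension $i_*$, which only needs to fill the vertex $(0,1)$. The inclusion $i$ is full and $(0,1)$ is not in its image. The slice $i_{(0,1)/}$ consists of the objects of $\Delta^2$ under $(0,1)$ in $\Box$, namely the single vertex $(1,1)$, which is therefore initial. So the extension puts $B$ at $(0,1)$, with identity map $(0,1)\to(1,1)$, and the square is completed using the given $2$-simplex. Reading off edges:
\begin{itemize}
\item left vertical: the arrow $(0,0)\to(0,1)$, which is $X\to B$, i.e. $p$;
\item right vertical: $X_1\to B$, i.e. $p_1=\Gan(p)$;
\item bottom: $\id_B$;
\item top: $\pi(p)$.
\end{itemize}
This is precisely the square exhibiting a morphism $p\to\Gan(p)$ in $\Fun(\Delta^1,\Cc)$. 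Since $i_*$, $i^*$, $\tau^*$ and $\widetilde{\varepsilon}$ are all functors, these identifications hold functorially in $p$.

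The main obstacle is making this rigorous at the level of functors rather than objectwise. The difficulty is that $(\delta_1\times 1)^*\widetilde{\pi}$ must agree with $\id$ exactly, or at least up to a specified equivalence, not merely on objects. I would first check that the composite $X\to X\to B$, formed from the identity and $p$ via the $2$-simplex of $\widetilde{\varepsilon}$ on $\{(0,0),(1,0),(1,1)\}$, is canonically equivalent to $p$; this uses the degenerate top edge from \eqref{horc}. I would then invoke \Cref{LuProp}(2): $i^*$ is a trivial fibration on $\Rc_i$, so the resulting identification of the left vertical edge with $\id$ is homotopically unique. If a strict identity is required, one can replace $\widetilde{\pi}$ by an equivalent functor using the trivial fibration $(\delta_1\times 1)^*$ restricted to $\Rc_i$.
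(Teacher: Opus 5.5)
Your proposal is correct and follows essentially the paper's route. Both arguments identify the two vertical edges of $\widetilde{\pi}$ using three facts: $i_*$ places the value at $(1,1)$ on the vertex $(0,1)$ (so $i^*i_*\cong\id$ and the left edge is equivalent to the long edge $i\,\delta_1$); $\tau$ fixes this long edge; and the degenerate top edge \eqref{horc} of $\widetilde{\varepsilon}$ identifies it with $p$. The paper simply phrases this as a chain of equivalences of restriction functors rather than objectwise.

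One slip in your optional final remark: the trivial fibration on $\Rc_i$ is $i^*$, not $(\delta_1\times 1)^*$. No strictification is needed anyway, since the lemma is only established up to the equivalences \eqref{counit3}.
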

\begin{proof}
We need to show that   
\begin{equation}
\la{counit3}
(\delta_1 \times 1)^*\, \widetilde{\pi}\,\cong \,\id_{\Fun(\Delta^1, \Cc)}\, , \quad
(\delta_0 \times 1)^*\, \widetilde{\pi} \, \cong\, \Gan\, .
\end{equation}
First, let us observe that $\,i: \Delta^2 \into \Box \,$ is a fully faithful functor which can be obtained by taking the nerve of the map of posets $\, [2] \into [1] \times [1]$, $\, \{0\le 1 \le 2\} \mapsto \{(0,0) \le (1,0) \le (1,1)\} $. Hence
the Kan extension functor $i_*$ is also fully faithful, and there is an equivalence 
\begin{equation}
\la{ii}
i^* i_* \,\cong\, \id_{\Fun(\Delta^2,\Cc)}
\end{equation}
Next, observe that the only vertex of $ \Box $ that is not in the image of 
$i$ is $ (0,1) $. The corresponding coslice $\infty$-category
$\, i_{(0,1)/} = \Box_{(0,1)/} \times_{\Box} \Delta^2 \,$ is isomorphic to the nerve of the subposet $ [2]_{\ge (0,1)} \subseteq [2] $, which consists of the 
single element $ \{(1,1)\}$. Hence $ i_{(0,1)/} \cong \Delta^0 $ and the forgetful functor $ i_{(0,1)/} \to \Delta^2 $ is isomorphic to the vertex inclusion 
$ (1,1): \Delta^0 \to \Delta^2 $. Given a diagram $ \cF: \Delta^2 \to \Cc $, its right Kan extension 
$i_*\cF: \Box \to \Cc $ at vertex $(0,1)$ is therefore given by
$$
(i_*\cF)(0,1)\,= \,\blim\{i_{(0,1)/} \to \Delta^2 \xrightarrow{\cF} \Cc\}\,\cong\,
\,\blim\{\Delta^0 \xrightarrow{(1,1)} \Delta^2 \xrightarrow{\cF} \Cc\} = \cF(1,1)\, ,
$$
the value of $ i_*\cF$ at the morphism $ (0,1) \to (1,1) $ in $\Box$ being the identity morphism $ \id_{\cF(1,1)} \in \Cc_1 $. Thus, for any $\cF \in \Fun(\Delta^2, \Cc)$, the functor $ i_*\cF $ can be diagrammatically described by
%
\[\begin{tikzcd}
	{\cF(0,0)} & {\cF(1,0)} \\
	{\cF(1,1)} & {\cF(1,1)}
	\arrow[from=1-1, to=1-2]
	\arrow[from=1-1, to=2-1]
	\arrow[from=1-1, to=2-2]
	\arrow[from=1-2, to=2-2]
	\arrow[equals, from=2-1, to=2-2]
\end{tikzcd}\]
where the diagonal arrow represents the morphism 
$ \cF[(0,0) \to (1,1)] \in \Cc_1 $. 
We see that this last morphism is equivalent to  $ i_*\cF[(0,0) \to (0,1)] $, which is represented by the left vertical arrow in the above diagram. More precisely, this
equivalence can be expressed as an equivalence of functors
$\,\Fun(\Delta^2, \Cc) \to \Fun(\Delta^1, \Cc)\,$:
\begin{equation}
\la{isodel1}
(\delta_1 \times 1)^* \,i_*  \,\cong \, \delta_1^*     
\end{equation}
where the $\delta_1^*$ on the right is the restriction functor
along the coface inclusion $ \delta_1: \Delta^1 \to \Delta^2 $
opposite to the vertex $(1,0)$. Using \eqref{isodel1}, we can now verify 
the first condition in \eqref{counit3} by a formal calculation:
\begin{eqnarray*}
(\delta_1 \times 1)^*\, \widetilde{\pi} 
& = &
(\delta_1 \times 1)^*\,i_*\,i^*\,\tau^*\,\widetilde{\varepsilon}
\,\cong\, 
\delta_1^*\,i^*\,\tau^*\,\widetilde{\varepsilon}
\,= \,
(\tau\,i\,\delta_1)^*\widetilde{\varepsilon} \\*[1ex]
&=& (i\,\delta_1)^*\widetilde{\varepsilon} \,
= \,\delta_1^*\,i^*\widetilde{\varepsilon}
\,\cong\, (\delta_0 \times 1)^*\widetilde{\varepsilon}
\,=\, \id_{\Fun(\Delta^1, \Cc)}\, ,
\end{eqnarray*}
where we also used the (obvious) fact that the image of $\, i\,\delta_1: \Delta^1 \into \Box $ is stable under the involution $\tau$. 

For the second condition in \eqref{counit3}, we notice that the map $ \delta_0 \times 1: \Delta^1 \to \Box $ can be factored as $ \delta_0 \times 1 = i \, \delta_0 $, where $ \delta_0: \Delta^1 \to \Delta^2 $ is the edge inclusion opposite to the vertex $(0,0)$. Using \eqref{ii}, we then have
\begin{eqnarray*}
(\delta_0 \times 1)^*\, \widetilde{\pi} 
&=&
(\delta_0 \times 1)^*\,i_*\,i^*\,\tau^*\,\widetilde{\varepsilon}
\,=\,
\delta_0^* \,i^*\,i_* \,i^*\,\tau^*\,\widetilde{\varepsilon}
\,\cong\,
\delta_0^* \,i^*\,\tau^*\,\widetilde{\varepsilon}
\,=\, (\tau\,i\,\delta_0)^* \,\widetilde{\varepsilon}\\*[1ex] 
&=& 
(\tau\,(\delta_0\times 1))^* \,\widetilde{\varepsilon} 
\,=\,(\tau\,(\delta_0\times 1))^* \,\widetilde{\varepsilon} 
\,=\,(1 \times \delta_0)^* \,\widetilde{\varepsilon} \,=\,\Gan\,.
\end{eqnarray*}
This completes the proof of the lemma.
\end{proof}
Our next goal is to prove an $\infty$-categorical analog of the classical Ganea Theorem showing how the (homotopy) fiber of a morphism in $\Cc$ changes under the functor $\Gan $.
To this end we consider a `comparison map' $\, \phi: \Fib \to \Fib \circ\Gan $ defined
by the natural transformation
\begin{equation}
\la{phi}
\phi:\,\Fun(\Delta^1, \Cc) \times \Delta^1 \xrightarrow{\pi} \Fun(\Delta^1, \Cc)
\xrightarrow{\Fib} \Fun(\Delta^1, \Cc)\, ,
\end{equation}
where $ \Fib $ is the fiber functor on $\Fun(\Delta^1, \Cc) $ defined by \eqref{Fifun}.
Under \eqref{Delcube}, this natural transformation corresponds to a functor
$ \widetilde{\phi}: \Fun(\Delta^1, \Cc) \to \Fun(\Box, \Cc) $ that takes a morphism
$ (X \xrightarrow{p} B) \in \Fun(\Delta^1, \Cc)_0 $ to the commutative square 
\begin{equation}
    \la{fibsq}
\begin{tikzcd}
	F & {F_1} \\
	X & {X_1}
	\arrow[from=1-1, to=1-2]
	\arrow["j"', from=1-1, to=2-1]
	\arrow["{j_1}", from=1-2, to=2-2]
	\arrow["\pi", from=2-1, to=2-2]
\end{tikzcd}    
\end{equation}
where $\,j := \Fib(p)\,$ and $ j_1 := \Fib(\Gan(p)) $ in $ \Fun(\Delta^1, \Cc)$.
We describe the square \eqref{fibsq} by realizing it as a face of a 3-dimensional 
cubical diagram with a special property formalized in \Cref{defn:cube-axiom} below.
To simplify the notation we set $ \cube := \Delta^1 \times \Delta^1 \times\Delta^1  $ and denote
by $ i_{12}: \, \Delta^1 \times \Delta^1 \times\{0\}  \into \cube \,$,
$ i_{12}': \,\Delta^1 \times \Delta^1 \times\{1\}  \into \cube $, etc. the six natural inclusions of $ \Box $ into $\cube$. We call these inclusions the faces of $\cube $, referring to $ i_{kl} $ and $ i_{kl}' $ (with same $k,l$) as the {\it opposite}
faces. Given a functor $ \cF: \cube \to \Cc $,  we will also refer to the restriction functors $ i_{kl}^* \,\cF: \Box \to \Cc $ along the faces of $ \cube $ as {\it faces of $ \cF $}.
Pictorially, $ \cF $ can be visualized as 
\begin{equation}\la{cubic}
        \begin{tikzcd}[row sep=tiny, column sep=small]
            & \bullet \ar[dl]\ar[dd]\ar[rr] && \bullet \ar[dl]\ar[dd] \\
            \bullet \ar[rr,crossing over]\ar[dd] && \bullet & \\
            & \bullet \ar[dl]\ar[rr] && \bullet \ar[dl] \\
            \bullet \ar[rr] && \bullet \ar[from=uu,crossing over," "',near end]
        \end{tikzcd}
    \end{equation}
The next definition is a slight modification of \cite{kerodon}*{Definition 7.7.5.1}.
\begin{defn}
\label{defn:cube-axiom} A diagram $ \cF: \cube \to \Cc $ is called a {\it Mather cube} in $\Cc $ if $($`up to rotation'$)$ the bottom face of $\cF$ is a pushout square, while the four side faces are pullback squares  in $\Cc$, see \eqref{cubic}. We say that $\Cc$ satisfies the {\it $($second$)$ Mather cube axiom} if the top face of any Mather cube in $\Cc$ is also a pushout square in $\Cc$.   
\end{defn}
\noindent
We remark that the Mather cube axiom for a (presentable) 
$\infty$-category $\Cc$ can be reformulated by saying that
the pushouts in $\Cc $ are universal in the sense that
the pullback functor $f^*: \Cc_{/S} \to \Cc_{/T} $ preserves
pushouts for any morphism $f: T \to S $ in $\Cc$ (see \cite{kerodon}*{7.7.5.2}).

\begin{eg}
\la{Ex1}
The $\infty$-category ${\mathcal S}_{\ast}$ of (pointed) topological spaces satisfies the Mather cube axiom. This is essentially the contents of Mather's classical theorem \cite{mather1976pull} (see \cite{kerodon}*{7.7.5.3}). 
\end{eg}
\begin{eg}
\la{Ex2}
Recall that an $\infty$-category $ \Cc $ is an {\it $\infty$-topos} if it can be obtained by an (accessible) left exact localization $\, {\mathscr P}(\I)  \to \Cc \,$ of the $\infty$-category ${\mathscr P}(\I)$ of ($\mathcal S$-valued) presheaves on some small $\infty$-category $\I$ (see \cite{lurie2009higher}*{Def. 6.1.0.4}). By a theorem of Ch. Rezk, the $\infty$-topoi are characterized among the $\infty$-categories by the 3 properties: $\,(1)$ $\Cc$ is presentable, $\,(2)$ all colimits in $\Cc$ are universal, and $(3)$ the (non-full) subcategory of Cartesian morphisms in $\Fun(\Delta^1, \Cc)$ is closed under colimits (see \cite{lurie2009higher}*{6.1.6.8} and also \cite{ABFJ20}*{Example 2.2.4}). Properties $(1)$ and $(2)$ show, in particular, that the Mather
cube axiom holds in any (pointed) $\infty$-topos $\Cc$. This is a far-reaching generalization of \Cref{Ex1}.
\end{eg}
\begin{eg}
\la{Ex3}
By  \cite{kerodon}*{7.7.5.5}, the $\infty$-category $\Cat_{\infty}$ of small $\infty$-categories (quasi-categories) does not satisfy the Mather cube axiom.
\end{eg}
We are now in position to state the main result of this Appendix, which we refer to as 
the Ganea Theorem for $\infty$-categories.
\begin{thm}\la{GaTh}
Let $\Cc $ be a pointed $\infty$-category with $($finite$)$ limits and colimits. 
There is a canonical functor $ \Xi:\, \Fun(\Delta^1, \Cc)\,\to\, \Fun(\cube, \Cc) $
represented $($on vertices$)$ by
\begin{equation}
\la{Gancube}
  \begin{tikzcd}[scale cd=1]
	X \\
	B
	\arrow["p"', from=1-1, to=2-1]
\end{tikzcd}  
\qquad \longmapsto\qquad
\begin{tikzcd}[column sep=tiny
, row sep=tiny]
            & \Omega B \times F \ar[dl]\ar[dd]\ar[rr] && F \ar[dl]\ar[dd, "j"] \\
            \Omega B \ar[rr,crossing over]\ar[dd] && F_1 & \\
            & F \ar[dl]\ar[rr, "j_1"] && X \ar[dl, "\pi"] \\
            0 \ar[rr] && X_1 \ar[from=uu,crossing over," "',near end]
        \end{tikzcd}
\end{equation}
whose image lies in the full $\infty$-subcategory of $\Fun(\cube, \Cc)$ spanned by the 
Mather cubes. Consequently, if $\Cc $ satisfies the $($second$)$ Mather cube axiom, there is
an equivalence in $\Cc$
\begin{equation}
F_1 \,\cong \,F \ast \Omega B := \bcolim\{ F \leftarrow  F \times \Omega B \to \Omega B\}\, ,     
\end{equation}
where $ \Omega B = \blim\{0 \to B \leftarrow 0\}$ is the loop object on $B \in \Cc_0$.
\end{thm}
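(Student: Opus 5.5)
The plan is to build $\Xi$ from functors we already have and to get the pullback faces from pasting of pullbacks in $\Fun(\Delta^1,\Cc)$. Given $p\colon X\to B$, the functor $\widetilde{\phi}$ supplies the square \eqref{fibsq}, which will be the right face of the cube. We then regard this square as a morphism $\Fib(p)\to\Fib(\Gan(p))$ in $\Fun(\Delta^1,\Cc)$. Applying the composite of $p_!$, $q_*$ and the restriction to the $\Box$ spanned by the fiber data, levelwise in this arrow, extends it to a cube.

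Concretely, the cube will be the pullback of the square \eqref{fibsq} along the bottom face, assembled as follows. The front face is the fiber square of $p_1\colon X_1\to B$ over the zero object, namely $F_1\to X_1$ over $0\to B$, pulled back along $\pi$. Evaluating the right Kan extension $q_*$ on the cospan $\{F\to X\xleftarrow{\ } F\}$ yields the back face with corner $F\times_X F$. Here $F\to X\to B$ is null, so $F\times_X F\simeq F\times\Omega B$. The remaining left face $\Omega B\times F\to\Omega B$ comes from restricting the same Kan extension to the fiber of $0\to B$ computed two ways.

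To make this functorial, I will define $\Xi$ as a right Kan extension along the inclusion $J\hookrightarrow\cube$ of the full subcategory spanned by the four vertices $X$, $X_1$, $F_1$, $0$ (after the appropriate rotation). On $J$ the data come from $\widetilde{\varepsilon}$ and $\widetilde{\phi}$, which gives the square $F\to X\to X_1\leftarrow 0$ in its flipped form, together with the fiber square of $p_1$. Proposition \ref{LuProp}(2) then supplies the extension.

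With that construction, the verification splits into three checks.
\begin{enumerate}
\item Side faces. Each side face must be a pullback. This follows from the pointwise formula for right Kan extensions, computed on the slice categories $u_{x/}$ exactly as in the proof of the lemma on $\widetilde{\pi}$, combined with the pasting law for pullbacks. The pasting step uses that $F_1\to X_1\to B$ and $F\to X\to B$ are fiber sequences, and that $X\to X_1\to B$ composes to $p$ via the counit square.
\item Bottom face. The bottom face $F\to X$, $F\to 0$, $X\to X_1$, $0\to X_1$ must be a pushout. By construction $\pi=\Cof(\Fib(p))$, so the bottom face is precisely $\tau^*$ applied to the cofiber triangle produced by $v_!u_*$. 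Hence it is a pushout by Proposition \ref{LuProp}(1).
\item The equivalence. Once the image of $\Xi$ consists of Mather cubes, the Mather cube axiom says the top face is a pushout. That gives $F_1\simeq\bcolim\{F\leftarrow F\times\Omega B\to\Omega B\}=F\ast\Omega B$.
\end{enumerate}

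I expect the main obstacle to be coherence: identifying the right Kan extension's values on the back-left edges with $\Omega B\times F$ and the correct projections canonically, rather than only objectwise. I would first attempt this by computing the slices of $J\hookrightarrow\cube$ explicitly as small posets, mirroring the computation of $i_{(0,1)/}$ in the earlier lemma. I would then use the reflection symmetry $\tau$ to transport the cofiber triangle onto the bottom face, so that every face identification reduces to an equivalence of restriction functors of the form \eqref{isodel1}.
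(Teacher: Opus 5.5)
There is a genuine gap: the construction you actually commit to does not produce a Mather cube. Let $J\subseteq\cube$ be the full subcategory on the four vertices $X,X_1,F_1,0$, under any rotation. The initial vertex of the bottom face (the copy of $F$ mapping to $X$ and to $0$) then lies outside $J$. The pointwise formula for the right Kan extension therefore assigns to it the limit over the part of $J$ above it, namely $X\to X_1\leftarrow 0$. That vertex gets $X\times_{X_1}0=\mathrm{fib}(\pi)$ instead of $F$, and the bottom face comes out as a \emph{pullback} square rather than the cofiber square of $j$.

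These genuinely differ. For $p\colon 0\to B$ in pointed spaces, $F=\Omega B$, $X_1=\Sigma\Omega B$ and $\mathrm{fib}(\pi)=\Omega\Sigma\Omega B$; for $B=\mathbb{CP}^\infty$ these are $S^1$ versus $\Omega S^2$. So your check (2), that the bottom face is the cofiber triangle ``by construction,'' contradicts your own choice of $J$. The data you place on $J$ also cannot live there: $F$ is not among its vertices, and the fiber square of $p_1$ involves $B$, which is not a vertex of $\cube$.

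Your first description has the same defect. Applying $q_*p_!$ levelwise to the arrow $j\to j_1$ gives $\mathrm{fib}(j)\simeq\Omega B\to\mathrm{fib}(j_1)\simeq\Omega B$ lying over $F\to F_1$. That cube has no $\Omega B\times F$ and no pushout face. The pushout face is the one non-formal ingredient of a Mather cube, so it must be part of the input, never the output of a limit.

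The fix is to enlarge $J$ to the full subcategory on the whole bottom face together with $F_1$. Its nerve is $\Box\sqcup_{\Delta^0}\Delta^1$, glued at $X_1$, so the cofiber square $v_!u_*\Fib(p)$ and $j_1=\Fib(\Gan(p))$ glue along $X_1$ and can be right Kan extended. The extension places $v\times_{X_1}F_1$ above each bottom vertex $v$. The four side faces are then pullbacks by pasting, and the bottom face is the cofiber square. The identifications
$X\times_{X_1}F_1\simeq X\times_B0=F$, $0\times_{X_1}F_1\simeq\Omega B$ and $F\times_{X_1}F_1\simeq F\times_B0\simeq F\times\Omega B$
follow from $F_1=X_1\times_B0$, from $p\simeq p_1\pi$ (the counit square), and from the fact that $F\to X_1$ factors through $0$.

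Once repaired, your route is a valid variant of the paper's. The paper instead transposes $\Gan$ across $\bcolim_\Box\dashv\partial_\Box$ to get a map from the cofiber square to the constant square at $B$. It then applies $\Fib$ to all four maps to $B$ (the functor $\Fib_\Box$) and gets the pullback faces by transitivity of pullbacks. Each of those maps factors through $p_1$, so $v\times_B0\simeq v\times_{X_1}F_1$ and the two cubes agree.

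Your closing coherence worry is unnecessary in either version. The statement only identifies the vertex values up to equivalence, and being a Mather cube is invariant under equivalence.
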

\begin{proof} 
We construct $\Xi$ in four steps each of which consists in performing some natural operation on functor categories. Note that the functors $\, \Fun(\Delta^1, \Cc) \to \Fun(\cube, \Cc) \,$ can be interpreted as natural transformations of functors $ \Fun(\Delta^1, \Cc) \to \Fun(\Box, \Cc) $, using the canonical isomorphism
\begin{equation}
\la{isobox}
\Fun(\Fun(\Delta^1, \Cc),\, \Fun(\cube, \Cc))\, \cong\
\Fun(\Fun(\Delta^1, \Cc) \times \Delta^1,\, \Fun(\Box, \Cc)) 
\end{equation}
Under \eqref{isobox}, the functor $\Xi$ itself corresponds  to a functor $ \Fun(\Delta^1, \Cc) \times\Delta^1 \to \Fun(\Box, \Cc) $ whose restriction to $ \Fun(\Delta^1, \Cc) \times \{1\} $ is isomorphic to the diagram  \eqref{fibsq}. 
Using `rotational symmetries' of the simplicial cube $\cube$, we can also regard $\Xi $ as a natural transformation of functors corresponding to other pairs of opposite faces. This gives us an additional flexibility.

As a first step, we use the fiber functor \eqref{Fifun} in combination with Kan extensions along  the inclusions \eqref{1del1} to define 
\begin{equation}
 \la{compF}
\cF:\, \Fun(\Delta^1, \Cc) \xrightarrow{\Fib} \Fun(\Delta^1, \Cc) \xrightarrow{v_{!} \,u_*} 
\Tria_{\rm cof}(\Cc) \subseteq \Fun(\Box, \Cc)
\end{equation}
This functor assigns to a morphism $ X \xrightarrow{p} B $ the cofiber sequence 
\begin{equation}
    \la{fibsq1}
\begin{tikzcd}
	F & {X} \\
	0 & {X_1}
	\arrow["j", from=1-1, to=1-2]
	\arrow[from=1-1, to=2-1]
	\arrow["{\pi}", from=1-2, to=2-2]
	\arrow[from=2-1, to=2-2]
\end{tikzcd}    
\end{equation}
where the top arrow $j: F \to X $ in the fiber inclusion map.

Next, we consider the standard (colimit) adjunction (see \cite{kerodon}*{Proposition 7.1.1.18}):
\begin{equation*}
\bcolim_{\Box}:\, \Fun(\Box, \Cc)\, \leftrightarrows\, \Cc: \partial_{\Box} 
\end{equation*}
where the right adjoint is the diagonal functor of (shape $\Box$). This adjunction naturally extends
to an adjunction between functor categories  (see, e.g., \cite{La21}*{Proposition 5.1.16}): 
\begin{equation}
\la{KBox}
\bcolim_{\Box}^K:\, \Fun(K, \Fun(\Box, \Cc))\, \leftrightarrows\, \Fun(K, \Cc): \partial^K _{\Box} 
\end{equation}
where  $ K  $ is any simplicial set. Note that the functor $\bcolim_{\Box}^K$  is easy to compute: since $\Box $ has a final object (the vertex $(1,1)$, see \eqref{sqvert}), it is  given by composition of the canonical isomorphism  $\Fun(K, \Fun(\Box, \Cc)) \cong \Fun(\Box, \Fun(K, \Cc)) $ with evaluation at $(1,1)$.

Now, taking $ K := \Fun(\Delta^1, \Cc) $ in \eqref{KBox} and regarding the functor  \eqref{compF} as an object in $ \Fun(\Fun(\Delta^1, \Cc), \Fun(\Box, \Cc)) $, we have the natural map (homotopy equivalence)
\begin{equation}
\la{Kequiv}
\Map_{\Fun(\Fun(\Delta^1, \Cc), \Cc)}(\bcolim_{\Box}^K(\cF),\,\delta_0^*)\,
\xrightarrow{\eta^* \circ \,\partial^K_{\Box}}\,
\Map_{\Fun(\Fun(\Delta^1, \Cc), \Fun(\Box,\Cc))}(\cF,\,\partial_{\Box}^K(\delta_0^*))
\end{equation}
determined by the unit $\eta $ of the adjunction \eqref{KBox}. Observe that the Ganea functor $\Gan$ defined by \eqref{Gan} can be viewed as an 
object in the functor category
$$ 
\Fun(\Fun(\Delta^1, \Cc), \,\Fun(\Delta^1,\Cc))  \,\cong\, \Fun(\Delta^1,\, \Fun(\Fun(\Delta^1, \Cc), \,\Cc))\,. 
$$ 
This object satisfies the restriction conditions: $\, \delta^*_0(\Gan) = \delta^*_0 \,$ and $ \delta^*_1(\Gan)\,\cong \, \bcolim_{\Box}^K(\cF) $ in $\Fun(\Fun(\Delta^1, \Cc), \Cc) $; 
hence, by \eqref{Mapdef}, it can be viewed as a vertex ($0$-simplex) of the mapping space in the domain of \eqref{Kequiv}, i.e. $\,\Gan \in \Hom_{\Fun(\Fun(\Delta^1, \Cc), \Cc)}(\bcolim_{\Box}^K(\cF),\,\delta_0^*)\,$. The map \eqref{Kequiv} then sends $ \Gan $ 
to 
\begin{equation}
\la{adjGan}
\eta^*(\partial_{\Box}^K \, \Gan):= \partial_{\Box}^K \circ \Gan \circ \eta \,\in\,
\Hom_{\Fun(\Fun(\Delta^1, \Cc), \Fun(\Box,\Cc))}(\cF,\,\partial_{\Box}^K(\delta_0^*))\,,
\end{equation}
which is a morphism of functors $ \cF \to \partial_{\Box}^K(\delta_0^*) $ in
$\Fun(\Fun(\Delta^1, \Cc), \Fun(\Box,\Cc))$. By definition, such morphisms are represented by
the functors $\,\Fun(\Delta^1, \Cc) \times \Delta^1 \to  \Fun(\Box,\Cc)\,$, which, in turn,
correspond bijectively --- via the isomorphism \eqref{isobox} --- to the functors 
$\,\Fun(\Delta^1, \Cc) \to \Fun(\cube, \Cc) \,$. Thus, the morphism \eqref{adjGan} determines a cubical functor
\begin{equation}
\la{Gann}
\Gan^{\natural}: \ \Fun(\Delta^1, \Cc) \to \Fun(\cube, \Cc)   
\end{equation}
which is easily seen to be represented (on vertices) by
\begin{equation}
\la{FBfun}
  \begin{tikzcd}[scale cd=1]
	X \\
	B
	\arrow[from=1-1, to=2-1]
\end{tikzcd}  
\qquad \longmapsto\qquad
\begin{tikzcd}[column sep=tiny
, row sep=tiny]
            & F \ar[dl]\ar[dd]\ar[rr] && B \ar[dl]\ar[dd] \\
            X \ar[rr,crossing over]\ar[dd] && B & \\
            & 0 \ar[dl]\ar[rr] && B \ar[dl] \\
            X_1 \ar[rr] && B \ar[from=uu,crossing over," "',near end]
        \end{tikzcd}
\end{equation}
For the final step,
we extend the fiber functor \eqref{Fifun} to cubic diagrams by
\begin{equation*}
\Fib_{\Box}: \, \Fun(\cube, \Cc) \cong \Fun(\Box,\,\Fun(\Delta^1, \Cc)) 
\xrightarrow{\Fun(\Box,\Fib)}    \Fun(\Box,\,\Fun(\Delta^1, \Cc)) \cong \Fun(\cube, \Cc)
\end{equation*}
Informally speaking, $\Fib_{\Box}$ `interprets' a cubic diagram in $\Cc$ as a commutative square of morphisms (cube's horizontal arrows) and then `replaces' each of these morphisms with its fiber. When combined with \eqref{Gann}, this yields the functor
\begin{equation}
\la{Xiprime}
\Xi' := \Fib_{\Box} \circ \Gan^\natural :\  
\Fun(\Delta^1, \Cc) \to \Fun(\cube, \Cc) \to  \Fun(\cube, \Cc)
\end{equation}
which can be depicted as
\begin{equation}
\la{FBfun2}
  \begin{tikzcd}[scale cd=1]
	X \\
	B
	\arrow[from=1-1, to=2-1]
\end{tikzcd}  
\quad \longmapsto\quad
\begin{tikzcd}[column sep=tiny, row sep=tiny]
            & F \ar[dl]\ar[dd]\ar[rr] && B \ar[dl]\ar[dd] \\
            X \ar[rr,crossing over]\ar[dd] && B & \\
            & 0 \ar[dl]\ar[rr] && B \ar[dl] \\
            X_1 \ar[rr] && B \ar[from=uu,crossing over," "',near end]
        \end{tikzcd}
\quad \longmapsto\quad
\begin{tikzcd}[column sep=tiny, row sep=tiny]
            &  \Omega B \times F \ar[dl]\ar[dd]\ar[rr] && F \ar[dl]\ar[dd] \\
            F \ar[rr,crossing over]\ar[dd] && X & \\
            & \Omega B \ar[dl]\ar[rr] && 0 \ar[dl] \\
            F_1 \ar[rr] && X_1 \ar[from=uu,crossing over," "',near end]
\end{tikzcd}           
\end{equation}
Now, observe that, by construction, the right face of the target cube in \eqref{FBfun2} is the cofiber sequence \eqref{fibsq1}, and hence a pushout square in $\Cc$. On the other hand, the four faces adjacent to this right face arise from applying the fiber functor 
\eqref{Fifun} to morphisms over the same base;  by transitivity of pullbacks (see \cite{kerodon}*{Proposition~7.6.2.28}), they are pullback squares in $\Cc$. Thus, 
the image of \eqref{FBfun2} is a Mather cube, i.e. the functor $\Xi'$ takes  values in the full subcategory of $ \Fun(\cube, \Cc) $ spanned by such cubes. To complete the proof it remains to note that the functor $\Xi $ can be obtained from $\Xi'$ by `rotating the cube'
(i.e., twisting $\Xi'$ with an automorphism of $\,\Fun(\cube, \Cc) \,$ defined by a suitable transformation of the simplicial set $\cube$).
\end{proof}

Next, using the natural transformation \eqref{Gantr}, we can construct a diagram of endofunctors:
\begin{equation}
\la{Gtower}
\Gan^{\bullet}:\ \id \xrightarrow{\pi} \Gan \xrightarrow{\pi_1} \Gan^2 \xrightarrow{\pi_2} \Gan^3 \to \ldots 
\end{equation}
where the functors $ \Gan^m: \Fun(\Delta^1, \Cc) \to \Fun(\Delta^1, \Cc)\,$  and the morphisms $ \pi_m: \Gan^m \to \Gan^{m+1} $ are defined inductively by composition:
$\,\Gan^m := \Gan \circ \Gan^{m-1} $ and
$$
\pi_m: \,\Fun(\Delta^1, \Cc) \times \Delta^1 \xrightarrow{\pi_{m-1}} \Fun(\Delta^1, \Cc)
\xrightarrow{\Gan} \Fun(\Delta^1, \Cc)\ ,\quad \forall\,m \ge 1\,.
$$
Pictorially, when evaluated on objects, the diagram \eqref{Gtower} reads
 \begin{equation*}
    \label{eq:ganea-tow}
    \begin{tikzcd}
        X \ar[r, "\pi"]\ar[d,"p"] & X_1 \ar[d,"p_1"]\ar[r, "\pi_1"] & X_2 \ar[d,"p_2"]\ar[r, "\pi_2"] & \ldots \ar[r, "\pi_{m-1}"] & X_m \ar[r, "\pi_m"]\ar[d,"p_m"] & \ldots \\
        B \ar[r,equal] & B \ar[r,equal] & B \ar[r,equal] & \ldots \ar[r,equal] & B \ar[r,equal] & \ldots 
    \end{tikzcd}
    \end{equation*}

Note that the diagram $\Gan^\bullet $ is indexed by the simplicial set $\, I^{\infty} := \Delta^1 \sqcup_{\Delta^0} \Delta^1 \sqcup_{\Delta^0} \ldots $, which is {\it not} an
$\infty$-category. However, $I^{\infty}$ can be identified with the spine of 
the infinite simplex $ \Delta^\infty = N \Z_+ $ (the nerve of the ordered set $\Z_+$ of natural numbers), and the restriction map  $ j^*: \Fun(\Delta^\infty, \Dc) \to \Fun(I^{\infty}, \Dc) $ along the inclusion
$j: I^{\infty} \into \Delta^\infty $ is known to be a trivial Kan fibration for any 
$\infty$-category $\Dc$ (see \cite{kerodon}*{Theorem 1.5.7.1}). Hence, by \cite{lurie2009higher}*{Proposition 4.3.2.17}, the functor $j^*$ has a left adjoint given by left Kan extension:
\begin{equation}
\la{resext}
j_{!}:\ \Fun(I^{\infty}, \Dc)\,\leftrightarrows\,\Fun(\Delta^\infty, \Dc)\ : j^*
\end{equation}
Taking $ \Dc = \Fun(\Fun(\Delta^1, \Cc), \Fun(\Delta^1, \Cc)) $ and applying the corresponding   $j_!$ to the diagram \eqref{Gtower}, we can extend it to a $\Delta^\infty$-diagram (mapping telescope):
$$
\widetilde{\Gan}^\bullet := j_! \,\Gan^\bullet:\, \Delta^\infty \,\to \,\Fun(\Fun(\Delta^1, \Cc), \Fun(\Delta^1, \Cc))\,,
$$
which we call the {\it Ganea tower} in $\Cc$. Since the left Kan extensions preserve
colimits, we have a canonical equivalence of endofunctors on $\Fun(\Delta^1, \Cc)$:
\begin{equation}
\la{eqcolim}    
\bcolim_{I^\infty} \,\Gan^{\bullet}\,\xrightarrow{\sim}\,
\bcolim_{\Delta^\infty} \,\widetilde{\Gan}^{\bullet}\,
\end{equation}
induced by the  restriction functor in \eqref{resext}.

Now, consider the functor $\,s_0^*\,\delta_0^*: \Fun(\Delta^1, \Cc) \to \Fun(\Delta^1, \Cc)\,$ that takes a morphism $ X \to B $ in $\Cc$ to the identity $\id_B$ on its base.
There is a natural transformation $\,\alpha: \id \to s_0^* \,\delta^*_0 \,$ of endofunctors on 
$ \Fun(\Delta^1, \Cc) $ that can be formally defined as the unit of the 
adjunction $\,\delta^*_0 :\Fun(\Delta^1, \Cc)\,\leftrightarrows\,\Cc: s_0^*\,$.
It is easy to check that $ \Gan \, s_0^* \,\delta_0^* = s_0^* \delta^*_0 $, and hence $ \Gan^m \, s_0^* \,\delta_0^* = s_0^* \delta^*_0 $ for all $ m \ge 0 $. Then, by composition, $ \alpha $ induces morphisms of endofunctors $\,\alpha_m: \Gan^m \to s_0^* \,\delta_0^* \,$,
satisfying $ \alpha_{m+1} \circ \pi_m = \alpha_m $ for all $ m \ge 0 $. This defines a right cone $\Gan^{\bullet} \to c({s_0^* \,\delta_0^*}) $ on the $I^{\infty}$-diagram \eqref{Gtower} and hence a canonical map
\begin{equation}
\la{Gcolim}
\bcolim_{I^\infty} \,\Gan^{\bullet}\,\to \, s_0^*\,\delta_0^*  
\end{equation}
The left Kan extension functor along the inclusion $j: I^\infty \into \Delta^\infty $ preserves constant diagrams, hence it maps the right cone $\,\Gan^{\bullet} \to c({s_0^* \,\delta_0^*}) \,$ to a right cone $ \,\widetilde{\Gan}^\bullet \to c({s_0^* \,\delta_0^*})\, $ on the Ganea tower. It follows that the map \eqref{Gcolim} factors
through \eqref{eqcolim} inducing a natural morphism  in $\Fun(\Fun(\Delta^1, \Cc), \Fun(\Delta^1, \Cc))$:
\begin{equation}
\la{GGcolim}
\bcolim_{\Delta^\infty} \,\widetilde{\Gan}^{\bullet}\,\to \, s_0^*\,\delta_0^*    
\end{equation}

When $ \Cc = {\mathcal S}_{\ast} $ is the $\infty$-category of pointed spaces, the map \eqref{GGcolim} is an equivalence: evaluated levelwise (see the proof of \Cref{Gacor} below), it amounts to
the homotopy decomposition $\,\hocolim_m (X_m) \xrightarrow{\sim} B \,$ of a pointed space $B$,
which is part of Ganea's classical fiber-cofiber construction. It is therefore natural to ask in general: 
\begin{center}
{\it When is the map \eqref{GGcolim} an equivalence}?
\end{center}
We do not know the answer to this question for arbitrary $\infty$-categories; however, thanks to
the recent work of M. Anel, G. Biedermann, E. Finster and A. Joyal \cite{ABFJ20},
we can provide the following general (but, perhaps, not unexpected) answer for $\infty$-topoi.
\begin{prop}
\la{Gacor}
Assume that a $($pointed$)$ $\infty$-topos $\Cc$ is hypercomplete $($i.e. satisfies a descent condition with respect to hypercoverings in the sense of \cite{DHI}$)$.
Then, the map \eqref{GGcolim}  is an equivalence of endofunctors on $ \Fun(\Delta^1, \Cc)$.   
\end{prop}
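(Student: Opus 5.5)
Since equivalences in $\Fun(\Fun(\Delta^1,\Cc),\Fun(\Delta^1,\Cc))$ are detected pointwise, and colimits in functor $\infty$-categories are computed pointwise, we reduce to a single morphism $p: X\to B$ in $\Cc$. We must show that the induced map $\bcolim_m X_m \to B$ in $\Cc_{/B}$ is an equivalence. By the evaluation-at-target component this is automatic, since the base stays $B$ throughout the tower. So the content is the statement $\bcolim_m X_m \xrightarrow{\sim} B$ in $\Cc$.

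Since colimits in an $\infty$-topos are universal, pulling back along $0\to B$ commutes with the sequential colimit. Therefore $\Fib(\bcolim_m X_m\to B)\simeq \bcolim_m F_m$, where $F_m = \Fib(p_m)$. It thus suffices to show two things: that $\bcolim_m F_m \simeq 0$, and that a morphism into $B$ with contractible fiber is an equivalence. The latter fails for non-connected $B$ in general, but here $\Cc$ is pointed and the construction is over a pointed base. We will instead argue via connectivity, using the $n$-connected/$n$-truncated factorization systems in $\Cc$ from \cite{ABFJ20}.

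The key input is \Cref{GaTh}: under the Mather cube axiom, which holds in any $\infty$-topos (\Cref{Ex2}), we get $F_{m+1}\simeq F_m\ast\Omega B$. In an $\infty$-topos the join raises connectivity: by the Blakers–Massey type results of \cite{ABFJ20}, if $F_m$ is $n$-connective and $\Omega B$ is $(-1)$-connective (automatic in the pointed setting), then $F_m\ast\Omega B$ is $(n+1)$-connective. Moreover, the transition map $F_m\to F_m\ast \Omega B$ is null, since it factors through the cone. By induction $F_m$ is at least $(m-2)$-connective, and therefore $p_m$ is $(m-1)$-connected, as its fibers are $(m-2)$-connective; this uses that the fiber over the basepoint detects connectivity because of the pointing. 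I expect this to be the main obstacle. We must make the connectivity statement correct for morphisms rather than objects, i.e.\ the \emph{relative} statement over $B$. The cleanest route is to run the whole argument in the slice $\infty$-topos $\Cc_{/B}$ and use the fiberwise join $X_m\ast_B X$ interpretation of $\Gan$. That gives $p_{m+1}$ as the fiberwise join of $p_m$ with $\id$-related data, so that the connectivity of $p_{m+1}$ is the join-connectivity of $p_m$ and the diagonal-type map $0\to B$, and \cite{ABFJ20} supplies exactly the estimate that joins of $n$- and $k$-connected maps are $(n+k+2)$-connected.

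Finally, the colimit of a sequence of maps $p_m$ into $B$ whose connectivity tends to infinity is $\infty$-connected. Connected maps are stable under colimits in $\Cc_{/B}$, and each $p_m$ factors through the colimit, so the induced map $\bcolim_m X_m\to B$ is $n$-connected for every $n$. In a hypercomplete $\infty$-topos, $\infty$-connected morphisms are equivalences. This is precisely where hypercompleteness enters, and it finishes the proof.
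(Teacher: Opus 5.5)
Your outline is the paper's argument. You reduce to a single $p\colon X\to B$, get $F_{m+1}\simeq F_m\ast\Omega B$ from \Cref{GaTh}, and use \cite{ABFJ20} to make the connectivity of $F_m$ grow with $m$. You then deduce that $\bcolim_m X_m\to B$ is $\infty$-connected and finish with hypercompleteness. The gap is the step you flag yourself, and neither of your two justifications closes it.

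\begin{itemize}
\item Being pointed does \emph{not} make the fiber over the basepoint detect connectivity. To pass from ``$F_m=X_m\times_B 0$ is highly connected'' to ``$p_m$ is highly connected'', you need $0\to B$ to be an effective epimorphism. Then connectivity descends, because the left class of the $n$-connected/$n$-truncated modality is local on the target. For a pointed $B$, however, $0\to B$ is an effective epimorphism if and only if $B$ is $0$-connected.
\item Your slice-topos route has the same dependence. The join estimate only gives $\mathrm{conn}(p_{m+1})\ge\mathrm{conn}(p_m)+\mathrm{conn}(0\to B)+2$. This grows only if $0\to B$ is $(-1)$-connected, which again means $B$ is connected.
\item A smaller slip: $\Gan(p_m)$ is the fiberwise join $X_m\ast_B 0$, not $X_m\ast_B X$ (see \Cref{compcolim}).
\end{itemize}

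This is a missing hypothesis rather than a missing trick, because without it the statement fails. In $\mathcal{S}_*$ take $B=S^0$ and $p\colon 0\to B$. The fiber is $F=0\times_B 0=\Omega B\simeq 0$, so $X_1$, the cofiber of $F\to X=0$, is again $0$, and $p_1=p$. The tower is constant at $0$, and $\bcolim_m X_m\simeq 0\not\simeq S^0$.

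The paper's proof makes the same tacit assumption. It passes from ``$F_m$ is $(m-1)$-connected'' to ``\eqref{objmap} is $\infty$-connected'' ``as in the case of ordinary topological spaces'', and classically the Ganea decomposition is only for connected $B$.

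Once you add the hypothesis that $B$ is $0$-connected, your argument goes through:
\begin{itemize}
\item $p_m$ inherits the connectivity of $F_m$ by descent along $0\to B$. In your slice version, each join with $0\to B$ raises connectivity by at least one.
\item $n$-connected maps are closed under colimits in $\Fun(\Delta^1,\Cc)$. The target of the colimit map is the colimit of the constant diagram at $B$, so $\bcolim_m X_m\to B$ is $n$-connected for every $n$.
\item In a hypercomplete $\infty$-topos an $\infty$-connected map is an equivalence, which finishes the proof.
\end{itemize}
Your indexing mixes the ``connective'' and ``connected'' conventions in places, but that does not affect the conclusion.
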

\begin{proof}
The proof is parallel to the classical one for topological spaces, except that the classical Blakers-Massey  Theorem is replaced by the generalized one for $\infty$-topoi
proved in \cite{ABFJ20} and the classical Ganea Theorem is replaced by our \Cref{GaTh}. We provide details for reader's convenience.

First, in view of the equivalence \eqref{eqcolim}, 
we may replace the map \eqref{GGcolim} with \eqref{Gcolim}. 
To evaluate the colimit in \eqref{Gcolim}, we will use Proposition~7.1.8.3 of \cite{kerodon} that reduces colimits in diagram categories to the levelwise ones. More precisely, we identify
$$
\Fun(\Fun(\Delta^1, \Cc), \,\Fun(\Delta^1, \Cc))\,\cong\,
\Fun(\Fun(\Delta^1, \Cc) \times \Delta^1, \, \Cc)
$$
and apply the above proposition to the functor $\,
\Gan^\bullet: K \to \Fun(B, \, \Cc) $ with $ K = I^{\infty} $
$B = \Fun(\Delta^1, \Cc) \times \Delta^1 $ ({\it cf.} the notation of \cite{kerodon}). It reduces our problem to showing that, for every object $(p: X \to B ) \in \Fun(\Delta^1, \Cc)_0 $, the following map is 
an equivalence in $\Cc $:
\begin{equation}
\la{objmap}
\bcolim_{I^{\infty}}\,X_{\bullet}(p)\,\to\, B\ 
\end{equation}
where the functor $ X_\bullet(p):\, I^{\infty} \to \Cc $ is defined by $ X_m(p) := \delta^*_1(p_m) = \delta^*_1 \,\Gan^m(p) $ for $ m \in \Z_+$.

Next, recall (see \Cref{Ex2}) that any $\infty$-topos is presentable (hence, admits all (small) limits and colimits) and satisfies the  Mather cube axiom.
We can therefore apply our \Cref{GaTh}. By induction,
this theorem yields 
\begin{equation}
\la{Fm}
F_m \,\cong \,F_{m-1} \ast \Omega B\, ,\quad \forall\,m\ge0\, ,
\end{equation}
where $ (F_m \to X_m) := \Fib(p_m) $ for $ X \xrightarrow{p} B\,$. Now, we can use the results of \cite{ABFJ20} to deduce from \eqref{Fm} that the objects $F_m \in \Cc_0$ are at least $(m-1)$-connected (see \cite{ABFJ20}*{Definition~3.3.4}).
Specifically, this fact follows by induction from the Dual Blakers-Massey Theorem \cite{ABFJ20}*{3.5.1} applied to the pullback squares
\begin{equation}
\la{pullFm}
\begin{diagram}[small]
F_m \times \Omega B  & \rTo & F_m\\
\dTo & & \dTo\\
\Omega B & \rTo & 0\\
\end{diagram}
\end{equation}
in combination with \cite{ABFJ20}*{Corollary~3.3.4 (iv)}.
Thus, as in the case of ordinary topological spaces, we conclude that the map \eqref{objmap} is $\infty$-connected. To complete the proof it remains to note
that in a hypercomplete $\infty$-topos, every $\infty$-connected morphism is an equivalence (see \cite{lurie2009higher}*{Theorem 6.5.3.12}).
\end{proof}

\subsection{Relative Ganea construction} 
\la{AS3}
In this section, we extend \Cref{GaTh} to the relative setting. The construction 
is parallel to that of Section~\Cref{AS2}, we therefore outline only its main steps, focusing on differences and modifications that we need to make. We  keep the assumption that $\Cc $ is a pointed $\infty$-category admitting (finite) limits and colimits.

In the relative case, we replace the $\infty$-category $ \Fun(\Delta^1, \Cc) $ by two different (non-equivalent) $\infty$-categories: $\, \Fun(\Delta^1 \vee \Delta^1, \,\Cc)$ and $ \FunDD $, which are spanned by the pushout and pullback diagrams in $ \Cc $, respectively  (see Examples\, \ref{PushSq} and \ref{PullSq}). The relative version of the fiber-cofiber adjunction \eqref{CFib} is then defined as the composition of two adjunctions:
\begin{equation}
\la{Radj}
q^* \,v_!:\, \Fun(\Delta^1 \vee \Delta^1, \,\Cc)\,\rightleftarrows\, \Fun(\Box, \Cc) \,\rightleftarrows\,  \FunDD : v^*\,q_*
\end{equation}
using the Kan extension and restriction functors along the inclusions 
$$
\Delta^1 \vee \Delta^1 \xhookrightarrow{v} \Box \xhookleftarrow{q} (\Delta^1 \vee \Delta^1)^{\rm op}
$$
Pictorially,
\[
q^*\,v_!\, :\quad 
\begin{tikzcd}[column sep=small, row sep=small]
	A & X \\
	Y
	\arrow[from=1-1, to=1-2]
	\arrow[from=1-1, to=2-1]
\end{tikzcd}
\quad \longmapsto \quad
\begin{tikzcd}[column sep=small, row sep=small]
	A & X \\
	  Y & X\amalg_A Y
	\arrow[from=1-1, to=1-2]
	\arrow[from=1-1, to=2-1]
	\arrow[from=1-2, to=2-2]
	\arrow[from=2-1, to=2-2]
\end{tikzcd}
\quad \longmapsto \quad
\begin{tikzcd}[column sep=small, row sep=small]
	& X \\
	Y & X\amalg_A Y
	\arrow[from=1-2, to=2-2]
	\arrow[from=2-1, to=2-2]
\end{tikzcd}
\]

\[
v^*\,q_*\, :\quad 
\begin{tikzcd}[column sep=small, row sep=small]
	& X \\
	Y & B
	\arrow[from=1-2, to=2-2]
	\arrow[from=2-1, to=2-2]
\end{tikzcd}
\quad \longmapsto \quad
\begin{tikzcd}[column sep=small, row sep=small]
	X\times_B Y & X \\
	  Y & B
	\arrow[from=1-1, to=1-2]
	\arrow[from=1-1, to=2-1]
	\arrow[from=1-2, to=2-2]
	\arrow[from=2-1, to=2-2]
\end{tikzcd}
\quad \longmapsto \quad
\begin{tikzcd}[column sep=small, row sep=small]
	X\times_B Y & X \\
	Y
	\arrow[from=1-1, to=1-2]
	\arrow[from=1-1, to=2-1]
\end{tikzcd}
\]
where $ X\amalg_A Y $ and $ X\times_B Y $ denote the (homotopy) pushout and pullback
in $\Cc$, respectively. Note that \eqref{Radj} being  adjoint functors follows immediately from their definition (by formal properties of Kan extensions). 

To construct a relative analog of the Ganea functor \eqref{Gan} we look at the counit
of the above adjunction:
$$
\varepsilon:\, q^* v_!\, v^* q_* \, \to\, \id_{\FunDD}
$$
and interpret it as a functor
\begin{equation*}
\widetilde{\varepsilon}:\, \FunDD\,\to\,\Fun(\BBox,\Cc)    
\end{equation*}
where $\BBox$ denotes the simplicial set 
$ (\Delta^1 \vee \Delta^1)^{\rm op} \times \Delta^1 $ (which is a (small) $\infty$-category). 
Pictorially, $\widetilde{\varepsilon}$ is given by
\[
(X \to B \leftarrow Y)
\quad \longmapsto \quad
\begin{tikzcd}[column sep=small, row sep=small]
	X & {X \ast_B Y} & Y \\
	X & B & Y
	\arrow[from=1-1, to=1-2]
	\arrow[equals, from=1-1, to=2-1]
	\arrow[from=1-2, to=2-2]
	\arrow[from=1-3, to=1-2]
	\arrow[equals, from=1-3, to=2-3]
	\arrow[from=2-1, to=2-2]
	\arrow[from=2-3, to=2-2]
\end{tikzcd}
\]
where the top row represents the functor $\,q^* v_!\, v^* q_*\,$
and the bottom the identity functor on $ \FunDD $. 
The relative Ganea functor is now defined by restricting $\,\widetilde{\varepsilon}\,$ along the inclusion $j: \Delta^1 \into \BBox $ that identifies $ \Delta^1 $ as the common edge  in $ \BBox \,$:
\begin{equation}
\la{RGan}
\Gan^{\rm rel}: \, 
\FunDD \,\xrightarrow{\widetilde{\varepsilon}} \,\Fun(\BBox,\Cc) \,\xrightarrow{j^*}
\,\Fun(\Delta^1, \Cc)\,.
\end{equation}
This sends $\,(X \to B \leftarrow Y) \mapsto (X\ast_B Y \to B)\,$. 
The relative analog of \eqref{compF}  is the functor
\begin{equation}
\la{RF}
\cF^{\rm rel}: \, \FunDD \,\xrightarrow{v^* q_*} \,\Fun(\Delta^1 \vee \Delta^1, \Cc)\,
\xrightarrow{v_!}\, \Fun(\Box, \Cc)\, ,
\end{equation}
which takes a pair of maps over a common base to the pushout square of their relative  join:
\[
\begin{tikzcd}[column sep=small, row sep=small]
	& X \\
	Y & B
	\arrow[from=1-2, to=2-2]
	\arrow[from=2-1, to=2-2]
\end{tikzcd}
\quad \longmapsto \quad
\begin{tikzcd}[column sep=small, row sep=small]
	X\times_B Y & X \\
	Y
	\arrow[from=1-1, to=1-2]
	\arrow[from=1-1, to=2-1]
\end{tikzcd}
\quad \longmapsto \quad
\begin{tikzcd}[column sep=small, row sep=small]
	X\times_B Y & X \\
	  Y & X \ast_B Y
	\arrow[from=1-1, to=1-2]
	\arrow[from=1-1, to=2-1]
	\arrow[from=1-2, to=2-2]
	\arrow[from=2-1, to=2-2]
\end{tikzcd}
\]
To construct the relative version of the functor \eqref{Gann} we use the 
colimit adjunction \eqref{KBox} with $\,K := \FunDD\,$.
The homotopy equivalence \eqref{Kequiv} is then replaced by
\begin{equation}
\la{Keqrel}
\Map_{\Fun(K, \Cc)}(\bcolim_{\Box}^K(\cF^{\rm rel}),\,\varphi_0^*)\,
\xrightarrow{(\eta^K)^* \,\partial^K_{\Box}}\,
\Map_{\Fun(K, \Fun(\Box,\Cc))}(\cF^{\rm rel},\,\partial_{\Box}^K(\varphi_0^*))\,,
\end{equation}
where the restriction functor $\varphi^*_0: \FunDD \to \Cc\,$ is defined by the 
inclusion $ \varphi_0: \Delta^0 \into (\Delta^1 \vee \Delta^1)^{\rm op} $ identifying
$\Delta^0 $ with the `middle' vertex of $(\Delta^1 \vee \Delta^1)^{\rm op}$. Now, the relative Ganea functor 
\eqref{RGan} can be viewed as an object of the functor category
$$
\Fun(\FunDD,\,\Fun(\Delta^1, \Cc))\,\cong\,
\Fun(\Delta^1,\,\Fun(\FunDD, \,\Cc))
$$
satisfying the  conditions $\,\delta_0^*(\Gan^{\rm rel}) = \varphi_0^* \,$
and $ \delta^*_1(\Gan^{\rm rel}) \cong \bcolim^K_{\Box}(\cF^{\rm rel})$.
Hence, it determines a $0$-simplex of the mapping space in the domain of  \eqref{Keqrel}, 
that maps to a morphism in $ \Hom_{\Fun(K, \Fun(\Box,\Cc))}(\cF^{\rm rel},\,\partial_{\Box}^K(\varphi_0^*))$. This last morphism can then be interpreted as the cubical functor
\begin{equation}
\la{RGann}
\Gan^{{\rm rel}, \natural} :=  \,\partial^K_{\Box} \circ \Gan^{\rm rel}\circ \eta^K:  \ \FunDD \to \Fun(\cube, \Cc)   \,,
\end{equation}
generalizing \eqref{Gann}. Composing now \eqref{RGann} with the cubical fiber functor $\Fib_{\Box}$, we define 
the relative version of the functor \eqref{Xiprime}:
\begin{equation}
\la{RXi}
\Xi^{\rm rel} := \Fib_{\Box}\circ\Gan^{{\rm rel}, \natural} :\, \FunDD \to \Fun(\cube, \Cc)\, .
\end{equation}
Pictorially, the functor \eqref{RXi} can be visualized as
\begin{equation}
\la{RXipic}
\begin{tikzcd}[column sep=small, row sep=small]
	& X \\
	Y & B
	\arrow[from=1-2, to=2-2]
	\arrow[from=2-1, to=2-2]
\end{tikzcd}
%
\quad \longmapsto\quad
\begin{tikzcd}[column sep=tiny, row sep=tiny]
            &  F_X \times F_Y \ar[dl]\ar[dd]\ar[rr] && X \times_B Y \ar[dl]\ar[dd] \\
            F_X \ar[rr,crossing over]\ar[dd] && X & \\
            & F_Y \ar[dl]\ar[rr] && Y \ar[dl] \\
            F_{X,Y} \ar[rr] && X \ast_B Y \ar[from=uu,crossing over," "',near end]
\end{tikzcd}           
\end{equation}
where 
$\, (F_{X,Y} \to X\ast_B Y) := \Fib\,[\Gan^{\rm rel}(X \to B \leftarrow Y)] $.
The following is a generalization of \Cref{GaTh} to the relative setting.
\begin{thm}\la{RGaTh}
Let $\Cc $ be a pointed $\infty$-category with $($finite$)$ limits and colimits. 
The functor $ \Xi^{\rm rel}: \FunDD \to \Fun(\cube, \Cc) $ defined by 
\eqref{RXi} takes values in the full $\infty$-subcategory of $\Fun(\cube, \Cc)$ spanned by the Mather cubes. Consequently, if $\Cc $ satisfies the $($second$)$ Mather cube axiom, there is
an equivalence in $\Cc$
\begin{equation*}
F_{X,Y} \,\cong \, F_X \ast F_Y \,     
\end{equation*}
where $ F_X $ and $ F_Y $ are the fibers of the morphisms $X \to B $ and $ Y \to B $, respectively.
\end{thm}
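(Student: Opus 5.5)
The plan is to mirror the proof of \Cref{GaTh} face by face, using the construction \eqref{RXi} directly. First, identify the cube $\Gan^{{\rm rel},\natural}(X\to B\leftarrow Y)$ on vertices. By construction \eqref{RGann} is the adjoint, under \eqref{KBox} with $K=\FunDD$, of the morphism $\bcolim_\Box^K(\cF^{\rm rel})\to\varphi_0^*$ given by $\Gan^{\rm rel}$. Hence it is a natural transformation from the pushout square $\cF^{\rm rel}$ of the relative join (front face: $X\times_BY\to X$, $X\times_BY\to Y$, $X\to X\ast_BY$, $Y\to X\ast_BY$) to the constant square at $B$ (back face). All connecting edges are the structure maps to $B$; the map $X\ast_BY\to B$ is the one produced by the counit $\widetilde\varepsilon$. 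Since $\Box$ has a terminal vertex, $\bcolim^K_\Box$ is evaluation at $(1,1)$, so these identifications are formal, exactly as in \eqref{FBfun}.

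Second, apply $\Fib_\Box$, which replaces each of the four edges $Z\to B$ by its fiber inclusion $\Fib(Z\to B)$. This produces the cube \eqref{RXipic}. Its vertices are $F_X$, $F_Y$ and $F_{X,Y}$, together with the fiber of $X\times_BY\to B$. We then identify the latter with $F_X\times F_Y$: pulling back $X\times_BY\to B$ along $0\to B$ and using pullback pasting \cite{kerodon}*{Proposition~7.6.2.28} gives $F_X\times_0 F_Y\simeq F_X\times F_Y$.

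Third, check the Mather conditions. The face spanned by $X\times_BY, X, Y, X\ast_BY$ is $\cF^{\rm rel}$ itself, which is a pushout by the definition of $v_!$. Each of the four faces adjacent to it has the form $\Fib(Z'\to B)\to Z'$ over $\Fib(Z\to B)\to Z$ for a morphism $Z'\to Z$ over $B$. Such a face is the left half of a composite rectangle whose right half is the pullback square $\Fib(Z\to B)\to Z\to B$ with $0$, and whose total rectangle is the pullback defining $\Fib(Z'\to B)$. By transitivity of pullbacks, each of these four faces is a pullback. The cube is therefore a Mather cube after a rotation, again implemented by an automorphism of $\cube$ as at the end of the proof of \Cref{GaTh}. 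Functoriality is automatic, since every step is a composite of functors between functor $\infty$-categories.

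Finally, if $\Cc$ satisfies the Mather cube axiom, the opposite face is a pushout. That face is $\{F_X\leftarrow F_X\times F_Y\to F_Y\}$ with cone point $F_{X,Y}$, so $F_{X,Y}\simeq F_X\ast F_Y$. The main obstacle is the bookkeeping in the first step: we must verify that the $\infty$-categorical adjoint transposition really yields a cube whose edges into $B$ are the given maps, coherently. This requires the conditions $\delta_0^*\Gan^{\rm rel}=\varphi_0^*$ and $\delta_1^*\Gan^{\rm rel}\simeq\bcolim_\Box^K\cF^{\rm rel}$, and both follow from how $\widetilde\varepsilon$ restricts along $j$.
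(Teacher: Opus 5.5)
Your proposal is correct and follows essentially the same route as the paper, which obtains \Cref{RGaTh} by running the proof of \Cref{GaTh} on the construction \eqref{RXi}. In both arguments the face $\cF^{\rm rel}$ is a pushout by definition of $v_!$, the four adjacent faces are pullbacks by transitivity of pullbacks over the common base $B$, and rotating the cube and applying the Mather cube axiom gives $F_{X,Y}\simeq F_X\ast F_Y$. Your pasting argument identifying $\Fib(X\times_B Y\to B)$ with $F_X\times F_Y$, compatibly with the projections, makes explicit a vertex identification that the paper only records in the picture \eqref{RXipic}.
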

\begin{rmk} \label{compcolim}
\Cref{RGaTh} is a relative version of \Cref{GaTh}: the latter follows from
the former if we specialize to the diagram $ (X \to B \leftarrow 0)$
in $\FunDD $. The model-categorical versions of these results are established by J.-P. Doeraene
in \cite{Doe98} as a consequeunce of the so-called Join Theorem (see Theorem 3.1, Proposition 4.2 and Corollary 4.3 in  {\it loc. cit.}): these are now standard results in the literature on abstract Lusternik-Schnirelmann invariants  (see \cite{Doe93}, \cite{HL94}, \cite{DT95} and references therein). We point out that these model-categorical results, which we use in the main body of the paper, can, in fact, be formally deduced from our \Cref{GaTh} and \Cref{RGaTh} modulo the well-known relation between (homotopy) limits and colimits in model categories and $\infty$-categories 
(see \cite{lurie2009higher}). To be precise, any (pointed) $\infty$-category $\Cc$ admits a fully faithful (basepoint preserving) embedding $\Cc \hookrightarrow \underline{\cM}$ into the $\infty$-category $\underline{\cM}:=\mathcal{N}^{\Delta}(\cM^{\rm cf})$ underlying a pointed, simplicial model category $\cM$ (see \cite{lurie2009higher}*{Appendix A.2}). Then, by {\cite{lurie2009higher}*{Theorem 4.2.4.1}}, the limits and colimits in $\Cc$ can be realized as {\it homotopy} limits and colimits in $\cM$. Suppose that $ I$ is a small category such that for any $I$-shaped diagram in $\Cc$ (i.e., any $\infty$-functor $NI \to \Cc$), its colimit in $\underline{\cM}$ lies in $\Cc$. Then, for any functor $\cF: NI \to \Cc$,
$$ 
\bcol_{NI}(\cF) \,\simeq \,{\rm hocolim}_{I}(\cF)\, ,
$$
where the homotopy colimit on the right hand side is taken in the model category $\cM$. Dually, if for any $I$-shaped diagram in $\Cc$ (i.e., a $\infty$-functor $NI \to \Cc$), its limit in $\underline{\cM}$ lies in $\Cc$, then for any functor $ \cF: NI \to \Cc$,
$$ \blim_{NI}(\cF) \,\simeq \,{\rm holim}_I(\cF)\, ,$$
where the homotopy limit on the right hand side is taken in the model category $\cM$.
\end{rmk}
%
%

\bibliography{reference.bib}{}
\bibliographystyle{amsalpha}

\end{document}

%% file: preamble.tex
\usepackage{fontenc}
\usepackage{textcmds}  
\usepackage{amsmath, amssymb, amsfonts, amstext, verbatim, amsthm, thmtools, mathrsfs}
\numberwithin{equation}{section} 
\usepackage{mathtools}
\usepackage[mathcal]{eucal}
\usepackage{microtype}
\usepackage[all]{xy}
\usepackage{tikz-cd}
\usepackage{tikz} 
\usepackage{enumerate}
\usepackage{enumitem}
\usepackage[modulo]{lineno}
\usepackage[colorlinks=true,linkcolor=blue,citecolor=blue,urlcolor=blue,citebordercolor={0 0 1},urlbordercolor={0 0 1},linkbordercolor={0 0 1}]{hyperref} 
\usepackage[shortalphabetic]{amsrefs} 
\usepackage[nameinlink]{cleveref}

\def\makeCal#1{%
\expandafter\newcommand\csname c#1\endcsname{\mathcal{#1}}}
\def\makeFrak#1{%
\expandafter\newcommand\csname f#1\endcsname{\mathfrak{#1}}}
\def\makeScr#1{%
\expandafter\newcommand\csname s#1\endcsname{\mathscr{#1}}}

\count@=0
\loop
\advance\count@ 1
\edef\y{\@Alph\count@}%
\expandafter\makeCal\y
\expandafter\makeFrak\y
\ifnum\count@<26
\repeat

\def\CDGA{\textbf{CDGA}}
\DeclareMathOperator{\coker}{coker}
\DeclareMathOperator{\com}{com}

\DeclareMathOperator{\hcolim}{hocolim}
\DeclareMathOperator{\hfib}{hofib}

\DeclareMathOperator{\Hom}{Hom}

\newcommand{\pt}{\mathbf{pt}}

\DeclareMathOperator{\SO}{SO}
\DeclareMathOperator{\SP}{Sp}
\DeclareMathOperator{\Spin}{Spin}
\DeclareMathOperator{\SU}{SU}
\DeclareMathOperator{\Tor}{Tor}

\DeclareMathOperator{\U}{U}


\def\ar{\arrow}

\def\p{\partial}

\def\p{{\partial}}
\def\red{\textcolor{red}}

\theoremstyle{plain}
\newtheorem{thm}{Theorem}[section]
\newtheorem{cor}[thm]{Corollary}
\newtheorem{lem}[thm]{Lemma}

\newtheorem{prop}[thm]{Proposition}

\theoremstyle{definition}
\newtheorem{defn}[thm]{Definition}

\newtheorem{eg}[thm]{Example}
\newtheorem{rmk}[thm]{Remark}

\theoremstyle{remark}

\makeatletter
\renewcommand\paragraph{\@startsection{paragraph}{4}{\z@}%
  {1ex \@plus .2ex}%
  {-1em}%
  {\normalfont\bfseries}}
\makeatother

%